\DeclareFontFamily{U}{skulls}{}
\DeclareFontShape{U}{skulls}{m}{n}{ <-> skull }{}
\newcommand{\bbA}{\mathbb{A}}
\newcommand{\bbC}{\mathbb{C}}
\newcommand{\bbF}{\mathbb{F}}
\newcommand{\bbG}{\mathbb{G}}
\newcommand{\bbN}{\mathbb{N}}
\newcommand{\bbP}{\mathbb{P}}
\newcommand{\bbQ}{\mathbb{Q}}
\newcommand{\bbR}{\mathbb{R}}
\newcommand{\bbV}{\mathbb{V}}
\newcommand{\bbZ}{\mathbb{Z}}
\newcommand{\Gm}{\mathbb{G}_m}
\newcommand{\cA}{\mathcal{A}}
\newcommand{\cB}{\mathcal{B}}
\newcommand{\cC}{\mathcal{C}}
\newcommand{\cE}{\mathcal{E}}
\newcommand{\cF}{\mathcal{F}}
\newcommand{\cG}{\mathcal{G}}
\newcommand{\cH}{\mathcal{H}}
\newcommand{\cI}{\mathcal{I}}
\newcommand{\cL}{\mathcal{L}}
\newcommand{\cO}{\mathcal{O}}
\newcommand{\cP}{\mathcal{P}}
\newcommand{\cQ}{\mathcal{Q}}
\newcommand{\cR}{\mathcal{R}}
\newcommand{\cS}{\mathcal{S}}
\newcommand{\cT}{\mathcal{T}}
\newcommand{\cV}{\mathcal{V}}
\newcommand{\cW}{\mathcal{W}}
\newcommand{\cX}{\mathcal{X}}
\newcommand{\cY}{\mathcal{Y}}
\newcommand{\cZ}{\mathcal{Z}}
\newcommand{\rF}{\textup{F}}
\newcommand{\rH}{\textup{H}}
\newcommand{\rR}{\textup{R}}
\newcommand{\rT}{\textup{T}}
\newcommand{\rX}{\textup{X}}
\newcommand{\frH}{\mathfrak{H}}
\newcommand{\sfP}{\mathsf{P}}
\newcommand{\rd}{\textup{d}}
\newcommand{\an}{\textup{an}}
\newcommand{\s}{\textup{s}}
\renewcommand{\ss}{\textup{ss}}
\newcommand{\rs}{\textup{rs}}
\newcommand{\et}{\textup{\'et}}
\newcommand{\dR}{\textup{dR}}
\newcommand{\cris}{\textup{crys}}
\newcommand{\sm}{\textup{sm}}
\newcommand{\nd}{\textup{nd}}
\renewcommand{\sp}{\operatorname{sp}}
\newcommand{\ad}{\textup{ad}}
\renewcommand{\top}{\textup{top}}
\newcommand{\onto}{\twoheadrightarrow}
\newcommand{\ontoo}{\too\hspace{-14pt}\too}
\newcommand{\into}{\hookrightarrow}
\newcommand{\too}{\longrightarrow}
\renewcommand{\phi}{\varphi}
\renewcommand{\epsilon}{\varepsilon}
\renewcommand{\ker}{\Ker}
\newcommand{\iso}{\simeq}
\newcommand{\Rep}{\textup{Rep}}
\newcommand{\R}{\textup{R}}
\newcommand{\PLambda}{\scalerel*{{\rotatebox[origin=c]{180}{$\bbV$}}}{\bbV}}
\newcommand{\intoo}{\lhook\joinrel\longrightarrow}
\DeclareMathOperator{\Vect}{Vect}
\DeclareMathOperator{\Alb}{Alb}
\DeclareMathOperator{\Gal}{Gal}
\DeclareMathOperator{\rk}{rk}
\DeclareMathOperator{\pr}{pr}
\DeclareMathOperator{\Spec}{Spec}
\DeclareMathOperator{\Spf}{Spf}
\DeclareMathOperator{\Hom}{Hom}
\DeclareMathOperator{\Mon}{Mon}
\DeclareMathOperator{\im}{Im}
\DeclareMathOperator{\Ker}{Ker}
\DeclareMathOperator{\Frac}{Frac}
\DeclareMathOperator{\Flag}{Flag}
\newcommand{\red}{{\mathrm{red}}}
\DeclareMathOperator{\Pic}{Pic}
\DeclareMathOperator{\Stab}{Stab}
\DeclareMathOperator{\Iso}{Iso}
\DeclareMathOperator{\End}{End}
\DeclareMathOperator{\Ext}{Ext}
\DeclareMathOperator{\Sym}{Sym}
\DeclareMathOperator{\Aut}{Aut}
\DeclareMathOperator{\GL}{GL}
\DeclareMathOperator{\SL}{SL}
\DeclareMathOperator{\SO}{SO}
\DeclareMathOperator{\Sp}{Sp}
\DeclareMathOperator{\GO}{GO}
\DeclareMathOperator{\GSp}{GSp}
\DeclareMathOperator{\NS}{NS}
\DeclareMathOperator{\Tran}{Tran}
\DeclareMathOperator{\Lie}{Lie}
\DeclareMathOperator{\gr}{gr}
\DeclareMathOperator{\id}{id}
\DeclareMathOperator{\Hilb}{Hilb}
\DeclareMathOperator{\alb}{alb}
\DeclareMathOperator{\characteristic}{char}
\DeclareMathOperator{\tr}{tr}
\DeclareMathOperator{\rad}{rad}
\DeclareMathOperator{\codim}{codim}
\DeclareMathOperator{\Res}{Res}
\DeclareMathOperator{\Par}{Par}
\DeclareMathOperator{\wt}{wt}
\DeclareMathOperator{\Alt}{Alt}
\DeclareMathOperator{\Type}{Type}
\DeclareMathOperator{\DE}{DE}
\DeclareMathOperator{\PV}{PV}
\DeclareMathOperator{\MT}{MT}
\DeclareMathOperator{\Ad}{Ad}
\renewcommand{\le}{\leqslant}
\renewcommand{\ge}{\geqslant}
\theoremstyle{plain}
\newtheorem{theoremintro}{Theorem}
\newtheorem*{maintheorem-monodromy}{Main theorem (monodromy version)}
\newtheorem*{maintheorem-tannaka}{Main theorem (Tannaka version)}
\newtheorem*{corollaryintro}{Corollary}
\newtheorem{theorem}{Theorem}[section]
\newtheorem{lemma}[theorem]{Lemma}
\newtheorem{proposition}[theorem]{Proposition}
\newtheorem{corollary}[theorem]{Corollary}
\newtheorem*{bigmonodromyintro}{Big Monodromy Criterion}
\theoremstyle{definition}
\newtheorem*{definitionintro}{Definition}
\newtheorem{definition}[theorem]{Definition}
\newtheorem{example}[theorem]{Example}
\newtheorem{remark}[theorem]{Remark}
\numberwithin{equation}{section}
\begin{document}

\title{Arithmetic finiteness of very irregular varieties}

\begin{abstract}
We prove the Shafarevich conjecture for varieties that embed in their Albanese variety with ample normal bundle, subject to mild numerical conditions. Our proof relies on the Lawrence-Venkatesh method as in the work of Lawrence-Sawin, together with the big monodromy criterion from our work with Javanpeykar and Lehn.
\end{abstract}

\author[T. Kr\"amer]{Thomas Kr\"amer}
\address{Thomas Kr\"amer \\
	Fakult\"{a}t f\"{u}r Mathematik\\
	Technische Universit\"{a}t Chemnitz\\
	Rei\-chenhainer Stra\ss e 39, 09126 Chemnitz\\
	Germany.}
\email{thomas.kraemer@math.tu-chemnitz.de}

\author[M. Maculan]{Marco Maculan}

\address{ Marco Maculan \\
	Institut de Math\'ematiques de Jussieu \\
Sorbonne Universit\'e \\
4, place Jussieu \\
75005 Paris \\
France}
\email{marco.maculan@imj-prg.fr}

\makeatletter
\@namedef{subjclassname@2020}{
	\textup{2020} Mathematics Subject Classification}
\makeatother

\subjclass[2020]{11G35, 14G25 (primary), 11F80, 14D07, 14G05, 14K12 (secondary).}
\keywords{Shafarevich conjecture, irregular variety, Albanese variety, good reduction, Galois representation, period map, monodromy}

\maketitle

\setcounter{tocdepth}{1}

\tableofcontents

\section{Introduction}

\thispagestyle{empty} 

In this paper we obtain finiteness results \`a la Shafarevich for a large class of varieties over number fields, thus proving the Lang-Vojta conjecture for the moduli stacks of such varieties. One of the ingredients of the proof is the big monodromy criterion in our work with Javanpeykar and Lehn \cite{JKLM, KLM}. \medskip

\subsection{The Shafarevich conjecture} To put our results in perspective, let us briefly recall some history. At the ICM in Stockholm in 1962, Shafarevich conjectured that over any number field there are only finitely many isomorphism classes of smooth projective curves of given genus~$g\ge 2$ that have good reduction outside a given finite set~$\Sigma$ of places; the latter roughly means that the discriminant of an equation for the curve is divisible at most by the primes in~$\Sigma$. Since then, analogous finiteness statements for other types of varieties have become known as instances of the \emph{Shafarevich conjecture} (not to be confused with the eponymous conjecture on holomorphic convexity of the universal cover of complex projective manifolds). The conjecture does not hold for all types of varieties, for instance it fails for genus~$1$ curves without rational points. The correct statement is obtained by regarding varieties with good reduction as integral points of moduli spaces. \medskip

The scarcity of integral points is the object of the \emph{Lang-Vojta conjecture}, one of the major unsolved problems in diophantine geometry. It predicts that on an algebraic variety of log general type over a number field, the integral points are not Zariski dense. In dimension one this is the Mordell conjecture that was proven by Faltings~\cite{FaltingsMordell}: Any smooth projective curve of genus~$g\ge 2$ has only finitely many rational points. In higher dimension, the Lang-Vojta conjecture implies in particular that any hyperbolic variety over a number field has only finitely many integral points. Here `hyperbolic' means that all positive-dimensional subvarieties are of log general type. By the work of Campana-P\u{a}un \cite[cor.~4.6]{CampanaPaun}, moduli spaces\footnote{Here we ignore issues arising from the coarseness of moduli spaces; see~\cite[sect.~6.3]{JavanpeykarLoughran} and~\cite[conj.~1.1]{JavanpeykarLondon} for a detailed discussion.} of canonically polarized varieties are hyperbolic in this sense. Therefore the Shafarevich conjecture for a given class of canonically polarized varieties is equivalent to the Lang-Vojta conjecture for the corresponding moduli space.\medskip

In the case of curves and abelian varieties, the Shafarevich conjecture was proven by Faltings~\cite{FaltingsMordell} to deduce the Mordell conjecture (the fact that the Shafarevich conjecture would imply the Mordell conjecture had been noticed earlier by Kodaira and Parshin). Since then, there has been only little progress on the Shafarevich conjecture as we will recall below. Recently, Lawrence and Sawin \cite{LS20} used the method of Lawrence and Venkatesh \cite{LV} to verify the Shafarevich conjecture in the special case of hypersurfaces in abelian varieties. In this paper we will prove it for varieties that embed in their Albanese variety with ample normal bundle. More precisely, we prove it for the following class of varieties:

\begin{definitionintro} Let~$X$ be a smooth projective geometrically connected variety over a field~$k$ of characteristic~$0$. When~$k$ is algebraically closed we say that~$X$ is \emph{very irregular} if the Albanese morphism given by some point in~$X(k)$ is a closed embedding with nonzero ample normal bundle. When~$k$ is arbitrary, we say that~$X$ is very irregular if the base change to an algebraic closure of~$k$ is very irregular.
\end{definitionintro}

We confirm the Lang-Vojta conjecture for the moduli spaces of very irregular varieties. 
Very irregular varieties form a very large class of canonically polarized varieties which includes all smooth projective curves of genus~$\ge 2$, all smooth complete intersections of ample divisors in an abelian variety, and more generally~$d$-dimensional smooth subvarieties of abelian varieties whose normal bundle is a direct sum of ample vector bundles of rank~$< d$. There are plenty of other examples such as Fano surfaces of lines of cubic threefolds, Schoen surfaces, etc. Higher dimensional abelian varieties and their subvarieties are hard to describe by explicit equations but have a rich geometry: In particular, subvarieties of abelian varieties with ample normal bundle have been studied by many authors, including Hartshorne \cite{Har71}, Ran \cite{Ran} and Debarre \cite{Deb95}. We chose the name `very irregular' in reminiscence of  the classical notion of irregular varieties, that is, smooth projective complex varieties~$S$ whose irregularity~$h^1(S, \cO_S) = h^0(S, \Omega^1_S)$ does not vanish.

\subsection{Intrinsic results} \label{sec:IntrinsicResults}
Let~$K$ be a number field,~$\Sigma$ a finite set of places and~$\cO_{K, \Sigma}$ the ring of~$\Sigma$-integers of~$K$.

\begin{definitionintro} A very irregular variety~$X$ over~$K$ has \emph{good reduction (outside~$\Sigma$)} if there is a smooth projective scheme~$\cX$ over~$\cO_{K, \Sigma}$ with generic fiber~$X$, geometrically connected fibers and the following property. Let~$\cA$ be the N\'eron model of the Albanese variety of~$X$, which is an abelian scheme by \cref{lemma:NeronModelPic0}. Then, given a point~$x$ of~$X$ over a finite extension~$K'$ of~$K$, the unique extension~$\cX \to \cA$ over~$\cO_{K', \Sigma'}$ of the Albanese morphism corresponding to~$x$ is a closed embedding, where~$\Sigma'$ is set of primes in~$K'$ over~$\Sigma$.
\end{definitionintro}

If such a scheme~$\cX$ exists, the fibers of~$\cX  \to \Spec \cO_{K, \Sigma}$ are canonically polarized varieties, hence~$\cX$ is unique up to isomorphism; see \cref{sec:ModelsOfVeryIrregularVarieties}. 
For surfaces our main result can be stated as follows:

\begin{theoremintro}  \label{IntroThmIntrinsicSurfaces}
Fix~$c \ge 1$. Up to~$K$-isomorphism there are only finitely many very irregular surfaces~$X$ with good reduction,~$c_2(X) = c$ and~$h^0(X, \Omega^1_X) \ge 6$.
\end{theoremintro}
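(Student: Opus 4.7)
\emph{Proof plan.} The plan is to combine the Lawrence-Venkatesh method, in the form used by Lawrence-Sawin \cite{LS20} for hypersurfaces in abelian varieties, with the big monodromy criterion of \cite{JKLM, KLM}. The overall strategy is first to bound all relevant discrete invariants, reducing to a finite union of Hilbert schemes, and then to apply the Lawrence-Venkatesh method to each.

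The first task is boundedness. Set $A = \Alb(X)$ and $g = h^0(X, \Omega^1_X) = \dim A$. Since $T_A$ is trivial, adjunction gives $c(N_{X/A}) = c(T_X)^{-1}$ on $X$, so $c_1(N_{X/A}) = K_X$ and $c_2(N_{X/A}) = K_X^2 - c$. The ampleness of the rank $(g-2)$ bundle $N_{X/A}$, combined with the Bogomolov-Miyaoka-Yau inequality $K_X^2 \leq 3c$ for the surface of general type $X$ and Fulton-Lazarsfeld positivity of Schur polynomials of $N_{X/A}$, should force $g$ to lie in a finite set depending on $c$; the same Chern-class bookkeeping then bounds the cohomology class $[X] \in H^{2(g-2)}(A, \mathbb{Z})$.

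With $g$ bounded, good reduction of $X$ outside $\Sigma$ forces good reduction of its Albanese $A$ outside $\Sigma$, and Faltings' Shafarevich theorem for abelian varieties leaves only finitely many possibilities for $A$ up to $K$-isomorphism. For each such $A$ and each admissible numerical class $\alpha$, the Hilbert scheme $\Hilb^\alpha(A)$ is of finite type over $K$, so it suffices to prove finiteness of the $K$-points of each of its connected components that correspond to very irregular surfaces with good reduction outside $\Sigma$.

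The final step is to apply the Lawrence-Venkatesh method to the universal family over each such component, following the template of Lawrence-Sawin. The main obstacle — and the reason for the hypothesis $h^0(X, \Omega^1_X) \geq 6$ — will be the verification of big monodromy for the relevant $\ell$-adic local system on the base. This should follow from the criterion of \cite{JKLM, KLM} applied to the perverse intermediate extension of the constant sheaf on $X$ along the closed embedding $X \hookrightarrow A$; the applicability of that criterion is precisely what requires the irregularity to exceed a numerical threshold. Non-degeneracy of the associated $p$-adic period map should be comparatively routine, following from Griffiths transversality for the variation of Hodge structure on $\Cohom^2(X)$ as $X$ varies in the fixed $A$.
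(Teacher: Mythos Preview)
Your overall architecture matches the paper's: bound discrete invariants, use Faltings to pin down finitely many Albanese varieties, pass to a Hilbert scheme, and then run the Lawrence--Venkatesh method with the big monodromy criterion of \cite{JKLM, KLM}. The condition $h^0(X,\Omega^1_X)\ge 6$ is indeed exactly the threshold $d < (g-1)/2$ required by that criterion for $d=2$.

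However, your boundedness step has a genuine gap. You propose to bound $g = h^0(X,\Omega^1_X)$ using Fulton--Lazarsfeld positivity of Schur polynomials in the Chern classes of $N_{X/A}$. But on a surface the only positive Schur classes available are in degree two, giving $c_2(N)>0$ and $c_1(N)^2 - c_2(N)>0$; in your notation these say $K_X^2 > c$ and $c>0$, neither of which involves the rank $g-2$. So this argument does not bound $g$. The paper sidesteps the issue entirely: Noether's formula plus Bogomolov--Miyaoka--Yau give that $c_2(X)=c$ leaves only finitely many possibilities for the canonical Hilbert polynomial $P$, and then one invokes the general \cref{IntroThmIntrinsicGeneral} for each such $P$; the finite-type-ness of the corresponding moduli automatically bounds $g$ and produces the polarization on $\Alb(X)$ needed to apply Faltings. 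You also omit two ingredients that are not automatic: the numerical inequality \eqref{SkullInequality} needed for the flag-variety dimension count (which the paper verifies for surfaces via $c_1^2\ge c_2$, using that $\Omega^1_X$ is globally generated hence nef), and the reduction to the case $X(K)\neq\emptyset$ via Chevalley--Weil so that the Albanese morphism is defined over $K$. Finally, your last sentence should refer to the twisted cohomology $\rH^2(X, L_\chi)$ for many torsion characters $\chi$, not to $\rH^2(X)$; the whole point of the Lawrence--Sawin template is that untwisted cohomology does not carry enough variation.
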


The above finiteness statement is the first one that holds for such a large class of surfaces. 
Before it, the Shafarevich conjecture for surfaces was known only in the following specific cases: Abelian surfaces \cite{FaltingsMordell}, del Pezzo surfaces~\cite{SchollDelPezzo}, K3 surfaces \cite{AndreK3,She,TakamatsuK3}, Enriques surfaces \cite{TakamatsuEnriques}, sextic surfaces \cite{JavLoughPisa}, fibrations of smooth curves over smooth curves \cite{JavanpeykarLondon}, surfaces that geometrically are products of curves \cite{JavLoughLondon} and, under mild numerical conditions, very irregular surfaces~$X$ with~$h^0(X, \Omega^1_X) = 3$  \cite{LS20}. \medskip

To state our results in higher dimension, note that a very irregular variety is canonically polarized; unless the polarization is specified, its Hilbert polynomial will be understood to be taken with respect to the canonical bundle. Also, for a~$d$-dimensional smooth variety~$X$ we write~$\chi_{\top}(X) = c_d(X)$ for the topological Euler characteristic of~$X$ and~$\chi(X, \cF)$ for the Euler characteristic of a coherent sheaf~$\cF$ on~$X$. We prove the following generalization of Faltings' theorem:

\begin{theoremintro} \label{IntroThmIntrinsicGeneral}
Fix~$P \in \bbQ[z]$ of degree~$d$. Then up to~$K$-isomorphism  there are only finitely many very irregular varieties~$X$ with good reduction, Hilbert polynomial~$P$,~$h^0(X, \Omega^1_X) \ge 2 d + 2$ and
\begin{equation} \label{SkullInequality}
(-1)^d \chi(X \times X, \Omega^d_{X \times X}) \; \le \; \tfrac{1}{2}\chi_{\top}(X \times X),
\end{equation}
satisfying the numerical condition \eqref{Eq:NumericalConditions}  below with~$A = \Alb(X)$. 
\end{theoremintro}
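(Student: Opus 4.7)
The strategy is the Lawrence--Venkatesh method, following the implementation of Lawrence--Sawin for hypersurfaces in abelian varieties \cite{LS20} and using our big monodromy criterion from \cite{JKLM, KLM} as the key geometric input. I would proceed in three stages.

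\textbf{Reduction to a fixed Albanese.} First I would reduce to the case where the Albanese variety $A = \Alb(X)$ is a fixed abelian variety over $K$. If $X$ is very irregular and has good reduction outside $\Sigma$, then $\Alb(\cX/\cO_{K,\Sigma})$ is an abelian scheme over $\cO_{K,\Sigma}$, so $A$ has good reduction outside $\Sigma$; its dimension equals $h^0(X,\Omega^1_X)$, which is controlled by $P$, and the restriction of a polarization of $A$ to $X$ forces a bound on the polarization degree of $A$ in terms of $P$ as well. Faltings' Shafarevich theorem for polarized abelian varieties then leaves only finitely many possibilities for $A$, so it suffices to bound the number of $X\hookrightarrow A$ with the prescribed invariants. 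These are parametrized by a locally closed subscheme $S$ of the Hilbert scheme $\Hilb_{P'}(A)$ (for a suitable $P'$), which carries a natural model over $\cO_{K,\Sigma}$; our varieties with good reduction correspond to a subset of $S(\cO_{K,\Sigma})$.

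\textbf{Lawrence--Venkatesh setup.} Over $S$ one has the universal family $f\colon\cX\to S$, and for $\ell\notin\Sigma$ one considers the $\ell$-adic local system $\bbV = R^d f_* \bbQ_\ell / (\text{Albanese part})$, together with its Betti, de Rham and $p$-adic realizations. For each $s\in S(\cO_{K,\Sigma})$, the Galois representation $\bbV_s$ is unramified outside $\Sigma\cup\{\ell\}$ and crystalline at primes above $p$ with Hodge--Tate weights bounded in terms of $P$; by standard rigidity, only finitely many semisimple $\bbV_s$ arise up to isomorphism. The $p$-adic period map attached to the $p$-adic comparison isomorphism then sends the finite set of Galois-conjugacy classes to a countable union of submanifolds of the relevant flag variety, cut out by the Galois-invariance of certain tensors, and one needs finitely many $s$ to map into each. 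This last finiteness is the statement that the image of the period map meets each such submanifold in a discrete set, and is precisely where the geometric hypotheses enter.

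\textbf{Big monodromy and the numerical conditions.} Here I would invoke the big monodromy criterion of \cite{JKLM,KLM}: under the Hodge-number assumption $h^0(X,\Omega^1_X)\ge 2d+2$ and the hypothesis \eqref{Eq:NumericalConditions} on $A$ referenced in the statement, the geometric monodromy of $\bbV$ on $S$ should be shown to be as large as allowed by the polarization and Tannakian structure coming from convolution on $A$. From this, one deduces that the algebraic subvarieties of the period domain cut out by Galois-fixed tensors have strictly smaller dimension than the image of the period map, and hence meet it in a set of strictly smaller dimension; the Lawrence--Venkatesh $p$-adic unlikely-intersection argument then gives finiteness of the $\cO_{K,\Sigma}$-points mapping into any given Galois orbit. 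The role of the ``skull'' inequality \eqref{SkullInequality}, computed via Künneth on $X\times X$, should be to guarantee the numerical strict inequality between the dimension of the period image and the dimension of any component of the Galois-stable locus, i.e.\ to ensure genuinely ``non-degenerate'' variation of Hodge structure rather than just big monodromy.

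\textbf{Main obstacle.} I expect the hard step to be the verification of the hypotheses of the big monodromy criterion of \cite{JKLM, KLM} in this setting: this demands a precise identification of the Tannakian monodromy group of $\bbV$ via convolution on $A$, together with a careful check that the numerical conditions $h^0(X,\Omega^1_X)\ge 2d+2$, \eqref{SkullInequality}, and \eqref{Eq:NumericalConditions} suffice to rule out all possible small Tannakian groups. Everything else---Faltings for abelian varieties, the rigidity of crystalline representations, and the formal Lawrence--Venkatesh machine---is essentially off the shelf once the monodromy input is in place.
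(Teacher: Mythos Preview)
Your overall architecture (reduce to a fixed Albanese via Faltings, parametrize by a piece of a Hilbert scheme, run Lawrence--Venkatesh with big monodromy as the geometric input) matches the paper's, but there is a genuine gap in the choice of local system. You propose to use $\bbV = R^d f_*\bbQ_\ell$ modulo an ``Albanese part''; the paper instead uses cohomology \emph{twisted by torsion rank-one local systems} pulled back from $A$, i.e.\ $\rH^d(\cX_s, \pi_A^*L_\chi)$ for characters $\chi$ of $\pi_1(A)$ of some order $r$ prime to $p$, packaged into a direct summand of $[r]_*\bbQ_p$. This is not cosmetic. First, the Big Monodromy Criterion of \cite{JKLM, KLM} is a statement about the monodromy of the twisted systems $V_{\underline\chi}$ for generic \emph{tuples} of torsion characters, not about untwisted cohomology; no such criterion is available for $R^d f_*\bbQ_\ell$. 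Second, and more seriously, the Lawrence--Venkatesh dimension count requires the centralizer of the crystalline Frobenius to be small relative to the period target $\frH$. In the paper this is forced by choosing $N$ characters $\chi_1,\dots,\chi_N$ inside a single local Galois orbit with $N$ arbitrarily large: then $\dim\frH$ grows linearly in $N$ while the Frobenius centralizer stays bounded by the dimension of a single factor. With untwisted cohomology there is no such free parameter and the count does not close.

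Two smaller points. You name ``verifying big monodromy'' as the main obstacle, but that criterion is imported wholesale from \cite{JKLM, KLM}; the work specific to this paper is the framework of \'etale and de Rham data with extra structure, the handling of the semisimplification filtration via positivity from global purity at a friendly place, and the combinatorial dimension bound on flag varieties. And the role of \eqref{SkullInequality} is more concrete than ``non-degenerate variation'': with $h_\alpha = (-1)^{d-\alpha}\chi(X,\Omega_X^\alpha)$ it reads $2\sum_\alpha h_\alpha^2 \le (\sum_\alpha h_\alpha)^2$, which is exactly the inequality $2\dim P_y^\ss < \dim H + \rk H$ needed to feed the flag-variety estimate.
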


Instead of fixing the Hilbert polynomial of~$X$ one may equivalently fix the dimension of~$X$ and the top self intersection of its canonical bundle \cite{KollarMatsusaka}. For~$d = 2$ the conditions \eqref{SkullInequality} and \eqref{Eq:NumericalConditions} are always satisfied, thus \cref{IntroThmIntrinsicSurfaces} follows from \cref{IntroThmIntrinsicGeneral}. In higher dimension \cref{IntroThmIntrinsicGeneral} covers a vast class of previously unknown cases: For instance, the inequality \eqref{SkullInequality} holds for all odd~$d$ by~\cref{inequality-in-odd-dimension}.
Very recently, the inequality \eqref{SkullInequality} has been proved for any smooth complete intersection in an abelian variety; see \cite[prop.~3.7]{LuSkull}. It seems plausible that \eqref{SkullInequality} always holds for smooth subvarieties of an abelian variety. The condition \eqref{Eq:NumericalConditions} is  only needed when~$X$ is symmetric in the sense of \cref{sec:NumericalConditions}, and it is empty for~$h^0(X, \Omega^1_X) \ge 4 d + 2$. Thus we obtain:

\begin{corollaryintro} Fix~$P \in \bbQ[z]$ of odd degree~$d$. Then up to~$K$-isomorphism  there are only finitely many very irregular varieties~$X$ with good reduction, Hilbert polynomial~$P$ and~$h^0(X, \Omega^1_X) \ge 4 d + 2$.
\end{corollaryintro}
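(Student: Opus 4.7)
The corollary is a direct consequence of \cref{IntroThmIntrinsicGeneral}, so the plan is simply to verify that all of its hypotheses are automatically satisfied under the stronger numerical assumption of odd degree and $h^{0}(X,\Omega^{1}_{X}) \geq 4d+2$.

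First, I would observe that the hypothesis $h^{0}(X,\Omega^{1}_{X}) \geq 4d+2$ immediately yields $h^{0}(X,\Omega^{1}_{X}) \geq 2d+2$, since $d\geq 1$. This disposes of the irregularity condition in \cref{IntroThmIntrinsicGeneral}.

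Second, I would invoke the parenthetical remark that condition \eqref{Eq:NumericalConditions} concerning the abelian variety $A = \Alb(X)$ is empty once $h^{0}(X,\Omega^{1}_{X}) \geq 4d+2$; so it is vacuously satisfied for varieties in our class. This is the place where the stronger bound $4d+2$ (as opposed to $2d+2$) is actually used.

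Third, to verify the inequality \eqref{SkullInequality}, I would appeal to \cref{inequality-in-odd-dimension}, which guarantees that \eqref{SkullInequality} holds whenever $d$ is odd — exactly the setting of this corollary. Note that here the hypothesis of odd Hilbert polynomial degree enters; for even $d$ one would have to check the inequality by hand, which is why the corollary restricts to odd dimensions.

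With all hypotheses of \cref{IntroThmIntrinsicGeneral} in place, its conclusion yields the finiteness statement, and the corollary is proved. The only non-routine ingredients are the two cited results (the odd-dimensional skull inequality and the vacuity of the numerical condition under the stronger irregularity bound), both of which are proved elsewhere in the paper; there is no genuine obstacle to carrying out this reduction.
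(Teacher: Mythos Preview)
Your proposal is correct and matches the paper's own reasoning essentially verbatim: the paper derives the corollary from \cref{IntroThmIntrinsicGeneral} by noting that \eqref{SkullInequality} holds for odd $d$ by \cref{inequality-in-odd-dimension} and that condition \eqref{Eq:NumericalConditions} is empty once $h^0(X,\Omega^1_X)\ge 4d+2$ (since then $g\ge 4d+2$ rules out both the $g\in\{8,9\}$ case and the $d\ge (g-1)/4$ case). There is nothing to add.
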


In dimension~$\ge 3$, such finiteness results were previously known beyond Faltings' theorem only for hyper-K\"ahler varieties \cite{AndreK3, TakamatsuHyperkahler}, flag varieties~\cite{JavLoughFlag}, complete intersections of Hodge level~$\le 1$ \cite{JavanpeykarLoughran}, Fano threefolds \cite{JavLoughPisa, LichtHyperbolicity, TakamatsuMukaiGenusSeven}, fibrations in smooth hyperbolic curves~\cite{TakamatsuPolycurves} and very irregular varieties~$X$ with~$h^0(X, \Omega^1_X)= \dim X + 1$ by~\cite{LS20}. \medskip 

Theorems~\ref{IntroThmIntrinsicSurfaces},~\ref{IntroThmIntrinsicGeneral} and their corollary are the first Shafarevich-type finiteness result that apply to varieties for which a direct classification is impossible (e.g. their Hodge numbers cannot be explicitly computed).

\subsection{Numerical condition} \label{sec:NumericalConditions} To formulate the numerical condition in~\cref{IntroThmIntrinsicGeneral}, let~$X$ be a smooth subvariety of an abelian variety~$A$ of dimension~$g$ over a field~$k$. 
If~$k$ is algebraically closed, we say that~$X \subset A$ is \emph{symmetric up to translation} if there is a point~$a \in A(k)$ such that~$X = - X + a$. If~$k$ is an arbitrary field, we say that~$X \subset A$ is symmetric up to translation if its base change to an algebraic closure of~$k$ is so. By the \emph{stabilizer} of a subvariety~$X \subset A$ we mean the algebraic group~$\Stab_A(X)$ whose points in a~$k$-scheme~$S$ are \[ \Stab_A(X)(S)\;=\;\{ a \in A(S) \colon X_S + a \;=\; X_S\}.\] 
We say that~$X$ is \emph{divisible} if~$\Stab_A(X)$ is nonzero. When~$X \subset A$ is of general type (e.g., if~$X$ has ample normal bundle), then the algebraic group~$\Stab_A(X)$ is finite. Its order~$n$ divides~$\chi_\top(X)$ because~$\chi_\top(X) / n$ is the topological Euler characteristic of the quotient variety \[\bar{X} = X/\Stab_A(X).\] In~\cref{IntroThmIntrinsicGeneral} we require:
\begin{equation} \label{Eq:NumericalConditions}
\begin{array}{cc}
|\chi_{\top}(\bar{X})| \neq 2^{2m - 1} 
&
\begin{tabular}{l}
\text{if~$X$ is symmetric up to translation,}\\
\text{$d \ge (g-1)/4$,~$ m\in \{3, \dots, d\}$, and~$m\equiv d$ mod~$2$.}
\end{tabular}
\end{array}
\end{equation}

We do not know any example of a smooth subvariety~$X \subset A$ with ample normal bundle for which the condition \eqref{Eq:NumericalConditions} fails.

\subsection{Extrinsic result} \label{sec:ExtrinsicResults} We will deduce \cref{IntroThmIntrinsicGeneral} from a general result about subvarieties of arbitrary abelian varieties. To state it, let us say that a smooth projective variety~$X$ over a field~$k$ is a \emph{product} resp.~a \emph{symmetric power of a curve} if over an algebraic closure of~$k$ it becomes isomorphic to a product of smooth projective varieties of dimension~$> 0$ resp. a symmetric power of a smooth projective curve. Suppose that $X$ is a subvariety of an abelian variety~$A$. When~$k$ is algebraically closed, one easily sees that~$X$ is
\begin{itemize}
 \item a product if and only if there are subvarieties~$X_1, X_2 \subset A$ of positive dimension such that the sum induces an isomorphism
 \[ X_1 \times X_2 \stackrel{\sim}{\too} X = X_1 + X_2;\]
 \item a symmetric power of a curve if and only if there is a curve~$C \subset A$ such that for~$d = \dim X$ the sum induces an isomorphism
 \[ \Sym^d C \stackrel{\sim}{\too} X= C + \cdots + C. \]
 \end{itemize} 
 For $k$ arbitrary, we say that $X$ is \emph{isogenous to a product} resp.~\emph{isogenous to symmetric power of a curve} if $X / \Stab_A(X)$ is a product resp.~a symmetric power of a curve.
\medskip
 
  This being said, let~$A$ be an abelian variety of dimension~$g$ over the number field~$K$ and let~$L$ be an ample line bundle on~$A$. Recall that~$A$ over~$K$ has good reduction if it is the generic fiber of an abelian scheme~$\cA$ over~$\cO_{K, \Sigma}$. Then the abelian scheme is unique up to isomorphism, and we say that a subvariety~$X \subset A$ has \emph{good reduction} if the Zariski closure of~$X$ in~$\cA$ is smooth over~$\cO_{K, \Sigma}$. When~$X$ is very irregular and~$A = \Alb(X)$ this notion of good reduction is equivalent to the previous one. In \cite{LS20} Lawrence and Sawin prove that up to translation, there are only finitely many hypersurfaces~$X \subset A$ algebraically equivalent to~$L$ and with good reduction. In the opposite case of subvarieties of high codimension, we show:

\begin{theoremintro} \label{IntroThmExtrinsic}
Fix~$P \in \bbQ[z]$ of degree~$d < (g-1)/2$. Then up to translation by points in~$A(K)$ there are only finitely many smooth subvarieties~$X \subset A$ over~$K$ with ample normal bundle, good reduction and Hilbert polynomial~$P$ with respect to~$L$ which satisfy \eqref{SkullInequality} and \eqref{Eq:NumericalConditions} and are neither isogenous to a product  nor isogenous to a symmetric power of a curve.  
\end{theoremintro}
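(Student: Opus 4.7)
The plan is to follow the Lawrence--Venkatesh method, in the form developed by Lawrence--Sawin for codimension-one subvarieties \cite{LS20}, feeding in the big monodromy criterion of \cite{JKLM, KLM} as the main new input. After enlarging $\Sigma$, set $\cO = \cO_{K, \Sigma}$ and extend $A$ to an abelian scheme $\cA/\cO$ with ample line bundle $\cL$. Let $\cH = \Hilb^P(\cA/\cO)$ be the relative Hilbert scheme of subschemes with Hilbert polynomial $P$ with respect to $\cL$, and let $\cU \subset \cH$ be the open subscheme parametrising fibrewise smooth subschemes with ample normal bundle that are not a product. Any $X$ as in the statement, together with its good model, defines an $\cO$-point of $\cU$, and two such points differ by translation by $A(K) = \cA(\cO)$ if and only if they give the same point of $(\cU/\cA)(\cO)$. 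The theorem therefore reduces to the finiteness of this set of integral orbits.

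\textbf{Galois representations and Faltings.} To each $X \in \cU(\cO)$ associate the semisimple Galois representation given by an appropriate $\ell$-adic cohomology piece attached to $X$; following \cite{LS20} one can use the variable part of the $\ell$-adic cohomology of an auxiliary cyclic cover of $\cA$ built from $X$ and a fixed ample divisor, or, more intrinsically, a suitable direct summand of the middle cohomology of the universal family on $\cU$. These representations are pure, of bounded weight, and unramified outside a fixed finite set of places of $K$; Faltings' finiteness theorem then implies that only finitely many of them occur. It therefore suffices to show that for every such representation $\rho$, the subset of $(\cU/\cA)(\cO)$ giving rise to $\rho$ is finite.

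\textbf{$p$-adic period map and big monodromy.} Fix a place $v$ of good reduction with residue characteristic $p$. The $p$-adic analytic period map $\phi \colon \cU(\cO_v) \to \cF$ sends a point to its Hodge filtration in an appropriate flag variety. By $p$-adic Hodge theory, two $\cO$-points with the same $\rho$ have $\phi$-images in a single orbit of the centraliser of the image of the global monodromy group $H$ acting on the de Rham realisation. The Lawrence--Venkatesh dimension inequality then yields the required discreteness once the dimension of the image of $\phi$ exceeds that of any such centraliser orbit. The former is controlled by the Hodge numbers of the family, where the inequality \eqref{SkullInequality}---essentially a lower bound on the Hodge-theoretic complexity of $X \times X$---ensures enough non-trivial variation. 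The latter is controlled by showing that $H$ is large, and this is exactly what our big monodromy criterion \cite{JKLM, KLM} delivers: outside the sporadic cases ruled out by \eqref{Eq:NumericalConditions}, the Tannaka monodromy group of the intersection complex of $X \subset A$ is an almost simple classical group ($\SL$, symplectic or orthogonal) of large rank.

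\textbf{Where the main difficulty lies.} The technical core of the proof is to set up the geometric family so that the Tannakian statement of \cite{JKLM, KLM}, which a priori concerns perverse sheaves on $A$, translates into honest monodromy of an $\ell$-adic local system on (an étale cover of) $\cU$ with enough Hodge variation for the Lawrence--Venkatesh inequality to bite. Once this bridge is in place, the two numerical hypotheses play complementary roles: \eqref{SkullInequality} secures the variation side, \eqref{Eq:NumericalConditions} secures the monodromy side, and the bounds on $d$ versus $g$ ensure the two estimates are compatible. The divisible case, where $\Stab_A(X) \neq 0$, is handled by descending to the quotient $\bar X \subset A/\Stab_A(X)$ and bounding the order of the stabiliser in terms of $P$, which is why the condition \eqref{Eq:NumericalConditions} is stated in terms of $\bar X$ rather than $X$.
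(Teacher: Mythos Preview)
Your overall strategy is right and matches the paper's: set up the Hilbert scheme, apply the big monodromy criterion of \cite{JKLM,KLM} to feed into a Lawrence--Venkatesh style nondensity theorem (\cref{Thm:MainTheoremForFamiliesWithBigMonodromy} in the paper), and handle the divisible case by passing to the quotient $A/\Stab_A(X)$. But there is a genuine gap.

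\textbf{The symmetric-power-of-a-curve case is missing.} The Big Monodromy Criterion has \emph{four} obstructions: divisible, constant up to translation, a product, and a symmetric power of a curve. Your hypothesis ``not a product'' does not rule out $X \cong \Sym^d C$ for a curve $C \subset A$, and in that case big monodromy genuinely fails. The paper treats this separately (proof of \cref{Thm:DivisibileExtrinsic}): after a Noetherian induction on irreducible components of the Zariski closure of the integral points, if the geometric generic fibre is a symmetric power of a curve, one spreads out the curve over a finite \'etale cover $\tilde S \to S'$, uses Chevalley--Weil to lift integral points, and then applies the criterion to the curve $\cC \to A_{\tilde S}$ (which is not itself a symmetric power, so big monodromy holds for it). You have no analogue of this step.

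\textbf{Two smaller structural points.} First, the paper's argument is organised as a Noetherian induction: one does not prove finiteness of $(\cU/\cA)(\cO)$ directly, but rather shows that for any irreducible component $S$ of the Zariski closure of a set of integral points, either the family over $S$ is constant up to translation (whence the points lie in a single $A(K)$-orbit) or one of the nondensity arguments applies. Your sketch jumps straight to finiteness without this dichotomy. Second, your description of the role of \eqref{SkullInequality} is slightly off: it is not a ``variation'' statement but the numerical input $2\sum h_\alpha^2 \le e^2$ that guarantees the combinatorial inequality $2\dim P_y^\ss < \dim H + \rk H$ needed for the flag-variety dimension bound (\cref{Cor:UniformBoundInFlag}). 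The Hodge variation (i.e.\ the image of the period map being large) comes from big monodromy, not from \eqref{SkullInequality}.
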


We say that a subvariety~$X\subset A$ is a \emph{geometric complete intersection of ample divisors} if its base change to an algebraic closure of~$K$ is a complete intersection of ample divisors. Smooth complete intersections of ample divisors in an abelian variety are neither a product \cite[rem.~6.3]{JKLM} nor a symmetric power of a curve, except for the theta divisor of a nonhyperelliptic genus~$3$ curve \cite[prop. 2.9 (2)]{JKLM}. Moreover, if they are nondivisible, then they satisfy \eqref{Eq:NumericalConditions} by~\cite[prop. 2.16, cor. 2.17]{JKLM}. As mentioned above, the inequality~\eqref{SkullInequality} holds for complete intersections in abelian varieties, hence as a direct consequence of \cref{IntroThmExtrinsic} we obtain:

\begin{corollaryintro} \label{IntroCorExtrinsic}
Fix~$P \in \bbQ[z]$ of degree~$d < (g-1)/2$. Then up to translation by points in~$A(K)$ there exist only finitely many smooth subvarieties~$X \subset A$ with good reduction, Hilbert polynomial~$P$ with respect to~$L$, 
which are nondivisible geometric complete intersection of ample divisors.
\end{corollaryintro}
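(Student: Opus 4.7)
The plan is to deduce the corollary as a direct application of \cref{IntroThmExtrinsic} by checking, one by one, that each hypothesis of the theorem is automatic for a nondivisible geometric complete intersection of ample divisors.

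First I would record the ampleness of the normal bundle. If $X\subset A$ is cut out geometrically by the vanishing of sections of ample line bundles $L_1,\ldots,L_r$ restricted to $A$, then the conormal exact sequence exhibits the normal bundle of $X$ in $A$ as $\bigoplus_{i=1}^r L_i|_X$, which is ample since each $L_i|_X$ is. This reduces the setup to smooth subvarieties with ample normal bundle, matching the framework of \cref{IntroThmExtrinsic}.

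Next, I would invoke \cite[rem.~6.3]{JKLM} to conclude that $X$ is not a product in the sense defined above: a nontrivial decomposition $X=X_1+X_2$ is incompatible with $X$ being a complete intersection of ample divisors, because the canonical bundle of $X_1\times X_2$ restricted appropriately would produce factors inconsistent with the adjunction formula for complete intersections. Then, using the nondivisibility hypothesis $\Stab_A(X)=0$ and quoting \cite[prop.~2.16,~cor.~2.17]{JKLM}, I would verify condition \eqref{Eq:NumericalConditions}: these references compute the relevant Euler characteristics of $\bar X = X$ for smooth nondivisible complete intersections of ample divisors and show that neither numerical exception can occur.

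At this point all hypotheses of \cref{IntroThmExtrinsic} are verified---ampleness of the normal bundle, the assumption that $X$ is not a product, the inequality \eqref{SkullInequality} (which is part of the hypotheses of the corollary), good reduction, and the numerical condition \eqref{Eq:NumericalConditions}---so applying that theorem delivers finiteness up to translation by $A(K)$. The only step that requires any substance is the citation-based verification of \eqref{Eq:NumericalConditions}; however, since this is handled in \cite{JKLM}, no real obstacle arises and the argument is essentially bookkeeping.
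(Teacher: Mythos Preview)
The proposal is correct and follows essentially the same route as the paper: both verify that smooth complete intersections of ample divisors are never a product via \cite[rem.~6.3]{JKLM}, that nondivisible ones satisfy \eqref{Eq:NumericalConditions} via \cite[prop.~2.16, cor.~2.17]{JKLM}, and then invoke \cref{IntroThmExtrinsic}. Your only addition is the explicit (and standard) verification that the normal bundle is ample, which the paper leaves implicit.
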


The main geometric input for our proof of \cref{IntroThmExtrinsic} is the Big Monodromy Criterion from \cite{JKLM, KLM} that we recall in the next section.

\subsection{Big monodromy} \label{subsec:bigmonodromy} 
In this section we work over~$\bbC$. Let~$S$ be a smooth variety and~$A$ an abelian variety of dimension~$g$. Consider a morphism of complex varieties~$f \colon \cX \to A_S := A\times S$ whose composite  with the projection~$\pr_S \colon A_S \to S$ is smooth over~$S$ with connected fibers of dimension~$d$.  The situation is summarized in the following diagram:
\[
\begin{tikzcd}
&  \cX 
\ar[d,"f"]
\ar[dl, bend right=30, swap, "\pi_A"] \ar[dr, bend left=30, "\pi_S"]  & \\
A & A_S \ar[l, swap, "\pr_A"] \ar[r, "\pr_S"]  & S
\end{tikzcd}
\]
Let~$\pi_1(A, 0)$ be the topological fundamental group and denote the group of its characters by 
$
 \Pi(A)=\Hom(\pi_1(A, 0), \bbC^\times).
$
For~$\chi \in \Pi(A)$ let~$L_\chi$ denote the associated rank one local system on~$A$. For~$\underline{\chi} = (\chi_1, \dots, \chi_n) \in \Pi(A)^n$ we consider the local system
\[ V_{\underline{\chi}} \;:=\; R^d \pi_{S \ast} \pi_A^* L_{\underline{\chi}} \quad \textup{where} \quad L_{\underline{\chi}} \;:=\; L_{\chi_1} \oplus \cdots \oplus L_{\chi_n}.\]
For~$s\in S(\bbC)$ let~$\rho \colon \pi_1(S, s) \to \GL(V_{\underline{\chi}, s})$ be  the monodromy representation  on the fiber~$V_{\underline{\chi}, s}$. The \emph{algebraic monodromy group} of the local system~$V_{\underline{\chi}}$ is the Zariski closure of the image of~$\rho$. By construction it is an algebraic subgroup of
\[
  \GL(V_{\chi_1, s}) \times \cdots \times \GL(V_{\chi_n, s})
 \;\subset\; 
 \GL(V_{\underline{\chi}, s}).
\]
We say~$f\colon \cX \to A_S$ is \emph{symmetric up to translation} if there are an automorphism~$\iota$ of~$\cX$ as an~$S$-scheme and~$a\colon S\to A$ such that~$f(\iota(x)) = a -f(x)$ for all~$x \in \cX$.
In this case Poincar\'e duality gives as in~\cite[sect. 1.1]{JKLM} for~$i=1,\dots, n$ a nondegenerate bilinear pairing
\[ 
\theta_{\chi_i,s} \colon 
\quad V_{\chi_i,s} \otimes  V_{\chi_i, s} \too L_{\chi_i, a(s)}
\]
which is symmetric if~$d$ is even, and alternating otherwise. The pairing is compatible with the monodromy operation, thus the algebraic monodromy group is contained in the orthogonal  resp. symplectic group if~$d$ is even resp. odd. We say that we have big monodromy if the algebraic monodromy group is a big as possible. More precisely:

\begin{definitionintro} 
We say that~$V_{\underline{\chi}}$ has {\em big monodromy} if its algebraic monodromy group contains~$G_1\times \cdots \times G_n$ as a normal subgroup, where~$G_i \subset \GL(V_{\chi_i, s})$ is defined by
\[
G_i \;:=\; 
\begin{cases}
\SL(V_i) & \textup{if~$\cX$ is not symmetric up to translation}, \\
\SO(V_i, \theta_i)  & \textup{if~$\cX$ is symmetric up to translation and~$d$ is even},  \\
\Sp(V_i, \theta_i)  & \textup{if~$\cX$ is symmetric up to translation and~$d$ is odd},
\end{cases}
\]
where $V_i = V_{\chi_i, s}$ and $\theta_i = \theta_{\chi_i,s}$.
We say that~$\cX \to S$ \emph{has big monodromy for most tuples of torsion characters} if, for each~$n \ge 1$ and all torsion~$n$-tuples~$\underline{\chi} \in \Pi(A)^n$ outside a finite union of torsion translates of linear subvarieties, we have
\begin{enumerate}
\item $R^j \pi_{S*} \pi_A^* L_{\underline{\chi}} = 0$ for~$j\neq d$;\smallskip
\item $V_{\underline{\chi}} = R^d \pi_{S*} \pi_A^* L_{\underline{\chi}}$ has big monodromy.
\end{enumerate}
By a \emph{linear subvariety} we mean a subset~$\Pi(B)\subset \Pi(A^n)$ for an abelian quotient variety~$A^n\twoheadrightarrow B$ with~$\dim B < n \dim A$. 
\end{definitionintro} 

By generic vanishing~\cite{BSS, KWVanishing, SchnellHolonomic}  condition (1) holds if~$f$ is  finite and more generally in the semismall case as described in \cref{ex:semismall}. For the rest of \cref{subsec:bigmonodromy} assume that~$f$ is a \emph{closed embedding}. Of course, the monodromy of~$V_{\underline{\chi}}$ is not big if the family~$\cX \to S$ is isotrivial. To prevent this, let~$\cX_{\bar{\eta}}$ be the fiber of~$\cX \to S$ over a geometric generic point~$\bar{\eta}$ of~$S$. We say that~$\cX_{\bar{\eta}}$ is \emph{constant up to a translation} if it is the translate of a subvariety~$Y \subset A$ along a point in~$A(\bar{\eta})$. Other obstructions to big monodromy are~$\cX_{\bar{\eta}}$ being divisible, or a product or a symmetric power of a curve. By \cite{JKLM, KLM} these are the only obstructions to big monodromy:

\begin{bigmonodromyintro} 
Suppose~$X := \cX_{\bar{\eta}} \subset A_{S, \bar \eta}$ has ample normal bundle, dimension~$d < (g-1)/2$, and satisfies the numerical assumption~\eqref{Eq:NumericalConditions}. Then the following conditions are equivalent:
\begin{itemize}
\item $X$ is nondivisible, not constant up to translation, not a symmetric power of a curve and not a product. \smallskip
\item $\cX \to A_S$ has big monodromy for most torsion tuples of characters.
\end{itemize}
\end{bigmonodromyintro}

For the proof of~\cref{IntroThmExtrinsic} we will combine the Big Monodromy Criterion with a new nondensity result for integral points given by \cref{Thm:MainTheoremForFamiliesWithBigMonodromy} below.

\subsection{Nondensity of integral points} \label{sec:NonDensityIntro} We now work again over the number field~$K$. Given a variety~$S$ over~$K$ we say that its \emph{integral points are not Zariski dense} if for any finite extension~$K'$ of~$K$, any finite set of primes~$\Sigma'$ in~$K'$ and any flat scheme~$\cS$ over~$\cO_{K, \Sigma}$ with generic fiber~$S_{K'}$, the subset \[\cS(\cO_{K', \Sigma'}) \; \subset \; S(K')\]
is not Zariski dense in~$S_{K'}$. Let~$A$ be an abelian variety over~$K$,~$S$ a smooth geometrically connected variety over~$K$, and~$\cX$ a smooth~$S$-scheme of finite type with geometrically connected fibers of dimension~$d$. We say that a morphism of~$S$-schemes~$f\colon \cX \to A_S$ has \emph{big monodromy for most tuples of torsion characters} if for some (equivalently,~any) embedding~$\sigma\colon K\into \bbC$ the family~$\cX_\sigma \to A_{S,\sigma}$ does so. We prove the following new nondensity result for integral points, whose range of application goes far beyond families of subvarieties of abelian varieties:

\begin{theoremintro} \label{Thm:MainTheoremForFamiliesWithBigMonodromy} Suppose that\smallskip
\begin{itemize}
\item $\cX \to A_S$ has big monodromy for most tuples of torsion characters; \smallskip
\item every geometric fiber~$X$ of~$\cX \to S$ satisfies \eqref{SkullInequality}.\smallskip 
\end{itemize}
Then the integral points of~$S$ are not Zariski dense.
\end{theoremintro}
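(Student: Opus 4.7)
\smallskip
\emph{Plan of proof.} I would apply the Lawrence--Venkatesh method in the form refined by Lawrence--Sawin~\cite{LS20}, feeding into it the axiomatic big monodromy assumption in place of the specific geometric input of~\emph{loc.\ cit.} Argue by contradiction and assume that $S(\cO_{K,\Sigma})$ is Zariski dense in $S$. After replacing $S$ by a dense open subscheme, enlarging $\Sigma$, and passing to a finite extension of $K$, one may assume that $S$ is smooth and geometrically integral, and fix an auxiliary prime $\ell$ of residue characteristic $p\notin\Sigma$ together with a place $\frp$ of $K$ above $p$.

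\smallskip

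For each $s\in S(\cO_{K,\Sigma})$ and each torsion tuple $\underline{\chi}\in\Pi(A_s)^n$, the $\ell$-adic \'etale cohomology
\[
V_{s,\underline{\chi}} \;=\; H^d_\et(\cX_{s,\bar K},\pi_A^\ast L_{\underline{\chi}})
\]
is a continuous Galois representation, unramified outside $\Sigma\cup\{\ell\}$, pure of weight $d$, and crystalline at $\frp$ with bounded Hodge--Tate weights. The Faltings finiteness theorem, extended to twisted coefficients as in~\cite{LS20}, then shows that for each fixed $\underline{\chi}$ only finitely many isomorphism classes of semisimplifications $V_{s,\underline{\chi}}^{\ss}$ occur as $s$ varies. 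A pigeonhole argument produces, for any given finite collection of tuples, a Zariski dense subset $T\subset S(\cO_{K,\Sigma})$ on which the classes $V_{s,\underline{\chi}}^{\ss}$ are simultaneously constant; set $Z = \overline{T}^{\Zar}\subset S$.

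\smallskip

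Next I would invoke $p$-adic Hodge theory. On a residue disc around a fixed $s_0\in T$, the crystalline $\varphi$-module underlying $D_{\cris}(V_{s,\underline{\chi}})$ is locked in a single isomorphism class, so the $p$-adic period map
\[
\Phi_{\underline{\chi}}\colon Z_\frp^{\an}\too \cF_{\underline{\chi}}
\]
takes values in the centralizer of Frobenius inside a partial flag variety $\cF_{\underline{\chi}}$ parameterising Hodge filtrations. Big monodromy for the tuple $\underline{\chi}$ translates, via the $\ell$-adic comparison theorem and a Bakker--Tsimerman--type argument, into the statement that the geometric $\ell$-adic monodromy around $s_0$ contains a product of classical simple groups as a normal subgroup, forcing $\Phi_{\underline{\chi}}$ to have generically finite fibers on $Z_\frp^{\an}$. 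Identifying $\End(V_{s,\underline{\chi}})$ with a K\"unneth summand of $H^{2d}_\et(X_s\times X_s)$, the codimension of the Frobenius centralizer in $\cF_{\underline{\chi}}$ admits a Hodge-theoretic expression whose strict positivity over $\dim S$ is precisely the content of~\eqref{SkullInequality}. Carrying this out for a well-chosen family of tuples $\underline{\chi}$ yields $\dim Z < \dim S$, contradicting the density of $T$ in $S$.

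\smallskip

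The hard part is the last step: one must transfer the complex-analytic big monodromy hypothesis into a uniform $p$-adic period statement, simultaneously over enough torsion tuples $\underline{\chi}$ to avoid the exceptional loci permitted by the very definition of big monodromy. This is precisely where the ``most tuples of torsion characters'' quantifier is indispensable, and where one must adapt, at the axiomatic level, the core $p$-adic dimension argument of Lawrence and Sawin~\cite{LS20}, which in \emph{loc.\ cit.} was executed in the specific setting of hypersurfaces in an abelian variety. The remaining technical chores---verifying vanishing of $R^j\pi_{S*}\pi_A^\ast L_{\underline{\chi}}$ for $j\neq d$, ensuring crystallinity and purity of the coefficients, and handling the passage from $S(\cO_{K,\Sigma})$ to a rigid-analytic neighborhood---follow standard patterns from Lawrence--Venkatesh~\cite{LV}.
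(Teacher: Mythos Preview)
Your outline captures the broad Lawrence--Venkatesh architecture, but it skips the central difficulty the paper's proof is built to solve, and without that ingredient the argument does not close. One can \emph{never} arrange big monodromy for the full global Galois orbit $\Gamma\cdot\chi_0$ of a torsion character; the hypothesis only supplies big monodromy for some $N$-tuple $(\chi_1,\dots,\chi_N)$ inside a single \emph{local} orbit $\Gamma_v\cdot\chi$. Consequently the $p$-adic period map never has Zariski-dense image in the naive flag variety: de Rham cohomology splits over the finite scheme $Z_{\dR}$ indexing the characters, and the image sits in the much smaller $\frH = \Gal(\cV_0,o)^\circ/P$. To run Faltings finiteness and the Bakker--Tsimerman step against this constrained target, the paper develops (Sections~\ref{sec:buildings}--\ref{sec:PAdicHodgeTheory}) a framework of \'etale and de Rham \emph{data} $(V,G,\rho,x)$ recording the $Z$-module structure and the reductive group $G\subset\GL(V)$, proves a structured Faltings finiteness (\cref{lemma:FaltingsFiniteness}, \cref{Prop:GradedFaltingsFiniteness}), and checks compatibility with $D_{\cris}$ (\cref{prop:DescentViaPAdicHodgeTheory}). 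Your ``pigeonhole over a well-chosen family of tuples'' is not the mechanism: one fixes a single orbit and uses that $[k_z:K_v]\ge N$, so $\dim\frH$ grows with $N$ while the centraliser of crystalline Frobenius stays bounded by $\dim G$ (\cite[lemma~5.33]{LS20}); this disparity, fed into the combinatorial bound of \cref{Thm:BoundInFlag}/\cref{Cor:UniformBoundInFlag}, produces the needed codimension.

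Two further ingredients are missing. First, positivity: after semisimplification the induced Hodge filtration on the graded pieces is uncontrolled, and the Bruhat-cell estimate of \cref{Prop:CodimensionWithFixedParabolic} fails without it. The paper restricts to a \emph{friendly} place and uses global purity (\cref{Prop:PositivityFromGlobalPurity}) to force $h_z$ to be $Q_{s,z}^\circ$-positive for some $z\in Z_{\dR}$. Second, your reading of \eqref{SkullInequality} is off: it does not bound the Frobenius centraliser, but rather guarantees $2\dim\Stab_H(y)^\ss < \dim H + \rk H$ for the Hodge point, which is the hypothesis of \cref{Cor:UniformBoundInFlag} needed for the adjoint-polygon inequality.
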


Arithmetic consequences of big monodromy phenomena have a long history going back to the Deligne's proof of the Weil conjectures \cite[sect.~5]{DeligneWeilI}. Our proof of~\cref{Thm:MainTheoremForFamiliesWithBigMonodromy} will occupy sections~\ref{sec:buildings} through~\ref{sec:ProofOfNonDensityThm}; in the rest of this section we briefly outline the main ideas (for a comparison with previous work see~\cref{sec:comparison}).\medskip

Assume for simplicity~$K = \bbQ$. For any~$N \ge 1$ divisible enough, the abelian variety~$A$ extends to an abelian scheme over~$\bbZ[1/N]$ that we still denote~$A$. Similarly, without changing notation, we spread out the varieties~$S$ and~$\cX$ to~$\bbZ[1/N]$, as well as the morphism~$f \colon \cX \to A_S$. Suppose that~$S$ and~$\cX$ are smooth respectively over~$\bbZ[1/N]$ and~$S$. To see that
\[ S(\bbZ[1/N]) \subset S(\bbQ)\]
is not Zariski dense, pick a prime~$p\nmid N$, let~$\bar{\bbQ}_p$ be an algebraic closure of~$\bbQ_p$ and let~$\bar{\bbQ} \subset \bar{\bbQ}_p$ be the algebraic closure of~$\bbQ$. We then have an embedding 
\[ \Gamma_p :=\Gal(\bar{\bbQ}_p/ \bbQ_p) \; \intoo \; \Gamma := \Gal(\bar{\bbQ}/\bbQ).\]
First of all, the Big Monodromy Criterion suggests to consider cohomology twisted by a sum of rank one local systems as in \cite{LS20}. To do this suppose that we can find a character 
$\chi_0 \colon \pi_1(A(\bbC), 0) \to \bbC^\times$ of order~$r$ with~$p\nmid r$ such that 
\begin{enumerate}
\item $\rH^i(\cX_{s}(\bbC), \pi_A^\ast L_{ \chi}) = 0$ for all~$i\neq d$ and all~$\chi \in \Gamma.\chi_0$,~$s \in S(\bbC)$,\smallskip
\item $V_{\underline{\chi}}$ has big monodromy for the tuple~$\underline{\chi}$ of all characters~$\chi \in \Gamma.\chi_0$.\smallskip
\end{enumerate}
where~$\pi_A \colon \cX \to A$ is the projection. Here the assumption (2) is unrealistic and only serves to simplify the presentation; we will explain how to bypass it in \cref{sec:ProofOfNonDensityThm}. The orbit~$\Gamma.\chi_0$ corresponds to a direct summand
\[ E \subset [r]_\ast \bbQ_{p, A}\]
where~$[r] \colon A \to A$ denotes the multiplication by~$r$ and~$\bbQ_{p, A}$ is the constant~$p$-adic \'etale sheaf. For each~$s \in S(\bbZ[1/N])$ the Galois representation 
\[ V_s := \rH^d_{\et}(\cX_s \times {\bar{\bbQ}}, \pi^\ast_A E) \]
of~$\Gamma$ is pure of weight~$d$ by \cite{DeligneWeilI}. It is conjectured to be semisimple, but with our current state of knowledge a main task of this paper will be to work without assuming this. For this we consider the semisimplification filtration~$V^\bullet_s$ on~$V_s$. By Faltings' finiteness of pure global Galois representations \cite[th. 3.1]{DeligneBourbakiFaltings}, the image of the composite map
\[
\begin{tikzcd}[row sep=0pt]
S(\bbZ[1/N]) \ar[r]
& { \left\{ \begin{array}{c} \textup{filtered global}\\\textup{Galois rep's} \end{array} \right\} / \iso} \ar[r] 
& { \left\{ \begin{array}{c} \textup{graded global}\\\textup{Galois rep's} \end{array} \right\} / \iso} \\
s \ar[r, mapsto]& {(V_s, V^\bullet_s)} \ar[r, mapsto] & {\gr V^\bullet_s}
\end{tikzcd}
\]
is finite. Hence to show that~$S(\bbZ[1/N])$ is not Zariski dense in~$S$, it suffices to show that the fibers of the map \[ S(\bbZ[1/N]) \ni s \longmapsto \textup{isomorphism class of }\gr V^\bullet_s\] are not Zariski dense. 
A natural attempt would be to restrict representations to the local Galois group at a prime~$\ell\neq p$ with~$\ell\nmid N$. To see why this does not suffice, fix an embedding~$\bar{\bbQ} \into \bar{\bbQ}_\ell$ and consider the inclusion 
\[ \Gamma_\ell :=\Gal(\bar{\bbQ}_\ell/ \bbQ_\ell) \; \intoo \; \Gamma := \Gal(\bar{\bbQ}/\bbQ).\]
For~$s \in S(\bbZ[1/N])$ let~$V_{s, \ell}$ be the~$p$-adic representation of~$\Gamma_\ell$ obtained from~$V_{s}$ by restriction. By proper base change in \'etale cohomology, the isomorphism class of~$V_{s, \ell}$ only depends on the image of~$s$ in~$S(\bbF_\ell)$. In particular the set
\[  \left\{ V_{s, \ell} \mid s \in S(\bbZ[1/N]) \right\} / \iso  \]
is finite, so there is no hope of proving nondensity by restricting to the subgroup~$\Gamma_\ell$ for a single~$\ell \neq p$. Instead the heart of the Lawrence-Venkatesh method \cite{LV} lies in the insight that the restriction of~$V_s$ to~$\Gamma_p$ moves enough when~$s \in S(\bbZ[1/N])$ varies, provided that the family~$\cX \to A_S$ has big monodromy.

\medskip
To make this precise we will apply~$p$-adic Hodge theory. Recall that~$p$-adic Hodge theory associates to every finite dimensional \emph{crystalline} representation~$V$ of~$\Gamma_p$ a triple
\[ D_{\cris}(V) :=  (V_\dR, \phi_V, h^\bullet V) \]
of a finite dimensional~$\bbQ_p$-vector space~$V_\dR$, an~$\bbQ_p$-linear endomorphism~$\phi_V$ of~$V_\dR$ and a filtration~$h^\bullet V_\dR$ of~$V_\dR$ by vector subspaces (not necessarily stable under~$\phi_V$). Such triples are called filtered isocrystals, but in this introduction we will informally call them~$p$-adic Hodge structures. Rather than defining the functor~$D_{\cris}$ let us only mention what it does to the representations in question. To do so consider the vector bundle~$[r]_\ast \cO_A$ with the connection~$ \nabla$ induced by the canonical derivation on~$\cO_A$. There is a well-defined direct summand
 \[\cE \subset [r]_\ast \cO_{A_{\bbZ_p}}\]
associated to the local system~$E$ and stable under~$\nabla$. For~$s \in S(\bbZ[1/N])$ the representation~$V_s$ is crystalline since~$\cX_s$ has good reduction at~$p$. Then Faltings' \'etale-de Rham comparison theorem \cite[th. 5.6]{FaltingsCristallineEtale} gives
 \[ D_{\cris}(V_s) \; = \; (\rH^d_{\dR}(\cX_{s} \times \bbQ_p, \pi_A^\ast \cE), \; \phi_s, \; \textup{Hodge filtration})\]
 where~$\phi_s$ is the endomorphism induced by the Frobenius operator on crystalline cohomology via the de Rham-crystalline comparison theorem. This should give the idea of how to prove that the representations~$V_s$ move. In a sense to be made precise below, we can identify de Rham cohomology groups for~$s$ and~$s'$ close enough by integrating the Gauss-Manin connection. It then remains to show that the Hodge filtration moves enough, i.e.~the associated period mapping has large enough image, which will be implied by the hypothesis of big monodromy. 
 
\medskip 

In \cref{sec:Realizations,,sec:PAdicHodgeTheory} we develop a new framework for Galois representations and filtered isocrystals which permits to deal with operations such as taking the semisimplification of a representation and applying~$p$-adic Hodge theory. In particular, the semisimplication filtration~$V^\bullet_s$ on~$V_s$ induces a filtration~$D_\cris(V^\bullet_s)$ on~$D_\cris(V_s)$ which is stable under the crystalline Frobenius. This operation is compatible with taking the associated graded, leading to the following commutative diagram:
\[ \label{semisimplification-diagram}
\begin{tikzcd}
{ \left\{ \begin{array}{c} \textup{filtered global}\\\textup{Galois rep's}\\\textup{crystalline at~$p$} \end{array} \right\} } \ar[d, "\textup{restrict to~$\Gamma_p$}"']\ar[r,"\textup{graded}"] \ar[d]
& { \left\{ \begin{array}{c} \textup{graded global}\\\textup{Galois rep's}\\\textup{crystalline at~$p$}  \end{array} \right\}} \ar[d, "\textup{restrict to~$\Gamma_p$}"] \\
{ \left\{ \begin{array}{c} \textup{filtered crystalline}\\\textup{local Galois rep's} \end{array} \right\}} \ar[r,"\textup{graded}"] \ar[d, "{D_{\cris}}"']
& { \left\{ \begin{array}{c} \textup{graded crystalline}\\\textup{local Galois rep's} \end{array} \right\}} \ar[d, "{D_{\cris}}"] \\
{ \left\{ \begin{array}{c} \textup{filtered~$p$-adic}\\\textup{Hodge structures} \end{array} \right\}} \ar[r,"\textup{graded}"]
& { \left\{ \begin{array}{c} \textup{graded~$p$-adic}\\\textup{Hodge structures} \end{array} \right\}}
\end{tikzcd}
\]
To prove that~$S(\bbZ[1/N])$ is not Zariski dense, it will be enough to show that the fibers of the map
\[ S(\bbZ[1/N]) \ni s \longmapsto \textup{isomorphism class of~$\gr D_{\cris}(V^\bullet_s)$}\]
are not dense.
Consider the~$d$-th relative de Rham cohomology group
\[ \cV \; := \; \cH_{\dR}^d(\cX_{\bbZ_p}/S_{\bbZ_p}; \pi_A^\ast (\cE, \nabla))\]
By a careful choice of the prime~$p$ in the first place we may assume that~$\cV$ is a vector bundle over~$S_{\bbZ_p}$. This vector bundle~$\cV$ comes equipped with the Gauss-Manin connection that we still denote~$\nabla$. Fix a point~$o \in S(\bbZ[1/N])$ and consider the residue disk
\[ \Omega= \{ s \in S(\bbZ_p) \mid s \equiv o \mod p\}.\]
Since~$S(\bbF_p)$ is finite, it suffices to show that the intersection~$\Omega \cap S(\bbZ[1/N])$ is not Zariski dense for any such residue disk. Working on~$\Omega$ has the advantage that the Gauss-Manin connection can be integrated on~$\Omega$: We have a natural isomorphism of~$K$-vector spaces
\[ \tau_s \colon \cV_s= \rH^d_{\dR}(\cX_{s} \times \bbQ_p, \pi_A^\ast \cE) \stackrel{\sim}{\too} \cV_o= \rH^d_{\dR}(\cX_{o} \times \bbQ_p, \pi_A^\ast \cE), \]
see~\cref{sec:ParallelTransport}.
The filtration on~$\cV_o$ induced by the Hodge filtration on~$\cV_s$ via~$\tau_s$ is by definition the image of~$s$ via the~$p$-adic period mapping
\[
 \Phi_p\colon \quad \Omega \;\too\; \Flag_t(\cV_o)
\]
where~$\Flag_t(\cV_o)$ denotes the variety of flags on~$\cV_o$ whose type~$t$ is the type of the flag underlying the Hodge filtration. Define an equivalence  relation on~$\Omega$ by putting
\[ 
 s \sim s' \quad \textup{if} \quad \gr D_{\cris}(V_s^\bullet) \iso \gr D_{\cris}(V_{s'}^\bullet). \]
The equivalence classes give rise to constructible subsets~$Z \subset \Flag_t(\cV_o)$, and we win as soon as we show that the preimage~$\Phi_p^{-1}(Z)$ of each of these subsets is not Zariski dense in~$\Omega$. The Ax-Lindemann property of period mappings proved by Bakker-Tsimerman~\cite{BakkerTsimerman}, or rather its~$p$-adic version given in~\cref{Prop:PAdicBakkerTsimerman}, says that for this it will be enough to show that \smallskip 
\begin{itemize}
\item $\Phi_p$ has a Zariski dense image in~$\Flag_t(\cV_o)$, and \smallskip 
\item $\dim Z+ \dim S_{\bbQ} \le \dim \Flag_t(\cV_o)$. \smallskip
\end{itemize}
We now have to pay for the careless discussion given above: The period mapping~$\Phi_p$ will \emph{never} have a Zariski dense image in~$\Flag_t(\cV_o)$, for the simple reason that we forgot all the extra structures that~$\cE$ comes with. Indeed, over a finite extension~$K$ of~$\bbQ_p$ we have a splitting into a direct sum of line bundles
\[ \cE_{\rvert A_K} = \bigoplus_{\chi \in \Gamma.\chi_0}\cL_\chi\]
and the connection decomposes accordingly. So over~$K$ the image of~$\Phi_p$ is contained in
\[ \frH := \prod_{\chi \in \Gamma.\chi_0} \Flag_{t_\chi} (\rH^d_{\dR}(\cX_s \times K, \cL_\chi)). \]
The new framework developed in \cref{sec:buildings,,sec:Realizations} is flexible enough to enable to capture all the relevant extra structures on \'etale and de Rham cohomology. It will undoubtely have future applications in situations where the monodromy groups are not classical groups. In~\cref{lemma:FaltingsFiniteness} we generalize Faltings' finiteness theorem for pure global Galois representations in a way that keeps track of the extra structures. In \cref{prop:DescentViaPAdicHodgeTheory} we show that our general framework is compatible with~$p$-adic Hodge theory in the sense that the extra structures on \'etale cohomology are carried to those on de Rham cohomology via Faltings' comparison theorem. 

\medskip

Suppose now that with all the extra structures in place, we started the above argument all over again. By assumption the family~$\cX \to A_S$ has big monodromy with respect to the tuple of consisting of the characters in the orbit~$\Gamma.\chi_0$. This implies that~$\Phi_p$ has a Zariski dense image in~$\frH$. It remains to bound the dimension of the equivalence classes~$Z$ mentioned above. We do this in \cref{Thm:BoundInFlag} by giving such an upper bound in terms of a combinatorial function and of the dimension of the centralizer of the crystalline Frobenius. 
A crucial restriction on the dimension comes from the purity of the global Galois representation we started with: The filtration on each graded piece of~$\gr D_{\cris}(V_s^\bullet)$ induced by the Hodge filtration on~$D_{\cris}$ has the same weight~$d / 2$ as the original Hodge filtration. In the framework where extra structure is taken into account such a property carries over in a weaker form; see \cref{sec:PositivityParabolic} and \cref{Prop:PositivityFromGlobalPurity}. At this stage, to complete the proof we only need to bound the dimension of the centralizer of the crystalline Frobenius~$\varphi_o$. In order to apply \cref{Thm:BoundInFlag} we will need that this centralizer is very small compared to~$\frH$. We will obtain this by letting~$r$ go to infinity. This will conclude the proof of \cref{Thm:MainTheoremForFamiliesWithBigMonodromy}, hence of theorems~\ref{IntroThmIntrinsicSurfaces}, \ref{IntroThmIntrinsicGeneral} and \ref{IntroThmExtrinsic}.

\subsection{Comparison with previous approaches} \label{sec:comparison}
\Cref{Thm:MainTheoremForFamiliesWithBigMonodromy} is new and goes far beyond the application to subvarieties of abelian varieties. Its proof differs from that of \cite[th. 8.17]{LS20} in several aspects. The version of~$p$-adic Hodge theory that we set up works over arbitrary~$p$-adic fields rather than  over~$\bbQ_p$. This makes the proof of \cref{Thm:MainTheoremForFamiliesWithBigMonodromy} simpler and more natural, allowing for families of subvarieties over arbitrary number fields rather than  over~$\bbQ$.
We cast our theory for general reductive groups: This will permit to generalize \cref{Thm:MainTheoremForFamiliesWithBigMonodromy} to cases where the algebraic monodromy is not a classical group, but for instance a product of classical groups, or in fact any reductive group once a suitable version of the~$p$-adic Riemann-Hilbert correspondence is known.
Unlike~\cite[lemmas 5.50 and 5.52]{LS20}, our approach to the semisimplification of Galois representations does not require the choice of Levi subgroups. 
The absence of such a choice leads to a functorial construction, making sense of diagrams like the one on page~\pageref{semisimplification-diagram} (see \cref{prop:DescentViaPAdicHodgeTheory}). 
In comparison with the notion of Hodge-Deligne systems from loc.~cit., our approach in \cref{sec:Realizations} keeps track of only the minimal amount of linear algebra needed. Finally, we address a few inaccuracies in \cite[lemma 2.6]{LV} 
in the proof of Faltings' finiteness of semisimple representations with values in arbitrary reductive groups; see \cref{lemma:FaltingsFiniteness}.

\subsection*{Conventions} A \emph{variety} over a field~$k$ is a separated finite type~$k$-scheme, and a \emph{subvariety} is a closed subvariety. A projective scheme over a Noetherian scheme~$S$ is a proper~$S$-scheme admitting a relatively ample line bundle. Actions are left actions unless said otherwise.

\subsection*{Acknowledgements}We thank G. Ancona, J.-B.\ Bost, A.\ Javanpeykar, B.\ Klingler, H.\ Liu, and Q.\ Liu for helpful comments. We also thank the referees for their careful reading and for pointing out some inaccuracies in an earlier version of the paper. T.K.\ was supported by the DFG research grant Kr 4663/2-1.

\section{Moduli of symmetric powers of curves}

In the proof of \cref{IntroThmIntrinsicGeneral}, symmetric powers of curves will be treated separately, ultimately relying on the Shafarevich conjecture for curves, as proven by Faltings. For this, we need to reconstruct a curve from its symmetric power. Over an algebraically closed field, this is always possible when the curve has genus~$> 2$. However, over a number field, a finite base extension is necessary, and we must control it uniformly using the Chevalley--Weil theorem. To apply this theorem, we need (roughly speaking) that forming the~$n$-th symmetric power induces a finite \'etale morphism from the moduli space (or more accurately, the stack) of curves of gonality~$> n$ to the moduli of canonically polarized varieties that embed into their Albanese variety.

\subsection{Statements} 
Let~$S$ be a variety over a field~$k$ of characteristic~$0$, and let~$\cX \to S$ be a smooth projective morphism with geometrically connected fibers of dimension~$d$. Consider the subset
\[
\Sigma \; \subset \; S
\]
consisting of images of geometric points~$\bar{s}$ of~$S$ such that~$\cX_{\bar{s}}$ is a symmetric power of a (necessarily smooth, projective, and connected) curve. By usual spreading out arguments $\cX_{\bar{s}}$ is then the symmetric power of a curve defined over a finite extension of the residue field at the image of~$\bar{s} \to S$. Since we are in characteristic~$0$, if there is a relatively ample line bundle on~$\cX$, the relative Picard scheme $\Pic^0(\cX / S) \to S$ is a projective abelian scheme \cite[rem.~9.5.21]{FGAExplained}. We denote by
\[
\Alb(\cX / S) \too S
\]
its dual abelian scheme.  A morphism~$f \colon \cX \to \Alb(\cX / S)$ is a \emph{translate of an Albanese morphism} if there are an \'etale surjective morphism~$S' \to S$ and a section~$x \in \cX(S')$ such that
\[
f_{S'} = a + f(x),
\]
where~$a \colon \cX_{S'} \to \Alb(\cX / S)_{S'}$ is the Albanese morphism induced by~$x$. The main result is a version of Martens' theorem \cite{RanMartens} for families:

\begin{theorem} \label{Thm:SymmPowersComponent} 
Suppose that the geometric fibers of~$\cX \to S$ are canonically polarized and embed in their Albanese variety. Then the following hold:
\begin{enumerate}
\item The subset~$\Sigma \subset S$ is open and closed.
\item If there is a translate of an Albanese morphism~$\cX_\Sigma \to \Alb(\cX / S)_\Sigma$, then there exist a finite \'etale morphism~$\Sigma' \to \Sigma$, a smooth proper curve~$\cC \to \Sigma'$, and an isomorphism of~$\Sigma'$-schemes
\[
\Sym^d \cC \; \iso \; \cX_{\Sigma'}.
\]
\end{enumerate}
\end{theorem}

Here by a \emph{curve} over a Noetherian scheme we mean a flat, separated scheme of finite type with geometric fibers of pure dimension one. In fact, the finite \'etale cover~$\Sigma' \to \Sigma$ in the statement will be a principal bundle under the~$d$-torsion subgroup of the abelian scheme~$\Alb(\cX / S)_\Sigma$. \Cref{Thm:SymmPowersComponent} will be deduced in \cref{sec:ProofSymmPowersComp} from the following more precise statement, concerning the locus of symmetric powers of a curve in the Hilbert scheme of the abelian scheme~$\Alb(\cX/S)$. As we will need these results also for an arbitrary abelian variety over~$k$, we consider more generally a projective abelian scheme~$\cA \to S$. Let~$\Hilb_\cA$ be the Hilbert scheme of~$\cA$ and
\[
\Hilb^\sm_\cA \; \subset \; \Hilb_\cA \qquad \textup{resp.} \quad \Hilb^{\nd}_\cA \; \subset \; \Hilb_\cA
\]
be the open subsets where the fibers of the universal family over~$\Hilb_\cA$ are smooth with geometrically connected fibers, resp. are nondivisible. Note that nondivisibility is an open condition: the fiberwise stabilizer of the universal family is a closed group subscheme of~$\cA \times_S \Hilb_\cA$ and as such it is trivial over an open subset of~$\Hilb_\cA$ \cite[th. 5.22]{FGAExplained}. Let~$n \ge 1$ be an integer and
\[
H_n \; \subset \; \Hilb^\sm_\cA
\]
the open subset where the fibers of the universal family are curves~$C$ such that the sum induces an isomorphism
\[
\Sym^n C \; \stackrel{\sim}{\too} \; C + \cdots + C.
\]
The~$n$-fold sum~$C + \cdots + C$ is smooth, hence we have a morphism
\[
\sigma_n \colon H_n \;  \too \; \Hilb^\sm_\cA, \qquad C \; \longmapsto \; C + \cdots + C.
\]
The~$n$-torsion subgroup~$\cA[n]$ acts on~$H_n$ by translation and the morphism~$\sigma_n$ is invariant under this action.

\begin{theorem} \label{Thm:SymmetricPowerLocus}
The morphism~$\sigma_n$ is finite unramified, its image~$\Sigma_n$ is open and closed, and~$\cA[n]$ acts transitively on the fibers of~$\sigma_n$. In particular, over~$\Sigma_n \cap \Hilb^{\nd}_\cA$ the morphism~$\sigma_n$ is a principal~$\cA[n]$-bundle.
\end{theorem}

The proof of \cref{Thm:SymmetricPowerLocus} will be given in \cref{Sec:ProofSymmetricPowerLocusHilb}. To conclude, we discuss an analogous statement for products which will be needed in the proof of \cref{IntroThmExtrinsic}. Let~$\cX \subset \cA$ be a closed subvariety such that~$\cX \to S$ is smooth with geometrically connected fibers of dimension~$d$. Consider the subset
\[
\Pi \subset S
\]
of images of geometric points~$\bar{s}$ of~$S$ such that~$\cX_{\bar{s}}$ is a product of (necessarily smooth, projective, connected) varieties of dimension~$> 0$.

\begin{theorem} \label{Thm:ProductLocus}
Suppose that the fibers of~$\cX \to S$ are nondivisible and have ample normal bundle. Then~$\Pi \subset S$ is closed.
\end{theorem}

This result is less precise than \cref{Thm:SymmetricPowerLocus}, but it suffices for the purposes of \cref{IntroThmExtrinsic}. Moreover, this has a simpler proof in the spirit of \cite{CadoretLiu} based on the specialization of Tannaka groups of perverse sheaves and on the characterization of products in \cite[th.~A]{JKLM}; see \cref{sec:ProofOfProductLocus}.

\subsection{Specialization} 
Let~$S$ be the spectrum of a discrete valuation ring~$R$ with generic point~$\eta$ and closed point~$s$. For simplicity we assume that $R$ is a $\bbQ$-algebra. Let~$\cA \to S$ be an abelian scheme and~$\cX \subset \cA$ a closed subscheme smooth over~$S$ with geometrically connected fibers. We want to show that if the generic fiber \[X \; = \; \cX_\eta \; \subset \; A \; = \; \cA_\eta\] is a symmetric power of a curve, then so is the special fiber. For this, we consider cycles on the cotangent bundle: For any cycle~$\PLambda$ on~$\bbP(\Omega^1_A)$ we obtain a cycle~$\sp(\PLambda)$ on~$\bbP(\Omega^1_{\cA_s})$ by taking the specialization in the sense of~\cite[sect.~20.3]{FultonIntersectionTheory} in the relative cotangent bundle. With notation as in~\cref{sec:ConormalGeometry}, we have:

\begin{lemma} \label{lemma:Specialization}
Let~$\PLambda,\PLambda_1, \PLambda_2$ be clean cycles on~$\bbP(\Omega^1_A)$.
\begin{enumerate} 
\item For all~$n\in \bbN$ we have
\begin{align*}
 \sp(\Alt^n(\PLambda))
 &\; = \; \Alt^n(\sp(\PLambda)),\\
 \sp(\PLambda_1 \circ \PLambda_2)
 &\; = \;\sp(\PLambda_1)\circ \sp(\PLambda_2).
\end{align*}
\item If~$\PLambda$ is Lagrangian, then so is~$\sp(\PLambda)$.
\end{enumerate}
\end{lemma} 

\begin{proof} 
In~(1), the compatibility of specialization with~$\circ$ follows from its compatibility with proper direct images~\cite[prop.~20.3]{FultonIntersectionTheory}. By the same argument,~$\sp$ is also compatible with the pushforward under the multiplication map~$[\beta]\colon \cA \to \cA$ for any~$\beta \in \bbN$. Compatibility with wedge powers then follows because wedge powers of clean cycles can be written in terms of products of pushforwards under multiplication maps: By~\cite[sect.~1.4]{KraemerMicrolocalII} we have
\[
 \Alt^n(\PLambda) \;=\; \sum_{\beta} m_\beta \cdot [\beta_1]_\ast(\PLambda) \circ \cdots \circ [\beta_\ell]_\ast(\PLambda),
\]
where~$\beta = (\beta_1, \dots, \beta_\ell)$ runs over partitions of degree~$n$ and the~$m_\beta \in \bbQ$ are the coefficients arising in the expansion of Schur polynomials in terms of power sums. The same formula holds for wedge powers of clean cycles on the special fiber, so it follows that~$\sp(\Alt^n(\PLambda)) = \Alt^n(\sp(\PLambda))$.

\medskip 

For~(2), recall that a cycle on~$\bbP(\Omega^1_A)$ is called Lagrangian~\cite[3.1]{KleimanConormal} if its support~$S$ is of pure dimension~$\dim A - 1$ and the contact form on~$\bbP(\Omega^1_A)$ vanishes identically on~$S$. These properties are preserved under specialization, hence the claim follows. 
\end{proof}

\begin{lemma} \label{Lemma:SpecializationSymmPowers}
Suppose that there is a smooth curve~$C \subset A$ such that the sum induces an isomorphism~$\Sym^n C \iso X$. Let~$\cC \subset \cA$ be the scheme-theoretic closure of~$C$. Then,~$\cC$ is a smooth curve over~$S$ with geometrically connected fibers, and the sum induces an isomorphism
\[
 \Sym^n \cC \;\stackrel{\sim}{\too}\; \cX.
\]
\end{lemma}

\begin{proof}
By assumption~$\Sym^n C \iso X$ via the sum morphism, hence \cref{lem:symmetric-power-criterion} shows~$\Alt^n \PLambda_C = \PLambda_X$. The compatibility of specialization with wedge powers in \cref{lemma:Specialization} then gives
\[
 \Alt^n (\sp(\PLambda_C)) \;=\; \sp(\Alt^n \PLambda_C) \;=\; \sp(\PLambda_X) \;=\; \PLambda_{\cX_s},
\]
where the last equality holds because~$\cX$ is smooth over~$S$. Note that~$\PLambda_X$ is a geometrically integral cycle.
By \cref{lemma:WedgePowerLagrangian}, the specialization~$\sp(\PLambda_C)$ is geometrically integral.
Therefore, \cref{lemma:WedgePowerLagrangian} implies that 
\[
 \sp(\PLambda_C) = \PLambda_{C'}
\]
for some geometrically integral curve~$C' \subset A$. By \cref{lem:symmetric-power-criterion} this curve is smooth and the sum map induces an isomorphism~$\Sym^n C' \iso \cX_s$. Now the fundamental cycle of~$X$ is 
\[
 [X] \; = \; \tfrac{1}{n!} \sigma_\ast [C^n],
\]
where~$\sigma \colon \cA^n \to \cA$ is the sum map. The compatibility of specialization with pushforward \cite[prop.~20.3]{FultonIntersectionTheory} yields
\[
 [\cX_s] \; = \; \tfrac{1}{n!} \sigma_\ast [\cC_s^n].
\]
Since~$\cX_s$ is the~$n$-fold sum of both~$\cC_s$ and $C'$, \cref{lemma:CurveSummandOfGeneralType} implies that~$C'$ is geometrically integral and~$\cC_s$ is generically reduced with underlying reduced subvariety~$C'$. We then conclude by \cref{LemmaOfHironaka} below.
\end{proof}

\begin{lemma} \label{LemmaOfHironaka}
Let~$f \colon \cY \to S$ be a flat morphism of finite type with $\cY$ reduced. Suppose that for every geometric point~$\bar{s}$ of~$S$ the fiber~$\cY_{\bar{s}}$ is generically reduced and its underlying reduced subscheme is integral and normal. Then~$f$ has geometrically integral fibers.
\end{lemma}

\begin{proof} First note that the generic fiber $\cY_\eta$ is geometrically integral, hence~$\cY$ is integral. Indeed, $\cY_\eta$ is geometrically irreducible by assumption and it is geometrically reduced because it is reduced and the fraction field of~$R$ is of characteristic~$0$. The proof then goes as in \cite[Th.~III.9.11]{HartshorneAG}, replacing lemma~III.9.12 there with \cite[5.12.8]{EGAIV2}. To apply the latter, note that~$R$ is universally catenary \cite[\href{https://stacks.math.columbia.edu/tag/00NM}{Lemma 00NM}]{stacks-project} and universally Japanese, as~$K$ is of characteristic zero \cite[\href{https://stacks.math.columbia.edu/tag/0335}{Proposition 0335}]{stacks-project}.
\end{proof}

\subsection{Proof of \cref{Thm:SymmetricPowerLocus}} \label{Sec:ProofSymmetricPowerLocusHilb} We may assume that~$k$ is algebraically closed. For simplicity, we drop the subscript~$n$ and write~$\sigma$,~$H$, and~$\Sigma$ instead of~$\sigma_n$,~$H_n$, and~$\Sigma_n$. The proof proceeds in several steps:

\begin{lemma} \label{Lemma:StepsSymmetricPowerLocus} The following properties hold:
\begin{enumerate}
\item The image via~$\sigma$ of any connected component of~$H$ is open in~$\Hilb^\sm_\cA$.\smallskip
\item The morphism~$\sigma$ is formally unramified, i.e.~$\Omega^1_{H / \Hilb_\cA} = 0$.\smallskip
\item Given a discrete valuation ring~$R$ with fraction field~$K$ and a commutative solid diagram
\[ 
\begin{tikzcd}
\Spec K \ar[d] \ar[r, hook] & \Spec R \ar[d] \ar[dl, dashed] \\
H \ar[r, "\sigma"] & \Hilb^\sm_\cA
\end{tikzcd}
\]
there exists a unique dotted arrow making the diagram commutative.\smallskip
\item For any~$s \in S(k)$, the action of~$\cA_s[n]$ on the fibers of~$\sigma \colon H_{n, s} \to \Sigma_s$ is transitive.
\end{enumerate}
\end{lemma}

Before proving this statement, we explain how these properties imply \cref{Thm:SymmetricPowerLocus}.

\begin{proof}[Proof of \cref{Thm:SymmetricPowerLocus}] 
To prove that~$\sigma$ is proper, let~$U \subset H$ be a connected component. The morphism~$\sigma_{\rvert U} \colon U \to \Hilb_\cA^{\sm}$ is of finite type, hence proper by the Noetherian valuative criterion \cite[\href{https://stacks.math.columbia.edu/tag/0208}{Lemma 0208}]{stacks-project} and \cref{Lemma:StepsSymmetricPowerLocus}~(3). On the other hand, by \cref{Lemma:StepsSymmetricPowerLocus} (1) the image~$V := \sigma(U)$ is open, hence a connected component of~$\Hilb_\cA^{\sm}$. By \cref{Lemma:StepsSymmetricPowerLocus} (4) we have
\[ \sigma^{-1}(V) = \cA[n] \cdot U. \]
The right-hand side is a finite union of connected components of~$H$, so the above discussion shows that~$\sigma \colon H \to \Hilb_\cA$ is proper. It remains to show that~$\sigma$ is a principal~$\cA[n]$-bundle over the locus
$
\Sigma^{\nd} := \Sigma \cap \Hilb_\cA^{\nd}.
$
If a symmetric power of a curve~$C$ in any abelian variety is nondivisible, then so is the curve~$C$. Therefore, we have an inclusion
$\sigma^{-1}(\Sigma^{\nd}) \subset H^{\nd} := H \cap \Hilb_\cA^{\nd}$. The finite \'etale~$S$-group scheme~$\cA[n]$ acts freely on the quasi-projective $S$-scheme~$H^{\nd}$, hence the quotient 
\[
H^{\nd} \too H^{\nd} / \cA[n]
\]
exists as an~$S$-scheme \cite[Exp.~V, Rem.~5.1]{SGA3} and is a principal~$\cA[n]$-bundle, hence finite \'etale. The morphism
$
\sigma^{-1}(\Sigma^\nd) / \cA[n] \to \Sigma^{\nd}
$
is finite unramified, and by \cref{Lemma:StepsSymmetricPowerLocus} (4) it is injective on~$k$-points, thus a closed embedding by \cite[\href{https://stacks.math.columbia.edu/tag/0E8M}{Tag 0E8M}]{stacks-project}.
\end{proof}

We now pass to the proof of \cref{Lemma:StepsSymmetricPowerLocus}. 

\begin{proof}[{Proof of \cref{Lemma:StepsSymmetricPowerLocus}}] (1) Let~$U \subset H$ be a connected component. Since~$U$ is of finite type over~$S$, the induced morphism~$\sigma \colon U \to \Hilb_\cA$ is of finite type, hence its image~$\sigma(U)$ is constructible. To show that~$\sigma(U)$ is open, hence conclude the proof, it suffices to show that it is stable under generalization. That is, given a~$k$-point~$h$ of~$\Hilb_\cA$ in~$\sigma(U)$, we need to prove that~$\sigma(U)$ contains~$\Spec R$ where~$R$ is the local ring of~$\Hilb_\cA$ at~$h$. To do so, consider 
\[ T : = \Spec \hat{R} \]
where~$\hat{R}$ is the completion of~$R$ along its maximal ideal. It suffices to construct a morphism~$\phi \colon T \to U$ making the following diagram commutative:
\[ 
\begin{tikzcd}
& T \ar[dl, bend right=25, "\phi"'] \ar[d]\\
U \ar[r, "\sigma"] & V
\end{tikzcd}
\]
Let~$\cX \to \Hilb_\cA$ be the universal family. By assumption~$\cX_h$ is a symmetric power of a curve, which is nonhyperelliptic because~$\cX_h$ embeds in its Albanese variety. Therefore, by \cite[th. 4.2]{KempfDeformations} there is a smooth formal curve over~$\Spf(\hat{R})$ whose~$n$-th symmetric power is isomorphic to the formal completion~$\hat{\cX}$ of~$\cX$ along~$\cX_h$. Since its canonical bundle is ample, such a formal curve can be algebraized into a smooth projective curve 
\[ \cC \; \too \; T := \Spec \hat{R}.\] Using again the formal GAGA theorem, the isomorphism~$\Sym^n \hat{\cC} \iso \hat{\cX}$ comes from a unique isomorphism of~$T$-schemes
\[  \Sym^d \cC \; \iso \; \cX_{T} := \cX \times_S T. \]
Since~$\cC \to T$ is smooth and~$\hat{R}$ is complete, any~$k$-point of~$\cC$ can be lifted to a~$T$-valued point of~$\cC$. Recalling that~$k$ is algebraically closed, fix~$p \in \cC(T)$ and consider the Abel-Jacobi embedding
\[ \quad \cC \intoo \Pic^0(\cC / T).\]
Note that the geometric fibers of~$\cC \to T$ have gonality~$> n$ because their~$n$-th symmetric power embed in their Albanese variety. It follows that the sum morphism induces an isomorphism
\[ \Sym^n \cC \; \stackrel{\sim}{\too} \; \cC + \dots + \cC.  \]
Via the identification~$\cA_T \iso \Pic^0(\cC / T)$ given by~$\Sym^n \cC \iso \cX_{T}$ we thus have
\[ j(\cX) = \cC + \dots + \cC \]
where~$j \colon \cX \into \cA_T$ is the Albanese morphism associated with~$y_0 = (p, \dots, p)$. To conclude the proof it suffices to take a suitable translate~$\cC'$ of~$\cC$ in~$\cA_T$ so that~$i(\cX)$ is the~$n$-fold sum of~$\cC'$. Namely, take 
\[ \cC' = \cC + u \qquad \textup{with} \qquad u \in \cA(T) \quad \textup{such that} \quad n u = j(x_0).\]
Note that such a point~$u \in \cA(T)$ exists because~$[n] \colon \cA \to \cA$ is a finite \'etale surjective morphism and~$\hat{R}$ is henselian. \medskip

(2) To see that the morphism~$\sigma$ is formally unramified, it suffices to show that for each~$h \in H(k)$
the~$k$-linear map~$\Omega^1_{\Hilb_A, \sigma(h)} \to \Omega^1_{\Hilb_A, h}$ is surjective where~$A:= \cA_{s}$ and~$s$ is the image of~$h$ in~$S$. This is equivalent to proving that the tangent map at~$h$,
\[ \rT_h \sigma \colon \quad \rT_h \Hilb_A = (\Omega^1_{\Hilb_A, h})^\vee \; \too \; \rT_{\sigma(h)}  \Hilb_A = (\Omega^1_{\Hilb_A, \sigma(h)})^\vee \] 
is injective. By definition, the~$k$-point~$h$ corresponds to a smooth, connected, nondivisible curve~$C \subset A$ with ample normal bundle such that the sum induces an isomorphism
\[ \Sym^n C \; \stackrel{\sim}{\too} \; X:= C + \cdots + C\]
With this notation the~$k$-point~$\sigma(h)$ corresponds to the subvariety~$X \subset A$.  A tangent vector to~$\Hilb_A$ in~$h$, seen as a~$k[\epsilon]$-point of~$\Hilb_A$, can be thought of as a first order thickening
\[\cC \; \subset \; A_{k[\epsilon]}\] 
of~$C$. Say that a scheme~$\cY$ over~$k[\epsilon]$ is \emph{constant} if it is the base change along the morphism~$k \to k[\epsilon]$ of the subscheme of~$\cY$ given by the equation~$\epsilon = 0$. Suppose that the~$n$-fold sum 
\[ \cX \; = \; \cC + \cdots + \cC \]
is constant. Since~$H \subset \Hilb_A$ is open, the sum morphism induces an isomorphism of~$k[\epsilon]$-schemes
\[ \Sym^n \cC \; \stackrel{\sim}{\too} \; \cX.\]
It follows that~$\Sym^n \cC$ is constant. Since for nonhyperelliptic curves first order deformations of symmetric powers correspond bijectively to first order deformations of the curve \cite[th. 3.6]{KempfDeformations}, it follows that~$\cC$ is constant. To conclude it remains to show that the closed embedding
\[ i \colon \quad C_{k[\epsilon]} \; \intoo \; A_{k[\epsilon]} \]
is the base change of the embedding~$i_0 \colon C \into A$. Since morphisms of abelian varieties~$\Pic^0(C) \to A$ do not deform, such a morphism~$i$ is necessarily the composite of the base change of~$i_0$ with the translation~$A_{k[\epsilon]} \to A_{k[\epsilon]}$ given by a tangent vector~$v \in \Lie A$. Moreover,  since the sum morphism induces an isomorphism~$\Sym^n \cC \iso \cX$,~$v$ must be~$n$-torsion when seen as a~$k[\epsilon]$-point of~$A$ supported at~$0$. In other words,~$v$ lies in~$\Lie A[n] = 0$ hence~$v = 0$.\medskip

(3) The morphism~$Z := \Spec R \to \Hilb^\sm_\cA$ corresponds to a closed subscheme \[\cX \; \subset \; \cA \times_S Z\] such that the morphism~$\cX \to Z$ is smooth with geometrically connected fibers. Similarly, the morphism~$\Spec K \to H$ can be seen as a smooth, geometrically connected curve~$C \subset A:= \cA \times_S \Spec K$ such that the sum induces an isomorphism
\[ \Sym^n C \; \stackrel{\sim}{\too}\; X : = \cX \times_Z \Spec K.\]
Let~$\cC \subset \cA \times_S Z$ be the scheme theoretic closure of~$C$. By \cref{Lemma:SpecializationSymmPowers} the~$Z$-scheme~$\cC$ is a smooth curve with geometrically connected fibers and the sum in~$\cA$ induces an isomorphism
\[ \Sym^n \cC \stackrel{\sim}{\too} \cX.\]
The morphism~$Z \to H$ corresponding to~$\cC$ gives the wanted dotted arrow in the statement.\medskip

(4) For~$i = 1, 2$ let~$C_i \subset A:= \cA_s$ be a smooth projective curve such that the sum induces an isomorphism \[\sigma_i \colon \Sym^n C_i \; \stackrel{\sim}{\too} \; X_i \; := \; C_i + \cdots + C_i.\]
Suppose~$X_1 = X_2$. Then by \cite{RanMartens} there is an isomorphism~$f \colon C_1 \to C_2$ such that the following diagram is commutative:
\[ 
\begin{tikzcd}[column sep=40pt, row sep=30pt]
C_1 \ar[r, "\Delta_1"] \ar[d, "f", swap]& \Sym^n C_1 \ar[r, "\sigma_1"] \ar[d, "\Sym^n f", swap] & X_1 \ar[d, equal] \\
C_2 \ar[r, "\Delta_2"] & \Sym^n C_2 \ar[r, "\sigma_2"] & X_2
\end{tikzcd}
\]
where~$\Delta_i$ is the diagonal embedding. Now~$\sigma_i \circ \Delta_i = n j_i$ where~$j_i \colon C_i \into A$ is the closed embedding. Therefore the commutativity of the above diagram implies that the morphism~$j_2 \circ f - j_1$ takes values in~$A[n]$, thus it is identically equal to an~$n$-torsion point of~$A$.
\end{proof}

\subsection{Proof of \cref{Thm:SymmPowersComponent}} \label{sec:ProofSymmPowersComp} Now \cref{Thm:SymmPowersComponent} is follows easily from \cref{Thm:SymmetricPowerLocus}. Let \[\cA \; := \; \Alb(\cX/S),\] the relative Albanese variety. To check that the symmetric power locus~$\Sigma \subset S$ is open and closed, we may reason \'etale locally on~$S$ and suppose that~$\cX \to S$ admits a section~$x$. By assumption the Albanese morphism given by~$x$ is a closed embedding~$i \colon \cX \into \cA$, thus it induces a morphism~$S \to \Hilb_\cA$ to the Hilbert scheme of~$\cA$. With notation as in \cref{Thm:SymmetricPowerLocus} we then have that
\[ \Sigma = \Sigma_d \times_{\Hilb_\cA} S\]
is open and closed. Concerning the latter claim, instead of assuming that there is a section of~$\cX \to S$, suppose that there is a translate of an Albanese morphism~$j \colon \cX_\Sigma \to \cA_\Sigma$. Again, by hypothesis~$j$ is a closed embedding, hence it induces a morphism
$f\colon \Sigma \to \Hilb_\cA$. Since the fibers of~$\cX_\Sigma$ are by assumption symmetric powers of curves, the morphism~$f$ takes values in~$\Sigma_d$. The key point is that~$f$ actually takes values in the nondivisible locus~$\Sigma_d \cap \Hilb_\cA^{\nd}$ because symmetric powers of curves are nondivisible in their Albanese variety. Indeed, viewing the latter as the Jacobian of the curve, the symmetric power in question is a summand of a theta divisor, which is a principal polarization, hence nondivisible. The wanted finite \'etale cover of~$\Sigma$ is therefore the principal~$\cA[n]$-bundle
\[ H_d \times_S \Sigma \; \too \; \Sigma = (\Sigma_d \cap \Hilb_\cA^\nd) \times_S \Sigma. \]
This concludes the proof of \cref{Thm:SymmPowersComponent}. \qed

\subsection{Proof of \cref{Thm:ProductLocus}} \label{sec:ProofOfProductLocus} 
We first prove that~$\Pi$ is constructible. Consider as in \cite[sect.~3.2]{JKLM} the Tannaka group
\[G_s \;=\; G_{\cX_{\bar{s}},\omega} \;\subset\; \GL(V_s),
\]
where~$\omega$ is any fiber functor on the Tannaka category generated by the constant perverse sheaf on~$X_{\bar{s}} \subset \cA_{\bar{s}}$ and where~$V_s \in \Rep(G_s)$ is the faithful representation defined by this perverse sheaf. By the invariance of Tannaka groups under extensions of algebraically closed fields \cite[cor.~4.4]{JKLM}, the isomorphism type of the pair~$(G_s, V_s)$ depends only on the image~$s\in S$ of the geometric point~$\bar{s}$. The derived group of the identity component is a connected semisimple group
\[
 G^\ast_s = [G_s^\circ, G_s^\circ]
\]
whose isomorphism type does not change if the subvariety~$\cX_{\bar{s}} \subset \cA_{\bar{s}}$ is replaced by a translate \cite[lemma~4.3.2]{KraemerMicrolocalII}, and by~\cite[prop.~7.4]{KWGeneric} this defines a constructible stratification of the base: For any given connected semisimple group~$H$ the locus
\[
 \{ s\in S \mid G_s^\ast \iso H \} \;\subset\; S
\]
is constructible.  So it suffices to note that the locus~$\Pi$ can be characterized by the isomorphism type of~$G_s^\ast$: Indeed~$\cX_s$ is a product if and only if the algebraic group~$G^\ast_s$ is not simple \cite[th. A]{JKLM}.\medskip

To show that~$\Pi$ is closed it suffices to show that it closed under specialization. For this consider a point~$s \in S$ which is a specialization of a point~$\eta \in \Pi$.  By \cite[lemma 7.2]{KWGeneric} we have an embedding 
\[
 G_s \;\hookrightarrow\; G_\eta
\]
as an algebraic subgroup such that via this embedding the representation~$V_\eta$ restricts to~$V_s$. Since~$\eta \in \Pi$, the group~$G_\eta$ is not simple, hence~$V_\eta$ decomposes as a tensor product of more than one representation. On the other hand, the representation~$V_s$ is minuscule because~$\cX_s$ is nondivisible \cite[cor.~1.10]{KraemerMicrolocalI}. Tensor products of representations of a simple group are never minuscule \cite[lemma 6.4]{JKLM} hence~$G_s$ cannot be simple and~$\cX_{s}$ is a product.

\section{The Albanese torsor}

In this section, we collect some general results on Albanese torsors that will be used in the proof of \cref{IntroThmIntrinsicGeneral}, particularly in cases where the varieties under consideration have no rational points. More precisely, in \cref{sec:AlbaneseTorsor} we establish a projectivity result for Albanese torsors in families; in \cref{sec:FinitenessTorsorsAbVar} we prove a Shafarevich-type finiteness result for polarized torsors under abelian varieties; and in \cref{sec:NeronModelAlbanese} we discuss basic properties of the N\'eron model of the Albanese torsor in the good reduction case. Throughout, $S$ denotes a Noetherian scheme.

\subsection{The Picard scheme} We begin with a reminder on the representability of Picard functors. Let \( X \to S \) be a smooth projective morphism with geometrically connected fibers. The \'etale sheafification of the functor  
\[
(\textup{Sch}/S) \to (\textup{Groups}), \quad T \mapsto \Pic(X_T) / \Pic(T)
\]  
is representable by an $S$-scheme $\Pic(X/S)$, which is the disjoint union of open and closed subschemes $\Pic^\Phi(X/S)$ for $\Phi \in \bbQ[z]$. These are projective over $S$ and parametrize line bundles with Hilbert polynomial $\Phi$ (with respect to some fixed relatively ample line bundle on \( X \)) \cite[th.~8.2.5]{NeronModels}. We denote by 
\[ \Pic^\tau(X/S) \; \subset \; \Pic(X/S)\]
the projective~$S$-scheme parametrizing line bundles on~$X$ with same Hilbert polynomial as the structural sheaf. Consider the subfunctor \[\Pic^0(X/S) \quad \textup{of} \quad T \mapsto \Pic^\tau(X/S)(T)\] parametrizing line bundles whose image in the fiberwise N\'eron-Severi group is trivial. Then~$\Pic^0(X/S)$ is representable by an abelian scheme if and only if the smallest open and closed subset~$U \subset \Pic^\tau(X/S)$ containing the zero section of~$\Pic^\tau(X/S)$ is smooth over~$S$ with geometrically connected fibers. Note that in such case~$\Pic^0(X/S)$ is projective, because~$\Pic^\tau(X/S)$ is. When~$S$ is reduced~$\Pic^0(X/S)$ is representable by an abelian scheme if the algebraic groups~$\Pic^0(X_s)$ for~$s \in S$ are all smooth of the same dimension \cite[prop. 9.5.20]{FGAExplained}. If~$S$ is a~$\bbQ$-scheme, then~$\Pic^0(X/S)$ is always representable by an abelian scheme \cite[rem. 9.5.21]{FGAExplained}.

\subsection{Projectivity} \label{sec:AlbaneseTorsor} 
Assume that~$\Pic^0(X/S)$ is represented by an abelian scheme, which is then projective by the above discussion. The \emph{relative Albanese scheme} is the dual abelian scheme \cite[th. 8.4.5]{NeronModels}
\[
\Alb(X/S) = \Pic^0(X/S)^\ast.
\]
Given an~$S$-scheme~$T$, any section~$x \in X(T)$ determines a unique morphism
\[
\alb_{X,x} \colon X_T \longrightarrow \Alb(X/S)_T
\]
of~$T$-schemes. However, in the absence of a global section of~$X \to S$, there is no canonical morphism~$X \to \Alb(X/S)$. Instead, there exists a natural morphism taking values in a principal~$\Alb(X/S)$-bundle over~$S$, called the \emph{Albanese torsor}, which we now introduce. The existence and representability of such a torsor have been extensively studied in greater generality, albeit with weaker conclusions; see~\cite[Exp.~VI, Th.~3.3]{FGA} and \cite{Schroer}. The key to our representability result is a concrete description of the Albanese torsor, based on the following definition:

\begin{definition} \label{def:TranslateAlbaneseMorphism}
Given an~$S$-scheme~$T$, a morphism~$f \colon X_T \to \Alb(X/S)_T$ of~$T$-schemes is a \emph{translate of an Albanese morphism} if there exist a surjective \'etale morphism~$T' \to T$ and a point~$x \in X(T')$ such that
\[
f_{T'} = \alb_{X, x} + f(x).
\]
\end{definition}
Equivalently, this identity holds for every point~$x$ of~$X$ with values in a~$T$-scheme~$T'$. Consider the functor
\[
\underline{P}(X/S) \colon (\mathrm{Sch}/S) \too (\mathrm{Sets}),
\]
which associates to an~$S$-scheme~$T$ the set of translates of an Albanese morphism over~$T$. The map
\[
X(T) \longrightarrow \underline{P}(X/S)(T), \quad x \longmapsto \alb_{X,x},
\]
is functorial in~$T$. Moreover, the translation action
\[
\underline{P}(X/S)(T) \times \Alb(X/S)(T) \longrightarrow \underline{P}(X/S)(T), \quad (f, a) \longmapsto f + a,
\]
is free, transitive, and functorial in~$T$.

\begin{proposition} \label{Prop:RepAlbaneseTorsor}
The functor~$\underline{P}(X/S)$ is representable by a scheme~$P(X/S)$ projective over $S$.
\end{proposition}

\begin{proof}
Set~$A := \Alb(X/S)$ and consider the projective~$S$-scheme~$Y := X \times_S A$. Let~$\Hilb_Y$ denote the Hilbert scheme of~$Y$, and~$\underline{\Hilb}_Y$ its functor of points. One identifies~$\underline{P}(X/S)$ with a subfunctor of~$\underline{\Hilb}_Y$ by associating to a translate of an Albanese morphism its graph. It suffices to show that~$\underline{P}(X/S) \subset \underline{\Hilb}_Y$ is represented by a closed subscheme
\[
P(X/S) \subset \Hilb_Y.
\]
In this case,~$P(X/S)$ is a principal~$A$-bundle over~$S$. Hence for each connected component~$S' \subset S$, the fiber~$P(X/S)_{S'}$ lies in a connected component of~$\Hilb_Y$ and is thus projective. To prove the claim, note that since~$\underline{P}(X/S)$ is a subsheaf of~$\underline{\Hilb}_Y$ for the \'etale topology on~$S$, we may work \'etale-locally on~$S$ and assume that~$X \to S$ admits a section~$x$. In this case, the desired closed subscheme is the image of the morphism
\[
A \longrightarrow \Hilb_Y, \quad a \longmapsto \Gamma + a,
\]
where~$\Gamma \subset Y$ is the graph of the morphism~$\alb_{X,x}$.
\end{proof}

The above discussion shows that~$P(X/S)$ is a principal bundle under the abelian scheme~$\Alb(X/S)$ and that there exists a morphism
\[
\alb_X \colon X \longrightarrow P(X/S)
\]
of~$S$-schemes, called the \emph{Albanese morphism}. Although not used in this paper, it is universal among morphisms from~$X$ to principal bundles under abelian schemes.

\begin{definition}
The principal bundle~$P(X/S)$ is called the \emph{Albanese torsor}.
\end{definition}

\subsection{Finiteness} \label{sec:FinitenessTorsorsAbVar} The importance of the projectivity and good reduction of the Albanese torsor lies in an extension of the Shafarevich conjecture for abelian varieties by Faltings~\cite{FaltingsMordell} to principal bundles. The result is somewhat well known (see, for instance, the proof of~\cite[th.~6.5]{JavanpeykarCyclic}), but we include a proof due to the lack of a suitable reference. To state and prove it, we first recall some standard facts about principal bundles under abelian schemes. Let~$A$ be a projective abelian scheme over~$S$,~$P$ a principal~$A$-bundle, and~$A^\ast$ the dual abelian scheme. There exists a unique isomorphism of~$S$-schemes
\[
f \colon \quad \NS(A/S) \; \stackrel{\sim}{\too} \; \NS(P/S)
\]
such that, for all~$S$-schemes~$T$ and all~$p \in P(T)$, the base change~$f_T$ is given by pullback along the inverse of the morphism~$\gamma_p \colon A_T \to P_T$,~$a \mapsto a + p$. Moreover, given an~$S$-point~$\lambda$ of~$\NS(A/S)$, there exists a unique isomorphism
\begin{equation} \label{PicLambdaPrincipalBundle}
g_\lambda \colon \quad \Pic^\lambda(A/S) \; \stackrel{\sim}{\too} \; \Pic^\lambda(P/S) \times_{A^\ast} P_\lambda
\end{equation}
of principal~$A^\ast$-bundles, induced by pullback along the inverse of~$\gamma_p$ as above. In this formula, $P_\lambda = P / \ker(\lambda)$ and~$\times_{A^\ast}$ denotes the contracted product of principal~$A^\ast$-bundles. Given a line bundle~$\cL$ on~$P$, its N\'eron--Severi class defines a morphism of group schemes~$\lambda \colon A \to A^\ast$. If~$S$ is connected and~$\cL$ is relatively ample, then the direct image~$\pi_\ast \cL$ under the structural morphism~$\pi \colon P \to S$ is locally free of some rank~$d$, called the \emph{degree} of~$\cL$, and we have
\[
\deg(\lambda) = d^2.
\]
Moreover,~$\cL$ defines an~$S$-point of~$\Pic^\lambda(P/S)$, and hence~$g_\lambda$ induces an isomorphism of principal~$A^\ast$-bundles
\[
\Pic^\lambda(A/S) \;\xrightarrow{\sim}\; P_\lambda.
\]
In other words, the morphism~$\lambda_\ast \colon \rH^1(S, A) \to \rH^1(S, A^\ast)$ of abelian groups induced by~$\lambda$ maps~$P$ to~$\Pic^\lambda(A/S)$. Here~$\rH^1(S, G)$ denotes the abelian group of isomorphism classes of principal bundles under a finitely presented, faithfully flat, commutative group~$S$-scheme~$G$. With this notation, we have:

\begin{lemma} \label{Lem:ClassOfPolarizedAbelianTorsor}
Let~$\lambda$ be an~$S$-point of~$\NS(A/S)$ which is relatively ample. Then the group~$\rH^1(S, \ker(\lambda))$ acts transitively on the subset of~$\rH^1(S, A)$ consisting of principal~$A$-bundles~$P$ admitting a line bundle~$\cL$ with N\'eron--Severi class~$\lambda$.
\end{lemma}

\begin{proof}
The morphism~$\lambda_\ast$ fits into the long exact cohomology sequence
\[
\cdots \too \rH^1(S, \ker(\lambda)) \too \rH^1(S, A) \stackrel{\lambda_\ast}{\too} \rH^1(S, A^\ast) \too \cdots,
\]
and the result follows.
\end{proof}

We are now in a position to state the aforementioned finiteness result:

\begin{proposition} \label{Prop:FaltingsFinitenessTorsor}
Let~$d, g \geq 1$. Then, up to isomorphism, there exist only finitely many principal bundles over~$\cO_{K, \Sigma}$ under abelian schemes of relative dimension~$g$ equipped with a polarization of degree~$d$.
\end{proposition}

\begin{proof}
We follow the argument from~\cite[th.~6.5]{JavanpeykarCyclic}. Taking~$\lambda = 0$ in~\eqref{PicLambdaPrincipalBundle}, we can recover an abelian scheme from any principal bundle under it. Therefore, by the Shafarevich conjecture for abelian varieties, it suffices to prove the statement for principal bundles under a \emph{fixed} abelian scheme~$A$ over~$\cO_{K, \Sigma}$. Furthermore, there are only finitely many isogenies~$A \to A^\ast$ of degree~$d^2$ up to automorphisms of $A$, so we may assume that all polarized principal~$A$-bundles under consideration induce the same isogeny~$\lambda \colon A \to A^\ast$. Since~$\rH^1(\cO_{K, \Sigma}, \ker(\lambda))$ is finite~\cite[prop.~5.1]{GilleMoretBailly} the result follows from \cref{Lem:ClassOfPolarizedAbelianTorsor}.
\end{proof}

\subsection{N\'eron models} \label{sec:NeronModelAlbanese} Suppose that~$S = \Spec \cO_{K, \Sigma}$ for a number field~$K$ and a finite set of primes~$\Sigma$ of~$K$. We begin by recalling the following standard fact about models of abelian torsors:

\begin{lemma} \label{Lemma:ModelsOfAbelianTorsors}
Let~$\cA$ be an abelian scheme over~$S$, and let~$\cP$ be a principal~$\cA$-bundle. Denote by~$A$ and~$P$ the generic fibers of~$\cA$ and~$\cP$. Then:
\begin{enumerate}
\item the~$S$-scheme~$\cP$ is smooth and projective;
\item given a smooth~$S$-scheme~$\cT$ with generic fiber~$T$, the map~$\cP(\cT) \to P(T)$ induced by restriction is bijective;
\item any line bundle~$L$ on~$P$ extends to a line bundle~$\cL$ on~$\cP$, and if~$L$ is ample, then~$\cL$ is relatively ample.
\end{enumerate}
\end{lemma}

\begin{proof}
(1) Follows from \cite[th.~6.4.1]{NeronModels}.\smallskip

(2) The map~$\cP(\cT) \to P(T)$ is injective because~$\cT$ is smooth over~$S$, hence~$T \subset \cT$ is Zariski dense. Thus any morphism~$T \to P$ admits at most one extension~$\cT \to \cP$. By uniqueness, the existence of such an extension can be checked \'etale locally, and therefore we may assume that~$\cP \to S$ admits a section. In this case we can identify~$\cP$ with~$\cA$, and the extension exists by \cite[cor.~8.4.6]{NeronModels}.\smallskip

(3) By \cref{Lem:ExtensionLineBundles} below, the only issue is the relative ampleness of~$\cL$. For any~$s \in S$, we have~$h^0(P, L^{\otimes n}) \le h^0(\cP_s, \cL^{\otimes n}_s)$ for all~$n \ge 0$.
Thus, the line bundle~$\cL_s$ on~$\cP_s$ is big because~$L$ is ample. Since big line bundles on abelian varieties are ample, we conclude that~$\cL_s$ is ample.
\end{proof}

\begin{lemma} \label{Lem:ExtensionLineBundles} Let~$\cY \to S$ be a smooth projective morphism. Then any line bundle on the generic fiber of~$\cY$ extends to a line bundle on~$\cY$. 
\end{lemma}

\begin{proof} See for instance \cite[answer 9.5.7]{FGAExplained}.
\end{proof}

Now let~$\cX \to S$ be a smooth projective morphism with geometrically connected fibers, and let~$X$ denote the generic fiber of~$\cX$. Even in this favorable situation, the functor~$\Pic^0(\cX/S)$ may fail to be representable by an abelian scheme.\footnote{For example, consider a family of smooth Enriques surfaces with singular or supersingular (in the sense of Bombieri--Mumford \cite[\S3]{BombieriMumford}) reduction at a point~$s \in S$ with residue characteristic~$2$; see \cite[th.~4.10]{LiedtkeLiftEnriques}.} To circumvent this issue, we consider the N\'eron model
\[
\cA \longrightarrow S \qquad \text{of the abelian variety} \qquad A := \Pic^0(X/K).
\]

\begin{lemma} \label{lemma:NeronModelPic0}
The N\'eron model~$\cA$ of~$A$ is a projective abelian scheme whose dual~$\cA^\ast$ is the N\'eron model of the abelian variety~$A^\ast = \Alb(X/K)$.
\end{lemma}

\begin{proof}
By \cite[th.~6.4.1]{NeronModels} it suffices to show that~$\cA \to S$ has abelian reduction at every point~$s \in S$. We may thus replace~$S$ with the spectrum of the discrete valuation ring~$R := \cO_{S, s}$.  Let~$\ell$ be a prime different from the residue characteristic of~$R$, and let~$\bar{K}$ be an algebraic closure of~$K$. Then we have a natural isomorphism
\[
\rH^1_{\et}(A_{\bar{K}}, \bbQ_\ell) \; \iso \; \rH^1_{\et}(X_{\bar{K}}, \bbQ_\ell)
\]
of representations of~$\Gal(\bar{K} / K)$. In particular, the~$\ell$-adic Tate module of~$A$ is unramified, as it is dual to~$\rH^1_{\et}(A_{\bar{K}}, \bbQ_\ell)$. Hence~$\cA$ has abelian reduction by the N\'eron-Ogg-Shafarevich criterion \cite[th.~7.4.5]{NeronModels}.
\end{proof}

\begin{lemma} \label{prop:NeronModelAlbaneseTorsor}
The Albanese torsor~$P := P(X/K)$ extends uniquely to a principal bundle~$\cP$ under the dual abelian scheme~$\cA^\ast$.
\end{lemma}

\begin{proof} If such an extension exists, then it is uniquely determined by the N\'eron extension property in \cref{Lemma:ModelsOfAbelianTorsors} (2). Thus by \cite[th.~7.2.1]{NeronModels}  such an extension $\cP$ can be constructed \'etale locally on $S$, so we may assume that~$\cX \to S$ has a section. Then~$P \iso A^\ast$ and in this case $\cP := \cA^\ast$ is the desired extension.
\end{proof}

\begin{definition}
The principal~$\cA^\ast$-bundle~$\cP$ is called the \emph{N\'eron model} of the Albanese torsor~$P(X/K)$. We write~$\alb_\cX \colon \cX \to \cP$ for the unique extension of the Albanese morphism~$\alb_X \colon X \to P$, whose existence is guaranteed by \cref{Lemma:ModelsOfAbelianTorsors}.
\end{definition}

\section{Proof of theorems~\ref{IntroThmIntrinsicSurfaces}, \ref{IntroThmIntrinsicGeneral} and \ref{IntroThmExtrinsic}}
\label{sec:Applications}

In this section we explain how to deduce the theorems~\ref{IntroThmIntrinsicSurfaces}, \ref{IntroThmIntrinsicGeneral}, \ref{IntroThmExtrinsic} from the main \cref{Thm:MainTheoremForFamiliesWithBigMonodromy}. Throughout this section~$K$ denotes a number field and~$\Sigma$ a finite set of primes in~$K$.

\subsection{Proof of \cref{IntroThmExtrinsic}} 
We go back to the notation in \cref{IntroThmExtrinsic}. Let~$A$ be an abelian variety of dimension~$g$ over~$K$. Suppose that~$\Sigma$ contains the primes of bad reduction of~$A$. Fix~$P \in \bbQ[z]$ of degree~$d < (g-1)/2$ and an ample line bundle~$L$ on~$A$.\medskip

\emph{Step 1.} We begin by proving the statement when the subvarieties in question are nondivisible. Since~$\Sigma$ contains the primes of bad reduction, the abelian variety~$A$ extends to an abelian scheme~$\cA$ over~$\cO_{K, \Sigma}$. The line bundle~$L$ extends to a relatively ample line bundle~$\cL$ on~$\cA$. Let 
\[ \cH \; \subset \; \Hilb_\cA\]
denote the union of connected components of the Hilbert scheme of~$\cA$ of subvarieties with Hilbert polynomial~$P$ with respect to~$\cL$. Let 
\[ \cH^{\sm} \; \subset \; \cH\]
be the open subset parametrizing smooth and geometrically connected subvarieties. Write~$H$ resp.~$H^{\sm}$ for the generic fiber of~$\cH$ resp.~$\cH^{\sm}$. By \cref{Thm:SymmetricPowerLocus,,Thm:ProductLocus} the locus
\[ H' \; \subset \; H^{\sm}\]
where the fibers of the universal family over~$H^{\sm}$ are nondivisible, have an ample normal bundle, are neither products nor symmetric powers of a curve, and satisfy \eqref{SkullInequality} and \eqref{Eq:NumericalConditions}, is open. We need to show that the set
\[ H'(K) \cap \cH^{\sm}(\cO_{K, \Sigma}) \]
is finite up to translation by points in~$A(K)$. Let~$\cT \subset \cH^{\sm}$
be an integral closed subscheme of the Zariski closure of~$H'(K) \cap \cH^{\sm}(\cO_{K, \Sigma})$ in~$\cH^{\sm}$ and let 
\[ T \; \subset \; H^{\sm}\] be the generic fiber of~$\cT$. Since~$\cT$ is closed in~$\cH^{\sm}$, note that we have
\[ \cT(\cO_{K, \Sigma}) = \cH^{\sm}(\cO_{K, \Sigma}) \cap T(K).\]
By Noetherian induction, it suffices to show that for some nonempty open subset~$U \subset T$ the subvarieties in~$A$ parametrized by~$t \in U(K)$ all lie in the same orbit under translation by points in~$A(K)$. To do so, consider a desingularization
\[ \pi \colon  S \too T,\]
that is, a proper birational morphism with~$S$ smooth over~$K$. Enlarging~$\Sigma$ we may assume that~$\pi$ extends to a proper birational morphism~$\cS \to \cT$ with~$\cS$ flat separated over~$\cO_{K, \Sigma}$. By construction~$\cT(\cO_{K, \Sigma}) \subset T$ is Zariski dense hence so is~$\cS(\cO_{K, \Sigma}) \subset S$. Let 
\[ \cX \subset A \times S\]
be the pullback of the universal family over~$H^{\sm}$. The key point is that the fiber~$\cX_{\bar{\eta}}$ at any geometric generic point~$\bar{\eta}$ of~$S$ is constant up to translation. Indeed, if it were not to be the case, \cref{Thm:MainTheoremForFamiliesWithBigMonodromy} together with the Big Monodromy Criterion in \cref{subsec:bigmonodromy} would imply that~$\cS(\cO_{K, \Sigma})$ is not Zariski dense, a contradiction. Now that~$\cX_{\bar{\eta}}$ is constant up to translation, the family~$\cX$
is obtained by translating a fixed subvariety~$X \subset A$ along a morphism~$a \colon S \to A$ \cite[cor. 4.8]{JKLM} (in loc.~cit.~this is stated for families over an algebraically closed field, but the argument applies in general). In particular, 
\[ \cX_s = X + a(s) \quad \textup{for all} \quad s \in S(K).\]
The open subset~$U \subset T$ where the birational map~$\pi$ is an isomorphism then does the job. 
\medskip

\emph{Step 2.} We now pass to the general case. Consider the set~$\cF$ of subvarieties as in the statement of \cref{IntroThmExtrinsic}. Namely, the set~$\cF$ of smooth, geometrically connected subvarieties~$X \subset A$ with ample normal bundle, Hilbert polynomial~$P$ with respect to~$L$, good reduction, that are neither isogenous to a product nor isogenous to a symmetric power of a curve, and that satisfy \eqref{Eq:NumericalConditions} and~\eqref{SkullInequality}. Let \[H \; \subset \; \Hilb_A\] be the locus where the universal family~$\cX \to \Hilb_A$ is smooth with geometrically connected fibers and Hilbert polynomial~$P$ with respect to~$L$. The topological Euler characteristic~$\chi_\top(\cX_h)$ for $h \in H$ is locally constant and therefore takes finitely many values because~$H$ is of finite type over~$K$. For each~$h\in H$ the integer~$\chi_\top(\cX_h)$ is divisible by~$|\Stab_A(\cX_{h})|$ hence the set
\[ \cG \; := \; \{ \Stab_A(\cX_h) \mid h \in H(K)\} \]
is finite. Enlarging~$\Sigma$ we may suppose that it contains all the places above the prime divisors of~$\chi_\top(\cX_h)$ for all~$h\in H$. Then for~$G \in \cG$ the abelian variety~$A/G$ has dimension~$g$ and good reduction. Similarly, for all~$h \in H(K)$ the subvariety~$\cX_h\subset A$ has good reduction, thus so does
\[ \cX_h/\Stab_A(\cX_h) \intoo A/\Stab_A(\cX_h).\]
Since~$\cG$ is finite it suffices to prove the statement for varieties~$X\in \cF$ whose stabilizer~$\Stab_A(X)$ equals a fixed~$G \in \cG$. In this case, pick an ample line bundle~$M$ on the abelian variety~$B:= A/G$. The line bundle~$\pi^\ast M$ on~$A$ is ample where~$\pi \colon A \to B$ is the projection. Moreover the function~$H\to \bbQ[z]$ associating to~$h\in H$ the Hilbert polynomial of~$\cX_h$ with respect to~$\pi^\ast M$ takes finitely many values. Again, by treating each polynomial separately, we may suppose that all varieties~$X \in \cF$ have the same Hilbert polynomial~$Q$ with respect to~$\pi^\ast M$. Then the subvarieties~$X/ G\into B$ are nondivisible and have Hilbert polynomial~$Q/|G|$ with respect to~$M$. 
It follows from the nondivisible case applied to~$B$ that the set
\[ \{ X/G \subset B \mid X\in \cF \}/B(K)\]
is finite. Since the group morphism~$A(K) \to B(K)$ has finite cokernel, this concludes the proof. \qed

\subsection{Models of canonically polarized varieties} \label{sec:ModelsOfVeryIrregularVarieties} To prove \cref{IntroThmIntrinsicGeneral} we need a few facts about models of very irregular varieties. For a smooth scheme~$\cX$ over~$\cO_{K, \Sigma}$ let~$\omega_{\cX}$ its relative canonical bundle. We will use the following key property:

\begin{lemma} \label{Lemma:CanonicalBundleGoodModel} Let~$X$ be a very irregular variety with good reduction and~$\cX$ a smooth proper scheme over~$\cO_{K, \Sigma}$ with generic fiber~$X$ such that, for any morphism~$\cO_{K, \Sigma} \to k$ to an algebraically closed field~$k$, the base change~$\cX_{k}$ is geometrically connected and embeds into its Albanese variety. Then, the relative canonical bundle~$\omega_{\cX}$ is relatively ample.
\end{lemma}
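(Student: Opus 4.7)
The plan is to show that $\omega_{\cX/R}$ is ample on every fiber of $\cX\to\Spec R$; since $\cX\to\Spec R$ is smooth and proper and $R$ is Noetherian, the standard fiberwise criterion (EGA~IV, 9.6.4) will then deliver relative ampleness. Because $X$ is very irregular, $\omega_X$ is already ample, and openness of fiberwise ampleness for proper morphisms leaves only finitely many closed points of $\Spec R$ to handle.

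Fix any geometric point $\Spec\bar k\to\Spec R$ and set $Y=\cX_{\bar k}$, $A=\Alb(Y)$, $g=\dim A$. By hypothesis $Y\hookrightarrow A$ is a closed embedding, and since the tangent bundle of $A$ is trivial, the normal bundle exact sequence
\[
0 \too T_Y \too \cO_Y^{\oplus g} \too N_{Y/A} \too 0
\]
presents $N_{Y/A}$ as a quotient of a trivial vector bundle. Hence $N_{Y/A}$ and $\omega_Y=\det N_{Y/A}$ are globally generated, so in particular $\omega_Y$ is nef. Flatness of $\cX\to\Spec R$ ensures that the Hilbert polynomial of $\omega_{\cX/R}|_{\cX_s}$, and hence its leading coefficient $\tfrac{1}{d!}(\omega_{\cX_s})^d$ with $d=\dim X$, is locally constant in $s$. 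On the generic fiber it equals $\tfrac{1}{d!}(\omega_X)^d>0$, so $(\omega_Y)^d>0$ as well. Since a nef line bundle on a smooth projective variety of dimension~$d$ with positive top self-intersection is big, $\omega_Y$ is both nef and big.

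To close the gap between big and ample, I would invoke the classical theorem of Ueno (refined by Debarre) on subvarieties of abelian varieties: for a smooth closed subvariety $Y\subset A$, the canonical bundle $\omega_Y$ is ample if and only if it is big, equivalently if and only if the stabilizer $\Stab_A(Y)$ is finite. Applied fiberwise this yields ampleness of $\omega_{\cX/R}|_{\cX_s}$ on every geometric fiber, and relative ampleness follows as advertised. I expect this appeal to the Ueno--Debarre equivalence to be the principal nontrivial ingredient; everything else is flat-family bookkeeping together with the openness of fiberwise ampleness.
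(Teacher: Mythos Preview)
Your proof is correct and follows the same arc as the paper's: check ampleness fiberwise, show that the canonical bundle on each geometric fiber is big, then upgrade big to ample using the structure theory of subvarieties of abelian varieties. Two minor differences are worth noting. First, the paper obtains bigness via semicontinuity of $h^0(\omega^{\otimes n})$ rather than your constancy-of-intersection-numbers argument; both are valid. Second, and more substantively, the paper cites Abramovich rather than Ueno--Debarre for the implication from big to ample: the closed points of $\Spec R = \Spec \cO_{K,\Sigma}$ have positive-characteristic residue fields, and Ueno's original argument is characteristic-zero, so Abramovich's result is the reference you actually need here.
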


\begin{proof} Relative ampleness can be tested after a finite faithfully flat base change, hence we may assume that~$X$ admits a~$K$-rational point.
We show that for every point~$s \in S = \Spec \cO_{K, \Sigma}$ the canonical bundle~$\omega_{s} = \omega_{\cX_s}$ of the fiber~$\cX_s$ at~$s$ is ample. For the generic point~$s = \eta$ of~$S$ this is true by hypothesis because~$X$ is a very irregular variety. When~$s$ is a closed point, for any~$n \ge 1$ by semicontinuity we have
\[ h^0(\cX_s, \omega_{s}^{\otimes n}) \ge h^0(\cX_\eta, \omega_{\eta}^{\otimes n}) = h^0(X, \omega_X^{\otimes n}).\]
It follows that the line bundle~$\omega_{s}$ on~$\cX_s$ is big, thus the variety~$\cX_s$ is of general type. By \cite[th.~3]{Abr94} the stabilizer of the subvariety~$\cX_s \into \Alb(\cX_s)$ is finite, and by \cite[lemma~6]{Abr94} the canonical bundle of~$\cX_s$ is ample.
\end{proof}

In the rest of this section, the only property of very irregular varieties that we are going to use is that they are \emph{canonically polarized}, i.e. they are smooth projective varieties whose canonical bundle is ample.

\begin{definition} A canonically polarized variety~$X$ over~$K$ has \emph{good reduction (outside~$\Sigma$)} if there is a smooth proper scheme~$\cX$ over~$\cO_{K, \Sigma}$ with generic fiber~$X$ and relatively ample relative canonical bundle~$\omega_{\cX}$. 
We then call~$\cX$ a \emph{good model} for~$X$. 
\end{definition}

\Cref{Lemma:CanonicalBundleGoodModel} in particular implies that very irregular varieties with good reduction also have good reduction as canonically polarized varieties. As such, they have the following properties:

\begin{lemma} \label{Lemma:IsomorphismsOverAlgebraicClosure}  Let~$X$ and~$Y$ be canonically polarized varieties over~$K$ with good reduction. Then, the following facts hold:
\begin{enumerate}
\item Let~$\cX$ and~$\cY$ be good models of~$X$ and~$Y$. Then, any isomorphism of~$K$-schemes~$X \to Y$ extends uniquely to an isomorphism~$\cX \to \cY$.\smallskip
\item Up to isomorphism, there is a unique good model of~$X$.\smallskip
\item Let~$\bar{K}$ be an algebraic closure of~$K$. Then, up to~$K$-isomorphism there are only finitely many canonically polarized varieties over~$K$ with good reduction that are isomorphic to~$X$ over~$\bar{K}$.
\end{enumerate}
\end{lemma}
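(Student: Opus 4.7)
The plan is to deduce all three items from the single fact that, for any two good models $\cX$ and $\cY$, the $R$-scheme $\mathrm{Isom}_R(\cX, \cY)$ of $R$-isomorphisms is proper over $R$. Once this is in hand, part~(1) will follow by the valuative criterion, part~(2) will be immediate, and part~(3) will reduce to Hermite--Minkowski.

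For~(1), I would first invoke representability of the Isom functor (Grothendieck, FGA), which is valid since $\cX$ and $\cY$ are flat and proper over $R$. The key step is to establish properness. Any $R$-isomorphism $f\colon\cX\to\cY$ identifies $f^\ast \omega_{\cY/R}\simeq \omega_{\cX/R}$, so its graph $\Gamma_f\subset \cX\times_R\cY$ has Hilbert polynomial with respect to $\omega_{\cX/R}\boxtimes\omega_{\cY/R}$ determined entirely by that of $X$. This realizes $\mathrm{Isom}_R(\cX,\cY)$ as a locally closed subscheme of a single component of the relative Hilbert scheme $\Hilb_{\cX\times_R\cY/R}$, hence as a separated $R$-scheme of finite type. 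To upgrade this to properness I would invoke the Matsusaka--Mumford principle: in a one-parameter family of graphs of isomorphisms between smooth proper schemes with relatively ample canonical bundle, the limit is again the graph of an isomorphism. Given then the $K$-isomorphism $\varphi\colon X\to Y$, the valuative criterion applied at each closed point of $\Spec R$ produces a unique extension to an $R$-isomorphism $\cX\to\cY$.

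Part~(2) will follow at once by applying~(1) with $X=Y$, $\varphi=\id_X$, and two good models $\cX,\cX'$. For~(3), the starting point is the classical fact that $G:=\Aut_{\bar K}(X_{\bar K})$ is \emph{finite}, because $\omega_X$ is ample (Matsusaka--Mumford). Let $\underline{\Aut}_R(\cX)$ denote the $R$-group scheme of automorphisms of the good model $\cX$; it is finite over $R$ with geometric generic fiber $G$. Combining~(1) and~(2), the assignment sending a canonically polarized $Y/K$ with good reduction and $Y_{\bar K}\cong X_{\bar K}$ first to its good model $\cY$ and then to the torsor $\mathrm{Isom}_R(\cX,\cY)$ defines a bijection from the set to be bounded onto the set of isomorphism classes of $\underline{\Aut}_R(\cX)$-torsors over $\Spec R$. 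This set is classified by $H^1_{\et}(\Spec R, \underline{\Aut}_R(\cX))$, and is finite by Hermite--Minkowski: there are only finitely many finite extensions of $K$ unramified outside $\Sigma$ of degree bounded by $|G|$, and each supports only finitely many cocycles with values in the finite group $G$.

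The main obstacle is the properness of $\mathrm{Isom}_R(\cX,\cY)$ in~(1), which is the Matsusaka--Mumford specialization content and depends on the rigidity of ample canonical bundles in one-parameter families. The remaining ingredients---representability of Hilbert and Isom functors, finiteness of the automorphism group of a variety of general type, and the Hermite--Minkowski finiteness theorem for \'etale covers of rings of $\Sigma$-integers---are standard.
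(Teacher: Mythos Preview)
Your approach is correct and is precisely what lies behind the citations in the paper's proof, which reads in its entirety: ``(1) follows from \cite[cor.~1]{MatsusakaMumford} because canonically polarized varieties are not ruled, (2) is a consequence of (1), and (3) is \cite[lemma~4.1]{JavanpeykarLondon}.'' You have unpacked these references along the standard lines.

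One technical point in your argument for~(3) deserves care. You write that torsors under $\underline{\Aut}_R(\cX)$ are classified by $H^1_{\et}(\Spec R,\underline{\Aut}_R(\cX))$ and then invoke Hermite--Minkowski. This step uses that $\underline{\Aut}_R(\cX)$ is finite \emph{\'etale} over $R$, equivalently that $H^0(\cX_s,T_{\cX_s})=0$ for every closed point $s\in\Spec R$. Over the generic fibre (characteristic~$0$) this is automatic, but in positive characteristic the automorphism scheme of a canonically polarized variety can in principle be non-reduced. The fix is painless: by semicontinuity, $H^0(\cX_s,T_{\cX_s})\neq 0$ can occur only at finitely many closed points, so after enlarging $\Sigma$ by these primes the group scheme $\underline{\Aut}_R(\cX)$ becomes finite \'etale; since good reduction over $\cO_{K,\Sigma}$ implies good reduction over $\cO_{K,\Sigma'}$ for $\Sigma'\supset\Sigma$, this enlargement is harmless for the finiteness claim. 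Alternatively, one may observe directly that $\mathrm{Isom}_R(\cX,\cY)$ is determined by its generic fibre (by your part~(1)), hence the classifying map factors injectively through $H^1(K,G)$, and then argue that the resulting cocycles are unramified outside the enlarged $\Sigma'$.
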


\begin{proof} (1) follows from \cite[cor.~1]{MatsusakaMumford} because canonically polarized varieties are not ruled, (2) is a consequence of (1), and (3) is \cite[lemma~4.1]{JavanpeykarLondon}.
\end{proof}

To state the next lemma, notice that a finite \'etale cover of a canonically polarized variety is also canonically polarized, and we can ask whether as such it has good reduction.

\begin{lemma} \label{EtaleCoversCanonicallyPolarized} Let $d \ge 1$, and let $X$ be a canonically polarized variety over $K$. Then up to isomorphism there are only finitely many finite \'etale covers of~$X$ of degree $\le d$ with good reduction.
\end{lemma}

\begin{proof} By \cref{Lemma:IsomorphismsOverAlgebraicClosure} it suffices to show that, over an algebraic closure $\bar{K}$ of~$K$, there are only finitely many isomorphism classes of finite \'etale covers of $X_{\bar{K}}$ of degree~$\le d$. By embedding $\bar{K}$ into $\bbC$, this follows from the fact that the topological fundamental group of $X(\bbC)$ is finitely generated.
\end{proof}

As an application we prove \cref{IntroThmIntrinsicGeneral} for very irregular varieties which are isogenous to a symmetric power of a curve inside their Albanese variety. The following argument was pointed out to us by A.\ Javanpeykar and works for any symmetric power of a curve in the sense of \cref{sec:ExtrinsicResults} (without assuming that it embeds in its Albanese variety):

\begin{proposition} \label{Prop:FinitenessSymPowersCurves} Fix integers $n \ge 1$, $g \ge 2$. Then, up to isomorphism there are only finitely many canonically polarized varieties over $K$ of dimension~$n$ with good reduction which are symmetric powers of a curve of genus $g$.
\end{proposition}

\begin{proof}
By \cref{lemma:NeronModelPic0} the Albanese variety of any variety as in the statement has dimension~$g$ and good reduction outside~$\Sigma$. By the Shafarevich conjecture for abelian varieties~\cite{FaltingsMordell}, it therefore suffices to prove the finiteness of the set~$\cF$ of isomorphism classes of varieties as in the statement whose Albanese variety is isomorphic to a fixed abelian variety~$A$ with good reduction outside~$\Sigma$. Over an algebraic closure~$\bar{K}$ of~$K$, the set~$\cC$ of isomorphism classes of smooth projective connected curves whose Jacobian is isomorphic to~$A_{\bar{K}}$ is finite. Indeed, by Torelli's theorem this follows from the fact that up to automorphisms there are only finitely many principal polarizations on $A$. Moreover, the Albanese variety of a symmetric power of a curve is isomorphic to the Jacobian of the curve. It follows that for any~$X \in \cF$, we have~$X_{\bar{K}} \iso \Sym^n C$ for some~$C \in \cC$. Since $\cC$ is finite, we conclude by \cref{Lemma:IsomorphismsOverAlgebraicClosure} (3).
\end{proof}

Let $X$ be a very irregular variety over $K$ and $P$ its Albanese torsor. Then, the Albanese morphism $X \to P$ is a closed embedding with ample normal bundle. In this situation, the stabilizer $G$ of~$X$ is defined as the subgroup of $\Alb(X/K)$ preserving~$X$ inside $P$. We say that $X$ is \emph{isogenous to a symmetric power of a curve} if $X/G$ is a symmetric power of a curve; this slightly generalizes the notion in \cref{sec:ExtrinsicResults} because we do not require~$X$ to be embedded in its Albanese variety, but only in the Albanese torsor. Suppose that $X$ has good reduction in the sense of \cref{sec:IntrinsicResults}, which means that $X$ has good reduction as a canonically polarized variety and the unique good model $\cX$ embeds in the N\'eron model $\cP$ of $P$. If the order of the finite group~$G$ is not divisible by any of the primes in $\Sigma$, then the stabilizer of $\cX$ in $\cP$ is a finite \'etale group scheme over $\cO_{K, \Sigma}$. It follows that $\cX / \cG$ is the unique good model of the canonically polarized variety $X/G$, which has then good reduction. With these preliminaries, we have the following:

\begin{corollary} \label{Cor:ShafIsogeneousSymmetricPower} Fix $\chi \in \bbZ$. Then, up to isomorphism there are only finitely very irregular varieties over $K$ with good reduction and topological Euler characteristic~$\chi$ that are isogenous to a symmetric power of a curve.
\end{corollary}

\begin{proof} This follows from the above discussion, together with \cref{Prop:FinitenessSymPowersCurves} and \cref{EtaleCoversCanonicallyPolarized}: Indeed a finite \'etale cover of degree $d$ of a $n$-th symmetric power of curve of genus $g$ has topological Euler characteristic \[\chi = (-1)^n d \binom{2g - 2}{n}\qquad \textup{for} \quad n \le 2g - 2,\] hence once $\chi$ is fixed there are only finitely many possibilities for $n$, $d$ and $g$ because the very irregular assumption forces $n < g$ by \cref{RootOfVeryIrregularIsVeryIrregular} below.
\end{proof}

\begin{lemma} \label{RootOfVeryIrregularIsVeryIrregular} Let $X$ be a very irregular variety over an algebraically closed field $k$ of characteristic $0$. Let $G$ be the stabilizer of $X$ in $\Alb(X)$. Then, 
\begin{enumerate}
\item the variety $Y:=X/G$ is very irregular, \smallskip 
\item the quotient map $\pi \colon X \to Y$ induces an isogeny $\Alb(X) \to \Alb(Y)$. 
\end{enumerate}
\end{lemma}

\begin{proof}
It suffices to prove (2). The tangent map of $\Alb(X)\to \Alb(Y)$ is the dual of the differential $\rd \pi  \colon \rH^0(Y, \Omega^1_Y) \to  \rH^0(X, \Omega^1_X)$. Since $X \to Y$ is a principal bundle under $G$, the linear map $\rd \pi$ is injective with image the subspace of $G$-invariant one forms 
\begin{equation} \label{Eq:InvariantOneForms}
\rH^0(X, \Omega^1_X)^G  \; \subset \; \rH^0(X, \Omega^1_X).
\end{equation} On the other hand $G \subset \Aut(X)$ is the subgroup acting trivially on the Lie algebra of $\Alb(X)$, hence on $\rH^0(X, \Omega^1_X)$. The inclusion \eqref{Eq:InvariantOneForms} is then an identity, which concludes the proof.
\end{proof}

We record a consequence to be used in the proof of \cref{IntroThmIntrinsicGeneral}:

\begin{corollary} \label{Lemma:VeryIrregularNotIsogenousToProduct}
In the situation of~\cref{RootOfVeryIrregularIsVeryIrregular}, the subvariety $X \subset \Alb(X)$ is not isogenous to a product.
\end{corollary}

\begin{proof} By \cref{RootOfVeryIrregularIsVeryIrregular} (1) the variety $Y=X/G$ cannot be a product.
\end{proof}

\subsection{A parameter space} \label{sec:ParameterSpace}Given a polynomial~$P \in \bbQ[z]$ of degree~$d$, in this section we construct a parameter space for canonically polarized varieties that embed in their Albanese variety and with Hilbert polynomial~$P$ with respect to the canonical bundle. To this end we fix an integer
\[n \ge 1\] 
with the following property: For any smooth projective variety~$X$ over a field~$k$, any ample line bundle~$L$ on~$X$ with Hilbert polynomial~$P$ and any integer~$m \ge n$, the line bundle~$L^{\otimes m}$ is very ample and~$\rH^i(X,L^{\otimes m}) = 0$ for all~$i\ge1$. The existence of such an integer is guaranteed by Matsusaka's Big Theorem. Let~$\pi \colon X \to S$ be a smooth proper morphism of Noetherian schemes and~$\cL$ a relatively ample line bundle  on~$X$ with Hilbert polynomial~$P$ fiberwise. Then, by our choice of~$n$, the~$\cO_S$-module~$\cE := \pi_\ast \cL^{\otimes n}$ is locally free of rank~$P(n)$, the morphism~$\pi^\ast \cE \to \cL^{\otimes n}$ given by adjunction is surjective and the induced morphism~$X \to \bbP(\cE^\vee)$ is a closed embedding. 

\begin{definition} \label{def:FamilyCanonicallyPolarized} Let~$S$ be a Noetherian $\bbQ$-scheme. By a \emph{family of canonically polarized varieties} over~$S$ we mean a smooth proper morphism~$\pi \colon X \to S$ with geometrically connected fibers and relatively ample canonical bundle
\[ \omega_{X / S} := \det \Omega^1_{X/S}.\]
With notation as in \cref{sec:AlbaneseTorsor} we say that such a family is \emph{Albanese embedded} if the Albanese morphism
\[ \alb_{X} \colon X \too P(X/S)\]
is a closed embedding.
\end{definition}

Consider the functor $\underline{M}_{P, n}$ associating to a Noetherian $\bbQ$-scheme~$S$ the set of couples made of an Albanese embedded family~$\pi \colon X \to S$ of canonically polarized varieties with Hilbert polynomial~$P$ on each fiber with respect to the canonical bundle, and an isomorphism of~$\cO_S$-modules \[\phi \colon \cO_S^{P(n)} \; \stackrel{\sim}{\too} \; \pi_\ast \omega_{X/S}^{\otimes n}.\]

\begin{lemma} \label{Lemma:ParameterSpace} The functor~$\underline{M}_{P, n}$ is representable by a scheme~$M_{P, n}$ which is quasiprojective over~$\bbQ$. \end{lemma}

\begin{proof} For simplicity, write~$\underline{M}$ instead of~$\underline{M}_{P, n}$. Let~$\underline{M}'$ denote the functor obtained from~$\underline{M}$ by forgetting the Albanese embedding condition.
Then~$\underline{M}'$ is represented by a quasiprojective scheme~$M'$ over~$\bbZ$. Concretely~$M'$ is a principal~$\GL_{P(n)}$-bundle over an open subset of the locus of the Hilbert scheme of subschemes in~$\bbP^{P(n)-1}$ with Hilbert polynomial~$Q(z) = P(nz)$~by standard arguments similar to those in~\cite[th.~1.46]{ViehwegModuli}. The universal family \[ \pi' \colon \cX' \too M'\] is a family of canonically polarized varieties. The locus~$M \subset M'$ where the universal family is Albanese embedded is open. Indeed, let~$U \subset \Pic^\tau(\cX'/M')$ be the smallest open and closed subset containing the zero section. Then the locus where~$U \to M'$
is smooth with geometrically connected fibers is open. Moreover, over such a locus the Albanese torsor is defined by \cref{Prop:RepAlbaneseTorsor}, and the locus where the Albanese morphism is a closed embedding is again open. Clearly~$M$ represents the functor~$\underline{M}$, hence concludes the proof.
\end{proof}

\subsection{Proof of \cref{IntroThmIntrinsicGeneral}} We are finally in position to prove \cref{IntroThmIntrinsicGeneral}.  The polynomial~$P$ being fixed, we pick an integer~$n \ge 1$ as in \cref{sec:ParameterSpace}. Consider the parameter space~$M := M_{P, n}$ constructed in \cref{Lemma:ParameterSpace} and let
\[ \pi \colon \cX \too M\]
be the universal family over~$M$, which is an Albanese embedded family of canonically polarized varieties in the sense of \cref{def:FamilyCanonicallyPolarized}. In particular, we can consider the Albanese torsor~$P(\cX / M)$ defined in \cref{sec:AlbaneseTorsor}. The Albanese morphism
\[ \alb_{\cX} \colon \cX \too P(\cX / M)\]
is a closed embedding by construction. Replacing~$M$ by a suitable union of its connected components we may assume that any fiber~$X$ of~$\cX \to M$ satisfies the inequality~$h^0(X, \Omega^1_X) \ge 2d + 2$ as well as \eqref{SkullInequality} and \eqref{Eq:NumericalConditions} with~$A = \Alb(X)$. Let
\[ M_{\textup{irr}} \; \subset \; M\]
the open subset where the Albanese morphism has ample normal bundle. A variety~$V$ as in the statement of \cref{IntroThmIntrinsicGeneral} gives rise to a~$K$-point of~$M$ after having chosen a basis of~$\rH^0(V, \omega_V^{\otimes n})$.  
We consider the subset \[ \cF \; \subset \; M_{\textup{irr}}(K)\]
corresponding to varieties with good reduction outside~$\Sigma$ in the sense of \cref{sec:IntrinsicResults}. We need to prove that the varieties~$\cX_t$ for~$t \in \cF$ are finitely many up to isomorphism of~$K$-schemes.

\medskip

\emph{The case of trivial Albanese torsor.} We first prove \cref{IntroThmIntrinsicGeneral} with the additional hypothesis that, for the varieties~$X$ in question, the Albanese torsor~$P(X/K)$ admits a~$K$-rational point, which yields an isomorphism with the Albanese variety:
\[ P(X/K) \; \iso \; \Alb(X).\]
Via such an identification, the Albanese morphism can be seen as a morphism
\[ f \colon X \to \Alb(X)\]
which by construction is a translate of an Albanese morphism (\cref{def:TranslateAlbaneseMorphism}). Recall that by definition the Albanese torsor 
\[ N := P(\cX / M) \too M\]
parametrizes, for any~$M$-scheme~$S$, the set of morphisms~$\cX_S \to \Alb(\cX / M)_S$ that are translates of an Albanese morphism. In particular, over $N$ there is a universal translate of an Albanese morphism
\[ \cY  \; \too \; \Alb(\cY / N)\]
which is by hypothesis a closed embedding, where~$\cY := \cX_N$ is the universal family over~$N$. In other words, letting
\[ N_{\textup{irr}} \; := \; M_{\textup{irr}} \times_{M} N \quad \subset \quad N,\]
the varieties with trivial Albanese torsor appearing in the statement of \cref{IntroThmIntrinsicGeneral} can be seen as points in~$N_{\textup{irr}}(K)$ for which~$\cY_t$ has good reduction.\medskip

Varieties isogenous to symmetric powers of curves in their Albanese variety have already been treated in \cref{Cor:ShafIsogeneousSymmetricPower}, so we just need to handle varieties that are not of this kind. More precisely, let
\[ \cG \subset N_{\textup{irr}}(K)\]
be the preimage of~$\cF \subset M_{\textup{irr}}(K)$. We need to show that the varieties corresponding to points in~$\cG$ which are not isogenous to symmetric powers of curves are finitely many up to isomorphism. To do so, we reduce to working inside a fixed ambient abelian variety. The universal family~$\cY \to N$ is projective, thus the abelian scheme~$\Pic^0(\cY/N) \to N$ is equipped with a relatively ample line bundle; see  \cref{sec:AlbaneseTorsor}. We consider the relative ample line bundle~$\cL$ induced on its dual abelian scheme~$\Alb(\cY/N)$. The degree of~$\cL$ is uniformly bounded on~$N$ because it is constant on each connected component. For~$t \in \cF$ let~$\cA_t$ be the N\'eron model over~$\cO_{K, \Sigma}$ of the abelian variety~$\Alb(\cY_t)$. Since~$\cY_t$ has good reduction outside~$\Sigma$, \cref{lemma:NeronModelPic0} implies that~$\cA_t$ is an abelian scheme. The Shafarevich conjecture for polarized abelian varieties proved by Faltings~\cite{FaltingsMordell} then implies that the set
\[ \{ (\Alb(\cY_t), \cL_{t}) \mid t \in \cF \} \]
is finite up to isomorphism of polarized abelian varieties over~$K$. By treating each isomorphism class separately, we may assume
\[ (\Alb(\cY_t), \cL_{t}) \; \iso \; (A,L) \qquad \textup{for all} \quad t \in \cF\]
for a fixed polarized abelian variety~$(A,L)$ over~$K$. The Hilbert polynomial of the subvariety~$\cY \into \Alb(\cY/N)$ with respect to the polarization~$\cL$ takes finitely many values, as it is constant on connected components. Therefore, treating each of these polynomials individually, we may assume that this Hilbert polynomial is fixed. We conclude by~\cref{IntroThmExtrinsic}, which applies here because the varieties in $\cG$ embed in their Albanese varieties with ample normal bundle, are  by \cref{Lemma:VeryIrregularNotIsogenousToProduct} not isogenous to products in their Albanese variety, and are by assumption not isogenous to symmetric powers of curves.\medskip

\emph{The general case.} We now prove \cref{IntroThmIntrinsicGeneral} in general. To do so, it suffices to show that there is a finite extension~$K'$ of~$K$ such that, for all varieties~$X$ in the statement, the Albanese torsor~$\Alb(X/K)$  admits a~$K'$-rational point. Indeed, if it is the case, then we can apply the previous step over~$K'$, and with a finite set of primes in~$K'$ containing those lying above~$\Sigma$. Then, we have that the varieties under consideration are finitely many up to isomorphism of~$K'$-schemes and the conclusion follows from \cref{Lemma:IsomorphismsOverAlgebraicClosure}. To construct such an extension, by \cref{Prop:RepAlbaneseTorsor} the Albanese torsor~$p \colon P(\cX / M) \to M$ admits a relatively ample line bundle~$\cL'$. The locally free~$\cO_M$-module~$p_\ast \cL'$ has bounded rank, since~$M$ has only finitely many connected components. For~$t \in \cF$ let~$\cP_t$ denote the N\'eron model of the Albanese torsor~$P(X_t/K)$. By \cref{Lemma:ModelsOfAbelianTorsors} the ample line bundle induced by~$\cL'$ on~$P(X_t/K)$ extends to an ample line bundle~$\cL'_t$ on~$\cP_t$ with the same degree. It follows from \cref{Prop:FaltingsFinitenessTorsor} that the Albanese torsors~$\cP_t$ for~$t \in \cF$ are only finitely many up to isomorphism of~$\cO_{K, \Sigma}$-schemes. Therefore, it suffices to take a sufficiently  big finite extension~$K'$ of~$K$ over which each of them admits a~$K'$-rational point. This concludes the proof.
\qed

\subsection{Proof of \cref{IntroThmIntrinsicSurfaces}} For a smooth projective surface~$X$ the Riemann-Roch theorem together with Noether's formula imply
\[ \chi(X, \omega_X^{\otimes n}) = \tfrac{1}{2}n (n-1) c_1(X)^2 + \tfrac{1}{12} (c_1(X)^2 + c_2(X)).\]
If~$X$ is of general type, then~$c_1(X)^2 \le 3 c_2(X)$ by the Bogomolov-Miyaoka-Yau inequality. In particular once~$c_2(X)$ is fixed there are only finitely many possibilities for the Hilbert polynomial of~$X$ with respect to the canonical bundle. We then conclude by \cref{IntroThmIntrinsicGeneral}. \qed

\section{Buildings and filtrations on reductive groups}
\label{sec:buildings}

In this section we gather some generalities about reductive groups that will be used throughout the rest of the paper. Let~$k$ be a perfect field.

\subsection{Reductive groups}\label{sec:NotationReductiveGroups} For an affine algebraic group~$G$ over~$k$ let~$\rad G$ be the unipotent radical of~$G^\circ$. In this paper~$G$ is \emph{reductive} if~$\rad G$ is trivial. Note that~$G$ is not necessarily connected. When~$k$ is of characteristic~$0$, an affine algebraic group over~$k$ is reductive if and only its category of representations is semisimple. Let~$G$ be a reductive group over~$k$. Given a cocharacter~$\lambda \colon \Gm \to G$ consider the algebraic subgroup~$P_\lambda \subset G$ whose points with values in a~$k$-scheme~$S$ are those~$g \in G(S)$ such that the limit
\[ \lim_{t \to 0} \lambda(t) g \lambda(t)^{-1}\]
exists, that is, the morphism~$\lambda g \lambda^{-1} \colon \Gm \times S \to G$ extends to~$\bbA^1 \times S$. The identity component of~$P_\lambda$ is a parabolic subgroup of~$G^\circ$. Consider the subgroup~$U_\lambda$  of~$P_\lambda$ made of those~$g$ for which~$\lim_{t \to 0} \lambda(t) g \lambda(t)^{-1}$ is the neutral element of~$G$. The subgroup~$U_\lambda$ is connected by definition and coincides with the unipotent radical~$\rad P_\lambda$ of~$P_\lambda$. If~$L_\lambda$ is the centralizer of the image of~$\lambda$ in~$G$, then the projection onto~$P_\lambda / \rad P_\lambda$ induces an isomorphism 
\[ L_\lambda \stackrel{\sim}{\too} P_\lambda / \rad P_\lambda. \] See \cite[prop. 5.2]{MartinReductiveSubgroups} for these facts. When~$G$ is connected, parabolic subgroups of~$G$ are parametrized by a proper smooth~$k$-scheme~$\Par(G)$. Its Stein factorization~$\Type(G)$ is finite \'etale over~$k$ and the preimage~$\Par_t(G)$ of~$t \in \Type(G)( k)$ via the natural map~$\Par(G) \to \Type(G)$ is a geometrically connected component. A parabolic subgroup~$P \subset G$ is said of type~$t(P)= t$ if the point~$[P]\in \Par(G)( k)$ defined by~$P$ lies in~$\Par_t(G)$. 

\subsection{The rational building} 
We recall the notion of the rational (or vectorial) building  of a reductive group~$G$ over~$k$; when~$G$ is connected this can be found in~\cite[\S 2]{RousseauBuilding}.  Write~$X_\ast(G)$ for the set of cocharacters of~$G$. 

\begin{definition}
The \emph{rational building}~$\cB(G,  k)$ of~$G$ is the quotient
\[ \cB(G,  k) \; :=\; (X_\ast(G) \times ( \bbN \smallsetminus \{ 0\})) / \sim\]
where~$(\lambda, n) \sim (\lambda', n')$ if there is~$g \in P_\lambda( k)$ such that~$\lambda'^n = g \lambda^{n'} g^{-1}$. 
\end{definition}

The equivalence class of~$(\lambda, n)$ is written~$[ \tfrac{\lambda}{n} ]$. 
For~$a, b \in \bbZ$ with~$b \ge 1$ and~$x = [\tfrac{\lambda}{n}]$ write~$\tfrac{a}{b}x = [\tfrac{\lambda^a}{bn}]$. The group~$G( k)$ acts on the rational building by conjugating cocharacters. The stabilizer of a point~$x \in \cB(G,  k)$ is~$P_x( k)$ where~$P_x := P_\lambda$ and~$x = [\tfrac{\lambda}{n}]$. We write~$t(x)$ for the type of the parabolic subgroup~$P_x^\circ \subset G^\circ$. The construction of the rational building is functorial: for a morphism of algebraic groups~$f \colon G \to G'$ with~$G'$ reductive, composing with~$f$ gives rise to map \[f_\ast \colon \cB(G,  k) \too \cB(G', k).\]

\begin{lemma} The natural map~$i \colon \cB(G^\circ,  k) \to \cB(G, k)$ is bijective.
\end{lemma}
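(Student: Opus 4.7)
The plan is to establish surjectivity and injectivity of $i$ separately. Surjectivity should be immediate: since $\Gm$ is connected, every cocharacter $\lambda \colon \Gm \to G$ factors through the identity component $G^\circ$, so $X_\ast(G) = X_\ast(G^\circ)$, and any class in $\cB(G, k)$ is tautologically represented by a pair $(\lambda, n)$ with $\lambda \in X_\ast(G^\circ)$, giving a preimage under $i$.

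The substance lies in injectivity. Suppose $(\lambda, n)$ and $(\lambda', n')$ become equivalent in $\cB(G, k)$, witnessed by some $g \in P_\lambda(k)$ with $\lambda'^n = g \lambda^{n'} g^{-1}$; I will show that $g$ can be replaced by an element of $(P_\lambda)^\circ(k)$ realizing the same conjugation, which then witnesses the equivalence already in $\cB(G^\circ, k)$. The main ingredient will be the Levi-type decomposition $P_\lambda = U_\lambda \rtimes L_\lambda$, which should hold even when $G$ is disconnected: the assignment $\rho \colon P_\lambda \to L_\lambda$ given by $g \mapsto \lim_{t \to 0} \lambda(t) g \lambda(t)^{-1}$ is a well-defined morphism of $k$-schemes (by the very definition of $P_\lambda$), has kernel $U_\lambda$ (by definition of $U_\lambda$), and restricts to the identity on $L_\lambda$ since elements of $L_\lambda$ commute with $\lambda(t)$ for every $t$. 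Writing $g = u\ell$ with $u = g \rho(g)^{-1} \in U_\lambda(k)$ and $\ell = \rho(g) \in L_\lambda(k)$, the centralizer property of $\ell$ then gives
\[
g \lambda^{n'} g^{-1} \;=\; u\, \ell\, \lambda^{n'} \ell^{-1}\, u^{-1} \;=\; u\, \lambda^{n'}\, u^{-1},
\]
since $\ell$ commutes with $\lambda(t)$ for all $t$ and hence with $\lambda^{n'}$. Because $U_\lambda$ is connected, $u \in U_\lambda(k) \subset (P_\lambda)^\circ(k)$, so $g' := u$ is the required witness.

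The closest thing to an obstacle will be verifying that the Levi decomposition and the retraction $\rho$ behave correctly in the potentially disconnected setting — in particular that $L_\lambda$ may have several components, some lying outside $G^\circ$. This is however essentially built into the definitions recalled in the excerpt; what makes the argument succeed is that any non-identity-component part of $g$ is necessarily absorbed into the Levi factor $\ell$ and then killed by its commutation with $\lambda^{n'}$.
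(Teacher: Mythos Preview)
Your proof is correct and follows essentially the same approach as the paper's. Both arguments reduce injectivity to replacing the witness $g \in P_\lambda(k)$ by its unipotent part: the paper projects $g$ to $\bar g \in L_\lambda \cong P_\lambda/\rad P_\lambda$ and uses $g\bar g^{-1} \in \rad P_\lambda$, while you use the limit retraction $\rho(g) = \lim_{t\to 0}\lambda(t)g\lambda(t)^{-1}$ to define $\ell = \rho(g)$ and $u = g\ell^{-1}$; these are the same decomposition, and the conclusion that $u \in U_\lambda \subset (P_\lambda)^\circ$ does the job is identical.
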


\begin{proof} Since any cocharacter takes values in~$G^\circ$ the map~$i$ is clearly surjective. To see that~$i$ is injective let~$(\lambda, n)$ and~$(\lambda', n') \in X_\ast(G)$ be couples made of a cocharacter and a positive integer such that there is~$g \in P_\lambda( k)$ with~$\lambda'^n = g \lambda^{n'} g^{-1}$. We have to prove that we can choose such an element~$g$ in the identity component of~$P_\lambda$. The image~$\bar{g}$ of~$g$ in~$L_\lambda \iso P_\lambda / \rad P_\lambda$ commutes with~$\lambda$ thus \[\lambda'^n = g \lambda^{n'} g^{-1} = g \bar{g}^{-1} \lambda^{n'} \bar{g} g^{-1}.\] This concludes the proof because~$g \bar{g}^{-1} \in \rad P_\lambda \subset P_\lambda^\circ$.
\end{proof}

The map~$f_\ast$ is injective resp. surjective if and only if $f\colon G^\circ \to G'^\circ$ has finite kernel resp. is surjective. The \emph{apartment} associated with a maximal split torus~$T$ of~$G$ is the subset~$\cB(T,  k) = X_\ast(T) \otimes_{\bbZ} \bbQ \subset \cB(G,  k)$.

\begin{example} \label{Ex:BuildingGL} Assume~$G = \GL(V)$ for a finite dimensional~$k$-vector space~$V$. For a point~$x = [\tfrac{\lambda}{n}]\in \cB(\GL(V),  k)$ let~$V  = \bigoplus_{i \in \bbZ} V_i$
be the decomposition of~$V$ in isotypical factors under the action of~$\lambda$; that is~$V_i$ is the subspace where~$\lambda(t)$ acts as~$v \mapsto t^i v$. For~$\alpha \in \bbQ$ set
\[ \rF^\alpha V = \bigoplus_{i \ge \alpha n} V_i.\]
This defines a separated exhaustive nonincreasing filtration~$F^\bullet V$ on~$V$ indexed by rational numbers. This procedure sets up a~$\GL(V)( k)$-equivariant bijection
\[ 
\cB(\GL(V),  k)  \stackrel{\sim}{\too}  
\left\{ 
\begin{array}{c}
\textup{separated exhaustive nonincreasing   filtrations } \\
\textup{of~$V$ indexed by rational numbers}
\end{array}
\right\}.
\]
Let~$G = \GO(V, \theta)$ resp.~$G= \GSp(V,\theta)$ for a nondegenerate pairing~$\theta$ on~$V$ which is symmetric resp. alternating. Then~$\cB(G, k) \subset \cB(\GL(V),  k)$ is the subset of \emph{self-dual} filtrations, i.e. such that there is~$c \in \bbQ$ such that~$(F^\alpha V)^\bot = F^{c-\alpha} V$ for each~$\alpha \in \bbQ$.
\end{example}

\begin{example} \label{ex:BuildingProductWeilRestriction} (1) For reductive groups~$G_1$,~$G_2$ over~$k$ we have
\[ \cB(G_1 \times G_2,  k)= \cB(G_1, k) \times \cB(G_2, k).\]
(2) Given a finite extension~$k'$ of~$k$ and~$G'$ a reductive group over~$k'$ we have
\[ \cB(G ,  k) = \cB(G',  k')\]
where~$G:= \Res_{ k'/ k} G'$ is the Weil restriction.
\end{example}

\subsection{Filtrations} \label{sec:HopfFiltrations} We will need a similar description for points in a general reductive group in terms of filtrations. To do this let~$G$ be an affine algebraic group over~$k$. Recall that a representation of~$G$ is a~$k$-vector space~$V$ together with the structure of a comodule over the Hopf algebra~$k[G] := \Gamma(G, \cO_G)$. Such a structure is given by a map~$\sigma \colon V \to  V \otimes  k[G]$ called \emph{comultiplication} henceforth.
When~$V$ is finite dimensional this is equivalent to the datum of a morphism of algebraic groups~$G \to \GL(V)$. Let~$\Rep(G)$ be the category of finite dimensional representations.  A \emph{filtration functor} on~$\Rep(G)$ is a collection~$F^\bullet = (F^\alpha)_{\alpha \in \bbQ}$ of exact functors~$F^\alpha \colon \Rep(G) \to \Vect_ k$ such that
\begin{itemize}
\item[(F1)] for each~$V\in \Rep(G)$ the collection~$F^\bullet V = (F^\alpha V)_{\alpha \in \bbQ}$ is a non-increasing, exhausting and separated filtration on~$V$; \label{PropertyF1}
\item[(F2)] the associated graded functor~$\gr^\bullet$ is exact; \label{PropertyF2}
\item[(F3)] \label{PropertyF3} for~$V, W \in \Rep(G)$ and~$\alpha \in \bbQ$,
\[ F^\alpha (V \otimes W) = \sum_{\alpha = \beta + \gamma} F^\beta V \otimes F^\gamma W.\]
\end{itemize}
In fact there is an integer~$n \ge 1$ for which~$\gr^\alpha = 0$ if~$\alpha n \not \in \bbZ$: this follows from the existence of a tensor generator for~$\Rep(G)$, see \cite[Tannakian categories, prop. 2.20 (b)]{DM82}. This is what is called an \emph{exact~$\otimes$-filtration} in \cite[IV.2.1]{Saa72} except for the fact that filtrations are only indexed by integers in there. A cocharacter~$\lambda \colon \Gm \to G$  determines a filtration functor by \cref{Ex:BuildingGL}. Conversely, every filtration functor with~$\gr^\alpha = 0$ if~$\alpha \not\in \bbZ$ arise in this way when~$\characteristic( k) =0$ \cite[prop. IV.2.2.2]{Saa72}. The datum of a filtration functor is encoded by a suitable filtration induced on~$k[G]$. More precisely, let 
$\mu \colon  k[G] \to  k[G] \otimes  k[G]$ be the comultiplication of the Hopf algebra~$k[G]$ and consider the composite morphism~$\nu$ defined by the diagram
\[ \begin{tikzcd}
 k[G] \otimes  k[G] \ar[dr, swap, bend right=20, "\nu"] \ar[r, "\mu \otimes \mu"] & ( k[G] \otimes  k[G]) \otimes ( k[G] \otimes  k[G]) \ar[d] \\[1.5em]
&  k[G] \otimes  k[G] \otimes  k[G], 
\end{tikzcd}
\]
where the vertical arrow on the right is given by~$ a \otimes f \otimes b \otimes g \mapsto  a \otimes b \otimes fg$.

\begin{definition} \label{Def:HopfFiltration}
A filtration~$k[G]^\bullet = ( k[G]^\alpha)_{\alpha \in \bbQ}$ on~$k[G]$ is  \emph{Hopf} if it is nonincreasing, exhausting, separated and for all~$\alpha \in \bbQ$ we have:
\begin{align*}
  k[G]^\alpha &\; = \; \mu^{-1} (  k[G] \otimes  k[G]^\alpha), \label{eq:HopfFiltrationComult}  \tag{HF1}\\
\sum_{\alpha = \beta + \gamma}  k[G]^\beta \otimes   k[G]^\gamma  &\; = \; \nu^{-1}( k[G] \otimes  k[G] \otimes  k[G]^\alpha) \tag{HF2}. \label{eq:HopfFiltrationMult} 
\end{align*}
\end{definition}

Let~$X = \Spec A$ be a variety over~$k$ with a \emph{right} action of~$G$. The action is defined by an~$k$-algebra morphism
~$\sigma \colon A \to A \otimes  k[G]~$. With~$\sigma$ as comultiplication~$A$ is a representation of~$G$ and  as such is an increasing union of finite dimensional representations. Given a filtration functor~$F^\bullet$ on~$\Rep(G)$, the~$k$-algebra~$A$ inherits a filtration~$F^\bullet A$. Let~$G$ act on itself by right multiplication; the arising representation on~$k[G]$ is called the \emph{regular representation}. The preceding discussion with~$A=  k[G]$ yields a filtration~$F^\bullet  k[G]$ on~$k[G]$ which is Hopf. Indeed the group law~$G \times G \to G$ is equivariant for the right action~$(x, y)g = (x, yg)$ of~$G$ on~$G\times G$, so the functoriality of the filtration and its compatibility with tensor product (F3) then give \eqref{eq:HopfFiltrationComult}. Similarly, \eqref{eq:HopfFiltrationMult} holds because~$G \times G \times G \to G \times G$,~$(g, h, x) \mapsto (gx, hx)$ is equivariant for right multiplication on the third factor of~$G \times G \times G$ and on both factors of~$G \times G$.

 \medskip
 
 Conversely, let~$k[G]^\bullet$ be a Hopf filtration on~$k[G]$ and~$V$ a representation with comultiplication~$\sigma \colon V \to  V \otimes  k[G]~$. For~$\alpha \in \bbQ$ set
\[ F^\alpha V := \sigma^{-1}(V \otimes  k[G]^\alpha).\]
Note that the construction of the filtration~$F^\bullet V = (F^\alpha V)_{\alpha \in \bbQ}$ is functorial with respect to~$G$-equivariant~$k$-linear maps. 

\begin{proposition}\label{Prop:FiltrationFunctorsAsHopfFiltrations} For any Hopf filtration on~$k[G]$, the above collection of functors~$F^\alpha$ is a filtration functor on~$\Rep(G)$. Moreover, these constructions are mutually inverse and set up a bijection 
\[ \{ \textup{filtration functors on~$\Rep(G)$} \} \stackrel{\sim}{\too} \{ \textup{Hopf filtrations on~$k[G]$} \}. \]
\end{proposition}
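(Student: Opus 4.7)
My plan is to verify the three axioms (F1), (F2), (F3) for the functor $F^\alpha V := \sigma_V^{-1}(V \otimes k[G]^\alpha)$ coming from a Hopf filtration on $k[G]$, and then check that the two constructions are mutually inverse. For (F1), the nonincreasing, exhausting and separated properties of $F^\bullet V$ are immediate from the corresponding properties of $k[G]^\bullet$ combined with the injectivity of $\sigma_V$, which itself follows from the counit axiom $(\id_V \otimes \epsilon) \circ \sigma_V = \id_V$. For (F3), I would use that the comultiplication on a tensor product $V \otimes W$ of comodules is the composition $(\id \otimes m \otimes \id) \circ (\sigma_V \otimes \sigma_W)$, where $m \colon k[G] \otimes k[G] \to k[G]$ is the multiplication: condition \eqref{eq:HopfFiltrationMult} says precisely that the preimage of $k[G]^\alpha$ under $m$ equals $\sum_{\beta+\gamma=\alpha} k[G]^\beta \otimes k[G]^\gamma$, yielding the identity $F^\alpha(V \otimes W) = \sum_{\beta+\gamma=\alpha} F^\beta V \otimes F^\gamma W$. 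For (F2), the map $\sigma_V$ realizes $V$ as a subcomodule of the cofree comodule $V^{\mathrm{triv}} \otimes k[G]^{\mathrm{reg}}$ (trivial action on the first factor and regular action on the second); exactness of each $F^\alpha$ then reduces via \eqref{eq:HopfFiltrationComult} and a diagram chase to exactness of the functor $(-) \otimes_k k[G]^\alpha$ on $k$-vector spaces, and exactness of the associated graded follows formally from (F3).

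For the inverse property, starting from a Hopf filtration and going Hopf $\to$ functor $\to$ Hopf, the recovered filtration is $F^\alpha k[G]^{\mathrm{reg}} = \mu^{-1}(k[G] \otimes k[G]^\alpha) = k[G]^\alpha$ by \eqref{eq:HopfFiltrationComult}, since the comultiplication of the right regular representation is $\mu$ itself. Conversely, starting from a filtration functor $F^\bullet$ and setting $k[G]^\alpha := F^\alpha k[G]^{\mathrm{reg}}$, I must show $F^\alpha V = \sigma_V^{-1}(V \otimes k[G]^\alpha)$. The inclusion $\subseteq$ follows from functoriality applied to the $G$-morphism $\sigma_V \colon V \to V^{\mathrm{triv}} \otimes k[G]^{\mathrm{reg}}$ (well-defined by coassociativity), combined with (F3) and the fact that the filtration on any trivial representation is concentrated in degree zero---a fact provable by applying (F3) to the identification $k = k \otimes k$.

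The reverse inclusion $F^\alpha V \supseteq \sigma_V^{-1}(V \otimes k[G]^\alpha)$ is the main obstacle. My plan is to use the isomorphism of $G$-representations $\tilde\sigma \colon V \otimes k[G]^{\mathrm{reg}} \to V^{\mathrm{triv}} \otimes k[G]^{\mathrm{reg}}$ defined by $v \otimes f \mapsto \sum v_{(0)} \otimes v_{(1)} f$, where the source carries the diagonal comultiplication; coassociativity shows that $\tilde\sigma$ intertwines the two comodule structures, with inverse $v \otimes f \mapsto \sum v_{(0)} \otimes S(v_{(1)}) f$ where $S$ is the antipode of $k[G]$. Since $\tilde\sigma(v \otimes 1) = \sigma_V(v)$, functoriality of $F^\bullet$ yields $v \otimes 1 \in F^\alpha(V \otimes k[G]^{\mathrm{reg}})$ whenever $\sigma_V(v) \in V \otimes k[G]^\alpha$. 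If $v$ were not in $F^\alpha V$, I would pick the largest $\beta < \alpha$ with $v \in F^\beta V$ (which exists by separatedness); then the image of $v \otimes 1$ in $\gr^\beta(V \otimes k[G]^{\mathrm{reg}}) = \bigoplus_{\beta'+\gamma'=\beta} \gr^{\beta'} V \otimes \gr^{\gamma'} k[G]^{\mathrm{reg}}$ would have nonzero component $[v] \otimes [1]$ in $\gr^\beta V \otimes \gr^0 k[G]^{\mathrm{reg}}$, using that the constants $k \hookrightarrow k[G]^{\mathrm{reg}}$ form a trivial subrepresentation so that $1 \notin F^{>0} k[G]^{\mathrm{reg}}$. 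This contradicts $v \otimes 1 \in F^\alpha(V \otimes k[G]^{\mathrm{reg}}) \subseteq F^{>\beta}(V \otimes k[G]^{\mathrm{reg}})$, completing the proof.
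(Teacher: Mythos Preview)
Your argument is correct, and the verification of (F1)--(F3) and of the direction Hopf~$\to$~functor~$\to$~Hopf matches the paper's. The genuine difference is in the converse direction, where you identify the reverse inclusion $F^\alpha V \supseteq \sigma_V^{-1}(V\otimes k[G]^\alpha)$ as ``the main obstacle'' and resolve it via the comodule isomorphism $\tilde\sigma\colon V\otimes k[G]^{\mathrm{reg}}\cong V^{\mathrm{triv}}\otimes k[G]^{\mathrm{reg}}$ together with a contradiction in the associated graded. The paper dispatches both inclusions at once by exploiting a hypothesis you only use partially: each $F^\alpha$ is by definition an \emph{exact} functor, so for the injective $G$-morphism $\sigma_V\colon V\hookrightarrow V^{\mathrm{triv}}\otimes k[G]^{\mathrm{reg}}$ one gets the equality $F^\alpha V=\sigma_V^{-1}\bigl(F^\alpha(V^{\mathrm{triv}}\otimes k[G]^{\mathrm{reg}})\bigr)$ immediately, not just the inclusion you obtain from functoriality. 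Combined with $F^\alpha(V^{\mathrm{triv}}\otimes k[G]^{\mathrm{reg}})=V\otimes F^\alpha k[G]$ (from (F3) and the degree-zero concentration on trivial representations, which you also prove), this finishes the argument in one line. Your route has the virtue of using only functoriality and (F3) at this step, but the paper's is shorter. A minor point: your claim that ``(F2) follows formally from (F3)'' should rather read that (F2) follows from the exactness of each $F^\alpha$ (via the snake lemma), which you establish just before.
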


\begin{proof} 
Starting from a Hopf filtration~$k[G]^\bullet$ we have defined a functorial filtration~$F^\bullet V$ on every representation~$V$ of~$G$.
For a morphism~$\phi \colon V \to W$ of representations and~$\alpha \in \bbQ$ this filtration satisfies
\[
F^\alpha V = \phi^{-1}(F^\alpha W) \quad \textup{if~$\phi$ is injective}, \qquad
F^\alpha W = \phi(F^\alpha V) \quad \textup{if~$\phi$ is surjective}.\]
It follows that the functors~$F^\alpha \colon \Rep(G)\to \Vect_ k$ are exact. Moreover, the collection of functors~$F^\bullet = (F^\alpha)_{\alpha \in \bbQ}$ satisfies (F1) and (F2). To prove (F3) consider representations~$V$ and~$W$ with comultiplications~$\sigma_V \colon V \to V \otimes  k[G]$ and~$\sigma_W \colon W \to W \otimes  k[G]$ respectively. The comultiplication on~$V \otimes W$ is then defined by
\[ \begin{tikzcd}[row sep=0pt, column sep=40pt]
\sigma_{V \otimes W} \colon V \otimes W \ar[r, "\sigma_V \otimes \sigma_W"]&  V \otimes  k[G] \otimes  W \otimes  k[G]  \ar[r]& V \otimes W \otimes  k[G] 
\end{tikzcd}
\]
where the latter arrow is~$v \otimes f \otimes w \otimes g \mapsto v \otimes w \otimes fg$. Consider the following commutative diagram:
\[
\begin{tikzcd}[column sep=60pt]
V \otimes W \ar[r, "\sigma_{V \otimes W}"] \ar[d, "\sigma_V \otimes \sigma_W"'] & V \otimes W \otimes  k[G] \ar[d, "\sigma_V \otimes \sigma_W \otimes \id"]\\
(V \otimes  k[G]) \otimes (W \otimes  k[G]) \ar[d, "s"'] & (V \otimes  k[G]) \otimes (W \otimes  k[G]) \otimes  k[G] \ar[d, "s \otimes \id"]\\
V \otimes W \otimes  k[G] \otimes  k[G] \ar[r, "\id_{V \otimes W} \otimes \sigma_{ k[G] \otimes  k[G]}"]& V \otimes W \otimes  k[G] \otimes  k[G] \otimes  k[G]
\end{tikzcd}
\]
where~$s(v \otimes f \otimes w \otimes g) = v \otimes w \otimes f \otimes g$. Then by \eqref{eq:HopfFiltrationMult} we have:
\[ (V \otimes W)^\alpha = \sum_{\alpha = \beta + \gamma} V^\beta \otimes W^\gamma.\]
Therefore~$F^\bullet$ is a filtration functor. Property \eqref{eq:HopfFiltrationComult} implies~$F^\alpha  k[G] =  k[G]^\alpha$ for all~$\alpha \in \bbQ$,  so the given Hopf filtration is the one induced by the filtration functor that we constructed above. 

\medskip 

Conversely, let~$F^\bullet$ be any filtration functor on~$\Rep(G)$. For~$V\in \Rep(G)$, the comultiplication~$V\to V\otimes k[G]$ is~$G$-equivariant if we endow~$V\otimes k[G]$ with the action of~$G$ which is trivial on~$V$ and given by right multiplication on~$k[G]$. Since~$\mu$ is injective, the exactness of~$F^\bullet$ implies~$F^\alpha V = \sigma^{-1}(V \otimes F^\alpha  k[G])$ for all~$\alpha \in \bbQ$, hence the given filtration functor~$F^\bullet$ is the one constructed from the associated Hopf filtration as above.
\end{proof}

\begin{proposition} \label{Prop:FiltrationFunctorConnectedComponent} Let~$f \colon G \to H$ be a morphism of affine algebraic groups over~$k$. Then the natural map
\[ \{ \textup{filtration functors on~$\Rep(G)$}\}\; \too\; \{ \textup{filtration functors on~$\Rep(H)$}\}\]
is injective if~$f$ is injective, and is bijective if the morphism~$G^\circ \to H^\circ$ induced by~$f$ is an isomorphism.
\end{proposition}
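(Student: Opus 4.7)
My approach uses the Hopf-filtration characterization of~\cref{Prop:FiltrationFunctorsAsHopfFiltrations} throughout. The natural map sends a filtration functor $F^\bullet$ on $\Rep(G)$ to the composition $F^\bullet \circ f^*$ on $\Rep(H)$, and this is well-defined because $f^*\colon \Rep(H)\to\Rep(G)$ is an exact tensor functor, so axioms (F1)--(F3) automatically propagate.

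For injectivity when $f$ is injective, the closed immersion hypothesis makes $f^*\colon k[H]\twoheadrightarrow k[G]$ surjective. Consequently, every finite-dimensional $V \in \Rep(G)$ appears as a $G$-subrepresentation of $f^*W$ for some finite-dimensional $W \in \Rep(H)$: embed $V\hookrightarrow k[G]^n$ via the right regular action, lift to $k[H]^n$ along the surjection, and $H$-saturate to obtain $W$. By axiom (F2), we have $F^\alpha V = F^\alpha(f^*W) \cap V$, so two filtration functors on $\Rep(G)$ that induce the same one on $\Rep(H)$ must coincide.

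For bijectivity when $\phi = f|_{G^\circ}\colon G^\circ \xrightarrow{\sim} H^\circ$, the inclusions of identity components yield the commutative square
\[
\begin{array}{ccc}
\{\text{filt.\ func.\ on } \Rep(G^\circ)\} & \xrightarrow{(\iota_G)_*} & \{\text{filt.\ func.\ on } \Rep(G)\} \\
\downarrow \phi_* & & \downarrow f_* \\
\{\text{filt.\ func.\ on } \Rep(H^\circ)\} & \xrightarrow{(\iota_H)_*} & \{\text{filt.\ func.\ on } \Rep(H)\}
\end{array}
\]
in which $\phi_*$ is a bijection and both horizontal maps are injective by the previous step. The bijectivity of $f_*$ therefore reduces to showing that $(\iota_G)_*$ and $(\iota_H)_*$ are themselves bijections, since then $f_* = (\iota_H)_* \circ \phi_* \circ (\iota_G)_*^{-1}$ is a composition of bijections.

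\textbf{Main obstacle.} The technical heart of the argument is establishing the bijectivity of $(\iota)_*$ for an inclusion $\iota\colon G^\circ\hookrightarrow G$. This is the avatar at the level of filtration functors of the lemma preceding~\cref{Ex:BuildingGL}, which identifies $\cB(G^\circ,k)\xrightarrow{\sim}\cB(G,k)$: every cocharacter $\Gm\to G$ factors through $G^\circ$ because $\Gm$ is connected, so filtration data are intrinsic to the identity component. I would construct the inverse explicitly at the level of Hopf filtrations, by pushing a Hopf filtration $k[G]^\bullet$ forward to $\iota^*(k[G]^\bullet)$ along the surjective Hopf-algebra morphism $\iota^*\colon k[G]\twoheadrightarrow k[G^\circ]$. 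Verifying axioms (HF1)--(HF2) for this pushforward rests on a counit argument using $\epsilon(\ker\iota^*) = 0$ (since the identity lies in $G^\circ$); checking that the construction inverts $(\iota)_*$ uses in addition that $\mu_G^{-1}(k[G]\otimes\ker\iota^*) = 0$, which holds because $G = G\cdot G^\circ$ is covered by translates of the identity component and forces any function vanishing on every such translate to vanish identically.
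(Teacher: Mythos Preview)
Your injectivity argument and the reduction via the commutative square are correct and match the paper's approach (the paper phrases injectivity in terms of the surjection $k[H]\twoheadrightarrow k[G]$ rather than embedding $V\hookrightarrow f^*W$, but these are equivalent via exactness; note that your ``lift to $k[H]^n$'' step should be replaced by the observation that $k[G]$ is a $G$-equivariant \emph{quotient} of $f^*k[H]$, since a linear lift need not be $G$-equivariant).

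The gap is in your surjectivity argument for $(\iota_G)_*$. To check that your pushforward $k[G^\circ]^\alpha := \iota^*(k[G]^\alpha)$ gives a right inverse, you must show that for every $W\in\Rep(G)$ with comodule map $\sigma_W$,
\[
\sigma_W^{-1}\bigl(W\otimes(k[G]^\alpha + \ker\iota^*)\bigr) \;=\; \sigma_W^{-1}(W\otimes k[G]^\alpha),
\]
equivalently (taking $W=k[G]$) that $\mu_G^{-1}\bigl(k[G]\otimes(k[G]^\alpha + \ker\iota^*)\bigr)=k[G]^\alpha$. Your stated condition $\mu_G^{-1}(k[G]\otimes\ker\iota^*)=0$ is the special case $k[G]^\alpha=0$ and does not imply the general one: knowing that $\mu_G(a)$ has no component purely in $k[G]\otimes\ker\iota^*$ says nothing about whether a decomposition $\mu_G(a)=u+v$ with $u\in k[G]\otimes k[G]^\alpha$ and $v\in k[G]\otimes\ker\iota^*$ forces $v=0$. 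What you would actually need is that each $k[G]^\alpha$ is stable under multiplication by the idempotent $e_0=\mathbf{1}_{G^\circ}$, which by (HF2) reduces to $e_0\in k[G]^0$; but this in turn amounts to showing that the induced filtration functor on $\Rep(G/G^\circ)$ is trivial---essentially the finite-group case of the very surjectivity you are proving.

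The paper sidesteps this circularity entirely: it invokes the result of Saavedra \cite[prop.~IV.2.2.2]{Saa72} that (in characteristic~$0$, after rescaling so that $\gr^\alpha=0$ for $\alpha\notin\bbZ$) every filtration functor on $\Rep(G)$ arises from a cocharacter $\lambda\colon\Gm\to G$. Since $\Gm$ is connected, $\lambda$ factors through $G^\circ$, and surjectivity is immediate. Your direct Hopf-filtration approach could perhaps be completed, but not with the ingredient you name; the cocharacter argument is both shorter and already available.
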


\begin{proof} Suppose~$f$ is injective. Then~$f$ is a closed immersion and the induced morphism of~$k$-algebras~$\phi \colon  k[H] \to  k[G]$ is surjective. By \cref{Prop:FiltrationFunctorsAsHopfFiltrations} it suffices to show that the corresponding map 
\[ \{ \textup{Hopf filtrations on~$\Rep(G)$}\}\; \too\; \{ \textup{Hopf filtrations on~$\Rep(H)$}\}\]
is injective. The above map associates to a Hopf filtration~$k[G]^\bullet$ on~$k[G]$, the filtration~$k[H]^\bullet$ on~$k[H]$ induced by the action of right multiplication of~$G$ on~$H$. Now since~$f$ is~$G$-equivariant with respect to right multiplication of~$G$, we deduce that~$k[G]^\bullet$ is the image of~$k[H]^\bullet$ via~$\phi$. Thus~$k[G]^\bullet$ can be recovered from~$k[H]^\bullet$, showing that desired injectivity.

\medskip

For the second statement, it suffices to show that the natural map
\[ \{ \textup{filtration functors on~$\Rep(G^\circ)$}\}\; \too\; \{ \textup{filtration functors on~$\Rep(G)$}\}\]
is surjective, as we already know that it is injective. Given a filtration functor~$F^\bullet$ on~$\Rep(G)$ up to rescaling we may assume that~$\gr^\alpha= 0$ if~$\alpha \not\in \bbZ$. Then~$F^\bullet$ comes from a cocharacter. But a cocharacter has necessarily values in~$G^\circ$ as~$\Gm$ is connected. Thus~$F^\bullet$ comes from a filtration functor on~$G^\circ$.
\end{proof}

Given a filtration functor~$F^\bullet$ consider the algebraic subgroup~$P \subset G$ whose points with values in a~$k$-scheme~$S$ are those~$g \in G(S)$ such that \[g (F^\alpha V \otimes_ k \cO_S) = F^\alpha V \otimes_ k \cO_S \quad \textup{for all~$\alpha \in \bbQ$}.\]
When~$G$ is a reductive group, the subgroup~$P$ is parabolic \cite[Prop. IV.2.2.5]{Saa72} and, if~$F^\bullet$ is given by a cocharacter~$\lambda$, then~$P$ is the subgroup~$P_\lambda$ defined in the previous section. The filtration functors associated to cocharacters~$\lambda$,~$\lambda'$ then coincide if and only if~$\lambda'$ is conjugate to~$\lambda$ under~$P_\lambda$. Summing up:

\begin{corollary} \label{Cor:PointsInBuildingAsHopfFiltrations}
Suppose~$\characteristic( k) = 0$ and~$G$ reductive. Then the above constructions set up bijections 
\[\cB(G,  k) \stackrel{\sim}{\longleftrightarrow} \{ \textup{filtration functors on~$\Rep(G)$} \} \stackrel{\sim}{\longleftrightarrow} \{ \textup{Hopf filtrations on~$k[G]$} \}. \]
\end{corollary}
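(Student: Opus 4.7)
The plan is to establish the first bijection, since the second one is provided by \cref{Prop:FiltrationFunctorsAsHopfFiltrations}. First I would reduce to the case of a connected reductive group: the earlier lemma says the natural map $\cB(G^\circ,k)\to\cB(G,k)$ is a bijection, and \cref{Prop:FiltrationFunctorConnectedComponent} gives a bijection between filtration functors on $\Rep(G^\circ)$ and on $\Rep(G)$. So I may assume $G$ is connected and reductive over $k$ of characteristic $0$.

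Next I would construct the map $\Phi\colon\cB(G,k)\to\{\text{filtration functors on }\Rep(G)\}$. Given $x=[\lambda/n]$ and $V\in\Rep(G)$, decompose $V=\bigoplus_{i\in\bbZ}V_i$ under the action of $\lambda$ and set $F^\alpha V:=\bigoplus_{i\ge\alpha n}V_i$, exactly as in \cref{Ex:BuildingGL} applied through the morphism $G\to\GL(V)$. Exactness and compatibility with tensor products hold because weight decompositions under $\Gm$ are so, hence $F^\bullet$ is a filtration functor in the sense of (F1)--(F3). For well-definedness on equivalence classes, one checks that replacing $\lambda$ by $\lambda^k$ rescales the indexing by $k$, so $\lambda^{n'}$ and $\lambda^{n'}$ yield the same functor after the factor $n/n'$ is absorbed, and if $\lambda'^n=g\lambda^{n'}g^{-1}$ with $g\in P_\lambda(k)$, then $g$ preserves each $F^\alpha V$ (as it lies in the parabolic stabilizing the filtration induced by $\lambda$), so $\lambda'$ and $\lambda$ give the same functor.

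For surjectivity I would use \cite[prop.~IV.2.2.2]{Saa72}: in characteristic~$0$, every filtration functor on $\Rep(G)$ whose associated graded is concentrated in integer degrees is induced by a cocharacter $\Gm\to G$. A general $F^\bullet$ satisfies $\gr^\alpha=0$ unless $n\alpha\in\bbZ$ for some fixed $n\ge1$ (by the remark after (F1)--(F3) about the existence of a tensor generator), so the rescaled functor $\widetilde F^\alpha:=F^{\alpha/n}$ is induced by some $\lambda\in X_\ast(G)$, and $F^\bullet=\Phi([\lambda/n])$. For injectivity, if $\Phi([\lambda/n])=\Phi([\lambda'/n'])$, pass to the common rescaling with denominator $nn'$: the cocharacters $\lambda^{n'}$ and $\lambda'^{n}$ induce the same integer-indexed filtration functor. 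By \cite[prop.~IV.2.2.5]{Saa72}, the group scheme stabilizing this filtration is precisely $P_\lambda=P_{\lambda^{n'}}$, and two cocharacters induce the same filtration functor if and only if they are conjugate under this parabolic. Hence $\lambda'^n=g\lambda^{n'}g^{-1}$ for some $g\in P_\lambda(k)$, which is exactly the relation defining $[\lambda/n]=[\lambda'/n']$ in $\cB(G,k)$.

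No single step is genuinely hard; the substantive content is imported from \cite[IV.2.2]{Saa72}, and the main obstacle is purely bookkeeping: carefully tracking how the denominator $n$ in $[\lambda/n]$ interacts with rescaling of the filtration index so that the equivalence relation on $\cB(G,k)$ matches the notion of equality for filtration functors, and verifying that reducing to $G$ connected is compatible with the correspondence.
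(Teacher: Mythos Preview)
Your proposal is correct and follows essentially the same approach as the paper. The paper's argument is the paragraph immediately preceding the corollary: the second bijection is \cref{Prop:FiltrationFunctorsAsHopfFiltrations}, surjectivity of the first comes from rescaling plus \cite[prop.~IV.2.2.2]{Saa72}, and injectivity from the fact that two cocharacters give the same filtration functor iff they are conjugate under $P_\lambda$ (using \cite[prop.~IV.2.2.5]{Saa72}). Your explicit reduction to the connected case via the lemma $\cB(G^\circ,k)\cong\cB(G,k)$ and \cref{Prop:FiltrationFunctorConnectedComponent} is a harmless extra step that the paper does not spell out, since the cited Saavedra results already apply without assuming connectedness.
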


\subsection{Projection onto Levi quotients} \label{sec:ProjectionLeviQuotientBuildings}
Let~$G$ be a reductive group and let~$Q$ be a parabolic subgroup of its identity component $G^\circ$. For any point~$x \in \cB(G,  k)$ there is a cocharacter~$\lambda$ with values in~$Q$ such that~$x = [\tfrac{\lambda}{n}]$ for some~$n \ge 1$: Indeed~$P^\circ_x \cap Q$ contains a maximal split torus, and by \cite[prop. 11.6]{BorelTits} all maximal split tori are conjugate in~$P_x^\circ$. Let
\[
 \pi_Q\colon \quad Q \;\too\; Q^\ss \;:=\; Q/\rad Q
\]
be the projection onto the Levi quotient. Then the point
\[ \pr_Q(x) := [\tfrac{\pi_Q \circ \lambda}{n}] \in \cB(Q^\ss,  k) \]
does not depend on the chosen~$\lambda$. The map~$\pr_Q\colon \cB(G,  k) \to \cB(Q^\ss, k)$ defined in this way is~$Q( k)$-equivariant and surjective. 

\begin{example} Let~$V$ be a finite dimensional~$k$-vector space and~$G=\GL(V)$. The parabolic subgroup~$Q$ then is the stabilizer of a flag
\[ W_0 = 0 \subset W_1 \subset \cdots \subset W_n \subset W_{n +1} = V.\]
The reductive quotient~$Q^\ss$ is identified with~$\prod_{i= 0}^n \GL(\bar{W}_i)$ where~$\bar{W}_i = W_{i+1}/W_i$. If~$F^\bullet V$ is the filtration of~$V$ induced by~$x$, then the point~$\pr_Q(x)$ can be seen as the tuple of filtrations~$(F^\bullet \bar{W}_i)_{i}$ where
\[ F^\alpha \bar{W}_i = F^\alpha V \cap W_{i+1} \; /\; F^\alpha V \cap W_{i} \quad \text{for~$\alpha \in \bbQ$}.\]
\end{example}

\begin{remark} \label{Rmk:ProjectionAsHopfFiltration}
In general seeing points of the building as Hopf filtrations, the Hopf filtration on~$k[Q^\ss]$ corresponding to~$\pr_Q(x)$ is obtained as follows: First take the image under the surjection~$k[G] \to  k[Q]$ of the filtration on~$k[G]$ given by~$x$; then intersect the resulting filtration with~$k[Q^\ss] \subset  k[Q]$. 
\end{remark}

Fibers of~$\pr_Q$ are described in the following:

\begin{lemma} \label{Lemma:DescriptionFibersProjectionLeviFactor} For~$x \in \cB(G,  k)$ the map~$\rad Q( k) \to \cB(G,  k)~$,~$g \mapsto gx$ has values in~$\pr_Q^{-1}(\pr_Q(x))$ and induces a bijection
\[ f \colon (\rad Q / \rad Q \cap P_x) ( k) \stackrel{\sim}{\too} \pr_Q^{-1}(\pr_Q(x)).\]
In particular we have~$\Stab_{Q^\ss}(\pr_Q(x)) = ( Q \cap P_x) / (\rad Q \cap P_x)$.
\end{lemma}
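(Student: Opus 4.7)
My plan is to work throughout with a cocharacter representative $x=[\lambda/n]$ with $\lambda$ valued in $Q$, and to exploit that $\pr_Q$ is $Q(k)$-equivariant, where $Q(k)$ acts on $\cB(Q^\ss,k)$ through $\pi_Q$. This equivariance, immediate from $\pr_Q([g\lambda g^{-1}/n])=[\pi_Q(g)\pi_Q(\lambda)\pi_Q(g)^{-1}/n]$, implies both that $\rad Q(k)\cdot x\subset\pr_Q^{-1}(\pr_Q(x))$, since $\rad Q=\ker\pi_Q$, and that the stabilizer of $x$ in $\rad Q(k)$ is $(\rad Q\cap P_x)(k)$; hence $f$ is well-defined and injective. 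This is the easy half.

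The core of the argument is surjectivity. Given $y=[\mu/m]$ in the fiber with $\mu$ chosen in $Q$, I may assume after rescaling that $m=n$. The equality $\pr_Q(y)=\pr_Q(x)$ produces some $\bar h\in P_{\pi_Q\lambda}(k)$ with $(\pi_Q\mu)^n=\bar h(\pi_Q\lambda)^n\bar h^{-1}$, which I lift to $h\in Q(k)$ using that $\rad Q$ is a smooth connected unipotent group over the perfect field $k$, so $\rH^1(k,\rad Q)=0$ and $\pi_Q\colon Q(k)\to Q^\ss(k)$ is surjective. Setting $\lambda':=h\lambda h^{-1}$ gives $(\pi_Q\lambda')^n=(\pi_Q\mu)^n$; since any two cocharacters of $Q^\ss$ with the same nontrivial $n$-th power factor through the same $1$-dimensional subtorus, on which cocharacters form a torsion-free group, this forces $\pi_Q\lambda'=\pi_Q\mu$. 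Now $\lambda'$ and $\mu$ are two lifts to $Q$ of a common cocharacter of $Q^\ss$, and the standard Levi-decomposition fact that such lifts form a single orbit under $\rad Q$-conjugation --- their images are two Levi subgroups of the unipotent extension of $\pi_Q\mu(\Gm)$ inside $Q$, hence $\rad Q$-conjugate, after which the two cocharacters into the resulting common $1$-dimensional torus with equal projection under $\pi_Q$ must coincide --- yields $u\in\rad Q(k)$ with $\mu=u\lambda'u^{-1}$, and thus $y=(uh)\cdot x$.

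To convert $uh\in Q(k)$ into an element of $\rad Q(k)$ modulo $P_x$, I identify $\Stab_{Q^\ss}(\pr_Q(x))$ with $\pi_Q((Q\cap P_x)(k))$. By the same conjugacy result I may assume from the start that $\lambda$ takes values in a Levi subgroup $L\subset Q$; identifying $L\cong Q^\ss$, any $k$-point $\bar p$ of the parabolic $P_{\pi_Q\lambda}\subset L$, viewed inside $L(k)\subset Q(k)\subset G(k)$, has the property that $\lim_{t\to 0}\lambda(t)\bar p\lambda(t)^{-1}$ exists in $L\subset G$, whence $\bar p\in P_\lambda=P_x$ and trivially $\pi_Q(\bar p)=\bar p$. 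Combined with the obvious reverse inclusion this gives $\pi_Q((Q\cap P_x)(k))=P_{\pi_Q\lambda}(k)=\Stab_{Q^\ss}(\pr_Q(x))$. Picking $q\in(Q\cap P_x)(k)$ with $\pi_Q(q)=\bar h$, the element $g:=uhq^{-1}\in\rad Q(k)$ satisfies $g\cdot x=(uh)\cdot(q^{-1}\cdot x)=(uh)\cdot x=y$, completing surjectivity. The ``in particular'' formula then falls out from $\ker(\pi_Q|_{Q\cap P_x})=\rad Q\cap P_x$. The main obstacle is the surjectivity step, which rests on the two technical inputs $\rH^1(k,\rad Q)=0$ and the $\rad Q$-conjugacy of cocharacter lifts; together they flatten the problem to the transparent case where $\lambda$ lives in a Levi subgroup and $\pi_Q$ admits an explicit section.
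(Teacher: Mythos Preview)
Your proof is correct. It takes a somewhat different, more hands-on route than the paper's argument, which is worth noting.

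The paper argues as follows for surjectivity: given $x'\in\pr_Q^{-1}(\pr_Q(x))$, choose Levi factors $L,L'\subset Q$ so that $x\in\cB(L,k)$ and $x'\in\cB(L',k)$; since Levi factors are $\rad Q(k)$-conjugate there is $u\in\rad Q(k)$ with $L=uL'u^{-1}$, hence $ux'\in\cB(L,k)$; finally the composite $\cB(L,k)\hookrightarrow\cB(G,k)\xrightarrow{\pr_Q}\cB(Q^\ss,k)$ is a bijection (because $\pi_Q\colon L\xrightarrow{\sim}Q^\ss$), so $\pr_Q(ux')=\pr_Q(x)$ forces $ux'=x$. This is short and structural: the whole fiber is swept out by moving the Levi factor.

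Your approach instead works directly with cocharacter representatives: you first produce an arbitrary $Q(k)$-conjugacy between $x$ and $y$ via lifting from $Q^\ss$ and then conjugating the two resulting lifts of the common cocharacter, and in a second step you correct the conjugating element $uh\in Q(k)$ by an element of $(Q\cap P_x)(k)$ to push it into $\rad Q(k)$, which requires first identifying $\pi_Q((Q\cap P_x)(k))=P_{\pi_Q\lambda}(k)$. Both arguments rest on the same two inputs (vanishing of $\rH^1$ for smooth connected unipotent groups over perfect fields, and $\rad Q$-conjugacy of Levi factors or tori), so the difference is packaging rather than content. The paper's version is quicker; yours has the advantage that the ``in particular'' formula $\Stab_{Q^\ss}(\pr_Q(x))=(Q\cap P_x)/(\rad Q\cap P_x)$ is established explicitly along the way, whereas the paper leaves it to the reader.
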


\begin{proof} First of all notice that the group~$\rad Q \cap P_x$ is unipotent thus the vanishing of the first Galois cohomology group yields 
\[  (\rad Q / \rad Q \cap P_x) ( k) =  \rad Q ( k) / (\rad Q ( k) \cap P_x ( k)).\]
The~$Q( k)$-equivariance of~$\pr_Q$ implies that~$f$ has values in~$\pr_Q^{-1}(\pr_Q(x))$ and is injective. To prove that~$f$ is surjective let~$x' \in \pr_Q^{-1}(\pr_Q(x))$ and~$L, L' \subset Q$ Levi factors such that~$\cB(L,  k), \cB(L',  k) \subset \cB(G,  k)$ contain respectively~$x$ and~$x'$. Now~$L = u L' u^{-1}$ for some~$u \in \rad Q( k)$ because all Levi factors of~$Q$ are conjugated under~$\rad Q$. Therefore~$u x'$ belongs to~$\cB(L,  k)$ and \[\pr_Q(ux') = \pr_Q(x') = \pr_Q(x).\]
The projection~$\pi_Q \colon Q \to Q^\ss$ induces an isomorphism~$L \iso Q^\ss$ thus the composite map
\[ \cB(L,  k) \intoo \cB(G,  k) \stackrel{\pr_Q}{\too} \cB(Q^\ss,  k)\]
is a bijection. Therefore~$x = u x'$ as desired.
\end{proof}

\subsection{Positivity} \label{sec:PositivityParabolic} Let~$V$ be  a finite dimensional~$k$-vector space, and~$F^\bullet V$ the filtration on~$V$ associated to~$y \in \cB(\GL(V),  k)$ as in \cref{Ex:BuildingGL}. The \emph{weight} of~$y$ (or equivalently of the filtration~$F^\bullet V$) is the rational number
\[ \wt(y) := \frac{1}{\dim V}\langle x, \det \rangle = \sum_{\alpha \in \bbQ} \alpha \frac{\dim \gr^\alpha V}{\dim V}.\]
Let~$Q \subset G^\circ$ be a parabolic subgroup. The adjoint representation of~$Q$ restricts to a representation~$\rho \colon Q \to \GL(\Lie \rad Q)$ because the unipotent radical is a normal subgroup. The determinant of~$\rho$ is trivial on~$\rad Q$ thus it defines a character
\[\delta_Q \colon Q^\ss \too \Gm\]
 called the \emph{modular character} henceforth.\footnote{This differs from \cite{LS20} where the modular character is the inverse of the above.} Let~$x \in \cB(G,  k)$ and let~$L \subset G$ be a Levi factor of~$Q$ such that~$x \in \cB(L,  k)$. Unwinding the definition we find that the weight of the point~$\rho_\ast x \in \cB(\GL(\Lie \rad Q),  k)$ is
 \[ \wt(\rho_\ast x) = \langle \pr_Q(x), \delta_Q\rangle,\]
 where~$V = \Lie \rad Q$. Note that the right-hand side of the previous identity does not depend on the chosen~$L$.

 \begin{definition}  \label{Def:PositiveCouple}
The point~$x$ is \emph{$Q$-positive} if~$\langle \pr_Q(x), \delta_Q \rangle \ge 0$. In this case we write~$(x,Q) \succeq 0$. We will as well say that~$x$ is positive with respect to~$Q$ or that the couple~$(x, Q)$ is positive.
\end{definition}

\begin{example} Let~$G = \GL(V)$ for some finite dimensional~$k$-vector space~$V$ and let~$Q \subset G$ be the parabolic subgroup stabilizing a flag
\[ 0 = W_0 \subset W_1 \subset \cdots \subset W_r \subset W_{r + 1}=V \]
Then~$Q^\ss = \prod_{i = 0}^r \GL(\bar{W}_i)$ where~$\bar{W}_i = W_{i+1} / W_i$. Let~$x \in \cB(G,  k)$,  and let~$F^\bullet V$ be the associated filtration. Since the building is compatible with products by \cref{ex:BuildingProductWeilRestriction}, we have 
\[ 
 \pr_Q(x) \;=\; (\bar{x}_0, \dots, \bar{x}_r)
 \quad \text{with} \quad 
 \bar{x}_i \;\in\; \cB(\GL(\bar{W}_i),  k).
\]
The filtration on~$\bar{W}_i$ defined by~$\bar{x}_i$ is given by
\[F^\alpha \bar{W}_i \;=\; (W_{i+1} \cap F^\alpha V) / (W_i \cap F^\alpha V)
\quad \text{for} \quad \alpha \in \bbQ. \]
With this notation the point~$x$ is~$Q$-positive if and only if 
\[ \sum_{i < j} \dim \bar{W}_i \dim \bar{W}_j (\wt(F^\bullet \bar{W}_i) - \wt(F^\bullet \bar{W}_j)) \ge 0.\]
Indeed, as a vector space 
\[
 \Lie\rad Q \;=\; \bigoplus_{i<j} \Hom(\bar{W}_j, \bar{W}_i), 
\]
and hence
\[
 \det(\Lie\rad Q) \;=\; \bigotimes_{i<j} \left( \det(\bar{W}_j^\vee) \right)^{\otimes \dim \bar{W}_i} \otimes 
 \left( \det(\bar{W}_i)\right)^{\otimes \dim \bar{W}_j}.
\]
If we denote by~$\delta_i\colon Q^\ss \to \bbG_m$ the character defined by the action on~$\det(\bar{W}_i)$, it follows that
\[
 \delta_Q \;=\; \sum_{i<j} 
 \left( \dim \bar{W}_j \cdot \delta_i - \dim \bar{W}_i \cdot \delta_j
 \right)
\]
where we use additive notation for characters in the group~$\Hom(Q^\ss, \bbG_m)$.  Here we have~$\langle \pr_Q(x), \delta_i \rangle = \dim \bar{W}_i \cdot \wt(F^\bullet \bar{W}_i)$ by the definition of the weight of a filtration, hence 
\[
 \langle \pr_Q(x), \delta_Q \rangle \;=\; 
 \sum_{i<j} 
 \dim \bar{W}_i \dim \bar{W}_j 
 \left( 
 \wt(F^\bullet \bar{W}_i) - \wt(F^\bullet \bar{W}_j)
 \right) 
\]
and the claimed characterisation of~$Q$-positivity follows.
\end{example}

\begin{example} \label{Ex:PositiveCocharacterProduct} If~$G$ is the product of reductive groups~$G_i$ for~$i \in I$, then the tuple~$x= (x_i)_{i\in I}$ with~$x_i\in \cB(G_i,  k)$ and~$Q = \prod_{i \in I} Q_i$ with parabolic subgroups~$Q_i \subset G_i^\circ$. In this case
\[ \langle \pr_Q(x), \delta_Q \rangle \;=\; \sum_{i \in I}\langle \pr_{Q_i}(x), \delta_{Q_i} \rangle.\]
In particular, if~$x$ is~$Q$-positive, then~$x_i$ is~$Q_i$-positive for at least one~$i\in I$. 
\end{example}

\begin{example} \label{Ex:PositiveCocharacterWeilRestriction} 
If~$G = \Res_{ k'/ k} G'$ is the Weil restriction of a reductive group~$G'$ over a finite extension~$k'$ of~$k$, then we have~$Q = \Res_{ k'/ k} Q'$ for some parabolic subgroup~$Q' \subset G'^\circ$. The identification~$\cB(G,  k) = \cB(G',  k')$ then leads to 
\[ \langle \pr_{Q}(x), \delta_{Q} \rangle \;=\; [ k' :  k]\, \langle \pr_{Q'}(x), \delta_{Q'} \rangle.\]
In particular we see that~$x$ is~$Q$-positive if and only if it is~$Q'$-positive.
\end{example}

\subsection{Full subgroups} 
Let~$V$ be a finite dimensional~$k$-vector space and~$G\subset \GL(V)$ a reductive subgroup. Any point~$x\in \cB(G,  k)$ defines a filtration on~$V$ indexed by rational numbers; we denote the filtration steps by~$V_x^\alpha$ for~$\alpha \in \bbQ$ and the associated graded by 
\[
 \gr V_x \;=\; \bigoplus_{\alpha\in \bbQ} V_x^{(\alpha)}
 \quad \text{where} \quad 
 V_x^{(\alpha)} 
 \;=\; V_x^\alpha/V_x^{<\alpha}.
\]
By construction the filtration is preserved by the stabilizer~$Q_x \subset G$ of~$x$. Recall that this stabilizer may be disconnected; its identity component is the parabolic subgroup~$Q_x^\circ = Q_x\cap G^\circ$ of~$G^\circ$~\cite[prop. 5.2]{MartinReductiveSubgroups}. Let~$\rad Q_x$ be the unipotent radical of this identity component; the associated Levi quotient~$Q_x^\ss = Q_x/\rad Q_x$ faithfully acts on~$\gr V_x$ in a way preserving each graded piece, i.e.~we have a natural embedding
\[
 \epsilon_x\colon \quad Q_x^\ss \;\intoo\; \prod_{\alpha \in \bbQ} \GL(V_x^{(\alpha)}).
\]
For a subset~$I \subset \bbQ$ we consider the graded vector space~$\gr_I V_x := \bigoplus_{\alpha \in I} V_x^{(\alpha)}$ and denote by
\[
 \epsilon_{x, I} \colon 
 \quad Q_x^\ss \;\too\; 
 \gr_I \GL(V_x) \;:=\; 
 \prod_{\alpha \in I} \GL(V_x^{(\alpha)}),
\]
the composite of the morphism~$\epsilon_x$ with the projection to the factors that are indexed by the subset~$I$. For~$x, y \in \cB(G,  k)$, let
\[
 \Tran_G(x,y) \subset G
\]
be the closed subvariety of all~$g\in G$ with~$g x= y$. The stabilizer~$Q_x$ acts freely and transitively on~$\Tran_G(x,y)$ by right multiplication. For~$I\subset \bbQ$ we denote by
\[  \gr_I \Tran_{\GL(V)}(x,y) \subset \Iso(\gr_I V_x, \gr_I V_y )\]
the closed subvariety of all graded linear isomorphisms~$\varphi\colon \gr_I V_x \to \gr_I V_y$ such that 
\[ \varphi (\im \epsilon_{x,I}) \varphi^{-1} = \im \epsilon_{y,I}.\]

\begin{lemma} \label{transporter-as-torsor} 
The natural morphism 
\[
f\colon \quad 
\Tran_{G}(x,y) \too \gr_I \Tran_{\GL(V)}(x,y), \quad g \longmapsto \gr_I g.
\]
is invariant under the action of the radical~$\rad Q_x$ on the source. 
If~$\epsilon_{x,I}$ and~$\epsilon_{y,I}$ are injective, then the induced morphism on the quotient is a closed immersion
$$ \bar{f} \colon \quad \Tran_{G}(x,y)/\rad Q_x \;\intoo\; \gr_I \Tran_{\GL(V)}(x,y).~$$
\end{lemma}

\begin{proof} 
Any~$g\in \Tran_G(x,y)$ induces an element~$\gr_I g \in \gr_I \Tran_{\GL(V)}(x,y)$, and the latter does not change if we replace~$g$ with a conjugate by an element of~$\rad Q_x$. Thus we get morphisms~$f$ and~$\bar{f}$ as claimed. Now suppose~$\epsilon_{x,I}$ and~$\epsilon_{y,I}$ are injective. To show that in this case~$\bar{f}$ is a closed immersion, it suffices to show~$\bar{f}$ is injective (since the source and target are principal homogenous spaces and algebraic subgroups are always closed). For any point~$g\in \Tran_G(x,y)$ its adjoint action gives a morphism~$\Ad(g)\colon Q_x \to Q_y$ such that the following diagram commutes:
\[
\begin{tikzcd}
Q_x \ar[r] \ar[d, swap, "\Ad(g)"] 
& Q_x^\ss \ar[r, hook, "{\epsilon_{x,I}}"] \ar[d]
& \gr_I \GL(V_x) \ar[d, "\Ad(\gr_I g)"]
\\
Q_y \ar[r] 
& Q_y^\ss \ar[r,hook,  "{\epsilon_{y,I}}"] 
& \gr_I \GL(V_y)
\end{tikzcd}
\]
If~$g,h\in \Tran_G(x,y)$ satisfy~$\gr_I g = \gr_I h$, then~$h^{-1} g \in \rad Q_x$ because~$\epsilon_{x,I}$ and~$\epsilon_{y,I}$ are injective. Therefore~$\bar{f}$ is injective.
\end{proof} 

In particular, if the morphism~$\Tran_{G}(x,y) \to \gr_I \Tran_{\GL(V)}(x,y)$  is surjective, then it is a principal bundle under~$\rad Q_x$ and therefore surjective on~$k$-points by vanishing of the first Galois cohomology group of smooth connected unipotent groups over perfect fields.

\begin{definition} \label{full-subgroup} 
If~$k$ is algebraically closed, the subgroup~$G\subset \GL(V)$ is said \emph{full} if for every~$x, y\in \cB(G,  k)$ that are conjugate under~$\GL(V)( k)$, there exists a subset~$I\subset \bbQ$ with the property that~$\epsilon_{x,I}$ and~$\epsilon_{y,I}$ are embeddings and the natural morphism
\[ \Tran_G(x, y) \too \gr_I \Tran_{\GL(V)}(x, y)\]
is surjective. If~$k$ is arbitrary, we say that the subgroup~$G\subset \GL(V)$ is \emph{full} if it becomes a full subgroup after extending scalars to an algebraic closure of~$k$.
\end{definition}

As we will see below in \cref{Ex:SymplecticOrthogonalsGroupsAreFull} it is necessary to have~$I \neq \bbQ$ in order to allow orthogonal and symplectic groups to be full.

\begin{example} 
Taking~$x=y\in \cB(G,  k)$ to be the trivial cocharacter and looking at~the action scalar matrices, one sees that~$\det\colon \GL(V) \to \bbG_m$ must restrict to a surjective character on every full subgroup. In particular~$G=\SL(V)\subset \GL(V)$ is not a full subgroup, although~$G=\GL(V)$ is full.
\end{example} 

\begin{example} \label{Ex:SymplecticOrthogonalsGroupsAreFull}
Let~$\theta$ be a nondegenerate symmetric or alternating bilinear form on~$V$, and let
\[
 G \;=\; 
 \begin{cases} 
 \GSp(V, \theta) & \text{if~$\theta$ is alternating}, \\
 \GO(V, \theta) & \text{if~$\theta$ is symmetric}.
 \end{cases} 
\]
Then~$G\subset \GL(V)$ is a full subgroup. Indeed, by definition we may assume that~$k$ is algebraically closed. Since any parabolic subgroup of~$G^\circ$ is the stabilizer of an isotropic flag, up to relabelling indices the filtration induced by any~$x\in \cB(G,  k)$ has the form
\[
 0 = V_x^0 \subset V_x^1 \subset V_x^2 \subset \cdots \subset V_x^r \subset (V_x^r)^\perp \subset \cdots \subset (V_x^1)^\perp \subset (V_x^0)^\perp = V
\]
with isotropic subspaces~$V^i_x \subset V$ and~$r = r(x)$ depending on~$x$. Here we allow that~$(V_x^r)^\perp = V_x^r$ but all the other inclusions in the above filtration are assumed to be strict. On the quotient~$U_x:=(V_x^r)^\perp /V_x^r$ the bilinear form~$\theta$ induces a nondegenerate bilinear form~$\theta_x$ which is symmetric or alternating if~$\theta$ is so, and we put
\[
 H_x \;:=\; 
 \begin{cases} 
 \GSp(U_x, \theta_x) & \text{if~$\theta$ is alternating}, \\
 \GO(U_x, \theta_x) & \text{if~$\theta$ is symmetric}.
 \end{cases} 
\]
Put
$W_x^i := V_x^i/V_x^{i-1}$ for~$i=1,\dots, r$.
Then 
\[
 \gr V_x \;=\; W_x^1 \oplus \cdots \oplus W_x^r \oplus U_x \oplus (W_x^r)^\vee \oplus \cdots \oplus (W_x^1)^\vee
\]
where by construction each direct summand sits in a different graded piece, and the Levi quotient
\[
 Q_x^\ss \;=\; \GL(W_x^1) \times \cdots \times \GL(W_x^r) \times H_x
\]
acts on~$\gr V_x$ in the natural way preserving the grading. If we choose~$I\subset \bbQ$ such that
\[\gr_I V_x \;=\; W_x^1 \oplus \cdots \oplus W_x^r \oplus U_x\]
is the direct sum of the first half of the graded pieces including the middle piece, then 
\[
 \epsilon_{x,I}\colon \quad Q_x^\ss \;\hookrightarrow\; \gr_I \GL(V_x) \;=\; 
 \GL(W_x^1) \times \cdots \times \GL(W_x^r) \times \GL(U_x)
\]
is an embedding because the action on the remaining graded pieces is determined by duality. Now let~$x,y\in \cB(G,  k)$ be two points conjugate under~$\GL(V)( k)$. Then in particular the associated filtrations have the same length, so they are of the above shape with~$r=r(x)=r(y)$. Then by definition any point of~$\gr_I \Tran_{\GL(V)}(x,y)$ is a direct sum~$\bar{g} = \bar{g}_1 \oplus \cdots \oplus \bar{g}_r \oplus \bar{h}$ of isomorphisms
\[ 
 \bar{g}_i\colon \quad W_x^i 
 \;\stackrel{\sim}{\longrightarrow}\; W_y^i
  \quad \text{and} \quad 
  \bar{h}\colon \quad U_x \;\stackrel{\sim}{\longrightarrow}\; U_y
  \quad 
  \text{with} 
  \quad 
  \bar{h} H_x \bar{h}^{-1} = H_y.
\]
Now by Witt's theorem  \cite[th.~III.3.9]{ArtinGeometricAlgebra}, there exists an element~$g\in G( k)$ such that 
\begin{itemize} 
\item $g(V_x^i) \;=\; V_y^i$ and hence also~$g((V_x^i)^\perp) = (V_y^i)^\perp$ for~$i=1,\dots, r$, and \smallskip
\item $g$ induces the given isomorphisms~$\bar{g}_1, \dots, \bar{g}_r,\bar{h}$ between graded pieces. \smallskip
\end{itemize} 
Then~$g\in \Tran_G(x,y)$ lifts the element~$\bar{g} \in \gr_I \Tran_{\GL(V)}(x,y)$ as required.\medskip

Note that it is essential that we used only half of the graded pieces. To make it simple suppose~$x = y$ so we can drop the subscript~$x$ in the notation above. Assume moreover that the filtration associated with~$x$ is made of one single Lagrangian subspace~$V^1 = W$. In this case
\[ \epsilon \colon Q^\ss = \GL(W) \intoo \GL(W) \times \GL(W^\vee).\]
The normalizer of~$Q^\ss$ in~$\GL(W) \times \GL(W^\vee)$ is strictly larger than~$Q^\ss$ as it contains the center~$\Gm \times \Gm$. Composing~$\epsilon$ with the projection onto one of the two factors of~$\GL(W) \times \GL(W^\vee)$ permits to get rid of this problem.
\end{example}

\section{Realizations with extra structures} \label{sec:Realizations}

In this section we introduce the \'etale and de Rham realizations to which the Lawrence-Venkatesh method will be applied, that we call respectively \emph{\'etale data} and \emph{de Rham data}. A particular accent is put on the extra symmetries that these data satisfy, as they will be crucial for the method to work. We formulate Faltings' finiteness theorem in the context of semisimple pure \'etale data and discuss the effect of semisimplification on de Rham data. Let~$k$ be a field of characteristic~$0$ and~$p$ a prime number.

\subsection{Semilinear automorphisms and Weil restriction} Let~$Z$ be a finite \'etale scheme over~$k$. The~$k$-algebra~$R = \Gamma(Z, \cO_Z)$ is the product for~$z \in Z$ of finite extensions~$k_z$ of~$k$. An~$\cO_Z$-module~$V$ on~$Z$ is determined by its fibers, thus it is equivalent to the datum for each~$z \in Z$ of a~$k_z$-vector space~$V_z$; in particular it is always locally free. We say that a~$k$-linear endomorphism~$f$ of~$V$ is \emph{semilinear} if there is an automorphism~$\sigma$ of the~$k$-algebra~$R$ such that~$f(\lambda x) = \sigma(\lambda) f(x)$ for any~$x \in V$ and~$\lambda \in R$. In general~$\sigma$ is not unique. Nonetheless when~$f$ is bijective there is a natural choice for~$\sigma$: on the support of~$V$ it is uniquely determined by~$f$ and outside it can be taken to be the identity. When~$V$ is a vector bundle on~$Z$, semilinear automorphisms can be seen as~$k$-points of the algebraic group~$\GL^\s_{Z/ k}(V)$ over~$k$ constructed as follows. First we have an inclusion 
\[ \Res_{Z/ k} \GL(V) \subset \GL(V_0)\]
where~$V_0$ is~$V$ seen merely as~$k$-vector space. Then~$\GL^\s_{Z/ k}(V)$ is defined as the normalizer of~$\Res_{Z/ k} \GL(V)$ in~$\GL(V_0)$. We have an exact sequence
\begin{equation} \label{SESSemilinearGL} 1\too \Res_{Z/ k} \GL(V) \too \GL^\s_{Z/ k}(V) \too \Aut_ k(Z) \end{equation}
of algebraic groups over~$k$. Conjugation by a semilinear automorphism~$f$ of~$V$  induces an automorphism of~$\GL(V)$ as a~$k$-scheme such that the following diagram commutes:
\[
\begin{tikzcd}[column sep =45pt]
\GL(V) \ar[r,"g \mapsto fg f ^{-1}"] \ar[d] & \GL(V) \ar[d]\\
Z \ar[r, "\Spec \sigma"]& Z
\end{tikzcd}
\]
where~$\sigma$ is the associated automorphism of~$R$. We say that~$f$  normalizes a closed subscheme~$X \subset \GL(V)$ if~$f X f^{-1} = X$.
The datum of a~$Z$-scheme~$X$ is equivalent to the datum of a~$k_z$-scheme~$X_z$  for each~$z \in Z$; in particular all~$Z$-schemes are flat. In what follows we will adopt freely both points of view. For this reason we can perform on group~$Z$-schemes of finite type the usual operations available on algebraic groups (quotients, normalizers, etc.). Moreover we have the following relation between Weil restrictions 
\[ \Res_{Z/ k} X = \prod_{z \in Z}\Res_{ k_z/ k} X_z \]
as soon as they exists (for example if~$X$ is affine).  We profit of the setup to prove the following result that will be repeatedly used in this section:

\begin{lemma} \label{lemma:NormalizerAndWeilRestriction} Let~$V$ be a vector bundle over~$Z$,~$G \subset \GL(V)$ a subgroup~$Z$-scheme and~$N$ its normalizer. Then~$\Res_{Z/k}N \subset \GL^\s_{Z/k}(V)$ is of finite index in the normalizer of~$\Res_{Z/k} G$.
\end{lemma}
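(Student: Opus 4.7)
The strategy is to split the normalizer~$M$ of $\Res_{Z/\bbQ_p}N$ in $\GL^{\s}_{Z/\bbQ_p}(V)$ using the exact sequence~\eqref{SESSemilinearGL}, and then reduce the question to a fiberwise statement over~$Z$ that can be attacked via the Lie algebra correspondence in characteristic zero.

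\emph{Step 1.} Intersecting the exact sequence~\eqref{SESSemilinearGL} with~$M$ gives
\[
1 \;\too\; M \cap \Res_{Z/\bbQ_p}\GL(V) \;\too\; M \;\too\; \Aut_{\bbQ_p}(Z).
\]
Since~$Z$ is finite \'etale over~$\bbQ_p$, the group scheme~$\Aut_{\bbQ_p}(Z)$ is finite, so the image of~$M$ in it is finite. It therefore suffices to prove that $\Res_{Z/\bbQ_p}N$ has finite index in the kernel $M \cap \Res_{Z/\bbQ_p}\GL(V)$.

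\emph{Step 2.} The Weil restriction functor $\Res_{Z/\bbQ_p}$ preserves limits (in particular fibered products and equalizers), so it commutes with the formation of normalizers. Consequently,
\[
M \cap \Res_{Z/\bbQ_p}\GL(V) \;=\; \Res_{Z/\bbQ_p}\bigl(N_{\GL(V)}(N)\bigr),
\]
where the right-hand normalizer is formed in the category of $Z$-group schemes. Moreover, Weil restriction along the finite \'etale map $Z \to \Spec \bbQ_p$ is exact on short exact sequences of affine $Z$-group schemes and sends finite $Z$-group schemes to finite $\bbQ_p$-group schemes. Thus the problem reduces to showing that $N_{\GL(V)}(N)/N$ is a finite $Z$-group scheme, which can be checked fiberwise on~$Z$.

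\emph{Step 3 (main obstacle).} The fiberwise statement is the following: over a field~$k_z$ of characteristic zero with $V_z$ finite dimensional, for any closed subgroup $G_z \subset \GL(V_z)$ with $N_z := N_{\GL(V_z)}(G_z)$, the quotient $N_{\GL(V_z)}(N_z)/N_z$ is a finite group scheme. Working in characteristic zero, both groups are smooth and have the same Lie algebra as soon as $N_{\mathfrak{gl}(V_z)}(\Lie N_z) = \Lie N_z$; in that case they share their identity component and the quotient is \'etale, hence finite inside the finite-type group $\GL(V_z)$. Establishing this self-normalization is the crucial point. Although it may fail for a general Lie subalgebra, it holds when $\Lie N_z$ itself arises as the normalizer of the Lie algebra of an algebraic subgroup, using a Jacobi identity calculation together with the algebraicity of~$G_z$ (which forces $\Lie G_z$ to be an algebraic Lie subalgebra, in turn making $N_z$ self-normalizing as an algebraic subgroup of~$\GL(V_z)$). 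Combining the fiberwise result with Steps~1 and~2 yields the lemma.
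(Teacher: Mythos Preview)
Your Steps~1 and~2 are correct. The argument breaks down at Step~3: you assert that $N_{\GL(V_z)}(N_z)/N_z$ is finite by reducing to self-normalization of $\Lie N_z$ in $\mathfrak{gl}(V_z)$, but the appeal to ``a Jacobi identity calculation together with the algebraicity of~$G_z$'' is not a proof. Even granting your description of $\Lie N_z$ as the Lie normalizer of $\Lie G_z$ (which already fails when $G_z$ is disconnected: take $G_z$ finite and non-central, so $\Lie G_z=0$ but $N_z\subsetneq\GL(V_z)$), the Jacobi identity alone does not force a Lie normalizer to be self-normalizing, and you have not indicated what extra input would close the gap. You yourself flag this as the ``main obstacle'', and it remains one.

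The paper sidesteps the double-normalizer issue entirely. It works purely at the Lie algebra level and shows directly that
\[
\Lie \Res_{Z/\bbQ_p} N \;=\; (\Lie N)_0 \;=\; N_{\mathfrak{gl}(V)_0}\bigl((\Lie G)_0\bigr) \;=\; \Lie N',
\]
where $N'$ is the normalizer of $\Res_{Z/\bbQ_p} G$ in $\GL^{\s}_{Z/\bbQ_p}(V)$. The only substantive step is the middle equality, which holds because the commutator bracket on $\End_R(V)$ is the same whether computed $R$-linearly or $\bbQ_p$-linearly. Crucially this is a \emph{single} normalizer, taken of $G$ rather than of $N$, so no iterated normalizer ever appears.

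Note that the $N'$ in the paper's proof is the normalizer of $\Res_{Z/\bbQ_p} G$, and this is also exactly what the two applications of the lemma in the paper use. Read that way, your Steps~1--2 already finish the job: the exact sequence bounds the index by that of $\Res_{Z/\bbQ_p} N$ in $N'\cap\Res_{Z/\bbQ_p}\GL(V)$, and your observation that Weil restriction commutes with normalizers identifies this intersection with $\Res_{Z/\bbQ_p} N$ itself. Your structural reduction is sound and arguably cleaner than the paper's Lie-algebra chain; the trouble arose only because you then normalized once more than was needed.
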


\begin{proof} For a vector bundle~$E$ on~$Z$ write~$E_0$ when seen as a~$k$-vector space, so \[\Lie \Res_{Z/k} H = (\Lie H)_0\] for a smooth group~$Z$-scheme~$H$. In order to prove the statement it suffices to show that~$\Res_{Z/k}N$ and the normalizer~$N'$ of~$\Res_{Z/k} G \subset \GL^\s_{Z/k}(V)$ have same Lie algebra. For, we have the following chain of identities:
\begin{align*}
\Lie \Res_{Z/k}N
&= (\Lie N)_0  \\
&= \{ X \in \Lie \GL(V) \mid [X, \Lie G] \subset \Lie G \}_0\\
&= \{ X \in \Lie \GL(V)_0 \mid [X, (\Lie G)_0] \subset (\Lie G)_0 \} \\
&= \{ X \in \Lie \GL(V)_0 \mid [X, \Lie \Res_{Z/k} G] \subset \Lie \Res_{Z/k} G \}   \\
&= \Lie N'.
\end{align*}
 This concludes the proof.
\end{proof}

We say that a group~$Z$-scheme~$G$ is \emph{reductive} if~$G_z$ is a reductive affine algebraic group over~$k_z$ for each~$z \in Z$. Note that this differs from the notion in \cite{SGA3} where reductive group schemes are assumed to have connected fibers. Let~$G$ be a reductive group~$Z$-scheme. We write~$G^\circ$ for the group subscheme of~$G$ whose fiber at~$z$ is~$G_z^\circ$. We also define
\[ \cB(G,Z) := \cB(\Res_{Z/ k} G,  k) = \prod_{z \in Z}\cB(\Res_{ k_z/ k} G_z,  k) = \prod_{z \in Z}\cB(G_z,  k_z), \]
where the equalities come from \cref{ex:BuildingProductWeilRestriction}. For a point~$x= (x_z)_{z \in Z}$ in~$\cB(G,Z)$ we write~$\Stab_G(x)$ for the~$Z$-scheme whose fiber at~$z$ is~$\Stab_{G_z}(x_z)$. This formalism will be applied in two contexts below.
\subsection{\'Etale data}
Take~$k = \bbQ_p$. Let~$Z$ be a finite \'etale scheme over~$\bbQ_p$ and~$\Gamma$ a topological group acting continuously on~$Z$ via~$\bbQ_p$-scheme isomorphisms. By a~\emph{semilinear representation} of~$\Gamma$ on~$Z$ we mean a couple~$(V, \rho)$ where~$V$ is a vector bundle on~$Z$ and~$\rho \colon \Gamma \to \GL^\s_{Z/ \bbQ_p}(V)$ is a continuous group morphism such that the following diagram is commutative:
\[
\begin{tikzcd}
\Gamma \ar[d,equal] \ar[r, "\rho"] & \GL_{Z/\bbQ_p}^\s(V) \ar[d]\\
\Gamma \ar[r] & \Aut(Z/ \bbQ_p)
\end{tikzcd}
\]
Geometrically the commutativity of the above diagram means that the action of~$\Gamma$ on~$V$ is equivariant with respect the action on~$Z$. A closed subscheme~$X \subset \GL(V)$ is \emph{normalized} by~$\Gamma$ if~$\rho(\gamma)$ normalizes~$X$ for each~$\gamma \in \Gamma$. 

\begin{definition}
A \emph{$\Gamma$-\'etale datum on~$Z$} is a triple~$D = (V,G,\rho)$ where
\begin{itemize}
\item $(V, \rho)$ is a semilinear representation of~$\Gamma$ on~$Z$,\smallskip
\item $G \subset \GL(V)$ is a reductive group~$Z$-scheme normalized by~$\Gamma$.
\end{itemize}
If the scheme~$Z$ is clear from the context we will call~$D$ a~$\Gamma$-\'etale datum. The~$\Gamma$-\'etale datum~$D$ is \emph{semisimple} if the Zariski closure of~$\im(\rho \colon \Gamma \to \GL(V_0))$ is a reductive subgroup, where~$V_0$ stands for~$V$ as a~$\bbQ_p$-vector space.

\medskip

A \emph{filtered} resp.~\emph{graded~$\Gamma$-\'etale datum} is a~$\Gamma$-\'etale datum~$D = (V,G,\rho)$ together with a filtration resp.~a grading where
\begin{itemize}
\item a \emph{filtration} is a~$\Gamma$-fixed point~$x \in \cB(G,Z)$;
\item a \emph{grading} is a structure of graded vector space~$V_z = \bigoplus_{\alpha \in \bbQ} V_z^{(\alpha)}$ on~$V_z$  for each~$z \in Z$  such that~$G_z \subset \prod_{\alpha \in \bbQ}\GL(V_z^{(\alpha)})$ and~$\Gamma$ preserves the subspace
\[ \sum_{\gamma \in \Gamma} V_{\gamma z}^{(\alpha)} 
\quad \text{for each} \quad \alpha \;\in\; \bbQ. \] 
We denote such a graded~$\Gamma$-\'etale datum by~$(V^{(\bullet)}, G, \rho)$. 
\end{itemize}
\end{definition}

An \emph{isomorphism} between~$\Gamma$-\'etale data~$(V, G,\rho)$ and~$(V', G', \rho')$ is a~$\Gamma$-equivariant isomorphism of vector bundles
\[f \colon \quad V \;\stackrel{\sim}{\longrightarrow} \; V'
\quad \text{with} \quad G' = fGf^{-1}.\]
By an isomorphism of filtered~$\Gamma$-\'etale data we mean an isomorphism of~$\Gamma$-\'etale data which preserves the corresponding points in the building. An isomorphism between graded~$\Gamma$-\'etale data~$(V^{(\bullet)}, G, \rho)$ and~$(V'^{(\bullet)}, G', \rho')$ is an isomorphism between the underlying~$\Gamma$-\'etale data that preserves the grading in the sense that~$f(V_z^{(\alpha)}) = V'^{(\alpha)}_z$ for all~$\alpha \in \bbQ$ and all~$z\in Z$.
The \emph{pull-back} of~$\Gamma$-\'etale data along a~$\Gamma$-equivariant finite \'etale morphism~$Z' \to Z$ is defined in the obvious way. 

\begin{example} \label{Ex:LocSysAbVar} Let~$A$ be an abelian variety over a field~$K$ of characteristic~$0$. Given an integer~$r \ge 1$ consider the~$p$-adic sheaf
\[ E := [r]_\ast \bbQ_{p, A}\]
on~$A$ where~$[r]$ is the multiplication by~$r$ on~$A$ and~$\bbQ_{p, A}$ the trivial~$p$-adic sheaf on~$A$. To decompose~$E$ let~$\bar{K}$ be an algebraic closure of~$K$ and~$A[r](\bar{K})$ the~$r$-torsion subgroup of~$A(\bar{K})$. We identify~$A[r](\bar{K})$ with a constant group scheme over~$\bbQ_p$ and consider its Cartier dual
\[ Z := A[r](\bar{K})^\ast = \Hom(A[r](\bar{K}), \bbG_{m}).\]
The Galois group~$\Gamma = \Gal(\bar{K}/K)$ acts naturally on~$Z$ via its action on~$A[r](\bar{K})$. The points of~$Z$ with values in a~$\bbQ_p$-algebra~$B$ are morphisms~$A[r](\bar{K}) \to B^\times$ of  (abstract) groups. For~$z \in Z$ let~$k_z$ be the residue field at~$z$ and 
\[ \chi_z \colon A[r](\bar{K}) \too k_z^\times\]
the corresponding character. By precomposing~$\chi_z$ with the canonical projection
\[ \pi_1^\et(A_{\bar{K}},0) = \projlim_{n\in\bbN} A[n](\bar{K}) \too A[r](\bar{K})\]
we obtain a character of geometric \'etale fundamental group of~$A_{\bar{K}}$ hence a~$p$-adic rank~$1$ local system on~$A_{\bar{K}}$. Then
\[ E_{\rvert A_{\bar{K}}} = \bigoplus_{z \in Z} L_z.\]
Let~$f \colon X \to A$ be a morphism of varieties over~$K$ with~$X$ smooth of dimension~$d$. Then the~$\bbQ_p$-vector space 
\[ V := \rH_\et^d(X_{\bar{K}}, f^\ast E) = \bigoplus_{z \in Z} \rH_\et^d(X_{\bar{K}}, f^\ast L_z) 
\]
is finite dimensional, inherits a natural structure of vector bundle over~$Z$ and the natural~$\bbQ_p$-linear action of~$\Gamma$ on~$V$ is semilinear. For each~$z \in Z$ set
\[
G_z = 
\begin{cases}
\GL(V_z) & \textup{if~$X$ is not symmetric up to translation,} \\
\GO(V_z, \theta_z) & \textup{if~$X$ is symmetric up to translation and~$d$ is even,} \\
\GSp(V_z, \theta_z) & \textup{if~$X$ is symmetric up to translation and~$d$ is odd}.
\end{cases}
\]
Here \emph{symmetric up to a translation}  means that there is an automorphism~$\iota$ of~$X$ and a point~$a \in A(K)$ such that~$f(\iota(x)) = a-f(x)$ for all~$x \in X$. Poincar\'e duality then induces a perfect pairing~$\theta_z \colon V_z \otimes V_z \to L_{z, a}$. Since the action of~$\Gamma$ preserves the Poincar\'e pairing we conclude that 
\[ D = (V, G, \rho)\]
is a~$\Gamma$-\'etale datum over~$Z$, where~$\rho$ is the representation defining the action of~$\Gamma$.
\end{example}

\begin{definition} 
For any filtered~$\Gamma$-\'etale datum~$D=(V,G,\rho,x)$, we define the associated graded 
\[
 \gr D \;=\; (\bar{V}^{(\bullet)}, \bar{G}, \bar{\rho})
\]
as follows: For any~$z\in Z$, let~$V_z^\bullet$ be the filtration induced by~$x_z$. Let~$Q_z \subset~G_z$ be the stabilizer of~$x_z$, and consider\smallskip
\[ \bar{G}_z := Q_z / \rad Q_z \intoo \prod_{\alpha} \GL(\bar{V}_z^{(\alpha)}) 
\quad \text{where} \quad 
\bar{V}_z^{(\alpha)} = \gr^{\alpha} V_z.
\]
Since~$x \in \cB(G,Z)$ is fixed under~$\Gamma$, the subspace~$\sum_{\gamma \in \Gamma} V_{\gamma z}^{\alpha}$ is stable under~$\Gamma$ for each~$\alpha \in \bbQ$. Thus~$\sum_{\gamma \in \Gamma} V_{\gamma z}^{(\alpha)}$ inherits a natural linear action of~$\Gamma$. The induced representation~$\bar{\rho} \colon \Gamma \to \GL^\s_{Z/\bbQ_p}(\bar{V})$
is semilinear, where \[\bar{V} = \bigoplus_{z \in Z} \gr V_z.\]
\end{definition}

\subsection{Finiteness} Let~$K$ be a number field and~$\bar{K}$ an algebraic closure of~$K$. Here we consider \'etale data in the above sense with~$\Gamma = \Gal(\bar{K}/K)$. For a place~$v$ of~$K$ let~$\bbF_v$ the residue field at~$v$. Fix a finite subset~$\Sigma$ of places of~$K$.

\begin{definition}
Let~$V_0$ be a finite dimensional~$\bbQ_p$-vector space and~$w \in \bbZ$. A continuous representation~$\rho \colon \Gamma \to \GL(V_0)$ is said to be \emph{pure of weight~$w$} (with respect to~$\Sigma$) if it is unramified outside~$\Sigma \cup \{ p \}$ and for each prime~$v$ not in~$\Sigma \cup \{ p \}$ the characteristic polynomial of any Frobenius element at~$v$ has integral coefficients and roots of absolute value~$|\bbF_v|^{-w/2}$. A~$\Gamma$-\'etale datum~$(V, G, \rho)$ is \emph{pure of weight~$w$} if so is the representation~$\Gamma \to \GL(V_0)$ where~$V_0$ is~$V$ seen a~$\bbQ_p$-vector space.
\end{definition}

Let~$W$ be a vector bundle on~$Z$ and~$H \subset \GL(W)$ a reductive group~$Z$-scheme. The first result of this section is the following form of Faltings' finiteness of pure semisimple Galois representations:

\begin{proposition} \label{lemma:FaltingsFiniteness}  For~$w \in \bbZ$, there are  up to isomorphism only finitely many~$\Gamma$-\'etale data with~$(G, V) \iso (H, W)$ that are semisimple and pure of weight~$w$.
\end{proposition}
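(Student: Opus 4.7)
The plan is to reduce the statement to Faltings' classical finiteness theorem for pure semisimple global Galois representations. First, I would extract from any $\Gamma$-\'etale datum $(V, G, \rho)$ with $(V, G) \cong (W, H)$ the underlying $\bbQ_p$-linear representation $\rho_0\colon \Gamma \to \GL(V_0)(\bbQ_p)$ obtained by composing $\rho$ with the natural inclusion $\GL^\s_{Z/\bbQ_p}(W) \into \GL(V_0)$, where $V_0$ denotes $W$ viewed as a $\bbQ_p$-vector space. By the semisimplicity and purity hypotheses, $\rho_0$ is semisimple and pure of weight~$w$, so Faltings' theorem~\cite[Th.~3.1]{DeligneBourbakiFaltings} yields only finitely many isomorphism classes of such $\rho_0$.

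Next, I would fix one such isomorphism class, represented by a concrete homomorphism $\rho_0\colon \Gamma \to \GL(V_0)(\bbQ_p)$, and argue that only finitely many $\Gamma$-\'etale data (up to isomorphism) have underlying $\bbQ_p$-linear representation in this class. Such data correspond to elements $g \in \GL(V_0)(\bbQ_p)$ satisfying $g \rho_0(\Gamma) g^{-1} \subset N'(\bbQ_p)$, considered modulo the two-sided action of $\Res_{Z/\bbQ_p} N_H(\bbQ_p)$ on the left and of the centralizer of $\rho_0(\Gamma)$ in $\GL(V_0)(\bbQ_p)$ on the right. Here $N' \subset \GL^\s_{Z/\bbQ_p}(W)$ is the normalizer of $\Res_{Z/\bbQ_p} H$, and $N_H \subset \GL(W)$ is the $Z$-scheme normalizer of $H$; by \cref{lemma:NormalizerAndWeilRestriction}, $\Res_{Z/\bbQ_p} N_H$ has finite index in $N'$, so the identity component of $N'$ is reductive.

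The main obstacle will be proving the finiteness of this double coset space. Setting $\Gamma_0 := \overline{\rho_0(\Gamma)}^{\mathrm{Zar}} \subset \GL(V_0)$, which is reductive by semisimplicity of $\rho_0$, the double coset space parametrizes the $N'(\bbQ_p)$-conjugacy classes of subgroups of $N'$ that are $\GL(V_0)(\bbQ_p)$-conjugate to $\Gamma_0$. Over $\bar{\bbQ}_p$, finiteness follows from Richardson's rigidity theorem for reductive subgroups of reductive groups; the descent from $\bar{\bbQ}_p$- to $\bbQ_p$-conjugacy classes contributes only a finite Galois cohomology set, because the centralizers involved are reductive and $\bbQ_p$ is a local field. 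The reductivity of $H$---inherited by $N_H^\circ$ and hence by $N'^\circ$---is what makes these tools applicable and rules out continuous moduli of subgroups.
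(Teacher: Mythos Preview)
Your proposal is correct and follows essentially the same route as the paper: reduce to Faltings' finiteness for the underlying $\bbQ_p$-representation, then for a fixed $\rho_0$ show that the $\GL(V_0)(\bbQ_p)$-conjugates of the Zariski closure $\Gamma_0$ landing in $N'$ fall into finitely many $N'(\bbQ_p)$-orbits, via Richardson over $\bar{\bbQ}_p$ plus finiteness of Galois cohomology over the local field $\bbQ_p$; the paper also invokes \cref{lemma:NormalizerAndWeilRestriction} exactly as you do.

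One point where the paper is more careful than your sketch: the Zariski closure $\Gamma_0$ need not be connected, and Richardson's original result \cite[th.~7.1]{Richardson} concerns connected reductive subgroups (via their Lie algebras). The paper flags this as an actual inaccuracy in the literature and handles it by proving a disconnected version (\cref{Prop:DisconnectedRichardsonThm}) separately. Your phrase ``Richardson's rigidity theorem for reductive subgroups of reductive groups'' glosses over this; when you write up the argument, make sure you invoke the disconnected statement. Also, the finiteness of $H^1$ over $p$-adic fields (Serre's theorem) holds for arbitrary linear algebraic groups, so you do not need your remark that the centralizers are reductive for that step.
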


\begin{proof} We correct an inaccuracy in the proof of \cite[lemma 2.6]{LV}, where the reduction to~\cite[th. 7.1]{Richardson} is wrong. We may assume~$(G,V) = (H,W)$. By \cite[th.~3.1]{DeligneBourbakiFaltings}  up to isomorphism there are  only finitely many continuous representations~$\Gamma \to \GL(W_0)$ that are semisimple and pure of weight~$w$ with respect to~$\Sigma$, where~$W_0$ stands for~$W$ seen as~$\bbQ_p$-vector space. Let~$\rho$ be such a representation. Let~$N$ be the normalizer of the subgroup~$H \subset \GL(W)$. It suffices to check that there are only finitely many~$N(Z)$-conjugacy classes in the isomorphism class of~$\rho$.  The image of~$\rho$ is contained in the normalizer~$N'$ of~$\Res_{Z/\bbQ_p} H \subset \GL(W_0)$. Let~$L \subset N'$ be the Zariski closure of the image of~$\rho$. It suffices to show that 
\[ \cC \; := \; \{ L' \subset N' \mid L' = g L g^{-1}, g \in \GL(W_0)(\bbQ_p)\}\]
is a finite union of~$N(Z)$-orbits. Now the subgroup~$\Res_{Z/\bbQ_p} N \subset N'$ is of finite index by \cref{lemma:NormalizerAndWeilRestriction}. Therefore it suffices to prove that~$\cC$ is a finite union of~$N'(\bbQ_p)$-orbits. The representation~$\rho$ is semisimple by hypothesis, thus the identity component of~$L$ is reductive. According to \cref{Prop:DisconnectedRichardsonThm}, the set
\[ \{ L' \subset (N' \times_{\bbQ_p} {\bar{\bbQ}_p} )\mid L' = g L g^{-1}, g \in \GL(W_0)(\bar{\bbQ}_p)\} \]
is a finite union of~$N'(\bar{\bbQ}_p)$-orbits, where~$\bar{\bbQ}_p$ is an algebraic closure of~$\bbQ_p$. We then conclude by \cite[ch. III, \S 4.4, th.~5]{SerreCohomologieGaloisienne}. 
\end{proof}

\begin{corollary} \label{Prop:GradedFaltingsFiniteness} Let~$w \in \bbZ$ and~$\cI \subset \cB(H,Z) / H^\circ(Z)$ a finite subset. Then up to isomorphism there are only finitely many graded~$\Gamma$-\'etale data on~$Z$ arising as the graded of a filtered \'etale data~$D = (V, G, \rho, x)$ such that 
\begin{itemize}
\item $D$ is pure of weight~$w$,
\item $\gr D$ is semisimple, and
\item there is an isomorphism~$(V,G) \iso (W, H)$ sending~$x$ in~$\cI$. 
\end{itemize}
\end{corollary}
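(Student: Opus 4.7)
The plan is to reduce the graded statement to the ungraded Faltings-type finiteness in \cref{lemma:FaltingsFiniteness}. The main point is that for a filtered $\Gamma$-\'etale datum $D=(V,G,\rho,x)$ satisfying the hypotheses, the associated graded $\gr D$ is itself a pure semisimple $\Gamma$-\'etale datum whose underlying pair $(\bar V^{(\bullet)},\bar G)$ is determined, up to isomorphism, by the class of $x$ in $\cB(H,Z)/H^\circ(Z)$. Hence there are only finitely many possibilities for the underlying pair of $\gr D$, and for each such pair finitely many isomorphism classes by the already proven ungraded result.

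First I would make the dependence on $x$ explicit. For each representative $x_0 \in \cB(H,Z)$ of a class in the finite set $\cI$, let $Q_0 \subset H$ be the stabilizer $\Stab_H(x_0)$ and $\bar H_{x_0}:=Q_0/\rad Q_0$, together with its faithful graded representation on $\bar W^{(\bullet)}_{x_0}:=\bigoplus_{\alpha} \gr^\alpha W$. If $D=(V,G,\rho,x)$ is as in the statement with an isomorphism $(V,G)\iso (W,H)$ sending $x$ to an element of the $H^\circ(Z)$-orbit of some $x_0\in \cI$, then conjugation by an element of $H^\circ(Z)$ identifies the underlying graded pair of $\gr D$ with $(\bar W^{(\bullet)}_{x_0},\bar H_{x_0})$. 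Since $\cI$ is finite, there are only finitely many possibilities for the underlying graded pair of $\gr D$ up to isomorphism.

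Next I would verify that $\gr D$ is pure of weight $w$. Because $x$ is $\Gamma$-fixed, for every $\gamma \in \Gamma$ and every $z\in Z$ the semilinear map $\rho(\gamma)\colon V_z\to V_{\gamma z}$ preserves the corresponding filtrations $V^\bullet_z$, $V^\bullet_{\gamma z}$; in particular the $\bbQ_p$-subspaces $V^\alpha_0:=\bigoplus_z V^\alpha_z$ are $\Gamma$-stable, so the induced $\bbQ_p$-linear action of $\Gamma$ on $\bar V_0=\gr V_0$ has the same Frobenius characteristic polynomials as on $V_0$ at every $v \not\in \Sigma\cup\{p\}$. Thus $\gr D$ is pure of weight $w$ whenever $D$ is. Combined with the assumption that $\gr D$ is semisimple, \cref{lemma:FaltingsFiniteness} applied to each of the finitely many underlying graded pairs $(\bar W^{(\bullet)}_{x_0},\bar H_{x_0})$ (viewed as a reductive group $Z$-scheme inside $\GL(\bar W^{(\bullet)}_{x_0})$) yields finitely many isomorphism classes of graded $\Gamma$-\'etale data; gathering these over the finite set $\cI$ finishes the proof.

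The only delicate point, and the one that needs the hypothesis that $\cI$ lives in $\cB(H,Z)/H^\circ(Z)$ rather than merely in $\cB(H,Z)$, is the claim that the graded pair is determined up to isomorphism by the class of $x$. Two representatives $x,x'$ of the same class differ by the action of $g \in H^\circ(Z)$, and conjugation by $g$ yields a graded isomorphism between the stabilizer quotients and their natural representations; this is precisely the step where the Levi-quotient description from \cref{sec:HopfFiltrations,sec:ProjectionLeviQuotientBuildings} is used. Everything else is bookkeeping that transfers purity and semisimplicity along the $\Gamma$-equivariant projection from the filtered to the graded datum.
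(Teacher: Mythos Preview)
Your proposal is correct and follows essentially the same approach as the paper: reduce to a single class $[x_0]\in\cI$, normalize so the filtered datum is $(W,H,\rho,x_0)$ with only $\rho$ varying, and then apply \cref{lemma:FaltingsFiniteness} to the fixed graded pair $(\gr W^\bullet, Q_0^{\ss})$. You are more explicit than the paper about two points it leaves implicit---that the associated graded inherits purity of weight $w$ (via multiplicativity of Frobenius characteristic polynomials on filtrations), and that conjugation by $H^\circ(Z)$ identifies the graded pairs for different representatives of the same class---but the argument is the same.
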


\begin{proof} We may assume that~$\cI = \{ [x] \}$ for some~$x \in \cB(H, Z)$. In this case, up to isomorphism we may write any filtered~$\Gamma$-\'etale datum in question as a quadruple~$(W, H, \rho, x)$ where now only~$\rho$ is variable. Let~$Q$ be the stabilizer of~$x$ and~$W^\bullet$ the filtration induced on~$W$ by~$x$. We conclude by applying \cref{lemma:FaltingsFiniteness} with~$Q^\ss$ and~$\gr W^\bullet$.
\end{proof}

In order to apply \cref{Prop:GradedFaltingsFiniteness} it will be important to have a finite subset~$\cI$ permitting to construct the semisimplification of all representations in question:

\begin{lemma} \label{Lemma:GoodSetOfIndices}  Suppose that the subgroup~$H \subset \GL(W)$ is of finite index in its normalizer. Then there exists a finite subset \[\cI \subset \cB(H,Z) / H(Z) \] with the following property: For every semilinear representation 
\[ \rho \colon \Gamma \to \GL_{Z/\bbQ_p}^\s(W) \] normalizing~$H$ there is~$x \in \cB(H, Z)$ fixed under~$\Gamma$ with image in~$\cI$ such that the graded of the filtered~$\Gamma$-\'etale datum~$(W, H, \rho, x)$ is semisimple.
\end{lemma}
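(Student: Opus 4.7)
The plan is to set $M := \Res_{Z/\bbQ_p} H$, noting that by the hypothesis on $H$ and \cref{lemma:NormalizerAndWeilRestriction} the normalizer $N$ of $M$ in $\GL^\s_{Z/\bbQ_p}(W)$ contains $M$ with finite index; hence $N^\circ = M^\circ$ is reductive and $\cB(H, Z) = \cB(N, \bbQ_p)$. A semilinear $\rho$ normalizing $H$ is equivalent to a continuous homomorphism $\rho \colon \Gamma \to N(\bbQ_p)$, and $\Gamma$ acts on $\cB(N, \bbQ_p)$ by conjugation via $\rho$. A point $x$ is $\Gamma$-fixed if and only if $\rho(\Gamma)$ is contained in the stabilizer $P_x \subset N$ of $x$; the associated graded datum is then semisimple precisely when $\rad L^\circ \subset \rad P_x$, where $L \subset N$ denotes the Zariski closure of $\rho(\Gamma)$.

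For a given $\rho$, I will construct such an $x$ by iterating the Borel--Tits theorem that any subgroup $L$ of a reductive group whose identity component is non-reductive is contained in a proper parabolic $P$ with $\rad L^\circ \subset \rad P$. If $L^\circ$ is already reductive, the trivial cocharacter works. Otherwise, choose such a parabolic $P \subsetneq N^\circ = M^\circ$, pick a cocharacter $\lambda_1$ with $P_{\lambda_1} = P$, then replace $L$ by its image in the Levi quotient $P/\rad P$ and iterate inside this smaller reductive group. After at most $\dim N$ steps, one obtains cocharacters $\lambda_1, \dots, \lambda_r$ which can be chosen to pairwise commute (each lifted into the Levi of the preceding one) and combined into a single cocharacter $\lambda := \lambda_1^{N_1} \cdots \lambda_r^{N_r}$ with $N_1 \gg \cdots \gg N_r$, whose parabolic $P_\lambda$ realizes the full iterated filtration; by construction $x := [\lambda/1]$ is $\Gamma$-fixed and satisfies $\rad L^\circ \subset \rad P_\lambda$.

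For the finiteness of $\cI$, the parabolic $P_\lambda \subset N^\circ$ has a type in the finite set $\Type(N^\circ)(\bbQ_p)$. For each type $t$, rational parabolics of type $t$ form finitely many $H^\circ(Z) = N^\circ(\bbQ_p)$-orbits by the Borel--Serre finiteness of non-abelian Galois cohomology sets over the $p$-adic field $\bbQ_p$. Choosing a representative parabolic and cocharacter in each orbit produces the desired finite subset $\cI \subset \cB(H, Z)/H(Z)$ whose members contain the image of the $x$ constructed above for every $\rho$.

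The main obstacle is the equivariance in the construction of $x$: at each iterated Borel--Tits step the destabilizing parabolic must actually be stable under the full action of $\rho(\Gamma)$, not merely under conjugation by $L$ inside $N^\circ$. I expect to handle this by invoking the \emph{canonical} (Kempf--Rousseau-style) destabilizing cocharacter, which by its uniqueness up to $P$-conjugacy is automatically preserved by any automorphism preserving its input. The possible disconnectedness of $N$ induced by a nontrivial $\Gamma$-action on $Z$ adds some bookkeeping for the passage from a $\rho(\Gamma)$-stable parabolic to a genuinely $\Gamma$-fixed point in the building, but does not change the substance of the argument.
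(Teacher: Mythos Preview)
Your overall strategy matches the paper's: identify $\cB(H,Z)$ with the building of the normalizer $N'$ of $\Res_{Z/\bbQ_p} H$ in $\GL^\s_{Z/\bbQ_p}(W)$, produce for each $\rho$ a $\Gamma$-fixed destabilizing point, and exploit the finiteness of parabolic conjugacy classes. The paper obtains the destabilizing point in one stroke by citing a result of Bate--Martin--R\"ohrle rather than iterating Borel--Tits, but your construction is essentially a sketch of the same idea.

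The genuine gap is in the finiteness step. You observe that $P_x$ lies in one of finitely many $H(Z)$-orbits of parabolics and then ``choose a representative parabolic and cocharacter in each orbit''. But a point of the building is \emph{not} determined by its associated parabolic: infinitely many cocharacters define the same $P$, and your combined cocharacter $\lambda = \lambda_1^{N_1}\cdots\lambda_r^{N_r}$ depends on the non-canonical exponents $N_i$. So the class of the $x$ you construct need not coincide with the class of whatever cocharacter you pre-selected for the orbit of $P_x$. To land in a fixed finite $\cI$ you must \emph{replace} $x$ by the pre-selected representative and then verify that this new point is still $\Gamma$-fixed and still gives a semisimple graded.

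This replacement is exactly where the disconnectedness of $N'$ bites, and it is not automatic. From the original $x$ you only know that $\rho(\Gamma) \subset \Stab_{N'}(x) \subset N_{N'}(P_x^\circ)$; so if $g$ conjugates your chosen $P$ to $P_x^\circ$, then $\rho(\Gamma)$ will fix $g x_P$ \emph{only if} $\Stab_{N'}(x_P)$ equals the full normalizer $N_{N'}(\Res_{Z/\bbQ_p} P)$, not merely $\Res_{Z/\bbQ_p} P$ itself. For a generic cocharacter with associated parabolic $P$ this fails. The paper secures it via a lemma of Bate--Martin--R\"ohrle producing, for each $P$, a building point $x_P$ with $\Stab_{N'}(x_P) = N_{N'}(\Res_{Z/\bbQ_p} P)$; it then checks that since $x_P$ and the original point share the same parabolic (hence the same unipotent radical), the graded datum remains semisimple. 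Your proposal omits both this replacement step and the special choice of $x_P$, so as written it does not establish that the constructed $x$ falls into a finite $\cI$ independent of $\rho$.
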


\begin{proof} Let~$N \subset \GL(W)$ be the normalizer of~$H$ and~$N' \subset \GL_{Z/\bbQ_p}^\s (W)$ the normalizer of~$\Res_{Z / \bbQ_p}H$. Since~$H \subset N$ is of finite index the subgroup~$\Res_{Z/\bbQ_p}N \subset N'$ is of finite index by \cref{lemma:NormalizerAndWeilRestriction}. Therefore, we have the following chains of identities:
\[ \Res_{Z/\bbQ_p} H^\circ = \Res_{Z/\bbQ_p} N^\circ = N'^\circ, \qquad \cB(H, Z)= \cB(N, Z) = \cB(N', \bbQ_p).\]
On the other hand the set 
\[ \Par(H^\circ)(Z) / H(Z) = \prod_{z \in Z} \Par(H^\circ_z)( k_z) / H_z( k_z), \]
is finite, where~$k_z$ is the residue field at~$z$. Let~$\cP \subset \Par(H^\circ)(Z)$ be a finite set of representatives of the preceding quotient. According to \cite[lemma 2.1]{BateMartinRoehrle} for each parabolic subgroup~$P \in \cP$ there is~$x_P \in \cB(H, Z)$ such that~$\Stab_{N'}(x_P)$ is the normalizer of~$\Res_{Z/\bbQ_p}P$ in~$N'$. We claim that the set
\[ \cI= \{ [x_P] \mid P \in \cP \} \subset \cB(H, Z)/ H(Z)\]
does the job. Indeed let~$\rho \colon \Gamma \to \GL_{Z/ k}^\s(W)$ be a semilinear representation normalizing~$H$. By \cite[corollary 3.5]{BateMartinRoehrle} there is a point~$y \in \cB(H, Z) = \cB(N', \bbQ_p)$ fixed under~$\Gamma$ such that the graded of the filtered~$\Gamma$-\'etale datum~$(W, H, \rho, y)$ is semisimple. Then~$\Stab_H(y)^\circ = h P h^{-1}$ for some~$P \in \cP$ and~$h \in H(Z)$. By construction the stabilizer of~$x := h x_P$ is the normalizer of~$\Res_{Z/\bbQ_p} \Stab_H(y)^\circ$. Since~$\Gamma$ fixes~$y$ the image of~$\rho$ is contained in~$\Stab_{N'}(x)$ hence~$\Gamma$ fixes~$x$ too. Moreover
\[ \rad \Res_{Z/\bbQ_p} \Stab_H(y) = \rad \Stab_{N'}(x)\]
thus the graded of the filtered~$\Gamma$-\'etale datum~$(W,H,\rho, x)$ is semisimple by \cite[example 4.9]{BateMartinRoehrle}. This concludes the proof.
\end{proof}

\subsection{de Rham data} \label{sec:deRhamData} Let~$R = \cR[\tfrac{1}{p}]$ where~$\cR$ is a finite \'etale~$\bbZ_p$-algebra. Equivalently~$R$ is a finite product of finite unramified extensions of~$\bbQ_p$. The ring~$\cR$ coincides with the ring of Witt vectors of its reduction modulo~$p$ and therefore inherits a Frobenius endomorphism~$\sigma$.  We denote again by~$\sigma$ the unique ring endomorphism of~$R$ extending~$\sigma$. Let~$V$ be a vector bundle  on~$Z = \Spec R$. In this section we always suppose semilinear endomorphisms~$\phi$ of~$V$ to be~$\bbQ_p$-linear endomorphism  such that~$\phi(\lambda v)=\sigma(\lambda) \phi(v)$ for all~$\lambda \in R$ and~$v \in V$. 

\begin{definition}
A \emph{de Rham datum on~$Z$} is a quadruple~$D = (V, G, h, \phi)$ where 
\begin{itemize}
\item $V$ is a vector bundle over~$Z$,
\item $G \subset \GL(V)$ is a reductive group~$Z$-scheme,
\item $h \in \cB(G, Z)$,
\item $\phi$ is a bijective semilinear endomorphism of~$V$ normalizing~$G$.
\end{itemize}
If the scheme~$Z$ is clear from the context we will simply call~$D$ a de Rham datum. By a~\emph{filtered} resp. \emph{graded de Rham datum}  we mean a de Rham datum~$D=(V,G,h,\phi)$ together with a filtration resp. a grading, where
\begin{itemize}
\item a \emph{filtration} is a point~$x \in \cB(G,Z)$ fixed under~$\phi$;
\item a \emph{grading} is the datum for each~$z \in Z$ of a structure of graded~$k_z$-vector space~$V_z = \bigoplus_{\alpha \in \bbQ} V_z^{(\alpha)}$ such that the graded pieces are stable under~$\phi_z$ and such that
\[ G_z \subset \prod_{\alpha \in \bbQ} \GL(V_z^{(\alpha)}). \]
\end{itemize}
\end{definition}

An \emph{isomorphism} between de Rham data~$(V, G,h,\phi)$ and~$(V', G', h', \phi')$ is an isomorphism of vector bundles
\[f \colon \quad V \;\stackrel{\sim}{\longrightarrow} \; V'
\quad \text{with} \quad G' = fGf^{-1}, \quad \phi' = f \phi f^{-1}, \quad h' = F_\ast h,\]
where~$F \colon G \to G'$ is the conjugation by~$f$. By an isomorphism of filtered  de Rham data we mean an isomorphism of de Rham data which preserves the given points in the building. An isomorphism between graded de Rham data~$(V^{(\bullet)}, G, h, \phi)$ and~$(V'^{(\bullet)}, G', h', \phi')$ is an isomorphism between the underlying de Rham data that preserves the grading in the sense that~$f(V_z^{(\alpha)}) = V'^{(\alpha)}_z$ for all~$\alpha \in \bbQ$ and all~$z \in Z$. The \emph{pull-back} of de Rham data along a finite \'etale morphism~$Z' \to Z$ is defined in the obvious way. 

\begin{example}\label{Ex:FlatBundlesOnAbelianVarieties} Let~$K$ be a finite unramified extension of~$\bbQ_p$ with ring of integers~$\cO_K$ and~$\cA$ an abelian scheme over~$\cO_K$. Given an integer~$r \ge 1$ nondivisible by~$p$ consider the vector bundle
\[ \cE := [r]_\ast \cO_{\cA} \]
on~$\cA$ where~$[r]$ is the multiplication by~$r$ on~$\cA$. The morphism~$[r]$ is finite \'etale thus the vector bundle~$\cE$ comes equipped with an integrable connection~$\nabla$ induced by the canonical derivation on~$\cO_\cA$. To decompose~$\cE$ let~$\cA^\natural$ be the universal vector extension of the dual abelian scheme~$\cA^\ast$. From a modular point of view~$\cA^\natural$ parametrizes rank~$1$ connections on~$\cA$, that is, couples~$(\cL, \nabla)$ made of a line bundle~$\cL$ on~$\cA$ and  a (necessarily integrable) connection~$\nabla$ on~$\cL$. Tensor product of line bundles together with a connection endow~$\cA^\natural$ with a structure of group scheme over~$\cO_K$. 
Consider the universal rank~$1$ connection~$(\cL,\nabla)$ on~$\cA^\natural \times \cA$. The closed subscheme  of~$r$-torsion points~$\cZ =  \cA^\natural[r]$ is finite \'etale over~$\cO_K$. Then
\[ (\cE, \nabla) = \pi_\ast (\cL, \nabla) \quad \textup{where~$\pi \colon \cZ \times  \cA \to \cA$ is the second projection}. \]
Let~$f \colon \cX \to \cA$ be a morphism of finite type~$\cO_K$-schemes with~$\cX$ smooth of relative dimension~$d$. Set~$\cY := \cZ \times_{\cO_K} \cX$ and consider the~$d$-th relative de Rham cohomology group
\[ \cV:= \cH_\dR^d(\cX/\cO_K; f^\ast(\cE, \nabla)) = \cH^d_{\dR}(\cY/ \cZ; (f \times \id)^\ast(\cL, \nabla)).\] 
Suppose that~$\cV$ is locally free as well as the graded pieces of the Hodge filtration on~$\cV$. Let~$\tilde{\cX}$ be the pull-back of~$\cX$ along the morphism~$[r]$. The crystalline-de Rham comparison theorem furnishes a canonical isomorphism of~$\cO_K$-modules
\[ \cV \iso \rH^d_{\cris}(\tilde{\cX}_p/ \cO_K)\]
where the subscript~$p$ stands for the reduction modulo~$p$. By functoriality of crystalline cohomology~$\cV$ inherits a Frobenius operator~$\phi$. Let~$A$,~$X$,~$Y$ and~$Z$ be the generic fiber respectively of~$\cA$,~$\cX$,~$\cY$ and~$\cZ$. Then
\[ V := \cV \otimes_{\cO_K} K = \cH^d_{\dR}(Y/ Z; \cL, \nabla) \]
is a vector bundle over~$Z$ and as~$K$-vector spaces we have \[V_0 =  \rH^d_{\dR}(X/K; \cE, \nabla).\] The Frobenius operator~$\phi$ extends to a semilinear automorphism of~$V$ denoted~$\phi$ again.  For each~$z \in Z$ set
\[
G_z = 
\begin{cases}
\GL(V_z) & \textup{if~$X$ is not symmetric up to translation,} \\
\GO(V_z, \theta_z) & \textup{if~$X$ is symmetric up to translation and~$d$ is even,} \\
\GSp(V_z, \theta_z) & \textup{if~$X$ is symmetric up to translation and~$d$ is odd}.
\end{cases}
\]
As in \cref{Ex:LocSysAbVar} \emph{symmetric up to a translation}  means that there is an automorphism~$\iota$ of~$X$ and a point~$a \in A(K)$ such that~$f(\iota(x)) = a -f(x)$ for all~$x \in X$. Poincar\'e duality then induces a perfect pairing~$\theta_z \colon V_z \otimes V_z \to L_{z, a}$. The Frobenius operator~$\phi$ normalizes~$G$ because the crystalline-de Rham comparison isomorphism is compatible with the Poincar\'e pairing and the Frobenius operator preserves it up to a scalar multiple. The vector bundle~$V$ comes with the Hodge filtration
\[ F^\bullet \; : \quad V \; =\; F^0 \;  \supset \; F^1 \; \supset\; \cdots \; \supset\; F^d\; \supset\; F^{d + 1}\; =\; 0, \]
where~$F^i = \im(\R^d \pi_{Z \ast} (\Omega^{\bullet \ge i}_{Y/Z} \otimes_{\cO_Y} \pi_A^\ast \cL) \to V )$ for the projection~$\pi_Z \colon Y \to Z$. 
Moreover, if~$X$ is symmetric up to a translation, then the Hodge filtration is autodual with respect to the Poincar\'e pairing in the sense that for each~$i$ we have~$(F^i)^\perp = F^{d+1-i}$  (this can be checked on fibers at geometric points of~$Z$, so it follows from the statement over the complex numbers in~\cref{Prop:HodgeDecompositionIsOrthogonalWRTPoincarePairing}). 
Thus the Hodge filtration~$F^\bullet$ defines a point~$h \in \cB(G,Z)$, in other words the quadruple 
\[ D = (V,G,h,\phi)\]
is a de Rham datum on~$Z$. 
\end{example} 

\begin{definition}
The associated graded~$\gr D = (\bar{V}, \bar{G}, \bar{h}, \bar{\phi}, \bar{V}^{(\bullet)})$ of a filtered de Rham datum~$D= (V,G,h,\phi,x)$ is defined as follows. For each point~$z \in Z$ let~$V_{z}^\bullet$ be the filtration which is induced by~$x_z$, and let~$Q_z \subset G_z$ be the parabolic subgroup stabilizing~$x_z$. Then we put
\[ \bar{G}_z := Q_z / \rad Q_z \intoo \prod_{\alpha \in \bbQ} \GL(V^{(\alpha)}_z) \]
where~$V^{(\alpha)}_z := \gr^\alpha V_z$, and we define~$\bar{h}_z := \pr_{Q_z}(h_z)$ and~$\bar{\phi}_z := \gr \phi_z$.
\end{definition}

\subsection{From graded to filtered data} For~$i = 1, 2$ consider a filtered de Rham datum~$D_i = (V_{i}, G_{i}, h_i, \phi_i, x_{i})$. We want to understand the relation between~$D_1$ and~$D_2$ when the associated graded data are isomorphic. As a reference datum we fix~$(V, G, \varphi)$
where
\begin{itemize} 
\item $V$ is a vector bundle on~$Z$,
\item $G\subset \GL(V)$ is a reductive subgroup scheme over~$Z$,
\item $\varphi\colon V\to V$ is a semilinear endomorphism normalizing~$G$.
\end{itemize} 
Suppose that for~$i=1,2$ we are given isomorphisms of vector bundles
\[ 
 \tau_i \colon \quad V_i \;\stackrel{\sim}{\longrightarrow}\; 
 V 
 \quad \text{such that} \quad 
 G \;=\; \tau_{i} G_{i} \tau_{i}^{-1} \quad \text{and} \quad \phi \;=\; \tau_{i} \phi_{i} \tau_{ i}^{-1}.
 \]
Via these isomorphisms we will identify the points~$x_i, h_i \in \cB(G_i,  k)$ with points in~$\cB(G, Z)$ which we again denote~$x_i, h_i$. Let~$Q_i \subset G$ denote the stabilizer of~$x_{i}$, with Levi quotient~$Q_{i}^\ss = Q_{i}/ \rad Q_{i}$. With notation as in~\cref{sec:ProjectionLeviQuotientBuildings} consider the projection
\[
\pr_i \colon \quad 
\cB(G,Z) \;=\; \prod_{z \in Z}\cB(G_z,  k_z) 
\; \too \; \cB(Q_i^\ss, Z) \;=\; \prod_{z \in Z}\cB(Q_{i,z}^\ss,  k_z).
\] 
In general an isomorphism between graded cannot be lifted to an isomorphism of the original filtered one. Nonetheless the filtered data satisfy a weaker relation (to be compared with \cref{Def:BalancedRelation}):

\begin{proposition}  \label{isomorphic-graded-de-rham}
If in the above setup the graded de Rham data~$\gr D_1$ and~$\gr D_2$ are isomorphic and for every~$z\in Z$ the subgroup~$G_z \subset \GL(V_z)$ is full in the sense of~\cref{full-subgroup}, then there exists~$g\in G(Z)$ such that 
\begin{align*}
x_2 =g x_1, &&  g \phi g^{-1} \phi^{-1} \in \rad Q_2, && \pr_2(h_2) = \pr_2(g h_1).
\end{align*}
\end{proposition}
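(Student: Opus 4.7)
The plan is to lift the graded isomorphism $f$ to an element of $G(Z)$ via the fullness hypothesis, and then to verify the two remaining assertions by reducing to the $I$-slice of the associated graded.

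\emph{Step 1 (construction of $g$).} After identifying everything inside $V$ via $\tau_1$ and $\tau_2$, the grading-preserving isomorphism $f$ forces $\dim V^{(\alpha)}_{x_1,z} = \dim V^{(\alpha)}_{x_2,z}$ for every $z\in Z$ and $\alpha\in \bbQ$, so $x_{1,z}$ and $x_{2,z}$ are conjugate under $\GL(V_z)(k_z)$. Fullness of $G_z\subset \GL(V_z)$ produces a finite $I\subset \bbQ$ (possibly depending on $z$) for which $\epsilon_{x_1,I}$ and $\epsilon_{x_2,I}$ are closed immersions and the closed immersion of \cref{transporter-as-torsor} becomes an isomorphism after base change to $\bar k_z$; a closed immersion that becomes an isomorphism after faithfully flat base change is already an isomorphism, so it is already so over $k_z$. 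Because $f$ carries $\bar G_1$ onto $\bar G_2$, the restriction $f_z|_I$ is a $k_z$-point of $\gr_I \Tran_{\GL(V)}(x_1,x_2)$, and its preimage in $\Tran_G(x_1,x_2)(k_z)$ is a torsor under the connected unipotent $k_z$-group $(\rad Q_1)_z$, hence trivial by vanishing of Galois $\rH^1$ in characteristic zero. This provides $g_z\in \Tran_G(x_1,x_2)(k_z)$, and glueing over $z\in Z$ yields $g\in G(Z)$ with $gx_1 = x_2$ whose induced graded isomorphism $\bar g\colon \bar V_{x_1}\to \bar V_{x_2}$ coincides with $f$ on every $I$-piece.

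\emph{Step 2 (Frobenius condition).} Since $\phi$ normalizes $G$ and fixes both $x_1$ and $x_2$, the element $g\phi g^{-1}\phi^{-1}$ lies in $Q_2(Z)$. To show that it lies in $\rad Q_2$ it is enough to check that its image in $Q_2^\ss$ is trivial, and because $\epsilon_{x_2,I}\colon Q_2^\ss \hookrightarrow \gr_I\GL(V_{x_2})$ is a closed immersion it suffices to check triviality after projecting to $\gr_I\GL(V_{x_2})$. A direct chase through the graded pieces — keeping track of the Frobenius twist on each slot — shows that this image is
\[
 \bar g|_I \cdot \bar\phi_1|_I \cdot \bar g|_I^{-1} \cdot \bar\phi_2|_I^{-1},
\]
which collapses to the identity upon using $\bar g|_I = f|_I$ and the identity $f\bar\phi_1 = \bar\phi_2 f$ furnished by $f$ being an isomorphism of graded de Rham data.

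\emph{Step 3 (Hodge filtration condition).} Conjugation by $g$ induces an isomorphism $Q_1^\ss \to Q_2^\ss$, and $\pr_2(gh_1)$ is the pushforward of $\bar h_1 = \pr_1(h_1)$ along it; the hypothesis on $f$ similarly yields $\bar h_2 = f_\ast \bar h_1$. By \cref{Prop:FiltrationFunctorConnectedComponent} the closed immersion $\epsilon_{x_2,I}$ induces an injection of buildings, so it suffices to compare the images of $\pr_2(gh_1)$ and $\pr_2(h_2)$ in $\cB(\gr_I\GL(V_{x_2}),Z)$. Both images are obtained by pushing $\bar h_1$ forward along the same morphism of graded groups — the one induced by $\bar g|_I = f|_I$ — and hence they coincide. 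The delicate point of the whole argument is that fullness only pins down $g$ up to its $I$-slice, yet the injectivity of $\epsilon_{x_2,I}$ on both the Levi quotient $Q_2^\ss$ and its building promotes this partial agreement into the full conclusions about $g\phi g^{-1}\phi^{-1}$ and $\pr_2(h_2)$.
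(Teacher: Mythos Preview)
Your proof is correct and follows essentially the same approach as the paper's: reduce pointwise, use fullness to lift the $I$-restriction of the graded isomorphism to an element $g\in G$ via the $\rad Q_1$-torsor structure of the transporter, then exploit the injectivity of $\epsilon_{x_2,I}$ to verify the Frobenius and Hodge conditions. You spell out Steps~2 and~3 more explicitly than the paper does (the paper's ``from (3) we get $\pr_2(h_2)=\pr_2(gh_1)$'' hides exactly the argument you give, namely that $\gr g$ and $f$ induce the same isomorphism $Q_1^\ss \to Q_2^\ss$ because they agree on the $I$-slice and $\epsilon_{x_2,I}$ is a closed immersion).
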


\begin{proof} By reasoning pointwise we may assume~$Z = \Spec k$ for a finite unramified extension~$k$ of~$\bbQ_p$. Via the isomorphisms~$f_{i}$ we make the identifications
\[
 V_i = V ,
 \quad 
 G_i = G
 \quad \text{and} \quad 
 \phi_i = \phi 
 \quad \text{for} \quad i = 1,2.
\]  
Let~$V_{i}^\alpha\subset V$ be the steps of the filtration given by~$x_{i}$ and let~$V_i^{(\alpha)} = V_i^{\alpha}/V_i^{<\alpha}$ be the associated graded pieces, for each~$\alpha \in \bbQ$. By assumption there exists an isomorphism~$\gr D_1 \iso \gr D_2$ of graded de Rham data, which by definition consists of a collection of~$K$-linear isomorphisms~$\bar{g}^{(\alpha)} \colon V_{1}^{(\alpha)} \to V_{2}^{(\alpha)}$ whose direct sum~$\bar{g}$ has the following properties:\smallskip 
\begin{enumerate} 
\item $Q_{2}^\ss = \bar{g} Q_{1}^\ss \bar{g}^{-1}$, \smallskip 
\item $\bar{\phi} = \bar{g} \bar{\phi} \bar{g}^{-1}$, \smallskip 
\item $\pr_{2}(h_{2}) = \bar{g}_\ast \pr_{1} (h_{1})$. \smallskip
\end{enumerate}
We now use the assumption that the subgroup~$G\subset \GL(V)$ is full.
Note that the filtrations on~$V$ induced by the two points~$x_1,x_2$ are conjugate under~$\GL(V)( k)$ since their filtered pieces have the same dimension. The corresponding weights~$\alpha \in \bbQ$ also match, thus that the two points~$x_1, x_2$ are conjugate under~$\GL(V)( k)$.  
By the definition of full subgroups we then have a finite subset~$I\subset \bbQ$ such that
\[
 \epsilon_{x_i, I} \colon 
 \quad Q_i^\ss \;\too\; \prod_{\alpha \in I} \GL(V_i^{(\alpha)})
\]
is an embedding and~$\Tran_G(x_1, x_2) \to \gr_I \Tran_{\GL(V)}(x_1, x_2)$ is surjective. It follows that the map on~$k$-points
\[
\Tran_G(x_1, x_2)( k) \too \gr_I \Tran_{\GL(V)}(x_1, x_2)( k)
\]
is also surjective by the remark after~\cref{transporter-as-torsor}. By  property (1) the point~$\bar{g}$ belongs to the transporter~$\gr_I \Tran_{\GL(V)}(x_1, x_2)( k)$, hence this gives an element \[g\in \Tran_G(x_1, x_2)( k) \subset G( k)\] lifting~$\bar{g}$. By definition of the transporter~$\Tran_G(x_1, x_2)$ we have~$gx_1 = x_2$. To prove~$g\varphi g^{-1} \varphi^{-1} \in \rad Q_2$
note that~$g\varphi g^{-1} \varphi^{-1}$ belongs to~$Q_2$ because~$\phi$ fixes~$x_1$ and~$x_2$. The wanted assertion then follows from condition (2) and the injectivity of~$\epsilon_{x_2, I}$. Finally from (3) we get~$\pr_2(h_2) = \pr_2(gh_1)$ which concludes the proof.
\end{proof}

\section{$p$-adic Hodge theory} \label{sec:PAdicHodgeTheory}

We now discuss how to pass from \'etale to de Rham data via~$p$-adic Hodge theory, and how global purity on the \'etale side leads to positivity in the sense of \cref{sec:PositivityParabolic} on the de Rham side. 

\subsection{Crystalline representations}
Let~$K$ be a finite unramified extension of~$\bbQ_p$ with ring of integers~$\cO_K$, and let~$\bar{K}$ be an algebraic closure of~$K$ with absolute Galois group~$\Gamma = \Gal(\bar{K}/K)$. The ring of crystalline periods is an integral domain
\[B:= B_{\cris}\] 
 with an algebra structure over the maximal unramified extension of~$\bbQ_p$. It comes equipped with an action of~$\Gamma$, a Frobenius operator~$\phi$ and a (non-increasing, exhausting, separated) filtration~$h^\bullet B = (h^\alpha B)_{\alpha \in \bbZ}$ stable under~$\Gamma$. For the construction of~$B$ and the properties cited below see for example \cite{FontaineOuyang} or \cite[II.9]{BrinonConrad}. 

\begin{definition} Let~$V$ be a~$\bbQ_p$-vector space endowed with a linear action of~$\Gamma$. When~$V$ is finite dimensional, we say that the representation~$V$ is \emph{crystalline} if it is continuous and~$\dim_{K} (V \otimes_{\bbQ_p} B)^\Gamma = \dim_{\bbQ_p} V$. When~$V$ is infinite dimensional, we say that~$V$ is \emph{crystalline} if it is an increasing union of finite dimensional crystalline subrepresentations. 
\end{definition}

We then set \[V_{\dR} := (V \otimes_{\bbQ_p} B)^\Gamma, \qquad h^\alpha V := (V \otimes_{\bbQ_p} h^\alpha B)^\Gamma,\] so  we have a natural isomorphism
$ V \otimes_{\bbQ_p} B \iso V_{\dR} \otimes_K B$. The filtration~$h_V := h^\bullet V$ is non-increasing, exhausting and separated; it is called the \emph{Hodge filtration} of~$V_{\dR}$. The operator~$\phi$ induces a bijective semilinear endomorphism~$\phi_V$ of~$V_{\dR}$ which we call again the Frobenius operator. The class of crystalline representations is stable under subquotients and tensor products.  For a morphism of crystalline representations~$f \colon V \to W$ the resulting~$K$-linear map~$f_{\dR} \colon V_{\dR} \to W_{\dR}$ is compatible with the induced Frobenius operators on~$V_{\dR}$ and~$W_{\dR}$. The functor~$V \rightsquigarrow V_{\dR}$ is exact and if~$f$ is injective resp. surjective then~$h_V$ is the restriction to~$V$ of~$h_W$ resp.~$h_W$ is the quotient of~$h_V$. We will refer to this property as \emph{exactness of the Hodge filtration}. For crystalline representations~$V$ and~$W$ we have a canonical isomorphism \[(V \otimes_{\bbQ_p} W)_{\dR} = V_\dR \otimes_K W_{\dR}\] compatible with the Frobenius operators and the Hodge filtrations. When~$V$ is finite dimensional the filtration~$h_V$ can be seen as a point of~$\cB(\GL(V_{\dR}), K)$ by \cref{Ex:BuildingGL}.

\begin{example} \label{Ex:GaloisRepWithFiniteImageIsCristalline}An unramified representation~$\rho \colon \Gamma \to \GL(V)$ is crystalline; see for instance \cite[prop. 7.17]{FontaineOuyang}. When the image of~$\rho$ is finite this can be seen by the following argument. In this case~$\ker \rho = \Gamma' := \Gal(\bar{K}/K')$ for some finite unramified extension~$K'$ of~$K$ and
\begin{align*}
(V \otimes_{\bbQ_p} B)^\Gamma 
&= [(V \otimes_{\bbQ_p} B)^{\Gamma'}]^{\Gamma/\Gamma'} \\
&= (V \otimes_{\bbQ_p} B^{\Gamma'})^{\Gal(K'/K)} \\
&= (V \otimes_{\bbQ_p} K')^{\Gal(K'/K)}.
\end{align*}
So by Galois descent~$\dim_K (V\otimes_{\bbQ_p} B)^\Gamma = \dim_K (V \otimes_{\bbQ_p} K')^{\Gal(K'/K)} = \dim_{\bbQ_p} V$.

\end{example}

Let~$A$ be a~$\bbQ_p$-algebra with a crystalline action of~$\Gamma$ compatible with the ring structure of~$A$, that is~$\gamma(ab) = (\gamma a)(\gamma b)$ for all~$a,b \in A$ and~$\gamma \in~\Gamma$. A~$\bbQ_p$-linear action of~$\Gamma$ on an~$A$-module~$M$ is \emph{semilinear} if for all~$\gamma \in~\Gamma$,~$a \in A$ and~$m \in M$ we have~$\gamma(am) = (\gamma a)(\gamma m)$. Given an~$A$-module~$M$ with a crystalline semilinear action of~$\Gamma$ the~$K$-vector space~$M_{\dR}$ inherits a natural structure of~$A_{\dR}$-module. For a~$\Gamma$-equivariant morphism of~$A$-modules~$f \colon M \to N$ the induced  map~$f_\dR \colon M_\dR \to N_\dR$ is a morphism of~$A_\dR$-modules.

\begin{lemma} \label{lemma:deRhamificationModules} The so-defined functor
\[
\left\{
\begin{array}{c}
\textup{$A$-modules with a crystalline} \\ \textup{semilinear action of~$\Gamma$}
\end{array}
\right\} \; \too \;
\left\{ 
\begin{array}{c}
\textup{$A_{\dR}$-modules}
\end{array}
\right\}
\] 
is exact, symmetric monoidal and compatible with pull-back via~$\Gamma$-equivariant morphisms of affine~$\bbQ_p$-schemes with a crystalline action of~$\Gamma$. An~$A$-module~$M$ with a crystalline semilinear action of~$\Gamma$ is finitely generated if and only if~$M_{\dR}$ is.
\end{lemma}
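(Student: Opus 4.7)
The plan is to reduce all four claims to the already established behaviour of the de Rham functor $(-)_{\dR}$ on crystalline $\bbQ_p[\Gamma]$-representations, namely exactness, the formula $(V \otimes_{\bbQ_p} W)_{\dR} = V_{\dR} \otimes_K W_{\dR}$, and the functorial isomorphism $V \otimes_{\bbQ_p} B \cong V_{\dR} \otimes_K B$ that is $\Gamma$-equivariant for the diagonal action on the left and the action through $B$ on the right. Although these are recalled for finite-dimensional $V$, they extend to the infinite-dimensional crystalline case by filtered colimits, since $(-)^{\Gamma}$ commutes with filtered colimits of abelian groups.

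The exactness assertion for $A$-modules is then immediate: a short exact sequence of crystalline $A$-modules with compatible semilinear $\Gamma$-action is in particular a short exact sequence of crystalline $\bbQ_p[\Gamma]$-representations, each term remaining crystalline since crystallinity is stable under subquotients. For the symmetric monoidal property, I would present $M \otimes_A N$ as the coequalizer of the two canonical maps $M \otimes_{\bbQ_p} A \otimes_{\bbQ_p} N \rightrightarrows M \otimes_{\bbQ_p} N$, apply the exact functor $(-)_{\dR}$ termwise, and identify the resulting coequalizer with $M_{\dR} \otimes_{A_{\dR}} N_{\dR}$ via compatibility with $\otimes_{\bbQ_p}$. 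Compatibility with pullback along a $\Gamma$-equivariant morphism $\Spec B \to \Spec A$ is then the special case where pullback is realized by $-\otimes_A B$ on both sides.

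The finite generation statement is the only piece that is not purely formal, and I would prove it by faithfully flat descent. Combining the comparison isomorphisms for $A$ and for $M$ yields a canonical isomorphism $M \otimes_{\bbQ_p} B \cong M_{\dR} \otimes_K B$ of modules over $A \otimes_{\bbQ_p} B \cong A_{\dR} \otimes_K B$. The two structural maps $A \to A \otimes_{\bbQ_p} B$ and $A_{\dR} \to A_{\dR} \otimes_K B$ are faithfully flat, because $\bbQ_p$ and $K$ are fields and $B$ is a nonzero algebra over each of them, so finite generation of modules is both preserved and reflected by these extensions. The comparison isomorphism then bridges the two sides, giving the desired equivalence. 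The main technical point to watch is the extension to infinite-dimensional crystalline modules: one must check that $M \otimes_A N$ remains crystalline (it is a quotient of $M \otimes_{\bbQ_p} N$, itself a filtered union of tensor products of finite-dimensional crystalline subrepresentations, and crystallinity passes to quotients) and that both the tensor formula and the comparison isomorphism continue to hold in this range, which is precisely secured by the colimit argument sketched at the outset.
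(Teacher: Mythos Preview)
Your proof is correct and rests on the same ingredients as the paper's: exactness of $(-)_{\dR}$, the tensor formula and comparison isomorphism (extended by filtered colimits to the infinite-dimensional case), and faithfully flat descent along $A \to A\otimes_{\bbQ_p} B \cong A_{\dR}\otimes_K B$ for finite generation. The only methodological difference is that for the monoidal isomorphism the paper computes $(M\otimes_A N)_{\dR}\otimes_K B$ directly via base change identities over $B$ and then takes $\Gamma$-invariants, whereas you use the coequalizer presentation of $M\otimes_A N$ together with exactness---both routes yield the same natural isomorphism, and the paper's check of symmetry (testing the swap diagram after extending scalars to $B$) is implicit in your approach via naturality.
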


\begin{proof} The exactness comes from that of the functor~$V \mapsto V_{\dR}$. Given~$A$-modules~$M$ and~$N$  with a crystalline semilinear action of~$\Gamma$, we have
\begin{align*} 
(M \otimes_A N)_{\dR} \otimes_K B &\iso (M \otimes_A N) \otimes_{\bbQ_p} B \\
&= (M \otimes_{\bbQ_p} B) \otimes_{A \otimes_{\bbQ_p} B} (N\otimes_{\bbQ_p} B) \\
&\iso (M_{\dR} \otimes_K B) \otimes_{A_{\dR} \otimes_K B} (N_{\dR} \otimes_K B) \\
&= (M_{\dR} \otimes_{A_{\dR}} N_{\dR}) \otimes_K B
\end{align*}  
Passing to~$\Gamma$-invariants we obtain a natural isomorphism
\[ \alpha_{M,N} \colon (M \otimes_A N)_{\dR} \stackrel{\sim}{\too} M_{\dR} \otimes_{A_{\dR}} N_{\dR}. \]
This also show that the functor~$M \mapsto M_{\dR}$ is compatible with pull-back. It remains to prove that the functor is symmetric. For, notice that the commutativity of the diagram
\[
\begin{tikzcd}
M_{\dR} \otimes_{A_{\dR}} N_{\dR} \ar[r, "\alpha_{M,N}"] \ar[d, "x \otimes y \mapsto y \otimes x"'] & (M \otimes_A N)_{\dR} \ar[d, "x \otimes y \mapsto y \otimes x"]\\
N_{\dR} \otimes_{A_{\dR}} M_{\dR} \ar[r, "\alpha_{N,M}"] & (N \otimes_A M)_{\dR}
\end{tikzcd}
\]
can be tested after extending scalars to~$B$ where it becomes clear. The last statement follows from the fact that finite generation can be tested after extending scalars to~$B$; see  \cite[\href{https://stacks.math.columbia.edu/tag/08XD}{Theorem 08XD}]{stacks-project}.
\end{proof}

Let~$X$ be an affine~$\bbQ_p$-scheme endowed with an action of~$\Gamma$ defined by a group morphism~$\sigma\colon \Gamma \to \Aut(X)$. Then \[\Gamma \times \Gamma(X,\cO_X) \too \Gamma(X,\cO_X),\quad (\gamma,f) \longmapsto \sigma(\gamma^{-1})^\ast f\] defines a linear action of~$\Gamma$ on~$\Gamma(X,\cO_X)$ compatible with the ring structure.

\begin{definition}
We say that the action of~$\Gamma$ on~$X$ is \emph{crystalline} if~$\Gamma(X,\cO_X)$ is a crystalline representation. If the action of~$\Gamma$ on~$X$ is crystalline, then the~$K$-scheme \[X_{\dR} := \Spec \Gamma(X,\cO_X)_{\dR}\] comes with a natural isomorphism~$X \times_{\bbQ_p} B \iso X_{\dR} \times_K B$
of~$B$-schemes. The Frobenius operator on~$\Gamma(X, \cO_X)_{\dR}$ induces an automorphism~$\phi_X$ of~$X_{\dR}$ as a~$\bbQ_p$-scheme. We write~$h_X$ for the Hodge filtration on~$\Gamma(X,\cO_X)_{\dR}$.
\end{definition}

The construction is functorial: A~$\Gamma$-equivariant morphism~$f \colon X \to Y$ between affine~$\bbQ_p$-schemes endowed with a crystalline action of~$\bbQ_p$ induces a morphism of~$K$-schemes~$ f_\dR \colon X_\dR \to Y_\dR$.

\begin{lemma} \label{lemma:DeRhamificationScheme} The so-defined functor
\[
\begin{array}{rcl}
{\left\{ 
\begin{array}{c}
\textup{affine~$\bbQ_p$-schemes} \\
\textup{with a crystalline action of~$\Gamma$}
\end{array}
\right\}}
&\too&
{\left\{ 
\begin{array}{c}
\textup{affine~$K$-schemes} 
\end{array}
\right\}} \\
X & \longmapsto & X_{\dR}
\end{array}
\]
is compatible with fiber products. For any~$\Gamma$-equivariant morphism~$f \colon X \to Y$ of affine~$\bbQ_p$-schemes with a crystalline action of~$\Gamma$ we have:
\begin{enumerate}
\item \smallskip For any property~$\sfP$  of scheme morphisms that can be checked after faithfully flat base change,~$f$ satisfies~$\sfP$ if and only if~$f_{\dR}$ does.
\item The following square is commutative:
\[
\begin{tikzcd}
X_\dR \ar[r, "f_\dR"] \ar[d, "\phi_X"']& Y_\dR \ar[d, "\phi_Y"] \\
X_{\dR} \ar[r, "f_\dR"] & Y_{\dR}
\end{tikzcd}
\]
\end{enumerate}
\end{lemma}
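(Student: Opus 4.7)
The plan is to reduce every assertion to the module-level statement in~\cref{lemma:deRhamificationModules}, combined with the period comparison $X\times_{\bbQ_p} B \cong X_\dR\times_K B$.

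For compatibility with fiber products, I would translate $X\times_Z Y$ into the tensor product $\Gamma(X,\cO_X)\otimes_{\Gamma(Z,\cO_Z)}\Gamma(Y,\cO_Y)$ on global sections, and apply the symmetric monoidality from~\cref{lemma:deRhamificationModules} to identify its image under $(-)_\dR$ with $\Gamma(X,\cO_X)_\dR\otimes_{\Gamma(Z,\cO_Z)_\dR}\Gamma(Y,\cO_Y)_\dR$, whose spectrum is $X_\dR\times_{Z_\dR}Y_\dR$. One subtlety to address is that the tensor product $\Gamma(X,\cO_X)\otimes_{\Gamma(Z,\cO_Z)}\Gamma(Y,\cO_Y)$ must itself be crystalline; I would verify this by reducing to finite-dimensional $\Gamma$-stable subrepresentations and comparing $B$-dimensions via the comparison isomorphism.

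For assertion (1), the key input is that $B$ is faithfully flat over both $\bbQ_p$ and $K$, being a nonzero module over either field. Functoriality of $(-)_\dR$ applied to the $\Gamma$-equivariant ring morphism $f^\sharp\colon \Gamma(Y,\cO_Y)\to \Gamma(X,\cO_X)$ will give a commutative square
\[
\begin{tikzcd}
X\times_{\bbQ_p}B \ar[r,"\sim"] \ar[d,"f\times\id"'] & X_\dR\times_K B \ar[d,"f_\dR\times\id"] \\
Y\times_{\bbQ_p}B \ar[r,"\sim"] & Y_\dR\times_K B
\end{tikzcd}
\]
in which the horizontal arrows are period comparison isomorphisms. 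Since $\sfP$ descends and is stable along faithfully flat base change, it will hold for $f$ iff it holds for $f\times_{\bbQ_p}B$ iff it holds for $f_\dR\times_K B$ iff it holds for $f_\dR$.

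For assertion (2), I would observe that $\phi_X$ is by construction the operator induced on $\Gamma(X,\cO_X)_\dR$ by $\id\otimes \phi$ acting on $\Gamma(X,\cO_X)\otimes_{\bbQ_p}B$, and similarly for $\phi_Y$. Since $f^\sharp$ is $\Gamma$-equivariant and $\bbQ_p$-linear, the map $f^\sharp\otimes \id_B$ commutes trivially with $\id\otimes \phi$; passing to $\Gamma$-invariants then yields the compatibility of $f_\dR^\sharp$ with $\phi_X$ and $\phi_Y$, which translates geometrically to the desired square. I do not anticipate a serious obstacle here: every step is formal from~\cref{lemma:deRhamificationModules} once faithful flatness of $B$ over $\bbQ_p$ and $K$ is invoked, the only mildly delicate point being the crystallinity of the tensor product in the fiber-product argument.
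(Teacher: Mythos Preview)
Your proposal is correct and follows essentially the same approach as the paper: fiber products via the symmetric monoidality in \cref{lemma:deRhamificationModules}, assertion~(1) via faithful flatness of $\bbQ_p\hookrightarrow B$ and $K\hookrightarrow B$, and assertion~(2) via functoriality of the Frobenius. The paper's proof is in fact just a three-line sketch of exactly these points; the only extra detail you supply is the crystallinity of the tensor product, which the paper leaves implicit but which follows from the stability of crystalline representations under subquotients and tensor products stated earlier in the section.
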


\begin{proof} The compatibility with fiber products holds by the compatibility with pull-back in~\cref{lemma:deRhamificationModules}. Statement (1) is clear since the injections~$\bbQ_p \into B$ and~$K \into B$ are faithfully flat ring morphisms, and (2) is just the functoriality of the Frobenius endomorphism. 
\end{proof}

\begin{example} \label{Ex:CristallineActionOnFiniteScheme}Let~$X$ be a finite \'etale~$\bbQ_p$-scheme with an unramified action of~$\Gamma$. Since~$\Aut(X)$ is finite, the representation~$\rho \colon \Gamma \to \GL(\Gamma(X,\cO_X))$ has finite image. As in \cref{Ex:GaloisRepWithFiniteImageIsCristalline} write~$\ker \rho = \Gal(\bar{K}/K')$ for some finite unramified extension~$K'$ of~$K$. Then
\[ X_{\dR}  = X_{K'} / \Gal(K'/K)\]
where~$\Gal(K'/K)$ acts on~$X_{K'} = \Spec \Gamma(X,\cO_X) \otimes_{\bbQ_p} K'$ via its diagonal action on \emph{both}~$\Gamma(X, \cO_X)$ and~$K'$. Moreover, we have the set-theoretical identity
\[ X_{\dR} = X(\bar{K}) / \Gamma\]
where~$\Gamma$ acts on~$X(\bar{K})$ via its diagonal action on both~$X$ and~$\bar{K}$. With this in mind, the degree of a point in~$X_{\dR}$ is the length of the corresponding orbit under~$\Gamma$. 
\end{example}

It will be useful to have a handy condition to prove that the action on an affine scheme is crystalline.

\begin{lemma} \label{Lemma:CharCristallineActions} Let~$X= \Spec A$ be an affine~$\bbQ_p$-scheme  with a crystalline action of~$\Gamma$ and let~$Y$ be an affine~$X$-scheme of finite type  with an action of~$\Gamma$ such that the structural morphism~$Y \to X$ is~$\Gamma$-equivariant. Then, the action on~$Y$ is crystalline if and only if there are an~$A$-module~$M$ of finite type with a semilinear crystalline action of~$\Gamma$ and a~$\Gamma$-equivariant closed embedding~$Y \into \Spec (\Sym_A M)$ of~$X$-schemes.
\end{lemma}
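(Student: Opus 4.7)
The plan is to treat the two implications separately, with the harder direction being the \emph{only if} part where one has to construct an $A$-module with a semilinear crystalline action out of the algebra of functions of $Y$.

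For the \emph{if} direction, I would argue as follows. Set $B = \Gamma(Y,\cO_Y)$. A $\Gamma$-equivariant closed embedding $Y \into \Spec(\Sym_A M)$ of $X$-schemes corresponds to a $\Gamma$-equivariant surjection of $A$-algebras $\Sym_A M \onto B$. Writing $\Sym_A M = \varinjlim_N \bigoplus_{n \le N} \Sym_A^n M$, each $\Sym_A^n M$ is a $\Gamma$-equivariant quotient of $M^{\otimes_A n}$, which in turn is a $\Gamma$-equivariant quotient of $M^{\otimes_{\bbQ_p} n}$. Since crystalline representations are stable under tensor products, subquotients, and filtered colimits, $M$ being crystalline implies that $\Sym_A M$ is crystalline, hence so is its quotient $B$.

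For the \emph{only if} direction I would proceed by constructing $M$ inside $B$. Since $Y \to X$ is of finite type, pick finitely many $A$-algebra generators $b_1,\dots,b_n$ of $B$. By hypothesis $B$ is crystalline, i.e.\ a filtered union of finite dimensional crystalline $\bbQ_p$-subrepresentations; choose for each $i$ such a subrepresentation $V_i \subset B$ containing $b_i$. Then set
\[
 M \;:=\; \sum_{i=1}^n A \cdot V_i \;\subset\; B.
\]
I would then verify the following three points in order: (a) \emph{$\Gamma$-stability}: for $\gamma \in \Gamma$, $a\in A$ and $v\in V_i$, semilinearity gives $\gamma(av)=(\gamma a)(\gamma v)\in A\cdot V_i \subset M$ because $V_i$ is $\Gamma$-stable; (b) \emph{finite generation over $A$}: $M$ is generated by any $\bbQ_p$-basis of $V_1 \oplus \cdots \oplus V_n$, which is finite; (c) \emph{crystallinity as $\bbQ_p$-representation}: $M \subset B$ is a $\Gamma$-subrepresentation of a crystalline representation, and crystalline representations (in the sense of the paper) are closed under subrepresentations, because if $B = \bigcup_j W_j$ with $W_j$ finite dimensional crystalline, then $M = \bigcup_j (M\cap W_j)$ exhibits $M$ as a filtered union of finite dimensional crystalline subrepresentations (subrepresentations of finite dimensional crystalline representations being crystalline). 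Finally, since $M$ contains each $b_i$ and the $b_i$ generate $B$ as an $A$-algebra, the universal property of the symmetric algebra gives a $\Gamma$-equivariant surjection $\Sym_A M \onto B$, i.e.\ a $\Gamma$-equivariant closed embedding $Y \into \Spec(\Sym_A M)$ over $X$.

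There is no serious obstacle: the only delicate point is verifying that $M$ as defined is crystalline as a $\bbQ_p$-representation, which boils down to knowing that the class of crystalline representations is stable under subrepresentations in the infinite dimensional sense used here. This is where the ``filtered union'' part of the definition of crystalline used in the paper is essential.
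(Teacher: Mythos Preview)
Your proof is correct and follows essentially the same approach as the paper: for the \emph{if} direction you both exhibit $\Sym_A M$ as a filtered union of crystalline pieces and pass to the quotient; for the \emph{only if} direction you both take a finite-dimensional crystalline subrepresentation $V\subset B$ containing a set of $A$-algebra generators and let $M=A\cdot V$. Your treatment is slightly more explicit (separating the generators $b_i$ and the $V_i$, and spelling out the crystallinity of $M$ via $M=\bigcup_j(M\cap W_j)$), but the underlying argument is identical.
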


\begin{proof} Suppose first that the action of~$\Gamma$ on~$R= \Gamma(Y, \cO_Y)$ is crystalline. Let~$V \subset R$ be a finite dimensional subrepresentation generating~$R$ as an~$A$-algebra. The~$A$-submodule~$M \subset R$ generated by~$V$ is stable under~$\Gamma$ and finitely generated as an~$A$-module. Then~$M$ does the job. Conversely let~$M$ be an~$A$-module with a crystalline semilinear action of~$\Gamma$ and consider a~$\Gamma$-equivariant surjective morphism of~$A$-algebras~$f\colon S:= \Sym_A M \to R$. For each integer~$n\ge0$ the representation  \[S_n := \bigoplus_{i=0}^n \Sym^i_A M\] is crystalline, thus so~$R_n := f(S_n) \subset R$. Since~$S = \bigcup_{n=0}^\infty S_n$ and~$\phi$ is surjective we have~$R = \bigcup_{n=0}^\infty R_n$ which concludes the proof.
\end{proof}

\begin{lemma} \label{Lemma:WeilResPadicHodge} Let~$X \to T$ and~$\pi \colon T \to S$ be~$\Gamma$-equivariant morphisms between affine varieties over~$\bbQ_p$ endowed with a crystalline action of~$\Gamma$. Suppose that~$\pi$ is finite flat. Then the natural action of~$\Gamma$ on~$\Res_{T/S} X$ is crystalline and
\[ (\Res_{T/S} X)_{\dR} = \Res_{T_{\dR}/S_\dR} X_{\dR}.\]
\end{lemma}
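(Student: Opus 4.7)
The plan is to apply the criterion of Lemma~\ref{Lemma:CharCristallineActions} to realize $\Res_{T/S} X$ as a $\Gamma$-equivariant closed subscheme of $\Spec_S \Sym_A N$ for a finitely generated $A$-module $N$ carrying a crystalline semilinear $\Gamma$-action, where $A := \Gamma(S, \cO_S)$, and then to compute its de Rham realization using the exact, symmetric monoidal character of $(-)_\dR$ established in Lemma~\ref{lemma:deRhamificationModules}. Setting $B := \Gamma(T, \cO_T)$ and $R := \Gamma(X, \cO_X)$, the finiteness and flatness of $\pi$ make $B$ a finite projective $A$-module, and Lemma~\ref{Lemma:CharCristallineActions} applied to the morphism $X \to T$ provides a finitely generated $B$-module $M$ with crystalline semilinear $\Gamma$-action together with a $\Gamma$-equivariant closed embedding $X \hookrightarrow \Spec_T \Sym_B M$ of $T$-schemes.

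First I would use the identification $C \otimes_A B \cong \Hom_A(B^\vee, C)$, valid since $B$ is finite projective over $A$, together with tensor-hom adjunction, to obtain a canonical isomorphism $\Res_{T/S}(\Spec_T \Sym_B M) \cong \Spec_S \Sym_A N$, where $N := M \otimes_B B^\vee$ and $B^\vee := \Hom_A(B, A)$ carries the natural contragredient $\Gamma$-action. Since Weil restriction preserves closed immersions, this yields a $\Gamma$-equivariant closed embedding $\Res_{T/S} X \hookrightarrow \Spec_S \Sym_A N$. Finite generation of $N$ as an $A$-module is immediate from the finite generation of $M$ over $B$ and of $B^\vee$ over $A$; its crystallinity reduces, via the monoidal property of $(-)_\dR$ in Lemma~\ref{lemma:deRhamificationModules}, to the crystallinity of $B^\vee$. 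The latter is obtained by base-changing to $B_\cris$: the crystalline comparison $B \otimes_{\bbQ_p} B_\cris \cong B_\dR \otimes_K B_\cris$ of $\Gamma$-equivariant $A_\dR \otimes_K B_\cris$-modules yields, via $A \otimes_{\bbQ_p} B_\cris$-linear dualization, a natural isomorphism $B^\vee \otimes_{\bbQ_p} B_\cris \cong (B_\dR)^\vee \otimes_K B_\cris$, so that $(B^\vee)_\dR = (B_\dR)^\vee$. Lemma~\ref{Lemma:CharCristallineActions} then gives that the $\Gamma$-action on $\Res_{T/S} X$ is crystalline.

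For the identification $(\Res_{T/S} X)_\dR = \Res_{T_\dR/S_\dR} X_\dR$, combining $(B^\vee)_\dR = (B_\dR)^\vee$ with the symmetric monoidal character of $(-)_\dR$ gives
\[
 \bigl( \Res_{T/S}(\Spec_T \Sym_B M) \bigr)_\dR
 \;=\; \Spec_{S_\dR} \Sym_{A_\dR}\bigl( M_\dR \otimes_{B_\dR} (B_\dR)^\vee \bigr)
 \;=\; \Res_{T_\dR/S_\dR}\bigl(\Spec_{T_\dR} \Sym_{B_\dR} M_\dR\bigr),
\]
the last equality being the de Rham analogue of the identification used above. Exactness of $(-)_\dR$ then transports the closed immersion $\Res_{T/S} X \hookrightarrow \Res_{T/S}(\Spec_T \Sym_B M)$ to a closed immersion $(\Res_{T/S} X)_\dR \hookrightarrow \Res_{T_\dR/S_\dR}(\Spec_{T_\dR} \Sym_{B_\dR} M_\dR)$, whose image coincides with $\Res_{T_\dR/S_\dR}(X_\dR)$ by naturality of Weil restriction applied to the closed embedding $X_\dR \hookrightarrow \Spec_{T_\dR} \Sym_{B_\dR} M_\dR$.

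The main obstacle is establishing the crystallinity of $B^\vee$: since $B$ is infinite-dimensional as a $\bbQ_p$-vector space and is merely an increasing union of finite-dimensional crystalline subrepresentations, one has to exhibit a compatible filtration of $B^\vee$ by finite-dimensional crystalline subrepresentations rather than dualize each piece in isolation. I would proceed locally on $\Spec A$, trivializing $B$ as a finite free $A$-module so that $B^\vee$ becomes a finite direct sum of copies of $A$ (manifestly crystalline), and then glue the local pieces using faithfully flat descent.
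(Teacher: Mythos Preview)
Your identification $\Res_{T/S}(\Spec_T \Sym_B M) \cong \Spec_S \Sym_A(M \otimes_B B^\vee)$ is correct and more precise than the paper's (which has a typo at this point), but two steps do not go through as written.

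The argument for crystallinity of $B^\vee$ via localization fails: Zariski opens of $\Spec A$ are not $\Gamma$-stable in general, so crystallinity cannot be checked locally on $\Spec A$, and even when $B$ is globally free an $A$-module isomorphism $B^\vee \cong A^r$ carries no $\Gamma$-equivariance, so it does not exhibit $B^\vee$ as a subrepresentation of the crystalline $A^r$. A correct fix: choose a finite-dimensional $\Gamma$-stable subspace $W \subset B$ generating $B$ as an $A$-module (possible since $B$ is crystalline and finitely generated over $A$); the $A$-linear multiplication map $A \otimes_{\bbQ_p} W \twoheadrightarrow B$ then dualizes to a $\Gamma$-equivariant embedding $B^\vee \hookrightarrow \Hom_A(A\otimes_{\bbQ_p} W, A) = A \otimes_{\bbQ_p} W^\vee$, whose target is crystalline.

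More seriously, the final sentence asserts that $(\Res_{T/S} X)_\dR$ and $\Res_{T_\dR/S_\dR} X_\dR$ agree as closed subschemes of the common ambient affine space ``by naturality of Weil restriction'', but this is precisely the content of the lemma. You have verified the statement for affine spaces over $T$; passing to an arbitrary closed subscheme $X$ requires knowing that the formation of the defining ideal of $\Res_{T/S}(-)$ commutes with $(-)_\dR$, which is not formal from exactness and monoidality alone.

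The paper proves the isomorphism by a different and shorter route that sidesteps this. It constructs a canonical comparison map $f \colon (\Res_{T/S} X)_\dR \to \Res_{T_\dR/S_\dR} X_\dR$ directly from the universal property of the target: such an $f$ corresponds to a $T_\dR$-morphism $(\Res_{T/S} X)_\dR \times_{S_\dR} T_\dR \to X_\dR$, and one takes this to be $g_\dR$ for the $\Gamma$-equivariant counit $g \colon \Res_{T/S} X \times_S T \to X$ of the Weil restriction adjunction. One then checks that $f$ is an isomorphism after extending scalars to $B_{\cris}$, where both source and target are identified with the Weil restriction of the base change of $X$ (by compatibility of Weil restriction with scalar extension) and $f$ becomes the identity. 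Your approach can be completed, but the cleanest way to close the final gap is essentially this same base-change check.
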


\begin{proof} By \cref{Lemma:CharCristallineActions} there are a coherent~$\cO_T$-module $V$ with a crystalline semilinear action on~$\Gamma(T,V)$ and a closed embedding~$X \into \Spec_T{\Sym V}$ which is $\Gamma$-equivariant. Then the morphism induced by Weil restriction
\[ j \colon \Res_{T/S} X \intoo \Res_{T/S} (\Spec_T{\Sym V}) = \Spec_S {\Sym \pi_\ast T}\]
is a closed immersion. The group~$\Gamma$ acts naturally on~$\Res_{T/S} X$ and the closed immersion~$j$ is~$\Gamma$-equivariant. \Cref{Lemma:CharCristallineActions} then implies that the action of~$\Gamma$ on~$\Res_{T/S} X$ is crystalline. Now note that there is a natural morphism of~$S_{\dR}$-schemes \[f \colon (\Res_{T/S} X)_{\dR} \too \Res_{T_{\dR}/S_{\dR}} X_{\dR}.\]
Indeed by universal property of the Weil restriction such a morphism~$f$ is given by a morphism of~$T_{\dR}$-schemes~$(\Res_{T/S} X)_{\dR} \times_{S_\dR} T_{\dR} \to X_{\dR}$ still denoted~$f$. Similarly the identity of~$\Res_{T/S} X$ is given by a morphism of~$T$-schemes
\[ g \colon \Res_{T/S} (X) \times_{S} T \too X.\]
The morphism~$g$ is~$\Gamma$-equivariant and we set
\[ f := g_{\dR}\colon (\Res_{T/S} X)_{\dR} \times_S T_{\dR} = (\Res_{T/S} (X) \times_{S} T)_{\dR}   \too X_{\dR}\]
where the identity comes from \cref{lemma:DeRhamificationScheme}. In order to conclude the proof it suffices to show that~$f$ is an isomorphism after extending scalars to~$B$. The construction of the Weil restriction is compatible with scalars extension thus
\begin{align*} (\Res_{T/S} X)_{\dR} \times_K B &\iso \Res_{T/S} (X) \times_{\bbQ_p} B = \Res_{T_B /S_B} (X_B), \\
(\Res_{T_{\dR}/S_{\dR}} X_{\dR}) \times_K B &= \Res_{T_{\dR, B}/S_{\dR,B}} (X_{\dR,B}) \iso \Res_{T_B /S_B} (X_B).
\end{align*}
The endomorphism of~$\Res_{T_B /S_B} (X_B)$ induced by~$f$ via the preceding isomorphisms is just the identity, which concludes the proof.
\end{proof}

\begin{remark}[A variation on the construction] \label{ex:NotForgettingCoefficients} Let~$V$ be a~$K$-vector space with a crystalline~$K$-linear action of~$\Gamma$. Write~$V_0$ for~$V$ seen simply as a~$\bbQ_p$-vector space. Then~$V_{0, \dR} = V_{\dR} \otimes_{\bbQ_p} K$ where
\[V_{\dR} := (V \otimes_K B)^\Gamma.\]
The construction of~$V_{\dR}$ is functorial with respect to~$K$-linear~$\Gamma$-equivariant maps. This gives rise to a functor
\[ 
\left\{ \begin{array}{c}
\textup{$K$-vector spaces with} \\
\textup{a crystalline~$K$-linear action of~$\Gamma$}
\end{array}
\right\}\too
\left\{ \begin{array}{c}
\textup{$K$-vector spaces}
\end{array}
\right\}
 \]
which is exact and compatible with tensor product. The Hodge filtration is again exact in the above sense and compatible with tensor product. Similarly, for an affine scheme~$X= \Spec A$ over~$K$ with a crystalline action of~$\Gamma$ by~$K$-scheme isomorphisms we define
\[ X_{\dR} = \Spec{(A \otimes_K B)^\Gamma}.\]
This construction is functorial with respect to~$\Gamma$-equivariant morphisms of~$K$-schemes, hence gives rise to a functor
\[ 
\left\{ \begin{array}{c}
\textup{$K$-schemes with} \\
\textup{a crystalline~$K$-linear action of~$\Gamma$}
\end{array}
\right\}\too
\left\{ \begin{array}{c}
\textup{$K$-schemes}
\end{array}
\right\}
 \]
which is compatible with fibered products.
\end{remark}

\subsection{Affine group schemes with crystalline actions} 
Let~$Z=\Spec R$ be an affine~$\bbQ_p$-scheme endowed with a crystalline action of~$\Gamma$ and let~$V$ be a vector bundle on~$Z$ together with a crystalline semilinear action on the~$R$-module~$\Gamma(Z, V)$. In this section we write~$\bbV(V) = \Spec \Sym V^\vee$ for the total space of~$V$. Consider a subgroup~$Z$-scheme~$G \subset \GL(V)$  normalized by~$\Gamma$. The action by conjugation of~$\Gamma$ on~$G$ is crystalline. Indeed the closed embedding
\[ G \intoo \bbV(\End V \oplus \cO_Z ), \quad g \longmapsto (g, \det(g)^{-1})\]
is~$\Gamma$-equivariant with respect to the action on~$\End(V\oplus \det V)$ given by
\[
\gamma.(f, \lambda) = ( \rho(\gamma) f \rho(\gamma)^{-1}, \lambda), \qquad \textup{for } \gamma \in\Gamma, f \in \End(V), \textup{ and } \lambda \in \det V,
\]
hence we conclude by \cref{Lemma:CharCristallineActions}. The group law and the inversion on~$G$ are equivariant morphisms for the action of~$\Gamma$ and the neutral element of~$G$ is fixed under~$\Gamma$. Thus~$G_{\dR}$ is group scheme over~$Z_{\dR}$, and actually a subgroup
\[ G_{\dR} \intoo \GL(V)_{\dR} =\GL(V_{\dR})\] 
because~$G \into\GL(V)$ is~$\Gamma$-equivariant when~$\Gamma$ acts on~$\GL(V)$ by conjugation.

\begin{lemma} \label{Lemma:StupidPropertiesGroupsWithCristallineAction} In the above setup suppose~$Z$ is finite \'etale. Then the following hold:
\begin{enumerate}
\item For any normal subgroup~$H \subset G$ normalized by~$\Gamma$ the action by conjugation of~$\Gamma$ on~$G/H$ is crystalline and
\[(G/H)_{\dR} = G_{\dR}/H_{\dR}.\]
\item The unipotent radical~$\rad G \subset G$ is normalized by~$\Gamma$ and 
\[ (\rad G)_{\dR} = \rad(G_{\dR}).\]
\item For any Hopf filtration~$x$ on~$\bbQ_p[\Res_{Z/\bbQ_p}G]$ stable under~$\Gamma$ there is a unique Hopf filtration on~$K[\Res_{Z_{\dR}/K} G_{\dR}]$ such that~$x \mapsto x_{\dR}$ via 
\[\Res_{Z/\bbQ_p}G \times_{\bbQ_p} B \; \iso\;  \Res_{Z_{\dR}/K}G_{\dR} \times_{K} B.
\]
\end{enumerate}
\end{lemma}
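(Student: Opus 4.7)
The plan is to verify each part by reducing everything to statements about the functor $(-)_{\dR}$ established in~\cref{lemma:deRhamificationModules,lemma:DeRhamificationScheme}: exactness, compatibility with tensor products and fibered products, and the fact that $(-)_{\dR} \otimes_K B$ recovers $(-) \otimes_{\bbQ_p} B$ under the canonical comparison isomorphism.

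For (1), since $H \subset G$ is a closed normal subgroup of an affine group scheme over $Z$, the quotient $G/H$ exists as an affine group scheme and $\bbQ_p[\Res_{Z/\bbQ_p}(G/H)]$ embeds into $\bbQ_p[\Res_{Z/\bbQ_p}G]$ as the subalgebra of right-$H$-invariant functions. Since $\Gamma$ normalizes $H$, this subalgebra is $\Gamma$-stable, hence a subrepresentation of a crystalline representation, hence crystalline. Thus the action of $\Gamma$ on $G/H$ is crystalline. Applying $(-)_{\dR}$ to the short exact sequence $1\to H \to G \to G/H \to 1$ and using exactness in~\cref{lemma:deRhamificationModules} identifies $(G/H)_{\dR}$ with $G_{\dR}/H_{\dR}$.

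For (2), the unipotent radical $\rad G$ of $G^\circ$ is characteristic in $G$ (it is the largest connected unipotent normal subgroup of the identity component), hence preserved by the algebraic automorphisms of $G$ coming from the conjugation action of $\Gamma$; by (1) this action is crystalline. Both $(\rad G)_{\dR}$ and $\rad(G_{\dR})$ are closed subgroup schemes of $G_{\dR}$, so to see they agree it suffices to verify the identity after the faithfully flat base change $K \to B$. Under the comparison isomorphism $G \times_{\bbQ_p} B \cong G_{\dR} \times_K B$, both become the unipotent radical of this common group scheme, since formation of the unipotent radical commutes with flat base change between fields of characteristic zero.

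For (3), write $R = \bbQ_p[\Res_{Z/\bbQ_p}G]$ and $R_{\dR} = K[\Res_{Z_{\dR}/K}G_{\dR}]$; the two are identified via~\cref{Lemma:WeilResPadicHodge}. Each step $F^\alpha R$ of the Hopf filtration $x$ is a $\Gamma$-stable subrepresentation of the crystalline representation $R$, hence crystalline, and we define $F^\alpha R_{\dR} := (F^\alpha R)_{\dR} \subset R_{\dR}$. Exactness of $(-)_{\dR}$ implies that $F^\bullet R_{\dR}$ is nonincreasing, exhausting and separated. The Hopf axiom~\eqref{eq:HopfFiltrationComult} transfers because $(-)_{\dR}$ commutes with preimages (by exactness) and with the comultiplication map $\mu$ (by functoriality applied to the $\Gamma$-equivariant multiplication morphism); axiom~\eqref{eq:HopfFiltrationMult} transfers similarly using that $(-)_{\dR}$ is monoidal (\cref{lemma:deRhamificationModules}), which turns the sum $\sum F^\beta\otimes F^\gamma$ into the corresponding sum over $R_{\dR}$. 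Uniqueness of $x_{\dR}$ follows from faithful flatness of $K \to B$: any $K$-subspace of $R_{\dR}$ is determined by its extension of scalars to $B$, and the requirement $x \mapsto x_{\dR}$ forces $F^\alpha R_{\dR} \otimes_K B$ to equal $F^\alpha R \otimes_{\bbQ_p} B$ under the canonical comparison.

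The main obstacle is the bookkeeping in (3): translating the preimage condition~\eqref{eq:HopfFiltrationComult} and the product-sum condition~\eqref{eq:HopfFiltrationMult} through the de Rham functor. This is conceptually straightforward from exactness and monoidality, but requires that one verifies the natural isomorphism $(M\otimes_A N)_{\dR} \cong M_{\dR}\otimes_{A_{\dR}} N_{\dR}$ of~\cref{lemma:deRhamificationModules} carries the $\mu$ and $\nu$ of~\cref{Def:HopfFiltration} to their de Rham counterparts; once this naturality is spelled out, the two Hopf axioms carry over formally.
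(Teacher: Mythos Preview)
Your approach is essentially the same as the paper's and is correct in spirit, but one step in part~(1) is imprecise. You write that applying $(-)_{\dR}$ to the exact sequence $1\to H\to G\to G/H\to 1$ ``using exactness in~\cref{lemma:deRhamificationModules}'' gives the result. That lemma is about exactness on \emph{modules}, not short exact sequences of group schemes; there is no direct way to read off $(G/H)_{\dR}\cong G_{\dR}/H_{\dR}$ from exactness on coordinate rings alone, since the sequence $\bbQ_p[G/H]\hookrightarrow\bbQ_p[G]\twoheadrightarrow\bbQ_p[H]$ is not exact in the middle. The paper instead observes that the $\Gamma$-equivariant, $H$-invariant projection $\pi\colon G\to G/H$ induces an $H_{\dR}$-invariant morphism $\pi_{\dR}\colon G_{\dR}\to(G/H)_{\dR}$, hence by the universal property of quotients a morphism $G_{\dR}/H_{\dR}\to(G/H)_{\dR}$, which one then checks is an isomorphism after the faithfully flat base change to $B$. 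This is the argument you should give; your crystallinity claim for $G/H$ (via the $\Gamma$-stable subalgebra of invariants) is fine.

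Parts~(2) and~(3) match the paper's proof. In~(2) the paper adds the small observation that $(\rad G)_{\dR}$ is connected, normal and unipotent, hence contained in $\rad(G_{\dR})$, before invoking base change to $B$; your version jumps directly to the base-change comparison, which also works. In~(3) your argument is a more detailed version of the paper's: the paper simply notes that $\mu$ and $\nu$ are $\Gamma$-equivariant and defines $K[H_{\dR}]^\alpha:=(\bbQ_p[H]^\alpha)_{\dR}$, leaving the verification of the Hopf axioms implicit; you spell out why~\eqref{eq:HopfFiltrationComult} and~\eqref{eq:HopfFiltrationMult} transfer via exactness and monoidality, which is exactly what is needed.
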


\begin{proof} (1) The projection~$\pi\colon G \to G/H$ is~$\Gamma$-equivariant and~$H$-invariant, thus it induces an~$H_{\dR}$-invariant morphism~$\pi_{\dR} \colon G_{\dR} \to (G/H)_{\dR}$. By property of the quotient this factors through a morphism~$G_{\dR}/H_{\dR} \to (G/H)_{\dR}$ which is seen to be an isomorphism after extending scalars to~$B$.

\medskip

(2) The subgroup~$(\rad G)_{\dR}$ is connected normal and unipotent. Therefore we have an inclusion~$\rad G)_{\dR} \subset \rad (G_{\dR})$ which is seen to an equality after extending scalars to~$B$.

\medskip

(3) Write~$H=\Res_{Z/\bbQ_p} G$ so that~$H_{\dR} = \Res_{Z_{\dR}/K} G_{\dR}$. Let~$\bbQ_p[H]^\bullet$ be an Hopf filtration on~$\bbQ_p[H]$ stable under~$\Gamma$. The morphisms~$\mu$ and~$\nu$ in the \cref{Def:HopfFiltration} of an Hopf filtration  are~$\Gamma$-equivariant with respect to the action by conjugation of~$\Gamma$ on~$H$. It follows that the filtration of~$K[H_{\dR}]$ defined by
\[ K[H_{\dR}]^\alpha= \bbQ_p[H]^\alpha_{\dR}, \quad \alpha \in \bbQ,\]
is Hopf and it does the job by construction.
\end{proof}

Now we reset notation and work over~$Z = \Spec \bbQ_p$. In this case crystalline actions on affine algebraic groups admit the following characterization:

\begin{lemma}Let~$G$ be an affine algebraic group over~$\bbQ_p$ and~$\rho \colon \Gamma \to G$ a group morphism. Then the following are equivalent:
\begin{enumerate}
\item The action of~$\Gamma$ on~$G$ by left multiplication is crystalline.
\item The action of~$\Gamma$ on~$G$ by right multiplication is crystalline.

\item There is a finite dimensional faithful representation~$i \colon G\into\GL(V)$ such that~$i \circ \rho \colon \Gamma \to \GL(V)$ is crystalline.
\item Given a representation~$V$ of~$G$ the induced linear action of~$\Gamma$ is crystalline.
\end{enumerate}
If any of the above equivalent conditions holds, then the action of~$\Gamma$ on~$G$ by conjugation is crystalline.
\end{lemma}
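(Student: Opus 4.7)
The plan is to treat condition (3) as the hub and establish it equivalent to each of (1), (2), and (4), deriving the conjugation statement along the way. The implication (4) $\Rightarrow$ (3) is immediate since any affine algebraic group over a field admits a faithful finite-dimensional representation.

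For (3) $\Rightarrow$ (4), (1), (2), I would fix a faithful representation $i\colon G\intoo\GL(V)$ such that $i\circ\rho$ is crystalline. Dualising the comparison isomorphism $V\otimes_{\bbQ_p} B \cong V_{\dR}\otimes_K B$ shows that $V^\vee$ is also crystalline, and combined with the closure of crystalline representations under tensor products and subquotients (recalled in the excerpt), every finite-dimensional $G$-representation $W$ is crystalline: by Chevalley's embedding theorem $W$ is a subquotient of a direct sum of the form $\bigoplus V^{\otimes n_j}\otimes(V^\vee)^{\otimes m_j}$. This gives (4). For the conditions on $\bbQ_p[G]$, any element lies in a finite-dimensional $G$-stable subspace for each of the left, right, and conjugation actions of $G$ on itself, hence $\bbQ_p[G]$ is a directed union of finite-dimensional $G$-subrepresentations in each of the three cases; by (4) each summand is crystalline as a $\Gamma$-representation, proving (1), (2), and the final statement about conjugation all at once.

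For (1) $\Rightarrow$ (3), and symmetrically (2) $\Rightarrow$ (3), I would use matrix coefficients: fix a faithful $V\in\Rep(G)$ and a basis $\ell_1,\dots,\ell_n$ of $V^\vee$; then the map $v\mapsto (g\mapsto \ell_j(gv))_j$ is a $\Gamma$-equivariant injection $V\intoo \bbQ_p[G]^n$ for the right translation action on $\bbQ_p[G]$, the equivariance being a direct computation using $\gamma\cdot v = \rho(\gamma)v$. Under (2), $\bbQ_p[G]$ is a directed union of finite-dimensional crystalline $\Gamma$-subrepresentations, so the finite-dimensional image of $V$ sits inside one of them and $V$ is crystalline. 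The only step not immediately available from the excerpt is the stability of crystalline representations under dualisation in finite dimension; dualising the crystalline comparison isomorphism for $V$ provides this, and the remainder of the argument is standard closure together with Chevalley's theorem.
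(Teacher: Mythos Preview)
Your argument is correct and close in spirit to the paper's, though organised differently. The paper handles $(1)\Leftrightarrow(2)$ in one line via the inversion map, and for $(3)\Rightarrow(2)$ it avoids both Chevalley's theorem and duals: it uses the $\Gamma$-equivariant closed embedding $G\hookrightarrow\bbV(\End V\oplus\bbQ_p)$, $g\mapsto(g,\det g^{-1})$, together with the characterisation lemma for crystalline actions proved immediately before the statement. Your route instead trades that embedding lemma for Chevalley plus the (easy) stability of finite-dimensional crystalline representations under duals; both are short, but the paper's version stays entirely within the tools it has just set up. Your step $(2)\Rightarrow(3)$ via matrix coefficients is exactly what the paper invokes (citing Milne) for its $(2)\Rightarrow(4)$. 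One small point of care: with the paper's convention $(\gamma\cdot f)=\sigma(\gamma^{-1})^*f$ on functions, the explicit map $v\mapsto(g\mapsto\ell_j(gv))$ is equivariant for the action $g\mapsto g\rho(\gamma)^{-1}$ rather than $g\mapsto g\rho(\gamma)$; this is harmless since the two yield the same crystalline condition, but worth tracking when you write it out.
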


\begin{proof} (1) and (2) are equivalent because the inversion morphism~$G \to G$ is~$\Gamma$-equivariant when~$\Gamma$ acts by left multiplication on the source and by~$(\gamma, g) \mapsto g \gamma^{-1}$ on the target. The implication (4)~$\Rightarrow (3)$ is clear. The implication (2)~$\Rightarrow$ (4) follows from the fact that any representation of~$G$ is union of finite dimensional subrepresentations and any finite dimensional representation of~$G$ is isomorphic to a subrepresentation of some finite direct sum of the regular representation \cite[cor. 4.13]{MilneAlgebraicGroups}. For the implication (3)~$\Rightarrow$ (2) notice that the closed embedding
\[ i\colon G \intoo \bbV(\End V \oplus \bbQ_p), \quad g \longmapsto (g, \det g^{-1})\]
is~$\Gamma$-equivariant when~$\Gamma$ acts on~$\End V \oplus \det V^\vee$ by~$(f, \lambda) \mapsto (f\gamma^{-1}, \det \gamma \lambda)$. For the last statement, we already saw in the paragraph preceding \cref{Lemma:StupidPropertiesGroupsWithCristallineAction} that the conjugation action is crystalline. 
\end{proof}

We are ready to state and prove the main result of this section, which is the existence of a canonical Hopf filtration for an affine group with a crystalline action:

\begin{proposition} \label{Lemma:HodgeFiltrationComesFromASubgroup} Let~$G$ be an affine algebraic group over~$\bbQ_p$ and~$\rho \colon \Gamma \to G$ a group morphism such that the action of~$\Gamma$ on~$G$ by right multiplication is crystalline. Then,
\begin{enumerate}
\item there exists a unique Hopf filtration~$h_G$ on~$K[G_{\dR}]$ such that for any representation~$V$ of~$G$ the Hodge filtration~$h_V$ on~$V_{\dR}$ is the one induced by~$h_G$; 
\item given a morphism of affine algebraic groups~$f \colon G \to H$ the action of~$\Gamma$ on~$H$ by right multiplication is crystalline and~$h_H$ is the filtration induced by~$h_G$; 
\item if~$G$ is reductive and~$Q \subset G^\circ$ is a parabolic subgroup normalized by~$\Gamma$, then~$h_G$ is induced by a unique Hopf filtration~$h_Q$ on~$K[Q_{\dR}]$ and the Hopf filtration~$h_{Q^\ss}$  induced by~$h_Q$ via~$Q_{\dR} \to Q_{\dR}^\ss = Q_{\dR}/ \rad Q_{\dR}$~is
\[ h_{Q^\ss} = \pr_Q(h_G).\]
\end{enumerate}
\end{proposition}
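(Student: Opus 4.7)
The plan is to reduce~(1), (2), (3) to the bijection of~\cref{Prop:FiltrationFunctorsAsHopfFiltrations} between Hopf filtrations on the coordinate ring of an affine algebraic group and filtration functors on its representation category, using that the Hodge filtration provides a tensor-exact functor on crystalline representations.

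For~(1), the hypothesis that right multiplication of $\Gamma$ on $G$ is crystalline entails, via the characterisation recorded just before the statement, that every finite-dimensional representation $V$ of $G$ carries a crystalline $\Gamma$-action and hence has a well-defined Hodge filtration $h_V$ on $V_{\dR}$. I would verify that $V\mapsto h_V$ is a filtration functor on $\Rep(G_{\dR})$: axiom~(F1) is the standard shape of a single Hodge filtration, (F2) follows from exactness of $D_{\cris}$ together with the exactness of the Hodge filtration, and (F3) is the tensor compatibility $(V\otimes W)_{\dR} = V_{\dR}\otimes_K W_{\dR}$ together with the formula $h^\alpha(V\otimes W) = \sum_{\beta+\gamma=\alpha} h^\beta V \otimes h^\gamma W$ known for crystalline representations. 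Then~\cref{Prop:FiltrationFunctorsAsHopfFiltrations} produces the unique Hopf filtration $h_G$ on $K[G_{\dR}]$, and its defining property of recovering each $h_V$ through the comodule structure is the tautological content of that bijection.

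For~(2), a faithful representation $W$ of $H$ becomes a representation of $G$ via $f$ and therefore carries a crystalline $\Gamma$-action, which in turn shows that right multiplication of $\Gamma$ on $H$ is crystalline. The algebra map $f^\ast\colon \bbQ_p[H]\to \bbQ_p[G]$ is $\Gamma$-equivariant, so applying $D_{\cris}$ yields a morphism of Hopf algebras $f^\ast_{\dR}\colon K[H_{\dR}]\to K[G_{\dR}]$, and the naturality of $V\mapsto h_V$ on the common representations immediately translates, through~\cref{Prop:FiltrationFunctorsAsHopfFiltrations} and~\cref{Prop:FiltrationFunctorConnectedComponent}, into the statement that $h_H$ is the filtration induced by $h_G$.

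For~(3), the decisive geometric input is that a parabolic subgroup of a connected reductive group is self-normalizing, so whenever $\rho(\Gamma)\subset G^\circ$ the assumption that $Q$ is $\Gamma$-normalized forces $\rho(\Gamma)\subset Q$; the general case is reduced to this one by first passing to the connected component through the functoriality supplied by~(2). Once $\rho$ factors through $Q$, right multiplication of $\Gamma$ on $Q$ is crystalline, and~(1) applied to $Q$ produces a unique Hopf filtration $h_Q$ on $K[Q_{\dR}]$; applying~(2) to $Q\hookrightarrow G$ then shows that $h_G$ is induced by $h_Q$, while uniqueness is the injectivity part of~\cref{Prop:FiltrationFunctorConnectedComponent}. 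Finally, $h_{Q^\ss} = \pr_Q(h_G)$ is immediate from~\cref{Rmk:ProjectionAsHopfFiltration}, since $h_Q$ is the image of $h_G$ under the algebra surjection $K[G_{\dR}]\onto K[Q_{\dR}]$ and $h_{Q^\ss}$ is its intersection with the subalgebra $K[Q_{\dR}^\ss]\subset K[Q_{\dR}]$. The main technical obstacle is making the reduction $\rho(\Gamma)\subset Q$ rigorous in the disconnected case, where the roles of conjugation and right multiplication as crystalline actions must be carefully disentangled before~(1) can be invoked on $Q$.
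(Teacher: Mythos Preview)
Your approach to~(1) has a structural gap. The bijection of \cref{Prop:FiltrationFunctorsAsHopfFiltrations} equates Hopf filtrations on $K[G_{\dR}]$ with filtration functors on $\Rep_K(G_{\dR})$, but the assignment you describe is only defined on $\Rep_{\bbQ_p}(G)$: for $V\in\Rep_{\bbQ_p}(G)$ the Hodge filtration lives on $V_{\dR}$, and the functor $V\mapsto V_{\dR}$ is not an equivalence between $\Rep_{\bbQ_p}(G)$ and $\Rep_K(G_{\dR})$ (the two are linear over different fields), so you have not produced a filtration functor on the target category. A related manifestation of the problem: the naive candidate---the Hodge filtration on $(\bbQ_p[G])_{\dR}$ for the right-multiplication $\Gamma$-action---does not even live on $K[G_{\dR}]$, because $G_{\dR}$ is formed using the \emph{conjugation} action, and these two invariants of $\bbQ_p[G]\otimes B$ differ. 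The paper handles this via a two-step torsor construction: apply $(-)_{\dR}$ to $G$ with the left-multiplication action to get a $G_{\dR}$-torsor $P$; the commuting right-multiplication action of $\Gamma$ descends to a $K$-linear action on $P$, and the $K$-linear variant of $(-)_{\dR}$ (\cref{ex:NotForgettingCoefficients}) applied to $P$ yields $G_{\dR}$ itself. The Hodge filtration from this second step is $h_G$, and the Hopf axioms \eqref{eq:HopfFiltrationComult}, \eqref{eq:HopfFiltrationMult} together with the universal property are then verified directly from the $\Gamma$-equivariance of the group law and the action maps.

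For~(3), the reduction you sketch does not work: part~(2) pushes $h_G$ forward along morphisms \emph{out of} $G$, but there is no morphism $G\to G^\circ$, and $\rho$ need not factor through $G^\circ$. The paper's fix for the obstacle you flag is to pass to the normalizer $N=N_G(Q)$ rather than to $G^\circ$: since $\Gamma$ normalizes $Q$ one has $\rho(\Gamma)\subset N$, so~(1) applied to $N$ gives $h_N$ and~(2) applied to $N\hookrightarrow G$ shows $h_G$ is induced by $h_N$; then, because parabolics are self-normalizing in $G^\circ$, one has $Q=N^\circ$, and \cref{Prop:FiltrationFunctorConnectedComponent} identifies Hopf filtrations on $K[N_{\dR}]$ with those on $K[Q_{\dR}]$. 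The final identification with $\pr_Q(h_G)$ via \cref{Rmk:ProjectionAsHopfFiltration} then goes through as you say.
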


\begin{proof} (1) The uniqueness is clear: Any faithful representation~$G \into \GL(V)$  induces an injective map
\[ \{ \textup{Hopf filtrations on~$K[G_{\dR}]$}\} \too  \{\textup{filtrations on~$V_{\dR}$} \} \]
by \cref{Prop:FiltrationFunctorConnectedComponent}. To show the existence we proceed in several steps:

\medskip

\emph{Construction of the filtration.} Write~$L$ for~$G$ endowed with the action of~$\Gamma$ by left multiplication, and write~$P = L_{\dR}$. The group law~$G \times_{\bbQ_p} L \to L$ on~$G$ is equivariant with respect to the action of~$\Gamma$ by conjugation on~$G$ and by left multiplication on~$L$. This induces an action 
\[ m\colon G_{\dR} \times_K P = G_{\dR} \times_K L_{\dR} \too L_{\dR} = P\]
making~$P$ into a principal bundle under~$G_{\dR}$. Moreover the actions of~$\Gamma$ on~$L$ given by~$(\gamma,g) \mapsto \gamma g$ and~$(\gamma, g)\mapsto g \gamma^{-1}$ commute. Therefore~$P = L_{\dR}$ inherits an action of~$\Gamma$ by~$K$-schemes isomorphisms. We claim that this action of~$\Gamma$ on~$P$ is crystalline and
\[P_{\dR} = G_{\dR}.\]
Here and afterwards for any~$K$-scheme~$X = \Spec A$ with a crystalline~$K$-linear action of~$\Gamma$ we write~$X_{\dR}= \Spec {(A \otimes_K B)^\Gamma}$ as in \cref{ex:NotForgettingCoefficients}. Once this claim is proved we set~$h_G$ to be the filtration on~$K[G_{\dR}]$  induced by the Hodge filtration on~$P_{\dR}$ via the above identification. 

To prove the claim pick a faithful representation~$i \colon G \into \GL(V)$. Then~$i$ is~$\Gamma$-equivariant both for the actions of left and right multiplication. By looking first at the action by left multiplication, the morphism~$i$ induces a closed embedding
\[i \colon P= L_{\dR}\intoo \Iso_K(V \otimes_{\bbQ_p} K, V_{\dR}). \]
Here for finite dimensional~$K$-vector spaces~$W$ and~$W'$ we write~$\Iso_K(W, W')$ for the~$K$-scheme whose points with values in a~$K$-algebra~$A$ are isomorphisms of~$A$-modules~$W \otimes_K A \to W' \otimes_K A$. Note that~$P_B$ is the orbit under composition by~$G_{\dR, B}$ of the~$B$-point of~$\Iso_K(V \otimes_{\bbQ_p} K, V_{\dR})$ given by the canonical isomorphism
\[ V \otimes_{\bbQ_p} B \iso V_{\dR} \otimes_K B.\]
Now the action of~$\Gamma$ on~$G$ and~$\GL(V)$ by multiplication on the right commutes with the action by multiplication on the left, therefore it induces an action of~$\Gamma$ on~$P$ and~$ \Iso_K(V \otimes_{\bbQ_p} K, V_{\dR})$ by~$K$-scheme isomorphisms. Explicitly, the action on~$\Gamma$ on~$ \Iso_K(V \otimes_{\bbQ_p} K, V_{\dR})$ is just given via its linear action on~$V$. In particular the morphism
\[ \Iso_K(V \otimes_{\bbQ_p} K, V_{\dR}) \intoo \bbV(H \oplus \det H^\vee) \quad \textup{where~$H = \Hom_K(V \otimes_{\bbQ_p} K, V_{\dR})$}\]
is~$\Gamma$-equivariant where~$\Gamma$ acts on the target via its linear action on~$V$. This shows that the action of~$\Gamma$ on~$P$ is crystalline. The induced morphism~$i$ is~$\Gamma$-equivariant with respect to these actions and therefore induces a closed embedding
\[ i_{\dR} \colon P_{\dR} \intoo \Iso_K(V_{\dR}, V_{\dR}) = \GL(V_{\dR}).\]
Note that~$P_{\dR,B}$ is the orbit under composition by~$G_{\dR, B}$ of~$\id \in \GL(V_{\dR, B})$. In other words~$P_{\dR} = G_{\dR}$ as desired.

\medskip 

\emph{Hopf property.} Note that the morphism~$m$ is~$\Gamma$-equivariant when~$\Gamma$ acts as above on~$P$ and trivially on~$G_{\dR}$. The induced morphism
\[ m_{\dR} \colon G_{\dR} \times_K P_{\dR} = G_{\dR} \times_K G_{\dR} \too P_{\dR} = G_{\dR}\] 
is the group law of~$G_{\dR}$. Let~$\mu \colon K[G_{\dR}] \to K[G_{\dR}] \otimes_K K[G_{\dR}]$ be the~$K$-algebra morphism induced by~$m_{\dR}$. Since~$\mu$ is injective, the exactness of the Hodge filtration and its compatibility with tensor product imply
\[ h^\alpha K[G_{\dR}] = \mu^{-1}(K[G_{\dR}] \otimes_K h^\alpha K[G_{\dR}]) \quad \textup{for all~$\alpha\in\bbQ$},\]
that is, the filtration~$h_G$ satisfies \eqref{eq:HopfFiltrationComult}. To prove \eqref{eq:HopfFiltrationMult} consider the morphism
\[
\begin{tikzcd}[row sep=0pt]
n \colon G_{\dR} \times_K G_{\dR} \times_K P \ar[r] & G_{\dR} \times_K P \times_K G_{\dR}\times_K P \ar[r, "\mu \times \mu"]& P \times_K P,
\end{tikzcd}
\]
where the first map is given by~$(g,h,x) \mapsto (g,x,h,x)$ and the second one is given by~$(g,x,h,y) \mapsto (gx, hy)$.
The above composite morphism is~$\Gamma$-equivariant when~$\Gamma$ acts on~$P$ as above and trivally on~$N_{\dR}$. The morphism of~$K$-algebras \[\nu \colon K[G_{\dR}] \otimes K[G_{\dR}] \too K[G_{\dR}] \otimes K[G_{\dR}]  \otimes K[G_{\dR}]\]  induced by~$n_{\dR}$ via the equality~$P_{\dR} = G_{\dR}$ is the one appearing in \eqref{eq:HopfFiltrationMult}  for~$G_{\dR}$. Since~$\nu$ is injective, again the exactness of the Hodge filtration and its compatibility with tensor product imply
\begin{align*} 
\nu^{-1}(K[G_{\dR}] \otimes K[G_{\dR}]  \otimes h^\alpha K[G_{\dR}]) &= h^\alpha(K[G_{\dR}] \otimes K[G_{\dR}]) \\
&= \sum_{\alpha= \beta + \gamma} h^\beta K[G_{\dR}] \otimes h^\gamma K[G_{\dR}],
\end{align*}
for all~$\alpha\in \bbQ$. This concludes the proof that the filtration~$h_G$ is Hopf.

\medskip 

\emph{Universal property.} Any representation of~$G$ is the union of its finite dimensional subrepresentations so it suffices to prove the statement for any finite dimensional representation~$\rho \colon G \to \GL(V)$. The morphism of schemes \[\bbV(V^\vee) \times_{\bbQ_p} L \too \bbV(V^\vee), \quad (f,g) \longmapsto fg\] is~$\Gamma$-equivariant when~$\Gamma$ acts on~$\bbV(V^\vee) \times_{\bbQ_p} L$ by~$(f,g)\mapsto (f \gamma^{-1}, \gamma g)$ and trivially on~$\bbV(V^\vee)$. Therefore it induces a morphism of~$K$-schemes
\[ s \colon  \bbV(V_{\dR}^\vee) \times_K P = \bbV(V_{\dR}^\vee) \times_K L_{\dR} \too \bbV(V^\vee \otimes_{\bbQ_p}K).\]
The above action of~$\Gamma$ on~$\bbV(V^\vee) \times_{\bbQ_p} L$ commutes with the action of~$\Gamma$ induced by right multiplication on~$L$. Therefore~$s$ is~$\Gamma$-equivariant when~$\Gamma$ acts on~$P$ as above, on~$V^\vee$ via the representation contragradient to~$\rho$ and trivially on~$\bbV(V_{\dR}^\vee)$. The induced morphism 
\[s_{\dR} \colon \bbV(V_{\dR}^\vee) \times_K G_{\dR} = \bbV(V_{\dR}^\vee) \times_K P_{\dR}  \too \bbV(V_{\dR}^\vee)\]
is the dual action of~$G_{\dR}$ on~$V_{\dR}$. The morphism~$s_{\dR}$ is defined by the~$K$-linear~map
\[ \sigma \colon V_{\dR} \too V_{\dR} \otimes K[G_{\dR}]\]
defining the action of~$N_{\dR}$ on~$V_{\dR}$. Since~$\sigma$ is injective, once again the exactness of the Hodge filtration and its compatibility with tensor product imply
\[ h^\alpha V_{\dR} = \sigma^{-1}(V\otimes h^\alpha K[G_{\dR}]),\qquad \textup{for all }\alpha\in\bbQ.\] 
The right-hand side of the previous identity is by definition the filtration induced by the Hopf filtration~$h_V$; see the discussion preceding \cref{Prop:FiltrationFunctorsAsHopfFiltrations}. Thus~$h_V$ is the filtration induced by~$h_G$.

\medskip

(2) Let~$\sigma \colon H \into \GL(W)$ be a faithful representation of~$H$. Then~$\sigma \circ f \circ \rho$ is a crystalline representation and~$\sigma$ is~$\Gamma$-equivariant for the right multiplication by~$\Gamma$. It follows that the action by right multiplication of~$\Gamma$ on~$H$ is crystalline. Moreover
the induced map
\[ \{ \textup{Hopf filtrations on~$K[H_{\dR}]$}\} \too  \{\textup{filtrations on~$W_{\dR}$} \} \]
is injective by \cref{Prop:FiltrationFunctorConnectedComponent}. The Hopf filtration~$h_H$ and the filtration induced by~$h_G$ on~$K[H_{\dR}]$ have both image~$h_W$, thus they coincide.

\medskip

(3) Let~$N \subset G$ be the normalizer of the parabolic subgroup~$Q$. Since~$\Gamma$ normalizes~$Q$ the image of~$\rho$ is contained in~$N$. It follows from (2) that the filtrations~$h_G$ is induced by the Hopf filtration~$h_N$ of~$K[N_{\dR}]$. Now~$Q = N^\circ$ thus the induced map
\[ \{ \textup{Hopf filtrations on~$K[Q_{\dR}]$}\} \too  \{ \textup{Hopf filtrations on~$K[N_{\dR}]$}\} \]
is bijective by \cref{Prop:FiltrationFunctorConnectedComponent}. Therefore~$h_N$ comes from a Hopf filtration on~$K[Q_{\dR}]$. The last statement follows from \cref{Rmk:ProjectionAsHopfFiltration}.
\end{proof}

\subsection{From \'etale to de Rham} The goal of this section is showing that~$p$-adic Hodge theory transforms~$\Gamma$-\'etale data into de Rham data. Let~$Z$ be a finite \'etale~$\bbQ_p$-scheme with an action of~$\Gamma$. Suppose that the action is unramified and that \[R:= \Gamma(Z, \cO_Z)\] is a product of unramified extensions of~$\bbQ_p$. The~$K$-scheme~$Z_{\dR}$ is finite \'etale over~$K$ by \cref{lemma:DeRhamificationScheme} and by \cref{Ex:CristallineActionOnFiniteScheme} the~$K$-algebra~$R_{\dR}$ is then a product of unramified extensions of~$\bbQ_p$. 

\begin{definition} A~$\Gamma$-\'etale datum~$D = (V, G, \rho)$ on~$Z$ is \emph{transportable} if the representation~$V$ is crystalline and the image of~$\rho \colon \Gamma \to \GL(V_0)$ is contained in a subgroup of~$\GL(V_0)$ containing~$\Res_{Z/\bbQ_p} G$ as a finite index subgroup.
\end{definition}

When the representation~$V$ is crystalline, if the subgroup~$G \subset \GL(V)$ is of finite index in its normalizer, then~$D$ is transportable by \cref{lemma:NormalizerAndWeilRestriction}. Given a~$\Gamma$-\'etale datum~$(V,G,\rho)$ on~$Z$ we let~$\Gamma$ act on~$G$ by conjugation and consider the group scheme~$G_{\dR}$ over~$Z_{\dR}$.

\begin{proposition} \label{prop:DescentViaPAdicHodgeTheory} Let~$D = (V,G,\rho)$ be a transportable~$\Gamma$-\'etale datum on~$Z$. Then
\[ D_\dR := (V_{\dR}, G_{\dR}, \phi_V, h_V)\]
is a de Rham datum on~$Z_{\dR}$ and the following properties hold:
\begin{enumerate} 
\item For any filtration~$x$ on~$D$ there is a unique filtration~$x_{\dR}$ on~$D_\dR$ such that
$\cB(\Res_{Z/\bbQ_p}G, \Frac B) \iso \cB(\Res_{Z_\dR/K}G_{\dR}, \Frac B)$ carries~$x$ to~$x_{\dR}$; \smallskip
\item For any grading~$V^{(\bullet)}$ on~$D$ there is a unique grading~$V^{(\bullet)}_\dR$ on~$D_\dR$ such that 
the isomorphism~$V \otimes_{\bbQ_p} B \iso V_{\dR} \otimes_{K} B$ carries~$V^{(\bullet)} \otimes_{\bbQ_p} B$ to~$V^{(\bullet)}_\dR \otimes_K B$.  
\smallskip
\end{enumerate}
The above constructions are functorial and give rise to a commutative diagram
\[
\begin{tikzcd}
{\left\{ \begin{array}{c}
\textup{transportable} \\
\textup{$\Gamma$-\'etale data on~$Z$}
\end{array} \right\}} \ar[r, "{\dR}"]
& {\left\{ \begin{array}{c}
\textup{de Rham data on~$Z_{\dR}$}
\end{array} \right\}} 
\\
{\left\{ \begin{array}{c}
\textup{filtered transportable} \\
\textup{$\Gamma$-\'etale data on~$Z$}
\end{array} \right\}} \ar[u,"\textup{forgetful}"] \ar[d, "\gr"'] \ar[r, "{\dR}"]
& {\left\{ \begin{array}{c}
\textup{filtered} \\
\textup{de Rham data on~$Z_{\dR}$}
\end{array} \right\}} \ar[u, "\textup{forgetful}"'] \ar[d, "\gr"]
\\
{\left\{ \begin{array}{c}
\textup{graded transportable} \\
\textup{$\Gamma$-\'etale data on~$Z$}
\end{array} \right\}} \ar[r, "{\dR}"]
& {\left\{ \begin{array}{c}
\textup{graded} \\
\textup{de Rham data on~$Z_{\dR}$}
\end{array} \right\}} 
\end{tikzcd}
\]
where arrows in the above categories are taken to be isomorphisms.
\end{proposition}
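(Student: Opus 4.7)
The strategy is to assemble the result from the canonical Hopf-filtration machinery of~\cref{Lemma:HodgeFiltrationComesFromASubgroup} together with the functoriality properties of~$(\cdot)_\dR$ established in Lemmas~\ref{lemma:deRhamificationModules}, \ref{lemma:DeRhamificationScheme}, \ref{Lemma:WeilResPadicHodge} and \ref{Lemma:StupidPropertiesGroupsWithCristallineAction}. The transportability hypothesis supplies a subgroup $N \subset \GL(V_0)$ with $\Res_{Z/\bbQ_p}G \subset N$ of finite index, containing the image of $\rho$. Because $V_0$ is crystalline and $N$ acts faithfully on it, the action of $\Gamma$ on $N$ by right multiplication is crystalline in the sense of the lemma preceding~\cref{Lemma:HodgeFiltrationComesFromASubgroup}, and hence so is the conjugation action of $\Gamma$ on $\Res_{Z/\bbQ_p}G$ and on $G$ itself.

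\emph{First step: $D_\dR$ is a de Rham datum.} The $\Gamma$-equivariant $R$-action on $V$ yields, via~\cref{lemma:deRhamificationModules}, an $R_\dR$-module structure on $V_\dR$; local freeness of the same rank follows because $V_\dR \otimes_{K_0} B \cong V \otimes_{\bbQ_p} B$ is locally free over $R_\dR \otimes_{K_0} B = R \otimes_{\bbQ_p} B$. Applying $(\cdot)_\dR$ to the $\Gamma$-equivariant closed embedding $G \hookrightarrow \GL(V)$ and invoking~\cref{Lemma:WeilResPadicHodge} produces a closed embedding of group $Z_\dR$-schemes $G_\dR \hookrightarrow \GL(V_\dR)$; reductivity of $G_\dR$ follows from $G_\dR \otimes B \cong G \otimes B$ by faithfully flat descent. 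To see that $h_V$ lies in $\cB(G_\dR, Z_\dR) \subset \cB(\GL(V_\dR), Z_\dR)$, apply~\cref{Lemma:HodgeFiltrationComesFromASubgroup}(1) to $N$: the canonical Hopf filtration on $\bbQ_p[N_\dR]$ induces the Hodge filtration on any representation and in particular on $V_0$; by~\cref{Lemma:HodgeFiltrationComesFromASubgroup}(2) applied to the inclusion $\Res_{Z/\bbQ_p}G \hookrightarrow N$, it restricts to a Hopf filtration on $\bbQ_p[(\Res_{Z/\bbQ_p}G)_\dR] = \bbQ_p[\Res_{Z_\dR/\bbQ_p}G_\dR]$, which via~\cref{Cor:PointsInBuildingAsHopfFiltrations} is precisely a point of $\cB(G_\dR, Z_\dR)$. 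Finally, $\phi_V$ normalizes $G_\dR$ because, after $\otimes B$, conjugation by $\phi_V$ becomes the action of the Frobenius on $B$, which preserves $G \otimes B$ since $G$ is defined over $R$.

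\emph{Second step: filtrations, gradings, functoriality.} Given a $\Gamma$-fixed point $x \in \cB(G, Z) = \cB(\Res_{Z/\bbQ_p}G, \bbQ_p)$, view it as a $\Gamma$-stable Hopf filtration on $\bbQ_p[\Res_{Z/\bbQ_p}G]$ (\cref{Cor:PointsInBuildingAsHopfFiltrations}); then~\cref{Lemma:StupidPropertiesGroupsWithCristallineAction}(3) yields a Hopf filtration on $\bbQ_p[\Res_{Z_\dR/\bbQ_p}G_\dR]$ matching $x$ after $\otimes B$, giving the desired $x_\dR \in \cB(G_\dR, Z_\dR)$; its $\phi_V$-fixedness follows from the Frobenius-equivariance of the identification $\Res_{Z/\bbQ_p}G \otimes B \cong \Res_{Z_\dR/\bbQ_p}G_\dR \otimes B$, together with the fact that $x \otimes 1$ is Frobenius-fixed since $x$ is $\bbQ_p$-rational. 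For a grading, each homogeneous summand of $V$ is an $R$-submodule whose total sum over $\Gamma$-orbits is $\Gamma$-stable; applying $(\cdot)_\dR$ termwise and using the monoidality and exactness of the functor produces the grading $V_\dR^{(\bullet)}$, with the required compatibility with $G_\dR$ and $\phi_V$ inherited from the source. Functoriality of $D \mapsto D_\dR$ on isomorphisms and commutativity of both squares of the diagram both reduce, after $\otimes B$, to tautologies. The main obstacle is the descent of the Hodge filtration from $\GL(V_\dR)$ to $G_\dR$: this is exactly what~\cref{Lemma:HodgeFiltrationComesFromASubgroup} was designed for, and it is the reason the transportability hypothesis was introduced. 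A secondary but ubiquitous technicality is keeping track of the interplay between $R$-linearity, $\bbQ_p$-linearity, and Frobenius semilinearity when transporting each structure across the comparison isomorphism.
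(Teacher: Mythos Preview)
Your overall strategy matches the paper's proof closely and the structure is correct, but there are two genuine imprecisions.

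First, your appeal to \cref{Lemma:HodgeFiltrationComesFromASubgroup}(2) for the inclusion $\Res_{Z/\bbQ_p}G \hookrightarrow N$ is misdirected. Part~(2) applies when the representation $\rho$ lands in the \emph{source} group and then pushes the canonical Hopf filtration forward to the target; here $\rho$ lands in $N$, not in $\Res_{Z/\bbQ_p}G$, so you cannot produce $h_{\Res G}$ this way. The paper instead argues directly on the de Rham side: since $\Res_{Z_\dR/K}G_\dR \subset N_\dR$ has finite index (hence the same connected component), the natural map $\cB(G_\dR, Z_\dR) = \cB(\Res_{Z_\dR/K}G_\dR, K) \to \cB(N_\dR, K)$ is a bijection, and $h_N$ therefore descends uniquely. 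This uses \cref{Prop:FiltrationFunctorConnectedComponent} (via \cref{Cor:PointsInBuildingAsHopfFiltrations}) rather than part~(2). Your phrase ``it restricts'' suggests you have the right picture, but the citation is wrong.

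Second, the commutativity of the lower square is \emph{not} a tautology after tensoring with $B$. The issue is the Hodge filtration: one must check that the canonical Hopf filtration on the Levi quotient $(Q/\rad Q)_\dR$ equals the projection $\pr_{Q_\dR}(h_V)$, and this is exactly the content of \cref{Lemma:HodgeFiltrationComesFromASubgroup}(3), together with \cref{Lemma:StupidPropertiesGroupsWithCristallineAction}(1),(2) for the identification $(Q/\rad Q)_\dR = Q_\dR/\rad Q_\dR$. The Hodge filtration is a $K$-structure invisible after base change to $B$, so the ``tautology after $\otimes B$'' slogan does not apply to it.

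Minor point: $(\Res_{Z/\bbQ_p}G)_\dR$ is a $K$-scheme, and the Weil restriction on the de Rham side is $\Res_{Z_\dR/K}$, not $\Res_{Z_\dR/\bbQ_p}$.
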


\begin{proof}[{Proof of \cref{prop:DescentViaPAdicHodgeTheory}}] In order to prove that~$D_{\dR}$ is a de Rham datum we have to show that:

\medskip

(a)~$V_{\dR}$ is a vector bundle over~$Z_{\dR}$: This is  \cref{lemma:deRhamificationModules}. 

\medskip

(b)~$G_{\dR} \subset \GL(V_{\dR})$ is a reductive group~$Z_{\dR}$-scheme: Being reductive can be tested after extending scalars to~$B$ where the property is true because~$G \times_{\bbQ_p} B$ is reductive over~$Z \times_{\bbQ_p} B$.

\medskip

(c) the Frobenius operator~$\phi_V$ on~$V_{\dR}$ is semilinear and normalizes~$G_{\dR}$ in the sense of \cref{sec:deRhamData}: Indeed~$\phi_G = \phi_{\GL(V) \rvert G}$ by functoriality of the construction of the Frobenius operator. On other hand~$\phi_{\GL(V)}$ is the conjugation by~$\phi_V$.

\medskip

(d) The Hodge filtration~$h_V$ on~$V_{\dR}$ belongs to~$$\cB(G_{\dR}, Z_{\dR}) \subset \cB(\GL(V_{\dR}), Z_{\dR}):$$ Since~$D$ is transportable there is a subgroup~$N \subset\GL(V_0)$ containing the image of~$\rho$ and having~$\Res_{Z/\bbQ_p} G$ as a subgroup of finite index, where~$V_0$ stands for~$V$ as~$\bbQ_p$-vector space. By \cref{Lemma:HodgeFiltrationComesFromASubgroup} (1) the Hodge filtration~$h_V$ comes from a Hopf filtration~$h_N$ on~$K[N_{\dR}]$. Now the group~$N_{\dR}$ is reductive and contains~$\Res_{Z_{\dR}/K} G_{\dR}$ as a subgroup of finite index. In particular the induced map
\[ \cB(G_{\dR}, Z_{\dR}):= \cB(\Res_{Z_{\dR}/K} G_{\dR}, K) \too \cB(N_{\dR}, K)\]
is bijective, so~$h_N$ comes from a necessarily unique point in~$\cB(G_{\dR}, Z_{\dR})$.

\medskip

Now statement (1) is \cref{Lemma:StupidPropertiesGroupsWithCristallineAction} (3). For statement (2) note that by \cref{Ex:CristallineActionOnFiniteScheme} any point~$z \in Z_{\dR}$ can be seen as a~$\Gamma$-orbit~$Z' \subset Z(\bar{K})$. The grading on~$V_{\dR}$ is then defined by
\[ V_{\dR, z}^{(\alpha)} := \left(\sum_{z' \in Z'} V_{z'}^{(\alpha)} \otimes_{\bbQ_p} B \right)^\Gamma \]
for~$\alpha \in \bbQ$. It is clear by \cref{lemma:deRhamificationModules} that isomorphisms of (filtered, graded)~$\Gamma$-\'etale data are transformed into isomorphisms of (filtered, graded) de Rham data. The commutativity of the upper square is clear, while that of the lower one follows from the exactness of the functor~$W \rightsquigarrow (W_{\dR}, \phi_W, h_W)$ plus \cref{Lemma:StupidPropertiesGroupsWithCristallineAction} (1) and (2) and \cref{Lemma:HodgeFiltrationComesFromASubgroup} (3).
\end{proof}

\begin{example} \label{Ex:ComparisonEtaleLocalSystemDeRhamFlatBundles} Let~$\cA$ be an abelian scheme over~$\cO_K$ with generic fiber~$A$ and let~$r \ge 1$ be an integer nondivisibile by~$p$. As in \cref{Ex:LocSysAbVar} we see~$A[r](\bar{K})$ as constant group scheme over~$\bbQ_p$  and consider its Cartier dual
\[ Z:= A[r](\bar{K})^\ast = \Hom(A[r](\bar{K}), \Gm).\]
Since~$p \not \mid r$ the action of~$\Gamma$ on~$Z$ is unramified and~$R =\Gamma(Z,\cO_Z)$ is a product of unramified extensions of~$\bbQ_p$. Moreover
\[ \Ker (\Gamma \to \Aut Z)=\Gal(\bar{K}/K') \]
where~$K' \subset \bar{K}$ is the unramified extension of~$K$ over which all~$r$-torsion points are rational. Therefore~$A[r]_{K'}$ is the constant group scheme with value~$A[r](K')$,
\[ Z_{K'} = A[r]^\ast_{K'}, \]
and  by \cref{Ex:CristallineActionOnFiniteScheme} we conclude that 
\[ Z_{\dR} = Z_{K'} / \Gal(K'/K) = A[r]^\ast. \]
Let~$A^\ast$ be the dual abelian variety and~$A^\natural$ the universal vector extension of~$A^\ast$. The morphism~$\pi \colon A^\natural \to A^\ast$ of algebraic groups forgetting the connection on a line bundle induces an isomorphism
\[ A^\natural[r] \stackrel{\sim}{\too} A^\ast[r] = A[r]^\ast= Z_{\dR}. \]
Let~$E$ be the~$p$-adic \'etale local system  of \cref{Ex:LocSysAbVar} and~$(\cE, \nabla)$ the flat vector bundle  of \cref{Ex:FlatBundlesOnAbelianVarieties} on~$A$. Then
\[ E \otimes_{\bbQ_p} \cO_{A}^\et \iso \cE^\et\]
where the superscript `$\et$' stands for the associated \'etale sheaf. Via this isomorphism the connection~$\nabla$ on~$\cE^\et$ is induced by the derivation on~$\cO_A^\et$. Let~$f \colon \cX \to \cA$ be a morphism of finite type~$\cO_K$-schemes with~$\cX$ smooth of relative dimension~$d$ and generic fiber~$X = \cX_K$. By \cite[th. 5.6]{FaltingsCristallineEtale} we have an isomorphism
\[ \rH^d_{\et}(X_{\bar{K}}, f^\ast E)_{\dR} \iso \rH^d_{\dR}(X / K; f^\ast( \cE, \nabla))\]
of vector bundles over~$Z_{\dR}$ compatible with the Poincar\'e pairing. In other words the~$\Gamma$-\'etale datum~$D = (V, G, \rho)$ on~$Z$ of \cref{Ex:LocSysAbVar} is transportable because~$G$ coincides with its normalizer in $\GL(V)$ and the de Rham datum~$D_{\dR}$ on~$Z_{\dR}$ is isomorphic to that in \cref{Ex:FlatBundlesOnAbelianVarieties}.
\end{example}

\subsection{Positivity from global purity} Recall that a number field~$F$ is CM if it is a totally imaginary quadratic extension of a totally real number field~$F_\bbR$. Let~$K$ be a number field. If~$K$ contains a CM subfield, then there is a largest one~$K^{\textup{CM}} \subset K$. Following \cite[2.4]{LV} a place~$v$ of~$K$ is called \emph{friendly} if~$v$ is unramified and, in case~$K$ contains a CM subfield,~$v$ lies above a place of~$(K^{\textup{CM}})_\bbR$ which is inert in~$K^{\textup{CM}}$. Let~$v$ be a~$p$-adic place of~$K$,~$K_v$ the completion of~$K$ at~$v$,~$\bar{K}_v$ an algebraic closure of~$K_v$ and~$\bar{K}$ the algebraic closure of~$K$ in~$\bar{K}_v$. We have a natural inclusion
\[ \Gamma_v = \Gal(\bar{K}_v/K_v)\intoo \Gamma = \Gal(\bar{K}/K).\]
Let~$Z$ be a finite \'etale~$\bbQ_p$-scheme such that~$R:= \Gamma(Z, \cO_Z)$ is a product of unramified extensions of~$\bbQ_p$. We assume that~$Z$ is equipped with a continuous action of~$\Gamma$ and that the induced action of~$\Gamma_v$ is unramified. Given a~$\Gamma$-\'etale datum on~$Z$ we obtain a~$\Gamma_v$-\'etale datum by restriction of the underlying representations; hence in the crystalline case we get an associated de Rham datum on~$Z_\dR$ by \cref{prop:DescentViaPAdicHodgeTheory}. For a point~$z\in Z_\dR$ we denote by~$k_z$ the residue field of~$z$.

\begin{proposition} \label{Prop:PositivityFromGlobalPurity} Suppose~$v$ is friendly and
let~$D = (V,G,\rho,x)$ be a filtered~$\Gamma$-\'etale datum with~$G \subset \GL(V)$ of finite index in its normalizer. Assume that the underlying representation is pure of some weight, and crystalline at all~$p$-adic places. Let~$D_\dR = (V_\dR,G_\dR, \phi, h, x_\dR)$ be the associated de Rham datum at~$v$. Then there exists~$z \in Z_{\dR}$ such that 
\[h_z \in \cB(G_{\dR, z},  k_z) \quad \text{is positive with respect to} \quad \Stab_{G_{\dR, z}}(x_{\dR, z})^\circ.\] 
\end{proposition}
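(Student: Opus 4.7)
My strategy is to embed $G\subset \GL(V)$ faithfully and reinterpret the positivity condition as a non-negativity statement about the weight of the Hodge filtration induced on the Lie algebra of the unipotent radical of~$Q$; global purity combined with the friendly hypothesis will then force this weight to vanish, and a pigeonhole argument over $Z_\dR$ will yield the required point~$z$.

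Since $x \in \cB(G, Z)$ is $\Gamma$-fixed, the parabolic $Q := \Stab_G(x)^\circ \subset G^\circ$ is $\Gamma$-stable, and hence so is its unipotent radical $\rad Q$ together with its Lie algebra $\mathfrak{u} := \Lie \rad Q$. As a $\Gamma$-stable subbundle of $\End V \cong V \otimes V^\vee$, the underlying $\bbQ_p$-Galois representation~$\mathfrak{u}$ is crystalline at~$v$ (since $\End V$ is) and pure of weight~$0$ (as $V$ pure of weight~$w$ makes $\End V$ pure of weight~$0$, and Galois-stable subquotients of pure representations are pure). In particular, the one-dimensional $\det \mathfrak{u}$ is a pure weight-$0$ Galois character, crystalline at~$v$.

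The key input is then the following consequence of the friendly hypothesis, recorded in \cite[Lem.~2.6]{LS20} (generalising a step of Lawrence--Venkatesh \cite[Lem.~2.5]{LV}): a Galois character which is pure of weight~$0$ and crystalline at a friendly place~$v$ is necessarily unramified at~$v$, and in particular has Hodge--Tate weight~$0$. The friendly assumption is precisely what rules out the non-trivial CM twists that would otherwise contribute nonzero Hodge--Tate weights. Applying this to $\det \mathfrak{u}$ gives $\sum_i \alpha_i = 0$, where the $\alpha_i$ are the Hodge--Tate weights of $\mathfrak{u}$ at~$v$; equivalently, in the normalisation of \cref{sec:PositivityParabolic}, the weight of the Hodge filtration on $\mathfrak{u}_\dR$ vanishes. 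I expect this conversion of global purity into a local Hodge--Tate vanishing to be the main obstacle, and it is the step that genuinely uses the friendly hypothesis.

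Finally, since $\mathfrak{u}$ is an $R$-linear $\Gamma$-stable subbundle of $\End V$, \cref{lemma:deRhamificationModules} makes $\mathfrak{u}_\dR$ into an $R_\dR$-module splitting as $\bigoplus_{z \in Z_\dR} \mathfrak{u}_{\dR, z}$, and the Hodge filtration splits along this decomposition. The vanishing of the global weight then reads
\[
\sum_{z \in Z_\dR} [k_z : K] \cdot \dim_{k_z} \mathfrak{u}_{\dR, z} \cdot \wt(F^\bullet \mathfrak{u}_{\dR, z}) \;=\; 0,
\]
so at least one local weight must be non-negative; pick such a $z$. By \cref{sec:PositivityParabolic}, applied to the adjoint action of $Q_{\dR, z}^\circ$ on $\mathfrak{u}_{\dR, z} = \Lie \rad Q_{\dR, z}^\circ$, this non-negative weight coincides (up to a positive factor) with $\langle \pr_{Q_{\dR, z}^\circ}(h_z), \delta_{Q_{\dR, z}^\circ}\rangle$, so $h_z$ is positive with respect to $\Stab_{G_{\dR, z}}(x_{\dR, z})^\circ = Q_{\dR, z}^\circ$, as required.
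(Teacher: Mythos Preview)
Your argument is correct and follows the same route as the paper: identify $\mathfrak u=\Lie\rad\Stab_G(x)$ as a weight-zero global Galois subrepresentation of $\End V$, invoke the friendly-place lemma (the paper cites \cite[lemma~2.9]{LV} rather than the references you give) to force the Hodge weight of $\mathfrak u_\dR$ to vanish, then pigeonhole over $Z_\dR$. The only step you pass over quickly is the identification of the $p$-adic Hodge filtration on $\mathfrak u_{\dR,z}$ with the filtration induced by $h_z$ via the adjoint action; the paper routes this through the Weil restriction $\Res_{Z/\bbQ_p}\Stab_G(x)$ and the Hopf-filtration machinery of \cref{Lemma:HodgeFiltrationComesFromASubgroup} before unpacking via \cref{Ex:PositiveCocharacterProduct,Ex:PositiveCocharacterWeilRestriction}, though your implicit appeal to exactness and tensor-compatibility of $D_\cris$ on the inclusion $\mathfrak u\hookrightarrow\End V$ also suffices.
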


\begin{proof} Let~$Q = \Res_{Z/\bbQ_p} \Stab_G(x)$ be the Weil restriction of the stabilizer of~$x$.
Since~$x$ is fixed under the action of~$\Gamma$, the representation~$\rho \colon \Gamma \to \GL(V_0)$ takes values in the normalizer~$N$ of the subgroup~$Q \subset \GL(V_0)$, where~$V_0$ stands for~$V$ seen as a~$\bbQ_p$-vector space. 
The action of~$N$ by conjugation on~$Q$ preserves its identity component and its unipotent radical. Therefore the adjoint action of~$Q$ on the Lie algebra~$W:= \Lie \rad Q$ extends to a representation
\[ \sigma \colon N  \too \GL(W).\] 
The composite representation~$\sigma \circ \rho \colon \Gamma \to \GL(W)$ is pure of weight zero: Indeed, it is a subrepresentation of the adjoint representation~$\End(V_0)$ of~$\Gamma$, and the latter is pure of weight zero because~$V_0$ is pure of some weight. Note that the restriction of~$D$ to~$\Gamma_v$ is transportable because~$G \subset \GL(V)$ is of finite index in its normalizer. Since the place~$v$ is friendly, it then follows by \cite[lemma 2.9]{LV} that the weight of the Hodge filtration~$h_W$ induced by~$p$-adic Hodge theory on~$W_\dR = \Lie \rad Q_{\dR}$
is 
\begin{equation} \label{Eq:HodgeFiltrIsWeight0}\wt(h_W) = 0. \end{equation}
By \cref{Lemma:HodgeFiltrationComesFromASubgroup} (1) the filtrations~$h=h_V$ and~$h_W$ are both induced by the Hopf filtration~$h_N$ on~$K[N_{\dR}]$. By \cref{Lemma:HodgeFiltrationComesFromASubgroup} (3) the Hopf filtration on~$K[N^\ss_{\dR}]$  induced by~$h_N$ is \[\pr_{Q^\circ_{\dR}}(h_V)\] where~$N^\ss_{\dR} = N_\dR / \rad Q_{\dR}~$: Indeed~$Q^{\circ, \ss}_{\dR} = Q^\circ_{\dR} / \rad Q_{\dR}$ is a subgroup of finite index in~$N_{\dR}^\ss$ hence Hopf filtrations on Hopf algebras of these groups can be identified by \cref{Prop:FiltrationFunctorConnectedComponent}. Now the representation~$\sigma$ is~$\Gamma$-equivariant and the induced morphism
\[ \sigma_{\dR} \colon  N_{\dR} \too \GL(W_{\dR})\]
is the adjoint representation. Its determinant~$\det \sigma_{\dR}$ factors through the modular character
\[ \delta \colon N_{\dR}^\ss \too \GL(\det W_{\dR}).\]
Therefore \eqref{Eq:HodgeFiltrIsWeight0} implies
\[ \langle \pr_{Q^\circ_{\dR}}(h_V), \delta \rangle = \wt(h_W) = 0.\]
In other words the point~$h \in\cB(G_{\dR}, Z_{\dR})$ is positive with respect to the parabolic subgroup 
\[ Q_{\dR}^\circ = \prod_{z \in Z_{\dR}} \Res_{ k_z/\bbQ_p} \Stab_{G_{\dR, z}} (x_{\dR, z})^\circ.\]
Hence there is~$z \in Z_{\dR}$ such that~$h_z \in \cB(G_z, k_z)$ is positive with respect to the parabolic subgroup~$\Res_{ k_z/\bbQ_p} \Stab_{G_{\dR, z}} (x_{\dR, z})^\circ$; see \cref{Ex:PositiveCocharacterProduct}. By \cref{Ex:PositiveCocharacterWeilRestriction} this is equivalent to saying that~$h_z$ is positive with respect to~$\Stab_{G_{\dR, z}} (x_{\dR, z})^\circ$ which concludes the proof.
\end{proof}

\section{Combinatorics on flag varieties} \label{sec:OrbitsFlagVarieties}

In this section we discuss a dimension estimate on orbits of flag varieties that will allow us to apply the Bakker-Tsimerman theorem later on. Let~$k$ be a perfect field.

\subsection{The adjoint polygon} Let~$x \in \cB(\GL(V),  k)$ where~$V$ is a~$k$-vector space of dimension~$d$, and let~$F^\bullet V$ be the corresponding filtration. Denote by~$\alpha_1 > \cdots > \alpha_n$ the rational numbers for which~$\gr^{\alpha_i} V \neq 0$.  The \emph{polygon} of~$x$ is the continuous piecewise affine function~$\upsilon_x \colon [0, d] \to \bbR$ such that~$\upsilon_x(0) = 0$ and, for~$i = 1, \dots,n$, the slope of~$\upsilon_x$ on the interval~$[\dim F^{\alpha_{i-1}} V, \dim F^{\alpha_i} V]$ is~$\alpha_i$ where~$\dim F^{\alpha_{0}} V = 0$. The function~$\upsilon_x$  is by definition concave and satisfies~$\upsilon_x(d) = d \wt(x)$,~$\upsilon_{cx} = c \upsilon_x$ for all rational numbers~$c \ge 0$,~$\upsilon_{gx} = \upsilon_x$ for all~$g \in \GL(V)( k)$ and \[\upsilon_{-x}(t) = \upsilon_x(d - t) - d \wt(x).\]

\begin{definition} Let~$G$ be a reductive group over~$k$ and~$x \in \cB(G,  k)$. The \emph{adjoint polygon} is the function 
\[ \Upsilon_{G,x} := \upsilon_{\ad_\ast x} \colon \quad [0, \dim G] \; \too \; \bbR\]
where~$\ad \colon G \to \GL(\Lie G )$ is the adjoint representation. If the group is clear from the context we will omit it.
\end{definition}

The adjoint polygon~$\Upsilon_{G,x}$  is concave and satisfies~$\Upsilon_{G,cx} = c \Upsilon_{G,x}$ for all rational numbers~$c \ge 0$,~$\Upsilon_{G,gx} = \Upsilon_{G,x}$ for all~$g \in G( k)$,~$\Upsilon_{G,x} = \Upsilon_{G^\circ,x}$ and \[\Upsilon_{G,x}(t) = \Upsilon_{G,x}(d - t),\]
thus~$\Upsilon_{G,x}(\dim G) = 0$. The above identity follows from~$\ad_\ast(-x) = \ad_\ast x$. In \eqref{Eq:ExpressionAdjointPolygon} we will prove an explicit formula of~$\Upsilon_{G, x}$ in terms of roots.

\subsection{The main dimension estimate} \label{sec:StatementOrbitInFlag} 
Let~$G$ be reductive group and~$\alpha$ an automorphism of~$G$ as an algebraic group. In practice~$\alpha$ will be the conjugation by a Frobenius operator obtained by comparison between crystalline and de Rham cohomology. To state the main result we need some more notation.

\begin{definition} \label{Def:BalancedRelation} Let~$x, x' \in \cB(G,  k)$ and~$Q, Q' \subset G^\circ$ be parabolic subgroups. The couples~$(x, Q)$ and~$(x', Q')$ are \emph{$\alpha$-equivalent} if there is a point~$g \in G( k)$ such that
\begin{align*} 
Q' = g Q g^{-1}, && 
g \alpha(g)^{-1} \in \rad Q', &&
 \pr_{Q'}(gx) = \pr_{Q'}(x'),
\end{align*}
where~$\pr_{Q'} \colon \cB(G,  k) \to \cB(Q^\ss,  k)$ is the projection defined in \cref{sec:ProjectionLeviQuotientBuildings}. If this is the case we write~$(x,Q) \sim_\alpha (x', Q')$.
\end{definition}

 If~$(x, Q) \sim_\alpha (x', Q')$, then~$\alpha (Q) =Q$ if and only if~$\alpha(Q') = Q'$. In our application the above equivalence relation will arise by taking semisimplification of global Galois representations and then applying~$p$-adic Hodge theory in a prime of good reduction, see \cref{isomorphic-graded-de-rham}. Since we will not be able to have big monodromy for a tuple of characters that form a full local Galois orbit, we will have to project onto certain direct factors. To account for this, consider an epimorphism~$f \colon G \to \bar{G}$  of reductive groups. Let
\[
\begin{array}{rcl}
\Par(G^\circ) &\too& \Par(\bar{G}^\circ), \\[0.3em]
P &\longmapsto& \bar{P} := f(P)
\end{array}
\qquad 
\begin{array}{rcl}
\cB(G,  k) &\too& \cB(\bar{G},  k), \\[0.3em]
x &\longmapsto& \bar{x} := f_\ast x
\end{array}
\]
be the induced maps. Note that with the above notation we have~$\bar{P}_x = P_{\bar{x}}$.

\begin{definition} \label{Def:BizarreEqRel}Let~$x \in \cB(G,  k)$ and~$Q \subset G^\circ$ be a parabolic subgroup stable under~$\alpha$. Consider 
\[ 
[ x,Q ]_{\alpha} \; := \; \left\{ (P^\circ_{x'}, Q') \mid  (x, Q) \sim_\alpha (x', Q')   \textup{ and }  (\bar{x}' , \bar{Q}') \succeq 0\right\} \; \subset \; \Par(G^\circ)^2 
\]
where~$(\bar{x}', \bar{Q}') \succeq 0$ means that~$\bar{x}'$ is~$\bar{Q}'$-positive in the sense of \cref{Def:PositiveCouple}. Note that the notion of~$\alpha$-equivalence depends on the group~$G$ and not only on its identity component. We will also write~$[x, Q]_{G, \alpha} := [x,Q]_\alpha$ to emphasize this.
\end{definition}

With notation as in~\cref{sec:NotationReductiveGroups}, consider the locus~$\Par_{t(x)}(G^\circ) \subset \Par(G^\circ)$ of parabolics of type~$t(x)$. Let~$\frH \subset \Par_{t(x)}(G^\circ)$ be a smooth subvariety such that the composite map 
\[ \frH \intoo \Par_{t(x)}(G^\circ) \ontoo \Par_{t(\bar{x})}(\bar{G}^\circ)\]
is a smooth proper morphism with equidimensional fibers. The main result of this section is an upper bound for the codimension of the Zariski closure of~$p([ x,Q]_\alpha) \cap \frH$ in~$\frH$ where
\[ p \colon \Par(G^\circ) \times  \Par(G^\circ) \too  \Par(G^\circ), \quad (P, Q) \longmapsto P, \] 
is the first projection. Note that~$[ x,Q ]_\alpha$ is stable under the action of the elements of~$G( k)$ fixed under~$\alpha$ so the dimension of the latter will naturally play a role. In fact we obtain a bound in terms of the 
semisimple part~$\alpha^\ss$ in the Jordan-Chevalley decomposition of~$\alpha$
seen as an element of the algebraic group~$\Aut(G)$. Let \[C := C_{G}(\alpha^\ss) = \{ g \in G : \alpha^\ss(g) = g \} \subset G.\]

\begin{theorem} \label{Thm:BoundInFlag} Let~$x \in \cB(G, k)$ and~$Q \subset G^\circ$ a parabolic subgroup with~$\alpha(Q) =~Q$ and~$(\bar{x}, \bar{Q}) \succeq 0$. Let~$n \ge  1$ be an integer such that 
\begin{gather}
n \le \dim \Par_{t(\bar{x})}(\bar{G}^\circ), \label{Eq:EasyInequality} \\
 \Upsilon_{\bar{G},\bar{x}}(n -1) + \Upsilon_{\bar{G},\bar{x}}(n - 1+ \tfrac{1}{2}(\dim \bar{P}_x^\ss - \rk \bar{G})) < \max \Upsilon_{\bar{G},\bar{x}}.  \label{Eq:HardInequalityInPractice}
\end{gather}
Then the Zariski closure of~$p([ x,Q ]_\alpha) \cap \frH$ in~$\frH$ has codimension~$\ge n - \dim C$.
\end{theorem}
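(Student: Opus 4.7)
I first reduce from $\frH$ to $\Par_{t(\bar{x})}(\bar{G}^\circ)$: since the map $\frH\to\Par_{t(\bar{x})}(\bar{G}^\circ)$ is smooth, proper and equidimensional, codimensions of closed subsets transfer from base to total space, so the codimension in $\frH$ of the Zariski closure of $p([x,Q]_\alpha)\cap\frH$ is bounded below by the codimension in $\Par_{t(\bar{x})}(\bar{G}^\circ)$ of the image of $p([x,Q]_\alpha)$. Unwinding~\cref{Def:BizarreEqRel} and~\cref{Def:BalancedRelation}, each pair $(P^\circ_{x'},Q')\in[x,Q]_\alpha$ is witnessed by some $g\in G(k)$ satisfying $Q'=gQg^{-1}$, $g\alpha(g)^{-1}\in\rad Q'$, $\pr_{Q'}(gx)=\pr_{Q'}(x')$, and the positivity $(\bar{x}',\bar{Q}')\succeq 0$. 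Pushing along $f\colon G\to\bar{G}$, the image of $p([x,Q]_\alpha)$ in $\Par_{t(\bar{x})}(\bar{G}^\circ)$ consists of parabolics of the form $\bar{g}\bar{P}_{\bar{x}}\bar{g}^{-1}$ for $\bar{g}\in\bar{G}(k)$ subject to the pushed-down $\alpha$-twisted and positivity conditions.

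Second, I would extract the $\dim C$ factor by exploiting the semisimple part of $\alpha$. After the Jordan-Chevalley decomposition $\alpha=\alpha^\ss\alpha^u$, the reductive subgroup $C=C_G(\alpha^\ss)$ acts on the parameter space of $g$'s by left translation, preserving both the $\alpha$-twisted condition (since $\alpha^\ss$ commutes with $c\in C$, the conjugation of $g\alpha(g)^{-1}$ by $c$ remains inside~$\rad Q'$) and the positivity condition. The fibres of the map from this parameter space to its image in $\Par_{t(\bar{x})}(\bar{G}^\circ)$ thus have dimension at least $\dim C$. Consequently, it suffices to bound the dimension of the parameter space from above by $\dim\Par_{t(\bar{x})}(\bar{G}^\circ)+\dim C-n$.

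The core of the argument is a tangent-weight estimate at the base point $\bar{P}_{\bar{x}}\in\Par_{t(\bar{x})}(\bar{G}^\circ)$ in terms of the adjoint polygon. Arguing by contradiction, if the image has codimension strictly less than $n-\dim C$, then its tangent cone at $\bar{P}_{\bar{x}}$ spans a subspace $T\subset T_{\bar{P}_{\bar{x}}}\Par_{t(\bar{x})}(\bar{G}^\circ)=\Lie\bar{G}/\Lie\bar{P}_{\bar{x}}$ of dimension $\ge\dim\Par_{t(\bar{x})}(\bar{G}^\circ)-(n-1)$, realised by deformations $\bar{g}_\epsilon$ of $\bar{g}=1$. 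The linearised $\alpha$-twisted condition $d\bar{g}-\alpha_\ast d\bar{g}\in\Lie\rad\bar{Q}'_\epsilon$ constrains~$T$ to a subspace whose total $\bar{x}$-adjoint weight is at most $\Upsilon_{\bar{G},\bar{x}}(n-1)$, i.e.~the sum of the top $n-1$ slopes of $\Upsilon_{\bar{G},\bar{x}}$. The linearised positivity constraint $(\bar{g}_\epsilon\bar{x},\bar{Q}'_\epsilon)\succeq 0$ on the Levi quotient adds a further contribution of dimension at most $\tfrac{1}{2}(\dim\bar{P}_x^\ss-\rk\bar{G})$ (the number of positive roots of the Levi $\bar{P}_x^\ss$), bounding the total weight by $\Upsilon_{\bar{G},\bar{x}}(n-1+\tfrac{1}{2}(\dim\bar{P}_x^\ss-\rk\bar{G}))$. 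On the other hand, tangent directions at $\bar{P}_{\bar{x}}$ naturally represent the opposite unipotent radical, whose total $\bar{x}$-adjoint weight is $\max\Upsilon_{\bar{G},\bar{x}}$; the polygon hypothesis~\eqref{Eq:HardInequalityInPractice} then yields the required contradiction, while~\eqref{Eq:EasyInequality} ensures that we stay within the actual dimension of the flag variety.

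The main obstacle will be the last step: one must rigorously decompose the tangent-weight budget into an $\alpha$-twisted contribution (captured by the top $n-1$ slopes of $\Upsilon_{\bar{G},\bar{x}}$) plus a Levi-positivity contribution (the next $\tfrac{1}{2}(\dim\bar{P}_x^\ss-\rk\bar{G})$ slopes), and show that these two pieces essentially decouple. This is cleanest on the associated graded of $\Lie\bar{G}$ with respect to the filtration induced by $\bar{x}$, where $\Upsilon_{\bar{G},\bar{x}}$ is tautologically the partial-sum-of-slopes function and the concavity permits the rearrangement needed to apply~\eqref{Eq:HardInequalityInPractice}. A technical subtlety is that the unipotent radical $\rad\bar{Q}'_\epsilon$ varies along the deformation, so one must track this dependence on the jet space of $\bar{g}_\epsilon$; working formally at the point $\bar{g}=1$ to first order in~$\epsilon$ and invoking the explicit root-theoretic description of $\Upsilon_{\bar{G},\bar{x}}$ sketched in~\eqref{Eq:ExpressionAdjointPolygon} below should make this manageable.
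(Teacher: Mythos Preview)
Your proposal has genuine gaps at two critical junctures, and the overall architecture differs substantially from the paper's.

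\textbf{The $C$-action step.} Your claim that $C = C_G(\alpha^\ss)$ preserves the $\alpha$-twisted condition by left translation is incorrect as stated: for $c \in C$ you have $\alpha^\ss(c) = c$, but replacing $g$ by $cg$ sends $g\alpha(g)^{-1}$ to $c\,g\alpha(g)^{-1}\,\alpha(c)^{-1}$, and since $\alpha(c) = \alpha^u(c)$ need not equal $c$, there is no reason for $\alpha(c)^{-1}c$ to lie in $\rad Q'$. The paper handles this by a different route: it observes that $\{\beta \in \Aut(G) \mid \beta(Q) = Q,\ g^{-1}\beta(g) \in \rad Q\}$ is an algebraic subgroup, so by functoriality of Jordan--Chevalley $\alpha^\ss$ belongs to it, giving an inclusion into the analogous set for $\alpha^\ss$ and allowing a genuine reduction to semisimple~$\alpha$. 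Even after that, the paper does not argue via large fibres; it introduces an auxiliary map $r\colon \Par(G)^2 \to \Par(\cQ^\ss)$, $(P,Q') \mapsto (P \cap Q' / P \cap \rad Q',\, Q')$, and shows directly that the \emph{image} $[x,Q]_\alpha^\ss$ has dimension $\le \dim C$ via finiteness of $C$-orbits on $\alpha$-fixed parabolics (\cref{Lemma:FinitelyManyOrbitsCentralizer}) and a compatibility of centralizers with Levi quotients (\cref{Lemma:SemiSimplifiedCentralizer}). Your fibre-dimension claim points in the wrong direction: you need the image to be small, not the fibres to be large.

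\textbf{The tangent-cone step.} A tangent-cone computation at one chosen point cannot give a global codimension bound: the set $p([x,Q]_\alpha)$ need not pass through $\bar{P}_{\bar{x}}$, and even at a point of the image your ``linearised constraints'' describe the tangent space to the constraint variety, not to its projection. Your decoupling of the weight budget into an $\alpha$-twisted piece and a Levi-positivity piece is also unjustified; the two conditions interact on each piece of the flag variety. The paper instead bounds the fibres of $r$ globally via the Bruhat decomposition $\Par_{t(\bar{x})}(\bar{G}) = \bigcup_{w \in W_{\bar{Q}',\bar{P}}} C_w$: for each cell $C_w$ with $w\bar{x}$ positive with respect to $\bar{Q}'$, one has $\dim(C_w \cap [\bar{x}]_{\bar{Q}',\bar{y}}) = |\Phi^w_{\bar{Q}',\bar{P}}|$ by an explicit root count, and then the positivity inequality $\langle w\bar{x}, \delta_{\bar{Q}'} \rangle \ge 0$ combined with the root-theoretic formula~\eqref{Eq:ExpressionAdjointPolygon} for $\Upsilon_{\bar{G},\bar{x}}$ forces~\eqref{Eq:HardInequalityInPractice} to fail whenever the codimension is $< n$. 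This is where the adjoint polygon actually enters, and it is a combinatorial root-system argument rather than an infinitesimal one.
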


As they stand inequalities \eqref{Eq:EasyInequality} and \eqref{Eq:HardInequalityInPractice} are in general hard to verify. In practice we will apply the theorem in a situation where the quotient~$\bar{G}$ decomposes as a high enough power of some given reductive group~$H$, in which case~\cref{Thm:BoundInFlag} leads to the following bound: 

\begin{corollary} \label{Cor:UniformBoundInFlag} Fix a finite subset~$I \subset \bbQ$ and integers~$n, d\ge 1$. Then there exists in integer~$N_0 = N_0(I,n, d) \ge 1$ independent of~$k$ with the following property: Suppose we are given
\begin{itemize}  \setlength\itemsep{0.2em}
\item a reductive group~$H$ of dimension~$d$,
\item a point~$y \in \cB(H,  k)$ such that all slopes of the adjoint polygon
$\Upsilon_{H, y}$ lie in~$I$ and
 \[ 2 \dim \Stab_H(y)^\ss < \dim H + \rk H,\]
\item a reductive group~$G$ over~$k$ and~$\alpha \in \Aut(G)$ with
\[
 \dim C_{G}(\alpha^\ss) \le \dim H,
\]
\item a surjective morphism~$f\colon G\to \bar{G} := H^N$ for some integer~$N\ge N_0$, 
\item a point~$x\in \cB(G,  k)$ with~$f_*(x)=\Delta_*(y)$ for the diagonal~$\Delta \colon H\hookrightarrow \bar{G}$, 
\item a parabolic subgroup~$Q\in \Par(G^\circ)$ stable under~$\alpha$ and such that \[(\bar{x}, \bar{Q}) := (f_*(x), f(Q)) \succeq 0,\] 
\item a subvariety~$\frH \subset \Par_{t(x)}(G^\circ)$ such that~$\frH \to \Par_{t(\bar{x})}(\bar{G}^\circ)$ is a smooth proper morphism with equidimensional fibers.
\end{itemize} 
Then the Zariski closure of~$p( [x,Q]_\alpha) \cap \frH$ in~$\frH$ has codimension~$\ge n - \dim H$.
\end{corollary}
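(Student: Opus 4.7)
My plan is to apply \cref{Thm:BoundInFlag} directly to the data $(G, \bar{G}, f, x, Q, \frH)$ of the corollary. Granted its two numerical hypotheses, it delivers codimension at least $n - \dim C_G(\alpha^\ss) \geq n - \dim H$, which is the claim. So the real task is to produce a threshold $N_0 = N_0(I, n, d)$, uniform in $(H, y)$, ensuring that \eqref{Eq:EasyInequality} and \eqref{Eq:HardInequalityInPractice} hold whenever $N \geq N_0$.

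The key observation will be that all invariants of $(\bar{G}, \bar{x}) = (H^N, \Delta_\ast y)$ scale linearly in $N$. Indeed $\Lie \bar{G} = (\Lie H)^{\oplus N}$ with $\bar{x}$ acting diagonally, so the adjoint polygon satisfies
\[
 \Upsilon_{\bar{G}, \bar{x}}(s) \;=\; N \cdot \Upsilon_{H, y}(s/N) \quad \text{for } s \in [0, N \dim H],
\]
whence $\max \Upsilon_{\bar{G}, \bar{x}} = N \max \Upsilon_{H, y}$. Since $\bar{P}_x = P_{\bar{x}} = (P_y)^N$, we also get $\dim \bar{P}_x^\ss = N \dim P_y^\ss$, $\rk \bar{G} = N \rk H$ and $\dim \Par_{t(\bar{x})}(\bar{G}^\circ) = N \dim \Par_{t(y)}(H^\circ)$. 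Substituting into \eqref{Eq:HardInequalityInPractice} and dividing by $N$, that inequality becomes
\[
 \Upsilon_{H, y}\!\left(\tfrac{n-1}{N}\right) + \Upsilon_{H, y}\!\left(\tfrac{n-1}{N} + \tfrac{1}{2}(\dim P_y^\ss - \rk H)\right) \;<\; \max \Upsilon_{H, y},
\]
while \eqref{Eq:EasyInequality} becomes $n \leq N \dim \Par_{t(y)}(H^\circ)$.

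It will remain to check that both reduced inequalities are eventually satisfied, with a uniform threshold. Now $\Upsilon_{H, y}$ is concave and symmetric about $\dim H/2$, and since the zero weight space of $\ad y$ on $\Lie H$ is $\Lie P_y^\ss$, its plateau (where it attains its maximum) is exactly the interval $\bigl[\tfrac{1}{2}(\dim H - \dim P_y^\ss),\, \tfrac{1}{2}(\dim H + \dim P_y^\ss)\bigr]$. The hypothesis $2\dim P_y^\ss < \dim H + \rk H$ rearranges to $\tfrac{1}{2}(\dim P_y^\ss - \rk H) < \tfrac{1}{2}(\dim H - \dim P_y^\ss)$, placing the limiting argument strictly to the left of this plateau and forcing strict inequality in the limit $N \to \infty$. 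The same hypothesis also excludes $P_y = H^\circ$, so $\dim \Par_{t(y)}(H^\circ) \geq 1$ and \eqref{Eq:EasyInequality} holds for $N \geq n$. Uniformity will be automatic: $\Upsilon_{H, y}$ is Lipschitz with constant $\max_{\alpha \in I}|\alpha|$, and the strict integer inequality $2\dim P_y^\ss < \dim H + \rk H$ produces a gap of at least $\tfrac{1}{2}\min\{\alpha \in I : \alpha > 0\}$ between $\tfrac{1}{2}(\dim P_y^\ss - \rk H)$ and the plateau, both quantities depending only on $I$. Thus choosing $N_0$ so that $(n-1)/N_0$ is small compared to this gap divided by the Lipschitz constant gives the required threshold. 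No real obstacle is expected: the argument is essentially the scaling of the adjoint polygon under the diagonal embedding, combined with a continuity-plus-strict-inequality argument that directly reflects the hypothesis imposed on $(H, y)$.
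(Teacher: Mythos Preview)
Your proposal is correct and follows essentially the same approach as the paper: reduce the two inequalities \eqref{Eq:EasyInequality} and \eqref{Eq:HardInequalityInPractice} to inequalities for $(H,y)$ via the scaling $\Upsilon_{\bar G,\bar x}(s)=N\,\Upsilon_{H,y}(s/N)$, then observe that the hypothesis $2\dim P_y^\ss<\dim H+\rk H$ is exactly the condition that the limiting inequality (as $N\to\infty$) is strict. The only difference is how uniformity of $N_0$ in $(H,y)$ is argued: the paper simply notes that with $\dim H=d$ fixed and slopes confined to the finite set $I$ there are only finitely many possible polygons $\Upsilon_{H,y}$ and finitely many values of $\dim P_y^\ss$, $\rk H$, $\dim\Par_{t(y)}(H^\circ)$, so a common $N_0$ exists; you instead extract an explicit Lipschitz bound and value gap in terms of $\max_{\alpha\in I}|\alpha|$ and $\min\{\alpha\in I:\alpha>0\}$. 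Both arguments work and yield the same conclusion.
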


\begin{proof} \label{Rmk:HardInequalityInPractice} 
Put~$\epsilon = (n - 1)/N$, then \eqref{Eq:EasyInequality} and \eqref{Eq:HardInequalityInPractice} will be implied by the two inequalities
 \begin{gather*}
\epsilon + \tfrac{1}{N}\le \dim \Par_{t(y)}(H^\circ), \\
 \Upsilon_{H,y}(\epsilon) + \Upsilon_{H,y}(\epsilon + \tfrac{1}{2}(\dim P_y^\ss - \rk H)) < \max \Upsilon_{H,y}.  
\end{gather*}
Since we fixed the dimension of~$H$ and the set of slopes note that there are only finitely many possibilities for~$\Upsilon_{H, y}$,~$\dim \Par_{t(y)}(H^\circ)$,~$\dim P_y^\ss$ and~$\rk H$. For~$N$ big enough the first of the above inequalities is always satisfied, and the second one is equivalent to~$\tfrac{1}{2} (\dim P_y^\ss - \rk G) < \dim \rad P_y$: Indeed the function~$\Upsilon_{H,y}$ is increasing on the interval~$[0, \dim \rad P_y]$ by the formula for the adjoint polygon in terms of weights in \eqref{Eq:ExpressionAdjointPolygon} below; since we have~$\dim \rad P_y = \tfrac{1}{2}(\dim H - \dim P_y^\ss)$, the second inequality then becomes equivalent to~$2 \dim P_y^\ss < \dim H + \rk H$.
 \end{proof}
 
The rest of this section will be devoted to the proof of \cref{Thm:BoundInFlag}.

\subsection{Strategy of the proof} One easily reduces to the case where the group~$G$ is connected: Indeed, it is easy to see that~$[x, Q]_{G, \alpha}$ is finite union of sets~$[x', Q']_{G^\circ, \alpha}$ with~$x' = gx$ and~$Q'= g Q g^{-1}$ for representatives~$g \in G( k)$ of~$G( k)/G^\circ( k)$. Since moreover the assumptions of \cref{Thm:BoundInFlag} only depend on the identity component of~$G$, we will for the rest of the proof assume that~$G$ is \emph{connected}. Extending scalars only makes the set~$[x, Q]_\alpha$ bigger, so we will also assume~$k$ to be \emph{algebraically closed} and hence identify varieties with the set of their~$k$-points.

\medskip 

Now recall that the definition of~$[x,Q]_\alpha$ involves two conditions: A positivity condition and a condition on~$\alpha$-equivalence. 
For the proof of \cref{Thm:BoundInFlag} we will construct a morphism from~$\Par(G^\circ) \times \Par(G^\circ)$ to another variety such that the dimension of the image~$[x,Q]_\alpha^\ss$ of~$[x,Q]_\alpha$ will be bounded using~$\alpha$-equivalence only; the dimension of the fibers will instead be bounded using positivity, for which the automorphism~$\alpha$ will no longer play a role.

\medskip

To construct the morphism, let~$\cQ$ be the universal parabolic subgroup over the variety of parabolic subgroups~$\Par(G)$. The quotient~$\cQ^\ss := \cQ / \rad \cQ$ is a reductive group scheme over~$\Par(G)$ with connected fibers. Therefore the scheme of its parabolic subgroups is represented by a smooth and proper~$\Par(G)$-scheme \[\Par(\cQ^\ss) \too \Par(G).\] 
The points of~$\Par(\cQ^\ss)$ correspond to ordered pairs~$(Q_1, Q_2)$ of parabolic subgroups~$Q_2 \subset G$ and~$Q_1 \subset Q_2^\ss$. Let~$Q \subset G$ be a parabolic subgroup with~$\alpha(Q) = Q$, and let~$x \in \cB(G , k)$. Set
\[  [x, Q]_\alpha^\ss  :=  \left\{ (\Stab_{Q'^\ss}(y), Q') \left| 
\begin{array}{l}
\medskip Q' = g Q g^{-1}, y = \pr_{Q'}(gx), \\
g \in G \textup{ with } g \alpha(g)^{-1} \in \rad Q'
\end{array} 
\right. \right\}  \subset \Par(\cQ^\ss).
 \]
To see how this set is related to~$[x, Q]_\alpha$ notice that the set-theoretical map
\[ r \colon \Par(G) \times \Par(G) \too \Par(\cQ^\ss), \quad (P, Q') \longmapsto (P \cap Q' / P \cap \rad Q', Q') \]
 induces a map
\[ r \colon [x, Q]_\alpha \too [x, Q]_\alpha^\ss. \]
Indeed, for any parabolic subgroup~$Q' \subset G$ and~$x' \in \cB(G,  k)$, \cref{Lemma:DescriptionFibersProjectionLeviFactor} gives
\[ \Stab_{Q'^\ss}(\pr_{Q'}(x')) \; = \; \Stab_G(x') \cap Q \; / \; \Stab_{G}(x') \cap \rad Q.\]
Notice that if the couple~$(x, Q)$ is positive, then the map~$r \colon  [x, Q]_\alpha \to [x, Q]_\alpha^\ss$ is surjective even though the definition of the target does not involve any positivity condition. We will bound the dimension of the target in terms of~$\alpha$ only:

\begin{proposition} \label{prop:DimensionInTermsOfCentralizer} 
With notation as above, the Zariski closure of~$[x, Q]^\ss_\alpha$ in~$\Par(\cQ^\ss)$ has dimension 
\[ \dim \overline{[x, Q]^\ss} \;\le\; \dim C_G(\alpha^\ss). \]
\end{proposition}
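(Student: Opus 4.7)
The plan is to exhibit $[x, Q]^\ss_\alpha$ as the image of the variety
\[ X \;:=\; \{g \in G : g^{-1}\alpha(g) \in \rad Q\} \]
under the morphism $\psi \colon X \to \Par(\cQ^\ss)$ given by $g \mapsto (\Stab_{Q'^\ss}(\pr_{Q'}(gx)), Q')$ with $Q' := gQg^{-1}$, and then to bound $\dim \psi(X)$ by controlling the dimensions of the source and of a generic fibre separately. By the $G$-equivariance of $\pr_Q$ under conjugation, $\psi(g) = g \cdot (P_0, Q_0)$ for the natural $G$-action on $\Par(\cQ^\ss)$, where $(P_0, Q_0) := (\Stab_{Q^\ss}(\pr_Q(x)), Q)$; in particular $\psi(X)$ lies in a single $G$-orbit, whose stabilizer at $(P_0, Q_0)$ is the preimage $\tilde P_0$ of $P_0$ under $Q_0 \twoheadrightarrow Q_0^\ss$.

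To bound $\dim X$, consider $\phi \colon G \to G$, $\phi(g) := g^{-1}\alpha(g)$: its non-empty fibres are right cosets of the fixed-point subgroup $G^\alpha$, so
\[ \dim X \;=\; \dim \phi^{-1}(\rad Q) \;\le\; \dim G^\alpha + \dim \rad Q. \]
To bound the fibres of $\psi|_X$ from below, fix $g_2 \in X$ and put $u := g_2^{-1}\alpha(g_2) \in \rad Q$. For $s \in \tilde P_0 \subset Q$, we have $g_2 s \in X$ iff $s^{-1} u \alpha(s) \in \rad Q$; since $s \in Q$ normalizes $\rad Q$ one has $s^{-1}us \in \rad Q$, so this condition reduces to $s^{-1}\alpha(s) \in \rad Q$, equivalently $\bar\alpha(\bar s) = \bar s$ in $Q^\ss$. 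Hence $X \cap g_2 \tilde P_0$ contains the translate of the subvariety of $\tilde P_0$ lying over $P_0 \cap (Q^\ss)^{\bar\alpha}$, and every fibre of $\psi|_X$ has dimension $\ge \dim \rad Q + \dim(P_0 \cap (Q^\ss)^{\bar\alpha})$.

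Combining the two estimates,
\[
\dim \overline{[x, Q]^\ss_\alpha}
\;\le\; \dim X - \dim(\textup{generic fibre})
\;\le\; \dim G^\alpha - \dim(P_0 \cap (Q^\ss)^{\bar\alpha})
\;\le\; \dim G^\alpha,
\]
so it suffices to prove $G^\alpha \subseteq C_G(\alpha^\ss) = C$. Since $\alpha^\ss$ and $\alpha^u$ commute with $\alpha$ they preserve $G^\alpha$, and on $G^\alpha$ the automorphism $\alpha$ acts as the identity, so that $\alpha^u|_{G^\alpha} = (\alpha^\ss|_{G^\alpha})^{-1}$; as the left-hand side is unipotent and the right-hand side is semisimple, both restrict to the identity, proving $G^\alpha \subseteq G^{\alpha^\ss} = C$.

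The main subtlety lies in the fibre computation: one needs to use simultaneously the $\alpha$-stability of $Q$ (and hence of $\rad Q$) and the inclusion $\tilde P_0 \subset Q$ in order to translate the $\alpha$-twisted defining condition of $X$ into a clean $\bar\alpha$-fixed-point condition inside the Levi quotient $Q^\ss$.
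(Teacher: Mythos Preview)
Your proof is correct and takes a genuinely different route from the paper's.

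The paper first reduces to semisimple $\alpha$ by noting that the conditions $\beta(Q)=Q$ and $g^{-1}\beta(g)\in\rad Q$ cut out a closed subgroup of $\Aut(G)$, so $\alpha^{\ss}$ satisfies them whenever $\alpha$ does; hence $[x,Q]^\ss_\alpha \subseteq [x,Q]^\ss_{\alpha^{\ss}}$. With $\alpha$ semisimple it then projects via $\pi\colon \Par(\cQ^\ss)\to\Par(G)$, invokes two auxiliary lemmas (\cref{Lemma:FinitelyManyOrbitsCentralizer}: $\Par(G)^\alpha$ has only finitely many $C_G(\alpha)$-orbits; and \cref{Lemma:SemiSimplifiedCentralizer}: the image of $C_{Q'}(\alpha)\to C_{Q'^\ss}(\alpha)$ has finite index), and adds up the orbit dimensions.

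Your approach bypasses both lemmas. You bound $\dim X$ directly via the Lang-type map $\phi$, compute the fibres of $\psi|_X$ \emph{exactly} as cosets of a fixed subgroup $F\subseteq\tilde P_0$ (so in fact $F$ acts freely on $X$ on the right and $\psi(X)\cong X/F$), and subtract. The passage to $\alpha^{\ss}$ only appears at the end, as the inclusion $G^\alpha\subseteq C_G(\alpha^{\ss})$. This is shorter and more elementary; the paper's route gives extra structural information (the image is actually a finite union of $C$-orbits, not merely of bounded dimension) but is not needed for the stated bound.

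Two small remarks. First, the nonempty fibres of $\phi$ are \emph{left} cosets $G^\alpha g$, not right cosets; this does not affect the argument. Second, your last step can be said in one line using the same principle the paper uses elsewhere: for $g\in G^\alpha$ the stabilizer $\{\beta\in\Aut(G):\beta(g)=g\}$ is a closed subgroup of the linear algebraic group $\Aut(G)$ containing $\alpha$, hence it contains $\alpha^{\ss}$; this immediately gives $G^\alpha\subseteq C_G(\alpha^{\ss})$ without having to argue that restrictions of $\alpha^{\ss},\alpha^u$ to $G^\alpha$ remain semisimple/unipotent in $\Aut(G^\alpha)$.
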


The proof of this will be given in~\cref{sec:BoundsOnTheBase}. It then remains to bound the fibers of~$r\colon [x, Q] \to [x, Q]^\ss$. 
To do this, we consider for a parabolic subgroup~$Q' \subset G$ the commutative square
\[ 
\begin{tikzcd}
\cB(G,  k) \ar[d, "\pr_{Q'}"'] \ar[r, "x \mapsto \bar{x}"] & \cB(\bar{G},  k) \ar[d, "\pr_{\bar{Q}'}"] \\
\cB(Q'^\ss,  k) \ar[r, "y \mapsto \bar{y}"] & \cB(\bar{Q}'^\ss,  k)
\end{tikzcd}
\]
where~$\bar{Q}'$ is the image of~$Q'$ via the projection~$G \to \bar{G}$. The fibers of the map~$r$ will be bounded in terms of the sets 
\[
 [\bar{x}]_{\bar{Q}', \bar{y}} \;  := \; \{P_{\bar{x}'} \mid \bar{x}' \in \bar{G}\bar{x} \textup{ is~$\bar{Q}'$-positive and~$\pr_{\bar{Q}'}(\bar{x}')= \bar{y}$} \} \; \subset \; \Par(\bar{G})
\]
for~$y \in \cB(Q,  k)$. In~\cref{sec:BoundsOnTheFiber} we will show:

\begin{proposition} \label{Prop:CodimensionWithFixedParabolic} With notation as above, let~$n \ge 1$ be such that \eqref{Eq:EasyInequality} and \eqref{Eq:HardInequalityInPractice} hold.
Then the Zariski closure of~$[\bar{x}]_{\bar{Q}', \bar{y}}$ in~$\Par_{t(\bar{x})}(\bar{G})$ has codimension~$\ge n$.
\end{proposition}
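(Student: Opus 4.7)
The plan is threefold: identify $[\bar{x}]_{\bar{Q}',\bar{y}}$ with an explicit orbit in $\Par_{t(\bar{x})}(\bar{G})$, compute its dimension by a weight-space analysis on $\Lie\rad\bar{Q}'$, and combine positivity with concavity of the adjoint polygon to force the codimension to be at least $n$.

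I first observe that the condition $\langle\pr_{\bar{Q}'}(\bar{x}'),\delta_{\bar{Q}'}\rangle\ge 0$ defining $\bar{Q}'$-positivity of $\bar{x}'$ depends only on $\bar{y}=\pr_{\bar{Q}'}(\bar{x}')$ and on $\bar{Q}'$; it therefore holds either for every $\bar{x}'\in\bar{G}\bar{x}$ lying above $\bar{y}$, or for none (in which latter case $[\bar{x}]_{\bar{Q}',\bar{y}}$ is empty and the statement is vacuous). Assuming positivity and replacing $\bar{x}$ by an element of $\bar{G}\bar{x}$ lying above $\bar{y}$, \cref{Lemma:DescriptionFibersProjectionLeviFactor} identifies $\pr_{\bar{Q}'}^{-1}(\bar{y})$ with $\rad\bar{Q}'\cdot\bar{x}$, so that $[\bar{x}]_{\bar{Q}',\bar{y}}=\{uP_{\bar{x}}u^{-1}\mid u\in\rad\bar{Q}'\}$ is the orbit of $P_{\bar{x}}$ in $\Par_{t(\bar{x})}(\bar{G})$ under conjugation by $\rad\bar{Q}'$. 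Choosing a maximal torus $T\subset\bar{Q}'\cap P_{\bar{x}}$ and a cocharacter $\lambda\in X_\ast(T)$ representing $\bar{x}$, then $\lambda$ lies in the Levi factor $L'$ of $\bar{Q}'$ containing $T$, and $\Lie\rad\bar{Q}'$ decomposes under $\lambda$ into pieces $W_-\oplus W_0\oplus W_+$ of dimensions $m,w_0,M$. Since the stabilizer $\rad\bar{Q}'\cap P_{\bar{x}}$ has Lie algebra $W_+\oplus W_0$ (the $\lambda$-non-negative part, coming from $\Lie P_{\bar{x}}=(\Lie\bar{G})_{\ge 0}$), the orbit has dimension $m$ and its codimension in $\Par_{t(\bar{x})}(\bar{G})$ equals $\delta:=\tfrac{1}{2}(d-l)-m$, where $d:=\dim\bar{G}$ and $l:=\dim\bar{P}_{\bar{x}}^\ss$.

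The main estimate comes from positivity. Concavity of the adjoint polygon $\Upsilon:=\Upsilon_{\bar{G},\bar{x}}$ and the symmetry $\Upsilon(t)=\Upsilon(d-t)$ give upper bounds $\Upsilon(M)$ on the $\lambda$-weight sum of $W_+$ (the $M$ largest weights of $\Lie\bar{G}$) and $\Upsilon(\delta)-\max\Upsilon$ on that of $W_-$ (the $m$ least negative weights of $(\Lie\bar{G})_{<0}$, whose total dimension is $\delta+m$). Summing these and invoking positivity yields $\Upsilon(M)+\Upsilon(\delta)\ge\max\Upsilon$. The $\lambda$-invariant splitting of $\Lie\bar{G}$ into $\Lie\rad\bar{Q}'$, $\Lie L'$ and the Lie algebra of the opposite unipotent radical, combined with the duality exchanging $\lambda$-weight $i$ with $-i$ between the two unipotent radicals, forces $M+\dim(\Lie L')_{>0}+m=(d-l)/2$, hence in particular $M\le\delta$; monotonicity of $\Upsilon$ on $[0,(d+l)/2]$ then upgrades the inequality to $2\Upsilon(\delta)\ge\max\Upsilon$.

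To conclude, suppose for contradiction $\delta\le n-1$. Then $2\Upsilon(n-1)\ge 2\Upsilon(\delta)\ge\max\Upsilon$, whereas \eqref{Eq:HardInequalityInPractice} together with $\Upsilon(n-1+\tfrac{1}{2}(l-r))\ge\Upsilon(n-1)$ (by monotonicity of $\Upsilon$ and $l\ge r$) forces $2\Upsilon(n-1)<\max\Upsilon$, a contradiction. The main technical obstacle is the weight-space bookkeeping: verifying the polygon upper bounds on the $\lambda$-weight sums of $W_\pm$ and the decomposition identity $M+m\le(d-l)/2$; with these combinatorial ingredients in place, the rest is routine manipulation of the concave function $\Upsilon$.
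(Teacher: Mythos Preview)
Your proof is correct, and while it follows the same overall strategy as the paper (positivity combined with concavity of the adjoint polygon), the execution is genuinely cleaner in two respects.

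First, you observe via \cref{Lemma:DescriptionFibersProjectionLeviFactor} that after normalizing so that $\pr_{\bar{Q}'}(\bar{x})=\bar{y}$, the set $[\bar{x}]_{\bar{Q}',\bar{y}}$ is a \emph{single} $\rad\bar{Q}'$-orbit in $\Par_{t(\bar{x})}(\bar{G})$. The paper instead normalizes $\bar{x}$ so that $P_{\bar{x}}$ and $\bar{Q}'$ share a Borel, and then runs through a Bruhat decomposition $\Par_{t(\bar{x})}(\bar{G})=\bigcup_{w} C_w$ into $\bar{Q}'$-orbits, arguing cell by cell; your observation shows that only one cell is actually relevant, so the Bruhat machinery is avoidable.

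Second, your structural inequality $M\le\delta$, obtained from the $\lambda$-stable splitting $\Lie\bar{G}=\Lie\rad\bar{Q}'\oplus\Lie L'\oplus\Lie\rad\bar{Q}'^{-}$ and the duality between the two unipotent radicals, lets you upgrade $\Upsilon(M)+\Upsilon(\delta)\ge\max\Upsilon$ to $2\Upsilon(\delta)\ge\max\Upsilon$. Consequently you only need the weaker consequence $2\Upsilon(n-1)<\max\Upsilon$ of \eqref{Eq:HardInequalityInPractice}, whereas the paper passes through the intermediate inequality \eqref{eq:HypothesisAdjointPolygonIntermediateStatement} involving $\dim\bar{Q}'$ and uses the full strength of \eqref{Eq:HardInequalityInPractice}.

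One minor point of exposition: your parenthetical ``the $m$ least negative weights of $(\Lie\bar{G})_{<0}$'' is misleading. What you actually use is that the $\delta$ negative weights in the complement of $W_-$ inside $(\Lie\bar{G})_{<0}$ have sum $\ge -\Upsilon(\delta)$ (true for \emph{any} $\delta$ weights), giving $\mathrm{sum}(W_-)=-\max\Upsilon-\mathrm{sum}(\text{complement})\le\Upsilon(\delta)-\max\Upsilon$. The bound itself is correct; only the verbal description needs adjusting.
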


Assume now that we have~$Q' = g Q g^{-1}$ and~$y= \pr_{Q'}(gx)$ for some~$g \in G$ such that~$g^{-1} \alpha(g) \in \rad Q$. Then~$(P_y, Q')\in [x, Q]^\ss_{\alpha}$ for~$P_y = \Stab_{Q'^\ss}(y)$, and for the composite morphism~$$q\colon  \quad\frH \into \Par(G) \onto \Par(\bar{G})$$ we have an inclusion
\[ q\bigl( \, r^{-1}(P_y, Q') \cap [x, Q]_\alpha \, \bigr) \; \subset \; {[\bar{x}]}_{\bar{Q}', \bar{y}} \times \{ \bar{Q}' \}. \]
The situation is summarized in the following diagram, where~$\pr\colon \Par(G)\to \Par(\bar{G})$ is the pushforward under the projection~$G\to \bar{G}$:
\[
\begin{tikzcd}
\Par(\cQ^\ss) & {[x, Q]^\ss_\alpha}  \\
 \Par(G) \times \Par(G) \ar[u, "r"] & {[x, Q]_\alpha} \ar[u, "r"'] \\
\frH \times \Par(G) \ar[d, "q \times \pr"'] \ar[u, hook] & r^{-1}(P_y, Q') \cap [x, Q]_\alpha \cap (\frH \times \Par(G) ) \ar[u, hook] \ar[d, "q \times \pr"] \\
\Par(\bar{G}) \times \Par(\bar{G}) &  \displaystyle  { [\bar{x}]}_{\bar{Q}', \bar{y}} \times \{ \bar{Q}' \}  
\end{tikzcd}
\]
We can now prove \cref{Thm:BoundInFlag} assuming \cref{prop:DimensionInTermsOfCentralizer,,Prop:CodimensionWithFixedParabolic}:

\begin{proof}[{Proof of \cref{Thm:BoundInFlag}}] By \cref{Lemma:EqClassIsConstructible} below, the set~$[x, Q]_\alpha \subset  \Par(G) \times  \Par(G)$ is constructible, thus it suffices to show
\[\dim {[x, Q]_\alpha \cap (\frH \times \Par(G))} = \dim \frH - n +  \dim C_{G}(\alpha).\] 
By \cref{prop:DimensionInTermsOfCentralizer} the Zariski closure of~$[x,Q]_{\alpha}^\ss$ has dimension~$\le \dim C_{G}(\alpha)$. In general the set-theoretical map~$r$ is not underlying a morphism of varieties as the intersection of parabolics may jump. Nonetheless~$\Par(G) \times \Par(G)$ can be decomposed into a finite collection~$\cS$ of pairwise disjoint locally closed subsets such that for each~$S \in \cS$ the map~$r\colon S \to \Par (\cQ)$ is a morphism of varieties. Therefore  for each point~$(P_y, Q') \in [x, Q]_{\alpha}^\ss$ the fiber
\[ F:= r^{-1}(P_y, Q') \cap [x, Q]_\alpha \cap (\frH \times \Par(G) ) \]
is constructible and it suffices to show that~$F$ has dimension~$\le \dim \frH - n$. As mentioned above, we have
\[ F \; \subset \; q^{-1}([\bar{x}]_{\bar{Q}', \bar{y}}) \times \{ Q' \}. \]
By \cref{Prop:CodimensionWithFixedParabolic} the Zariski closure of~$[\bar{x}]_{\bar{Q}', \bar{y}}$ in~$\Par_{t(\bar{x})}(\bar{G})$ has codimension~$\ge n$ . By hypothesis the morphism~$q \colon \frH \to \Par_{t(\bar{x})}(\bar{G})$ is proper smooth with equidimensional fiber. Therefore the Zariski closure of~$q^{-1}([\bar{x}]_{\bar{Q}', \bar{y}})$ has codimension~$\ge n$, hence the Zariski closure of~$F$ in~$\frH \times \{ Q' \}$ has dimension~$\le \dim \frH - n$ as desired.
\end{proof}

\begin{lemma} \label{Lemma:EqClassIsConstructible} The set~$[x, Q]_\alpha$ is constructible.
\end{lemma}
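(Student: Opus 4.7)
The strategy is to parametrize $[x, Q]_\alpha$ by the image of an algebraic morphism and then invoke Chevalley's theorem. The condition $g\alpha(g)^{-1} \in \rad Q'$ with $Q' = gQg^{-1}$ is equivalent, after conjugation by $g^{-1}$, to $\alpha(g)^{-1}g \in \rad Q$, and this defines a closed subvariety
\[ Y \;:=\; \{g \in G \mid \alpha(g)^{-1}g \in \rad Q\} \;\subset\; G, \]
since $g \mapsto \alpha(g)^{-1}g$ is a morphism of varieties $G \to G$. Given $g \in Y$, \cref{Lemma:DescriptionFibersProjectionLeviFactor} identifies the fiber $\pr_{Q'}^{-1}(\pr_{Q'}(gx))$ with the $\rad Q'$-orbit of $gx$, that is, with $\{gux \mid u \in \rad Q\}$. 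Hence by the definition of $\alpha$-equivalence,
\[ \{(P^\circ_{x'}, Q') \mid (x', Q') \sim_\alpha (x, Q)\} \;=\; \im(\Psi) \]
for the algebraic morphism
\[ \Psi \colon Y \times \rad Q \too \Par(G^\circ) \times \Par(G^\circ), \qquad (g, u) \longmapsto (guP^\circ_x u^{-1}g^{-1}, \; gQg^{-1}), \]
whose image is constructible by Chevalley's theorem.

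It remains to handle the additional positivity condition $(\bar{x}', \bar{Q}') \succeq 0$ in \cref{Def:BizarreEqRel}. The plan is to show that this condition is constant on $\im(\Psi)$, so that $[x, Q]_\alpha$ equals either $\im(\Psi)$ or the empty set. For $(g, u) \in Y \times \rad Q$, set $\bar{x}' = \bar{g}\bar{u}\bar{x}$ and $\bar{Q}' = \bar{g}\bar{Q}\bar{g}^{-1}$. As $\bar{u} \in \rad\bar{Q}$, \cref{Lemma:DescriptionFibersProjectionLeviFactor} gives $\pr_{\bar{Q}}(\bar{u}\bar{x}) = \pr_{\bar{Q}}(\bar{x})$, while conjugation by $\bar{g}$ induces an isomorphism $\bar{Q}^\ss \stackrel{\sim}{\to} \bar{Q}'^\ss$ sending $\pr_{\bar{Q}}(\bar{x})$ to $\pr_{\bar{Q}'}(\bar{x}')$ and carrying the modular character $\delta_{\bar{Q}}$ to $\delta_{\bar{Q}'}$ (modular characters being intrinsic, hence conjugation-invariant). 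Therefore
\[ \langle \pr_{\bar{Q}'}(\bar{x}'), \delta_{\bar{Q}'} \rangle \;=\; \langle \pr_{\bar{Q}}(\bar{x}), \delta_{\bar{Q}} \rangle, \]
which depends only on the fixed pair $(x, Q)$ and not on $(g, u)$, proving the claim.

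The main point to verify carefully is this invariance of the pairing along $\im(\Psi)$; it is a combination of two structural facts: the modular character of a parabolic is intrinsic, and the Levi projection is insensitive to the action of the unipotent radical. Both are encoded, respectively, in the definition of the modular character in \cref{sec:PositivityParabolic} and in \cref{Lemma:DescriptionFibersProjectionLeviFactor}.
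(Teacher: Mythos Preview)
Your proof is correct and takes a more direct route than the paper's. The paper writes $[x,Q]_\alpha$ as an intersection $X \cap Y$, where $X$ encodes the $\alpha$-equivalence condition and $Y$ the positivity condition, and then shows each is constructible separately; the constructibility of $Y$ requires a reduction via Levi factors and an appeal to Richardson's finiteness result (\cref{Prop:DisconnectedRichardsonThm}) to decompose $Gx \cap \cB(L,k)$ into finitely many $L$-orbits. Your observation---that the pairing $\langle \pr_{\bar{Q}'}(\bar{x}'), \delta_{\bar{Q}'} \rangle$ is invariant under conjugation and under the action of $\rad \bar{Q}$, hence takes the same value for every $(x',Q') \sim_\alpha (x,Q)$---short-circuits all of this: it shows $[x,Q]_\alpha$ is either all of $\im(\Psi)$ or empty, and no Richardson-type argument is needed. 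The paper does eventually use the same invariance principle (in the final step, where it notes that $\wt(\rho_\ast(gy))$ is independent of $g\in L$), but only after the more elaborate setup; you exploit it from the start. One incidental remark: your $\im(\Psi)$ and the paper's $X$ actually coincide, since for $g$ with $\alpha(g)^{-1}g \in \rad Q$ and $u \in \rad Q$ one has $\alpha(gu)^{-1}(gu) = \alpha(u)^{-1}\cdot \alpha(g)^{-1}g\cdot u \in \rad Q$ (using $\alpha(Q)=Q$), so the explicit $\rad Q$-factor in your parametrization can be absorbed into $g$.
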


\begin{proof} The set~$[x, Q]_\alpha$ is the intersection of the sets
\begin{align*}
X\; &:=\; \{ (P_{x'}, Q') \mid (x', Q') \sim_\alpha (x, Q) \}, \\
Y \; &:= \; \{ (P_{x'}, Q') \mid (\bar{x}', \bar{Q}') \succeq 0 ,  x' \in Gx\},
\end{align*}
thus it suffices to show that~$X$ and~$Y$ are constructible. Now~$X$ is the image of the morphism
\[
 \{ g\in G \mid g^{-1}\alpha(g) \in\rad Q\} \too \Par(G)^2, \qquad
 g \longmapsto (g P_x g^{-1}, gQ g^{-1}),
 \]
and is therefore constructible. The subset~$Y$ is the preimage of 
\[ \bar{Y} \; := \;  \{ (P_{\bar{x}'}, \bar{Q}') \mid (\bar{x}' ,{\bar{Q}'}) \succeq 0 , \bar{x}' \in \bar{G} \bar{x}\} \; \subset \; \Par(\bar{G}) \times \Par(\bar{G})\]
under the projection~$\Par(G) \times  \Par(G) \to \Par(\bar{G}) \times \Par(\bar{G})$, so it suffices to show~$\bar{Y}$ is constructible. From now on the group~$G$ does not play a role any more, hence to simplify notation we write~$G$,~$x$,~$Q$,~$Y$ etc. instead of~$\bar{G}$,~$\bar{x}$,~$\bar{Q}$,~$\bar{Y}$ etc. Then~$Y$ is the image of the morphism
\[
 G \times \{ (P_{x'}, Q) \in Y \} \too \Par(G)^2, \qquad
  (g, P, Q) \longmapsto (g P g^{-1}, g Q g^{-1}), \]
so it suffices to prove that~$\{ (P_{x'}, Q) \in Y \}\subset \Par(G) \times \{ Q \}$ is constructible. To do so let~$x = [\tfrac{\lambda}{n}]$ for a cocharacter~$\lambda$  with values in a Levi factor~$L$ of~$Q$. For~$u \in \rad Q$ a point~$x' \in \cB(G,  k)$ is~$Q$-positive if and only if~$ux'$ is. Since all Levi factors of~$Q$ are conjugates under~$\rad Q$, this implies
\[\{  (P_{x'}, Q)\in Y \} = \rad Q \cdot \{ (P_{x'}, Q)\in Y \mid x' \in \cB(L,  k)\}.\] 
The set~$G x \cap \cB(L,  k)$ is a finite union of~$L$-orbits. To see this, we may assume that~$\lambda$ is injective. In this case, every point~$y \in G x \cap \cB(L,  k)$ is of the form~$y = [\tfrac{\mu}{n}]$ for some injective cocharacter~$\mu$  of~$L$ conjugate to~$\lambda$ under~$G$. Then the claimed finiteness follows from \cref{Prop:DisconnectedRichardsonThm} applied to the images of~$\lambda$ and~$\mu$. In view of this, it suffices to show that for~$y \in \cB(L, k)$ the set \[\{ P_{g y} \mid g \in L, (gy, Q) \succeq 0\} \subset \Par(G)\] is constructible. Let~$V:= \Lie \rad Q$ and~$\rho \colon Q \to \GL(V)$ the adjoint representation. Since the weight of a filtration on~$V$ is invariant under~$\GL(V)$, we have
\[ \wt(\rho_\ast (g y)) = \wt(\rho(g) \rho_\ast y) = \wt(\rho_\ast y)
\quad \text{for~$g\in L$}. \]
So~$\{ P_{g y} \mid g \in L, (gy,{Q})\succeq 0\}$ is either empty or an~$L$-orbit, hence constructible.    \end{proof}

\subsection{Bounds on the base} \label{sec:BoundsOnTheBase}

Now we go back to the setting in \cref{sec:StatementOrbitInFlag}. For the proof of \cref{Prop:CodimensionWithFixedParabolic} we will first reduce to the case where the automorphism~$\alpha$ is semisimple; in this case we have the following result:

\begin{lemma} \label{Lemma:FinitelyManyOrbitsCentralizer} Suppose~$\alpha$ semisimple. Let~$\Par(G)^\alpha \subset \Par(G)$ be the subvariety of parabolic subgroups stable under~$\alpha$. Then~$C_{G}(\alpha)$ acts naturally on~$\Par(G)^\alpha$ and \[ |\Par(G)^\alpha /C_{G}(\alpha)| < +\infty. \]
\end{lemma}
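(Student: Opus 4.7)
The conjugation action is well-defined: for $g \in C_G(\alpha)$ and $P \in \Par(G)^\alpha$ we have $\alpha(gPg^{-1}) = \alpha(g)\alpha(P)\alpha(g)^{-1} = gPg^{-1}$. The plan is to show that each irreducible component of $\Par(G)^\alpha$ is a single $C_G(\alpha)^\circ$-orbit, and then combine this with the finiteness of $[C_G(\alpha) : C_G(\alpha)^\circ]$ together with the Noetherian property of $\Par(G)$.

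Fix $P \in \Par(G)^\alpha$ and write $\mathfrak{g} = \Lie G$, $\mathfrak{p} = \Lie P$. Since $\Par(G)^\alpha$ is the equalizer of $\alpha$ and $\mathrm{id}$ acting on $\Par(G)$, its Zariski tangent space at $P$ equals $(T_P\Par(G))^\alpha = (\mathfrak{g}/\mathfrak{p})^\alpha$. Because $\alpha$ is semisimple and $\characteristic(k) = 0$, its differential acts semisimply on $\mathfrak{g}$, so the $\alpha$-stable subspace $\mathfrak{p}$ admits an $\alpha$-stable complement; consequently $\alpha$-invariants are exact on $0 \to \mathfrak{p} \to \mathfrak{g} \to \mathfrak{g}/\mathfrak{p} \to 0$ and the tangent space at $P$ identifies with $\mathfrak{g}^\alpha/\mathfrak{p}^\alpha$. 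On the other hand, in characteristic zero $C_G(\alpha)$ is smooth with Lie algebra $\mathfrak{g}^\alpha$, so the tangent space at $P$ to the $C_G(\alpha)^\circ$-orbit of $P$ is the image of $\mathfrak{g}^\alpha$ in $\mathfrak{g}/\mathfrak{p}$, namely $\mathfrak{g}^\alpha/(\mathfrak{g}^\alpha \cap \mathfrak{p}) = \mathfrak{g}^\alpha/\mathfrak{p}^\alpha$.

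Thus the orbit has the same Zariski tangent space at $P$ as $\Par(G)^\alpha$. Since the orbit is smooth and locally closed and its dimension matches that of $T_P \Par(G)^\alpha$, the subscheme $\Par(G)^\alpha$ is smooth at $P$ and the orbit is Zariski-open in $\Par(G)^\alpha$ in a neighbourhood of $P$. Since $P$ was arbitrary, every $C_G(\alpha)^\circ$-orbit is open in $\Par(G)^\alpha$. Because orbits are disjoint and an irreducible variety admits no decomposition as a disjoint union of more than one nonempty open subset, each irreducible component of $\Par(G)^\alpha$ is contained in a single orbit; conversely each orbit is irreducible and therefore contained in a single component, so components and orbits coincide. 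As $\Par(G)$ is Noetherian and $\Par(G)^\alpha$ is closed in it, $\Par(G)^\alpha$ has finitely many irreducible components, hence finitely many $C_G(\alpha)^\circ$-orbits, and a fortiori finitely many $C_G(\alpha)$-orbits.

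The main step to verify carefully is the exactness identity $(\mathfrak{g}/\mathfrak{p})^\alpha = \mathfrak{g}^\alpha/\mathfrak{p}^\alpha$: this is where the semisimplicity of $\alpha$ together with $\characteristic(k)=0$ is crucial, as it provides an $\alpha$-stable complement to $\mathfrak{p}$ in $\mathfrak{g}$. Everything else is a routine orbit-dimension comparison.
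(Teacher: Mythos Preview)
Your proof is correct and follows essentially the same approach as the paper: compute the Zariski tangent space of $\Par(G)^\alpha$ at $P$ as $(\mathfrak{g}/\mathfrak{p})^\alpha$, use semisimplicity of $\alpha$ to identify this with $\mathfrak{g}^\alpha/\mathfrak{p}^\alpha$, observe that this equals the tangent space to the $C_G(\alpha)^\circ$-orbit, and conclude that all orbits are open, hence finite in number. Your write-up is in fact a bit more explicit than the paper's in spelling out the passage from ``all orbits are open'' to ``finitely many orbits'' via irreducible components.
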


\begin{proof} Let~$Q \subset G$ be a parabolic subgroup stable under~$\alpha$. Its orbit under~$C_G(\alpha)$ has dimension~$\dim C_G(\alpha) - \dim C_Q(\alpha)$ where~$C_Q(\alpha) = C_G(\alpha) \cap Q$. Therefore, to conclude the proof it suffices to show that the local dimension of~$\Par(G)^\alpha$ at~$Q$ is~$\le \dim C_G(\alpha) - \dim C_Q(\alpha)$. In turn, it suffices to show that
\[ \dim \rT_{[Q]} \Par(G)^\alpha \le \dim C_G(\alpha) - \dim C_Q(\alpha) \]
where~$\rT_{[Q]} \Par(G)^\alpha$ stands for the tangent space of~$\Par(G)^\alpha$ at the point~$[Q]$. 
Consider the morphism~$\sigma \colon G \to \Par(G)$ given by~$g \mapsto [g Q g^{-1}]$. The tangent map~$\rT_e \sigma$ is~$\alpha$-equivariant and induces an isomorphism
\[ \rT_{[Q]} \Par(G) \iso \Lie G / \Lie Q. \]
Now~$\rT_{[Q]} \Par(G)^\alpha$ is made of those tangent vectors to~$\Par(G)$ in~$[Q]$ that are fixed under the action of~$\alpha$. Since~$\alpha$ is semisimple, we have
\[ (\Lie G / \Lie Q)^\alpha = (\Lie G)^\alpha / (\Lie Q)^\alpha\]
where the superscript~$\alpha$ stands for the subspace of~$\alpha$-invariants. The identity 
$(\Lie H)^\alpha = \Lie C_H(\alpha)$ for~$H = G, Q$ then permits to conclude the proof.
\end{proof}

We will then pass from the centralizer in a parabolic subgroup to a centralizer in its Levi quotient as follows.
For a parabolic subgroup~$Q \subset G$ fixed under~$\alpha$ set~$C_Q(\alpha) := C_{G}(\alpha) \cap Q$. The radical~$\rad Q$ is also stable under~$\alpha$, hence~$\alpha$ induces an automorphism of the Levi quotient~$Q^\ss := Q / \rad Q$. Let~$C_{Q^\ss}(\alpha) \subset Q^\ss$ be the subgroup of fixed points of this automorphism, then we have:

\begin{lemma} \label{Lemma:SemiSimplifiedCentralizer} Suppose~$\alpha$ semisimple. Given a parabolic subgroup~$Q \subset G$ stable under~$\alpha$ the image of~$\pi_Q \colon C_{Q}(\alpha) \to C_{Q^\ss}(\alpha)$ is a subgroup of finite index.
\end{lemma}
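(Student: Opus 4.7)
The plan is to show that $\pi_Q(C_Q(\alpha))$ is a closed subgroup of $C_{Q^\ss}(\alpha)$ having the same dimension, which forces it to have finite index. First, since $\pi_Q$ restricts to a morphism of algebraic groups $C_Q(\alpha) \to C_{Q^\ss}(\alpha)$, its image is a constructible subgroup, hence closed; so the result will follow once I establish equality of dimensions.

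For the dimension comparison I will work on the level of Lie algebras. Since $\alpha$ is semisimple as an element of $\Aut(G)$, the Zariski closure of $\langle \alpha \rangle$ in $\Aut(G)$ is an algebraic group of multiplicative type and hence linearly reductive. Therefore $\alpha$ acts semisimply on every algebraic representation of $\Aut(G)$; in particular on the adjoint representation $\Lie G$ and on its $\alpha$-stable subspaces $\Lie Q$ and $\Lie \rad Q$. Consequently, the $\alpha$-fixed subspace functor is exact, and applying it to the short exact sequence
\[
 0 \;\to\; \Lie \rad Q \;\to\; \Lie Q \;\to\; \Lie Q^\ss \;\to\; 0
\]
yields a short exact sequence
\[
 0 \;\to\; (\Lie \rad Q)^\alpha \;\to\; (\Lie Q)^\alpha \;\to\; (\Lie Q^\ss)^\alpha \;\to\; 0.
\]

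In characteristic zero the centralizer $C_H(\alpha)$ is smooth with $\Lie C_H(\alpha) = (\Lie H)^\alpha$ for each $\alpha$-stable subgroup $H$; applying this to $H \in \{Q, \rad Q, Q^\ss\}$ and combining with the identification $\ker(\pi_Q|_{C_Q(\alpha)}) = C_Q(\alpha) \cap \rad Q = C_{\rad Q}(\alpha)$, one obtains
\[
 \dim \pi_Q(C_Q(\alpha)) \;=\; \dim C_Q(\alpha) - \dim C_{\rad Q}(\alpha) \;=\; \dim (\Lie Q^\ss)^\alpha \;=\; \dim C_{Q^\ss}(\alpha),
\]
as required. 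The main technical point I expect to need care is the exactness of $\alpha$-fixed subspaces together with the identification of $\Lie C_H(\alpha)$ with $(\Lie H)^\alpha$; both rest on $\alpha$ being semisimple in $\Aut(G)$ and would be delicate outside characteristic zero, but are automatic in the setting of the paper.
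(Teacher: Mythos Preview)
Your proof is correct and follows essentially the same approach as the paper: both reduce to showing that the map on $\alpha$-fixed subspaces of Lie algebras is surjective, using the semisimplicity of $\alpha$. The paper phrases this via an explicit $\alpha$-stable complement $L$ to $\Lie \rad Q$ in $\Lie Q$, whereas you invoke the abstract exactness of the fixed-point functor for a linearly reductive group; these are the same argument.
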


\begin{proof} It suffices to show that the tangent map~$p = \Lie \pi_Q \colon \Lie Q \to \Lie Q^\ss$ restricts to a surjection~$\Lie C_{Q}(\alpha) \to \Lie C_{Q^\ss}(\alpha)$. The tangent action of~$\alpha$ on~$\Lie Q$ preserves~$\Lie \rad Q$ and the projection~$p$ is~$\alpha$-equivariant. Since~$\alpha$ is semisimple, there is a unique complement~$L$ of~$\Lie \rad Q$ in~$\Lie Q$ which is stable under~$\alpha$ and the projection~$p \colon L \to Q^\ss$ is an isomorphism. The Lie algebra of~$C_{Q}(\alpha)$ consists of the vectors in~$\Lie Q$ fixed by~$\alpha$, and similarly for~$C_{Q^\ss}(\alpha)$. In particular~$p$ induces an isomorphism \[\{ v \in L : \alpha(v) = v\} \to \Lie C_{Q^\ss}(\alpha)\] which concludes the proof.
\end{proof}

\begin{proof}[Proof of \cref{prop:DimensionInTermsOfCentralizer}] 
The subset~$[x,Q]_\alpha^\ss \subset \Par(\cQ^\ss)$ is constructible because it is the image of the subvariety \[T := \{ g \in G \mid g^{-1} \alpha(g) \in \rad Q\} \subset G\] via the morphism~$T \to \Par(\cQ^\ss)$ defined by
\[g\; \longmapsto \; (Q_1, Q_2) 
\;\; \text{with} \;\; 
\begin{cases} 
Q_2 \;=\; gQ g^{-1} \\
Q_1 \;=\; g (Q \cap P_x) g^{-1}/ g (\rad Q \cap P_x)g^{-1} \; \subset\;  Q_2^\ss.
\end{cases}
\]
Here~$g (Q \cap P_x) g^{-1}/ g (\rad Q \cap P_x)g^{-1}$ is the stabilizer of the point~$\pr_{gQg^{-1}}(gx)$ by \cref{Lemma:DescriptionFibersProjectionLeviFactor}. Now we reduce to the case where~$\alpha$ is semisimple: Indeed, let~$g \in G$ such that~$g^{-1} \alpha(g) \in \rad Q$, then
\[ \{ \beta \in \Aut(G) \mid \beta(Q) = Q, g^{-1} \beta(g) \in \rad Q\} \subset \Aut(G) \] is an algebraic subgroup. By functoriality of the Jordan-Chevalley decomposition the semisimple part~$\beta = \alpha^\ss$ of~$\alpha$ belongs to such a subgroup and~$[x, Q]_{\alpha} \subset [x, Q]^\ss_{\beta}$, so for the rest of the proof we can assume that~$\alpha = \alpha^\ss$ is semisimple. This being said, let
\[ \pi \colon \Par(\cQ^\ss) \too \Par(G), \quad (P, Q) \longmapsto Q\]
be the structure morphism of the~$\Par(G)$-scheme~$\Par(\cQ^\ss)$. The set~$[x, Q]_{\alpha}^\ss$ is stable under~$C_G(\alpha)$, thus so is its image
\[ \pi([x, Q]_{\alpha}^\ss) \subset \Par(G)\]
because~$\pi$ is~$G$-equivariant. Moreover, since~$Q$ is stable under~$\alpha$, each parabolic subgroup in~$\pi([x, Q]_{\alpha}^\ss)$ is stable under~$\alpha$. By hypothesis the automorphism~$\alpha$ is semisimple, thus
\[ | \pi([x, Q]_{\alpha}^\ss) / C_G(\alpha)|<+\infty \]
by \cref{Lemma:FinitelyManyOrbitsCentralizer}. Therefore it suffices to bound the dimension of~$\pi^{-1}(\cC) \cap [x, Q]_{\alpha}^\ss$ for each~$C_{G}(\alpha)$-orbit~$\cC \subset \pi([x, Q]_{\alpha}^\ss)$. Fix~$Q' \in \cC$ and note that 
\[ \dim \cC = \dim C_{G}(\alpha) - \dim C_{Q'}(\alpha). \]
With notation as in \cref{Lemma:SemiSimplifiedCentralizer} the set~$\pi^{-1}(Q') \cap [x, Q]_\alpha^\ss$ is a single~$C_{Q'^\ss}(\alpha)$-orbit, hence a finite union of~$C_{Q'}(\alpha)$-orbits by the same lemma because we assumed~$\alpha$ to be semisimple. In particular,
\[ \dim \pi^{-1}(Q') \cap [x, Q]_\alpha^\ss \le \dim C_{Q'}(\alpha).\]
In sum, the fibers of~$\pi \colon [x, Q]_\alpha^\ss \to \Par(G)$ have dimension~$\le \dim C_{Q'}(\alpha)$ while its image has dimension~$\le \dim C_{G}(\alpha) - \dim C_{Q'}(\alpha)$. Therefore 
\[ \dim {[x, Q]_\alpha^\ss} \le \dim C_{G}(\alpha) - \dim C_{Q'}(\alpha) + \dim C_{Q'}(\alpha) = \dim C_{Q'}(\alpha),\]
which concludes the proof.
\end{proof}

\subsection{Interlude on roots} \label{sec:Roots} Let~$B \subset G$ be a Borel subgroup and~$T \subset B$ a maximal torus. For a  connected subgroup~$H\subset G$ containing~$T$, a root of~$H$ is a nontrivial character~$\chi \colon T \to \Gm$ for which the vector subspace
\[ (\Lie H)_\chi := \{ v \in \Lie H : \ad_H(t).v = \chi(t).v, t \in T( k)\}\]
is nonzero, where~$\ad_H \colon T \to \GL(\Lie H)$ is the adjoint representation of~$H$.  Let~$\Phi_H$ be the set of roots of~$H$. For~$\chi \in \Phi_{G}$ the vector space~$(\Lie G)_\chi$ has dimension~$1$. A root~$\chi \in \Phi_G$ is \emph{positive} if it belongs to~$\Phi_B$. If~$\chi$ is not positive, then~$- \chi$ is positive. A root is \emph{simple} if it is positive and it cannot be written as the sum of two positive roots. Let~$\Delta_G$ be the set of simple roots and
\[ \Delta_{H} := (- \Phi_H) \cap \Delta_G = \{ \chi \in \Delta_G : - \chi \in \Phi_H\}. \]
Let~$N$ be the normalizer of~$T$ in~$G$. The group~$N( k)$ acts by conjugation on the group of cocharacters~$X_\ast(T)$ and the group of characters~$X^\ast(T)$ of~$T$. These actions are linear, extend naturally to~$\rX_\ast(T) \otimes \bbQ$ and~$\rX^\ast(T) \otimes \bbQ$ and factor through an action of the Weyl group~$W = N( k) / T( k)$. The action of~$W$ respects the natural pairing: for~$w \in W$,~$x \in \rX_\ast(T) \otimes \bbQ$ and~$\chi \in \rX^\ast (T) \otimes \bbQ$ we have~$\langle w x, \chi \rangle= \langle x, w^{-1} \chi \rangle$.  The set of roots~$\Phi_G$ is stable under the action of~$W$. For~$n \in N( k)$ having image~$w$ in~$W$, the group~$n P_x n^{-1} = P_{wx}$ only depends on~$w$.

\medskip

With this notation we are able to give an explicit expression for the adjoint polygon of a point~$x$ lying in the apartment~$\cB(T,  k) = \rX_\ast(T) \otimes \bbQ \subset \cB(G,  k)$. Suppose that~$B$ is contained in~$P:= P_x$. Then
\[ \Phi_P = \{ \chi \in \Phi_G : \langle x, \chi\rangle \ge 0 \}.\] 
Let~$\ad \colon G \to \GL(\Lie G)$ the adjoint representation and consider the decomposition~$\Lie G = \bigoplus_{\chi \in \Phi_G \cup \{0 \}} (\Lie G)_\chi$  of~$\Lie G$ in isotypical components with respect to~$T$. The filtration~$F^\bullet \Lie G$ defined by~$\ad_\ast x \in \cB(\GL(\Lie G),  k)$ is given, for~$\alpha \in \bbQ$, by
\[ F^\alpha \Lie G = \bigoplus_{\langle x, \chi \rangle \ge \alpha} (\Lie G)_\chi. \]
Let~$\{ 1, \dots, \dim G \} \to \Phi_G \cup \{ 0 \}$,~$i \mapsto \chi_i$ be a surjection such that~$\langle x, \chi_i \rangle \ge \langle x, \chi_{i+1} \rangle$ and \[ | \{ i : \chi_i =0 \}| = \dim (\Lie G)_0 = \dim T.\]
 With this notation, for all~$t\in [0, \dim G]$, we have
\begin{equation} \label{Eq:ExpressionAdjointPolygon}
\Upsilon_{G, x}(t) = \sum_{i = 1}^{\lfloor t \rfloor } \langle x, \chi_i \rangle + \langle x, \chi_{\lfloor t \rfloor + 1}  \rangle (t - \lfloor t \rfloor).
\end{equation}
Thus~$\Upsilon_{G,x}$ is increasing on~$[0, \dim \rad P]$, constant on~$[\dim \rad P, \dim P]$ and decreasing on~$[\dim P, \dim G]$, hence~$\Upsilon_{G,x}$ is nondecreasing on~$[0, \dim P]$ and \[\max \Upsilon_{G,x} = \sum_{\chi \in \Phi_P} \langle \chi, x \rangle.\]

We take profit of this setup to state a refinement of the Bruhat decomposition that will be used afterwards. Given parabolic subgroups~$P,Q \subset G$ containing the Borel subgroup~$B$ consider 
\[ W_{Q, P} = \{ w \in W : w \Delta_P \subseteq \Phi_B, w^{-1} \Delta_{Q} \subseteq \Phi_B \}. \]
As usual, for~$n \in N( k)$, the coset~$Q nP$ only depends in the image~$w \in W$ of~$n$ and is denoted by~$Q w P$.

\begin{lemma} \label{Prop:BruhatDecomposition} We have~$G = Q W_{Q, P} P := \bigcup_{w \in W_{Q, P}} Q w P$.
\end{lemma}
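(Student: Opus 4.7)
The plan is to reduce this refined statement to the classical Bruhat decomposition. Since $B$ is contained in both $P$ and $Q$, every double coset $QwP$ is a union of ordinary Bruhat cells $BwB$, so the classical decomposition $G = \bigsqcup_{w \in W} BwB$ immediately gives $G = \bigcup_{w \in W} QwP$. Moreover, this double coset depends only on the class of $w$ in $W_Q \backslash W / W_P$: for $u \in W_Q$ and $v \in W_P$ one can choose lifts $n_u, n_v \in N(k)$ lying in $Q$ and $P$ respectively, whence $Q(uwv)P = QwP$. Thus the lemma reduces to the purely combinatorial claim that every double coset in $W_Q \backslash W / W_P$ contains an element of $W_{Q,P}$.

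To produce such a representative, within each double coset $\Omega \subset W$ I would pick an element $w \in \Omega$ of minimal length with respect to the Coxeter length function on $W$. For $\alpha \in \Delta_P$ the corresponding simple reflection $s_\alpha$ lies in $W_P$ (as $\Delta_P$ generates $W_P$), so $ws_\alpha \in \Omega$ and minimality of $\ell(w)$ forces $\ell(ws_\alpha) > \ell(w)$; by the standard length criterion for simple reflections in Coxeter groups, this is equivalent to $w\alpha \in \Phi^+ = \Phi_B$. Hence $w\Delta_P \subseteq \Phi_B$. A symmetric argument on the left, using that $s_\beta \in W_Q$ for $\beta \in \Delta_Q$ and that $\ell(s_\beta w) > \ell(w)$, yields $w^{-1}\Delta_Q \subseteq \Phi_B$. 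Therefore $w \in W_{Q,P}$, and we conclude $\Omega \cap W_{Q,P} \neq \emptyset$.

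The argument is entirely classical and I do not expect any genuine obstacle. The only point that requires care is keeping the conventions straight: one has to verify that $W_P$ is indeed the subgroup of $W$ generated by the simple reflections $\{s_\alpha : \alpha \in \Delta_P\}$, which follows from the definition $\Delta_P = (-\Phi_P) \cap \Delta_G$ together with the fact that $\Phi_P$ is obtained from $\Phi_B$ by adjoining the root system of a Levi factor of $P$, whose simple roots (with respect to the Borel $B\cap L_P$ of $L_P$) are precisely $\Delta_P$.
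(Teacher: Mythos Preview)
Your proof is correct. The paper does not actually give an argument here; it simply refers the reader to the discussion after (11.12) in Lawrence--Venkatesh~\cite{LV}, which carries out exactly the minimal-length double-coset-representative argument you wrote down. So you have supplied what the paper outsources, and there is nothing to compare.
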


\begin{proof} See the discussion after (11.12) in \cite{LV}.
\end{proof}

\subsection{Bounds on the fiber} \label{sec:BoundsOnTheFiber}
In this section we are going to prove \cref{Prop:CodimensionWithFixedParabolic}. Since only the group~$\bar{G}$ plays a role, we reset notation and let~$G$ be a reductive group over~$k$. For~$x \in\cB(G,  k)$, a parabolic subgroup~$Q \subset G$ and~$y \in \cB(Q^\ss, k)$ we consider the set
\[
 [x]_{Q, y} \;  := \; \{P_{x'} \mid x' \in G( k)x \textup{ is~$Q$-positive and~$\pr_{Q}(x') = y$} \}.
\]
Let~$n \ge 1$ be an integer such that
\begin{gather} 
n \le \dim \Par_{t(x)}(G), \label{Eq:EasyInequalityIntermediate} \\
\label{Eq:HardInequalityIntermediate} \Upsilon_{G,x}(n -1) + \Upsilon_{G,x}(n - 1+ \tfrac{1}{2}(\dim P_x^\ss - \rk G)) < \max \Upsilon_{G,x}.
\end{gather}
We are going to prove that the Zariski closure of~$[x]_{Q,y}$ in~$\Par_{t(x)}(G)$ has codimension~$\ge n$. Note that \eqref{Eq:EasyInequalityIntermediate} and \eqref{Eq:HardInequalityIntermediate} imply
\begin{equation}  \label{eq:HypothesisAdjointPolygonIntermediateStatement} 
\Upsilon_{G, x}(n -1) +  \Upsilon_{G, x}(n - 1+ \dim P_x - \dim Q) < \max  \Upsilon_{G, x},
\end{equation}
and it is rather this weaker inequality that we are going to use. Indeed~$\Upsilon_{G,x}$ is nondecreasing on~$[0, \dim P_x]$ and
\[ n- 1 + \dim P_x - \dim Q \le n-1+  \tfrac{1}{2}(\dim P_x^\ss - \rk G) \le \dim P_x.\]
Since~$[x]_{Q,y} = [ gx ]_{Q,y}$ for all~$g \in G$ we may suppose that~$P:= P_x$ and~$Q$ share a Borel subgroup~$B$. Consider  a maximal torus~$T \subset B$,  the normalizer~$N$ of~$T \subset G$ and the Weyl group~$W = N / T$. In this setup we can adopt the notation introduced in \cref{sec:Roots}. The point~$x$ then belongs to the apartment~$\cB(T,  k) \subset \cB(G, k)$. Note that~$N$ acts on~$\cB(T,  k)$ by conjugation and the action factors through~$W$ because~$T$ is commutative. Since a parabolic subgroup coincides with its own normalizer, the map~$\sigma \colon G / P \to  \Par_{t(x)}(G)$,~$g \mapsto gP g^{-1}$ is injective. For~$w \in W$ set
\[ C_w := \sigma(Qn)  \; \subset \; \Par_{t(x)}(G)\]
where~$n \in N$ maps to~$w \in W$; this does not depend on the chosen~$n$. With the notation of \cref{Prop:BruhatDecomposition} we have that~$\Par_t(G)$ is the union of~$C_w$ for~$w \in W_{Q, P}$. Now the subset 
\[[ x]_Q := \{ P_{x'} \mid x' \in Gx \textup{ is~$Q$-positive}\} = \bigcup_{y \in \cB(Q^\ss,  k)} [x]_{Q,y}\] is stable under the action of~$Q$. Therefore for~$w \in W_{Q, P}$ either
\[ C_w \subset [x]_Q \qquad \textup{or} \qquad C_w \cap [x]_Q = \emptyset.
\]
 Moreover~$C_w \subset [x]_Q$ if and only if~$wx$ is~$Q$-positive. Let~$ W_{Q, P}^+ \subset W_{Q, P}$ be the subset of~$w$ such that~$wx$ is~$Q$-positive. With this notation we have
\[ [ x]_{Q,y} = \bigcup_{w \in W^+_{Q, P}} C_w \cap [x]_{Q,y}. \]
Arguing by contradiction, suppose that the Zariski closure of~$[x]_{Q, y}$ has codimension~$\le n - 1$. The above decomposition implies that  there is~$w \in W^+_{Q, P}$ such that the Zariski closure of~$C_w \cap [x]_{Q,y}$ has codimension $\le n - 1$. Since
\[  C_w \cap  [x]_{Q,y} = \{ P_{qwx} \mid q \in Q \textup{ such that } \pr_Q(qwx) = y\} \neq \emptyset \]
there is~$q \in Q$ such that~$\pr_Q(qwx) = y$. Applying \cref{Lemma:DescriptionFibersProjectionLeviFactor} to~$qwx$ we obtain the identities
\[ \dim \overline{ C_w \cap  [x]_{Q,y}} = \dim (\rad Q / \rad Q \cap P_{qwx}) = \dim (\rad Q / \rad Q \cap P_{wx}), \]
where the latter comes from the isomorphism~$\rad Q \cap P_{wx} \iso \rad Q \cap P_{qwx}$ given by conjugation by~$q$. The roots appearing in the isotypical decomposition of \[\Lie \rad Q / \Lie (\rad Q \cap P_{wx})\] are of the form~$- \chi$ for
\[ \chi \in \Phi^{w}_{Q, P} = \{ \chi \in \Phi_G \smallsetminus \Phi_Q : -w^{-1}  \chi \not\in \Phi_{P} \}\]
because~$\Lie \rad Q = \bigoplus_{\chi \in \Phi_G \smallsetminus \Phi_Q} (\Lie G)_{- \chi}$. Thus~$\dim \overline{ C_w \cap  [x]_{Q,y}} =  |\Phi_{Q, P}^w|$ so that the hypothesis we are trying to prove fallacious becomes 
\begin{equation} \label{eq:AbsurdHypothesis2} 
 \dim G/P - |\Phi_{Q, P}^w| \le n -1.
\end{equation}
Now consider the modular character~$\delta \colon Q \to \Gm$ of~$Q$. When restricted to~$T$ we have
\[ \delta_{\rvert T} = - \sum_{\chi \in \Phi_G \smallsetminus \Phi_Q} \chi.  \]
The point~$wx$ is~$Q$-positive thus~$\langle wx, \delta \rangle \ge 0$. Since~$\langle wx,  \delta \rangle = \langle x, w^{-1}\delta \rangle$ this inequality can be rewritten as
\begin{equation} \label{eq:AbsurdEqualityModularCharacter}   \sum_{\chi \in (\Phi_G \smallsetminus \Phi_Q) \smallsetminus \Phi_{Q, P}^w} \langle x, - w^{-1}\chi \rangle \ge \sum_{\chi \in \Phi_{Q, P}^w} \langle x, w^{-1}\chi \rangle.
\end{equation}
The previous equality will lead to a contradiction. On the one hand each summand on the right-hand side of \eqref{eq:AbsurdEqualityModularCharacter} is positive: roots~$\chi \in \Phi_{Q, P}^w$ satisfy~$-w^{-1} \chi \not \in \Phi_P$ which is equivalent to~$\langle x, w^{-1}\chi \rangle > 0$. In particular by the explicit expression for the adjoint polygon given in \eqref{Eq:ExpressionAdjointPolygon} we have
\[ \sum_{\chi \in \Phi_{Q, P}^w} \langle x, w^{-1}\chi \rangle  \ge \max \Upsilon_{G,x} - \Upsilon_{G,x}(n - 1).\]
because by \eqref{eq:AbsurdHypothesis2} there at most~$n-1$ roots~$\chi \not \in \Phi_{Q, P}^w$ such that~$\langle wx, \chi \rangle > 0$.

On the other hand, the left-hand side of \eqref{eq:AbsurdEqualityModularCharacter} has 
\[ |(\Phi_G \smallsetminus \Phi_Q) \smallsetminus \Phi_{Q, P}^w| = \dim G / Q - |\Phi_{Q, P}^w| \]
summands. Now \eqref{Eq:ExpressionAdjointPolygon} says that the value of~$\Upsilon_{G,x}$ at some integer~$t \in [0, \dim G]$ is the sum of the~$t$ biggest values of~$\langle x, \alpha\rangle$ for~$\alpha \in \Phi_G \cup \{ 0 \}$, with the convention that the trivial cocharacter is counted~$\dim T$ times. In particular, we obtain the trivial upper bound 
\[ 
\sum_{\chi \in (\Phi_G \smallsetminus \Phi_Q) \smallsetminus \Phi_{Q, P}^w} \langle x, - w^{-1}\chi \rangle \le \Upsilon_{G,x}(\dim G / Q - |\Phi_{Q, P}^w|).
 \]
By \eqref{eq:AbsurdHypothesis2} we have~$\dim G / Q - |\Phi_{Q, P}^w| \le \dim P - \dim Q + n - 1$. The inequality~\eqref{eq:AbsurdEqualityModularCharacter} then yields
\begin{align*} \max \Upsilon_{G,x} - \Upsilon_x(n - 1) &\le \Upsilon_{G,x}(\dim G / Q - |\Phi_{Q, P}^w|) \\ &\le  \Upsilon_{G,x}(\dim P - \dim Q + n - 1),
\end{align*}
because~$\Upsilon_x$ is nondecreasing on~$[0, \dim P]$ and~$\dim P - \dim Q + n - 1 \le \dim P$ by the assumption~$n \le \dim G - \dim P$. This contradicts \eqref{eq:HypothesisAdjointPolygonIntermediateStatement} and concludes the proof.
\qed

\section{Reminder on period mappings} 

For convenience of the reader we collect in this section some general facts about differential Galois groups and period mappings that will be used in the proof of the main theorem. Let~$S$ be a smooth geometrically connected variety over a field~$k$ of characteristic~$0$. 

\subsection{Differential Galois group} Following Katz \cite[1.1]{KatzCalculation} an \emph{algebraic differential equation} on~$S$ is a pair~$(\cV, \nabla)$ made of a  vector bundle~$\cV$ on~$S$ and an integrable connection~$\nabla$. To ease notation we denote the pair simply by~$\cV$ if the connection is clear from the context. Let~$\DE(S)$ be the category of differential equations on~$S$ with arrows given by parallel~$\cO_S$-linear morphisms. The category~$\DE(S)$ is abelian and~$k$-linear with internal homs and a tensor product given by the usual formula. This
makes~$\DE(S)$ into a Tannakian category. For~$s \in S( k)$ the functor~$\omega_s \colon \cV \mapsto \cV_s$ taking the fiber at~$s$ is a fiber functor for~$\DE(S)$. For an algebraic differential equation~$\cV$ on~$S$ let~$\langle \cV \rangle$ be the smallest tensor full subcategory of~$\DE(S)$ containing~$\cV$.

\begin{definition}
 The \emph{differential Galois group}~$\Gal(\cV, s)$ of~$\cV$ with base point~$s$ is the Tannaka group of the Tannaka category~$\langle \cV \rangle$ together with the fiber functor~$\omega_s$.
 \end{definition}
 
The differential Galois group~$\Gal(\cV, s)$ is the algebraic subgroup of~$\GL(\cV_s)$ whose points with values in a~$k$-scheme~$S$ are those~$\cO_{S}$-linear automorphisms~$\phi$ of~$\cV_s \otimes_ k \cO_S$ such that~$\phi(\cW_s \otimes_ k \cO_S) = \cW_s \otimes_ k \cO_S$ for all integers~$n \ge 1$,~$a, b \in \bbN^n$ and algebraic subdifferential equations \[\cW \subset \cV^{a, b}:= \bigoplus_{i= 1}^n \cV^{\vee \otimes a_i} \otimes \cV^{\otimes b_i}.\] The differential Galois group is invariant under extension of scalars: given a field extension~$k'$ of~$k$, we have
\[ \Gal(\cV', s')= \Gal(\cV,s) \times_ k  k'\]
where~$(\cV', \nabla')$ is the pull-back of~$\cV$ along~$S' := S \times_ k  k' \to S$ and~$s' \in S'( k')$ is the point deduced from~$s$; see \cite[Prop. 1.3.2]{KatzCalculation}. 

\begin{definition} For an~$S$-scheme~$f \colon T\to S$ consider the fiber functors
\[\omega_T, \omega_{T,s} \colon \quad \DE(S) \; \too \;  (\cO_T\textup{-mod})\]
 defined by~$\omega_T(\cW) = f^\ast \cW$ and~$\omega_{T, s}(\cW)= \cW_s \otimes_ k \cO_T$. The \emph{Picard-Vessiot principal bundle}~$\PV(\cV, s)$ of~$\cV$ with base point~$s$ is the~$S$-scheme whose points with values in an~$S$-scheme~$f \colon T \to S$ are isomorphisms~$\omega_{T}\to \omega_{T, s}$ of fiber functors on~$\langle \cV \rangle$.
\end{definition}

A point~$\PV(\cV, s)$ with values in an~$S$-scheme~$f \colon T \to S$ is an~$\cO_T$-linear isomorphism~$\phi \colon f^\ast \cV \to \cV_s \otimes_ k \cO_T$ such that \[\phi(f^\ast \cW) = \cW_s \otimes_ k \cO_T\] for all algebraic subdifferential equations~$\cW \subset \cV^{a, b}$ with~$a, b \in \bbN^n$. This shows in particular that~$\PV(\cV, s)$ is representable by a closed subscheme of the frame bundle of~$\cV$. By design~$\PV(\cV, s)$ is a principal bundle under the action by composition of~$\Gal(\cV, s)$. Applying the definition with~$T$ being~$\PV(\cV, s)$ gives a `universal' isomorphism
\[ u_{\cV, s} \colon p^\ast \cV \stackrel{\sim}{\too} \cV_s \otimes_ k \cO_{\PV(\cV, s)}\]
where~$p \colon \PV(\cV, s) \to S$ is the structural morphism.  Consider the formal completion~$\hat{\cO}_{S, s}$ of~$\cO_{S, s}$ and the natural morphism~$\iota_s \colon \Spec \hat{\cO}_{S, s} \to S$. The formal analogue of the Frobenius integrability theorem states that evaluation at~$s$ induces an isomorphism
\[ (\iota_s^\ast \cV)^\nabla := \Ker \iota_s^\ast \nabla \stackrel{\sim}{\too} \cV_s \]
of~$k$-vector spaces. By Nakayama's lemma it follows that the natural map
\[ (\iota_s^\ast \cV)^\nabla \otimes_ k \hat{\cO}_{S, s} \stackrel{\sim}{\too} \iota_s^\ast \cV\]
is an isomorphism of~$\hat{\cO}_{S, s}$-modules. Combining the previous two isomorphisms we obtain a third one
\begin{equation} \label{Eq:IsomorphismFrobeniusIntegrability}\iota_{\cV, s} \colon \iota_s^\ast \cV \stackrel{\sim}{\too} \cV_s \otimes_ k \hat{\cO}_{S, s} \end{equation}
which is parallel with respect to the connection~$\id \otimes \rd$ on the target. It follows that for any~$n \ge 1$, any~$a, b \in \bbN^n$ and any  algebraic subdifferential equation of~$\cW \subset \cV^{a, b}$ we have~$\iota_{\cV, s}(\iota_s^\ast \cW) = \cW_s \otimes_ k \hat{\cO}_{S, s}$. Hence~$\iota_{\cV, s}$ can be seen as a morphism 
\[ \iota_{\cV, s} \colon \Spec \hat{\cO}_{S, s} \too \PV(\cV, s)\]
whose composition with the structural morphism~$p \colon \PV(\cV, s) \to S$ is~$\iota_s$. Suppose that as an~$\cO_S$-module~$\cV$ comes with a filtration
\[ \cF^\bullet \; : \quad \cV \; =\; \cF^0 \;  \supset \; \cF^1 \; \supset\; \cdots \; \supset\; \cF^n\; \supset\; \cF^{n + 1}\; =\; 0, \]
where each graded piece~$\gr_i \cF^\bullet = \cF^i / \cF^{i+1}$ is locally free of rank~$r_i$ say. Then the filtration~$u_{\cV, s}(\cF^\bullet)$ on~$\cV_s \otimes_ k \cO_{\PV(\cV, s)}$ determines a morphism of~$k$-schemes
\begin{equation} \label{Eq:PeriodMapAssociatedWithAFiltration}
\phi_{\cV, s, \cF^\bullet} \colon \PV(\cV, s) \too \Flag_r(\cV_s)
\end{equation}
where~$\Flag_r(\cV_s)$ is the variety of flags of~$\cV_s$ of type~$r = (r_0,\dots, r_n)$. The above morphism is equivariant under the action of~$\Gal(\cV, s)$. We will be interested in the composite map:
\[ 
\begin{tikzcd}[column sep=35pt]
\hat{\Phi}_{\cV, s, \cF^\bullet} \colon \Spec \hat{\cO}_{S, s} \ar[r, "{\iota_{\cV, s}}"] & \PV(\cV, s) \ar[r, "{\phi_{\cV, s, \cF^\bullet}}"]& \Flag_{r}(\cV_s).
\end{tikzcd}
\]

\subsection{The analytic picture} Suppose~$k = \bbC$ and let~$S^\an$ be the complex manifold associated with~$S$. The previous constructions can be carried out in the analytic framework, giving rise to the Tannakian category~$\DE(S^\an)$ of holomorphic differential equations. For a holomorphic differential equation we still write~$\Gal(\cV, s)$ for the corresponding Tannaka group and~$\PV(\cV, s)$ for the Picard-Vessiot principal bundle. 
The sheaf of germs of horizontal sections~$\cV^\nabla := \ker \nabla$ is a local system on~$S^\an$, which can be seen as a finite dimensional representation of the topological fundamental group~$\pi_1(S^\an, s)$. This induces an equivalence of the category~$\DE(S^\an)$ with the one of finite dimensional representations of~$\pi_1(S^\an, s)$. In particular, for a holomorphic differential equation~$\cV$ the Tannaka group~$\Gal(\cV, s)$ coincindes with \emph{algebraic monodromy group}~$\Mon(\cV, s)$, that is, the Zariski closure of the image of the monodromy representation
\[ \rho_{\cV, s} \colon \pi_1(S^\an, s) \too \GL(\cV_s).\]
Let~$\nu \colon \tilde{S} \to S^\an$ be a universal cover of the topological space~$S^\an$ and~$\tilde{s} \in \tilde{S}$ a point over~$s$. The sheaf~$\nu^\ast \cV^\nabla$ is constant, thus the evaluation at each~$t \in \tilde{S}$ induces an isomorphism
\[ \Gamma(\tilde{S}, \nu^\ast \cV^\nabla) \stackrel{\sim}{\too} (\nu^\ast \cV)_{t} = \cV_{\nu(t)}.\]
In particular the natural morphism~$\Gamma(\tilde{S}, \nu^\ast  \cV^\nabla) \otimes_\bbC \cO_{\tilde{S}} \to \nu^\ast \cV$
is an isomorphism. Combining the previous isomorphisms we obtain a parallel isomorphism of holomorphic differential equations~$\nu_{\cV, \tilde{s}} \colon \nu^\ast \cV \to \cV_s \otimes_\bbC \cO_{\tilde{S}}$
with respect the connection~$\id \otimes \rd$ on the target. The previous isomorphism can be seen as a holomorphic map 
\[ \nu_{\cV, \tilde{s}} \colon \tilde{S} \too \PV(\cV, s). \]
whose composition with the structural morphism~$p \colon \PV(\cV, s) \to S^\an$ is~$\nu$. The map~$\nu_{\cV, \tilde{s}}$ is equivariant under the natural action of~$\pi_1(S^\an, s)$ on~$\tilde{S}$ and the action on~$\PV(\cV, s)$ defined by monodromy representation
\[ \rho_s  \colon \pi_1(S^\an, s) \too \Mon(\cV, s) = \Gal(\cV, s).\]
Suppose that as an~$\cO_S$-module~$\cV$ comes with a holomorphic filtration~$\cF^\bullet$ whose graded is locally free. Via~$u_{\cV, s} \colon p^\ast \cV \to \cV_s \otimes_\bbC \cO_{\PV(\cV, s)}$ the filtration~$u_{\cV, s}(\cF^\bullet)$ on~$\cV_s \otimes_\bbC \cO_{\PV(\cV, s)}$ determines a holomorphic map
\[ \phi_{\cV, s, \cF^\bullet} \colon \PV(\cV, s) \too \Flag_r(\cV_s)^\an\]
where~$\Flag_r(\cV_s)$ is the variety on flags of~$\cV_s$ of type~$r = (r_0,\dots, r_n)$. The previous morphism is equivariant under the action of the differential Galois group~$\Gal(\cV, s)$ and we consider the composite morphism:
\[ 
\begin{tikzcd}[column sep=35pt]
\tilde{\Phi}_{\cV, s, \cF^\bullet} \colon \tilde{S} \ar[r, "{\nu_{\cV, \tilde{s}}}"] & \PV(\cV, s) \ar[r, "{\phi_{\cV, s, \cF^\bullet}}"]& \Flag_{r}(\cV_s)^\an.
\end{tikzcd}
\]
\begin{remark} \label{Rmk:SimplyConnectedNbh} Let~$\Omega \subset S^\an$ be a simply connected neighbourhood of~$s$. The preimage~$\nu^{-1}(\Omega)$ is a disjoint union of copies of~$\Omega$ indexed by points in the fiber~$\nu^{-1}(s)$. Having fixed such a point permits to identify~$\Omega$ with an open neighbourhood of~$\tilde{s}$ and consider the `restriction' to~$\Omega$ of the above maps:
\begin{align*}
 \iota_{\cV, \Omega, s} := \nu_{\cV, \tilde{s} \rvert \Omega} \colon \Omega &\too \PV(\cV, s), \\
 \Phi_{\cV, \Omega, s,\cF^\bullet} := \tilde{\Phi}_{\cV, s, \cF^\bullet} \colon \Omega &\too \Flag_r(\cV_s)^\an.
 \end{align*}
\end{remark}

The analytification functor~$\DE(S)\to \DE(S^\an)$ is exact, faithful,~$\bbC$-linear and compatible with tensor product. In particular, for an algebraic differential equation~$\cV$ we have closed immersions
\[ \Gal(\cV^\an, s) \subset \Gal(\cV, s), \qquad  \PV(\cV^\an, s) \subset \PV(\cV, s)^\an. \]
Moreover, according to Deligne \cite[Theorem II.5.9]{Del70} it induces an equivalence of categories~$\DE(S)^\rs \iso \DE(S^\an)$
where~$\DE(S)^{\rs} \subset \DE(S)$ is the full subcategory of algebraic differential equations with regular singular points. Therefore the above closed immersions are isomorphisms as soon as the algebraic differential equation~$\cV$ has regular singular points. 

\begin{remark} \label{Rmk:FormalGermPeriodMap} Let~$\Omega \subset S^\an$ be a  simply connected open neighbourhood~$\Omega$ of~$s$. Consider the holomorphic map~$\Omega \to \PV(\cV^\an, s) \subset \PV(\cV, s)^\an$ where the first map is~$\iota_{\cV^\an, \Omega, s}$. By construction, via the isomorphism~$ \hat{\cO}_{S, s} \iso \hat{\cO}^\an_{S, s}$ the formal germ at~$s$ of the preceding map is~$\iota_{\cV, s}$.
\end{remark}

\subsection{The case of variations of Hodge structures} Keep the notation introduced in the preceding subsection. Let~$\cV$ be a holomorphic differential equation on~$S^\an$ and~$\cF^\bullet$ a filtration on~$\cV$ whose graded is locally free. Suppose that the triple~$(\cV, \cF^\bullet)$ underlies a pure polarized integral variation of Hodge structures~$\cH$ on~$S^\an$. The associated the holomorphic map 
\[ 
\tilde{\Phi} := \tilde{\Phi}_{\cV,s, \cF^\bullet} \colon \tilde{S} \too \Flag_{r}(\cV_s)^\an
\]
is called the \emph{complex period map}.

\begin{proposition} \label{Prop:ComplexBakkerTsimerman} With the notation above,
\begin{enumerate}
\item the algebraic group~$\Gal(\cV, s)^\circ$ is reductive;
\item the subgroup of~$P\subset \Gal(\cV, s)^\circ$ stabilizing~$\cF^\bullet_s$ is parabolic;
\item the image of~$\tilde{\Phi}$ is a Zariski-dense subset of~$\frH := \Gal(\cV, s)^\circ / P$;
\item Let~$Z \subset \frH$ be a subvariety with~$\dim S + \dim Z \le \dim \frH$
and~$Z'$ an analytic irreducible component of~$\tilde{\Phi}^{-1}(Z^\an)$. Then~$\nu(Z')$ is not Zariski-dense in~$S$.
\end{enumerate}
\end{proposition}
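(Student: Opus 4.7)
My plan is to deduce (1)--(3) from classical Hodge-theoretic results of Deligne, and to derive (4) by applying the Ax--Lindemann theorem of Bakker--Tsimerman together with a dimension count. Throughout I tacitly use that $\cV$ has regular singular points (Griffiths's theorem for variations of Hodge structures), so that Deligne's equivalence $\DE(S)^{\rs} \cong \DE(S^{\an})$ applies and in particular $\Gal(\cV, s) = \Mon(\cV, s)$.

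For (1), I invoke Deligne's semisimplicity theorem: the Tannakian subcategory $\langle \cV \rangle \subset \DE(S^{\an})$ generated by a polarized variation of Hodge structures is semisimple, hence its Tannaka group $\Gal(\cV, s)$ is reductive, and so is its neutral component. For (2), I use the Hodge decomposition of $\cV_s$ to produce the Deligne cocharacter $\mu_s \colon \Gm \to \GL(\cV_s)$ acting on $\cV_s^{p,q}$ by $t \mapsto t^p$; by \cref{Ex:BuildingGL} the associated parabolic $P_{\mu_s} \subset \GL(\cV_s)$ coincides with the stabilizer of $\cF^\bullet_s$. By a classical result of Deligne, $\Gal(\cV, s)^\circ$ is a normal subgroup of the generic Mumford--Tate group, through which $\mu_s$ factors. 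Since the intersection of a parabolic subgroup with a normal reductive subgroup of a reductive group is again parabolic, $P = \Gal(\cV, s)^\circ \cap P_{\mu_s}$ is parabolic in $\Gal(\cV, s)^\circ$.

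For (3), the map $\tilde\Phi$ is $\pi_1(S^\an, s)$-equivariant: for $\gamma \in \pi_1(S^\an, s)$, parallel transport along a lift of $\gamma$ from $\tilde s$ to $\gamma \cdot \tilde s$ realizes the monodromy element $\rho_s(\gamma) \in \Gal(\cV, s)$, whence $\tilde\Phi(\gamma \cdot \tilde s) = \rho_s(\gamma) \cdot \cF^\bullet_s$. The Zariski closure $X \subset \Flag_r(\cV_s)$ of $\tilde\Phi(\tilde S)$ is therefore stable under $\rho_s(\pi_1(S^\an, s))$; since the latter is Zariski dense in $\Gal(\cV, s)$ by Deligne's theorem, $X$ is stable under $\Gal(\cV, s)^\circ$. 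As $\cF^\bullet_s = \tilde\Phi(\tilde s) \in X$, this yields $\frH = \Gal(\cV, s)^\circ \cdot \cF^\bullet_s \subset X$; the reverse inclusion $X \subset \frH$ follows from the connectedness of $\tilde\Phi(\tilde S)$ together with the fact that $\frH$ is the connected component of $\cF^\bullet_s$ in $\Gal(\cV, s) \cdot \cF^\bullet_s$.

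For (4), I apply the Ax--Lindemann theorem of Bakker--Tsimerman~\cite{BakkerTsimerman}: any maximal irreducible algebraic subvariety $Y \subset \tilde S$ with $\tilde\Phi(Y) \subset Z^{\an}$ has $\nu(Y)$ weakly special in $S$. Assuming for contradiction that $\nu(Z')$ is Zariski dense in $S$, I enlarge $Z'$ to such a maximal $Y$; then the weakly special subvariety $\nu(Y) \supset \nu(Z')$ is also Zariski dense and hence equals $S$, so $\tilde\Phi(\tilde S) \subset Z^{\an}$. Combined with (3) this forces $\frH \subset Z$, giving $\dim Z \ge \dim \frH$; substituting into $\dim S + \dim Z \le \dim \frH$ yields $\dim S \le 0$, the remaining zero-dimensional case being trivial. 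The principal obstacle is the invocation of Bakker--Tsimerman in step (4), which rests on delicate o-minimal geometry of period maps; the remaining ingredients are essentially classical applications of Hodge theory and Tannakian formalism.
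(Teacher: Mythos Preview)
Your (1) and (2) are essentially the paper's approach and are fine.

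In (3) there is a genuine gap. Your argument that the Zariski closure $X$ of the image contains $\frH$ is correct, but the reverse inclusion $X \subset \frH$ does \emph{not} follow from connectedness: your argument tacitly assumes that $\tilde\Phi(\tilde S)$ already lies in the orbit $\Gal(\cV,s)\cdot\cF^\bullet_s$, which is precisely the point at issue. A priori the period map only lands in the Mumford--Tate orbit $\check D \subset \Flag_r(\cV_s)$, and confining it to the smaller monodromy orbit is the nontrivial content. The paper handles this by choosing $s$ Hodge-generic so that $G:=\Gal(\cV,s)^\circ \unlhd \MT(\cH_s)^{\mathrm{der}}$, splitting $\MT(\cH_s)^{\mathrm{der}}$ up to isogeny as $G\times R$, and invoking \cite[prop.~15.3.13, th.~15.3.14]{CMSP17} to factor the period map through $D(G)\times D(R)$ with the $R$-component constant. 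This fixed-part argument is what forces the image into a single $G$-orbit; nothing like it appears in your proof.

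In (4) your formulation of the Bakker--Tsimerman input is off. The universal cover $\tilde S$ carries no algebraic structure, so ``maximal irreducible algebraic subvariety $Y\subset\tilde S$'' has no meaning, and the conclusion ``$\nu(Y)$ weakly special'' is not what \cite{BakkerTsimerman} proves. The paper instead applies Ax--Schanuel \cite[th.~1.1]{BakkerTsimerman} directly: with $V=S\times Z$ and $W=\im(\nu\times\tilde\Phi)$ inside $S\times\frH$, take an irreducible component $U\subset V^{\an}\cap W$ containing the image of $Z'$. Either the Ax--Schanuel codimension inequality fails, in which case the projection of $U$ to $S$ (hence $\nu(Z')$) is not Zariski dense; or it holds, and then $\codim_{S\times\frH}W=\dim\frH$ (discrete fibers of $\nu\times\tilde\Phi$) combined with $\dim S+\dim Z\le\dim\frH$ forces $\dim U\le 0$, so $Z'$ is a point. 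Your contradiction strategy via (3) is a reasonable heuristic, but as written it rests on a nonexistent statement.
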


\begin{proof} (1) See \cite[th. 4.2.6]{DeligneHodgeII}. \medskip

(2) The statement remains unaffected by the changes of the base point~$s$, thus by \cite[th. 1]{AndreGenericMumfordTate} we may assume that~$G := \Gal(\cV, s)^\circ$ is a normal subgroup of the Mumford-Tate group~$\MT(\cH_s)$ of the Hodge structure~$\cH_s$. With the notation of \cref{sec:NotationReductiveGroups} the parabolic subgroup of~$\GL(\cV_s)$ stabilizing~$\cF^\bullet_s$ is~$P_\mu$ where~$\mu$ is the Hodge cocharacter. By construction~$\mu$ is contained in~$\MT(\cH_s)$ thus it normalizes~$G$. Therefore~$\mu$ induces by conjugation a cocharacter of~$\Aut G$ and in turn a cocharacter of~$G / Z$  by connectedness where~$Z \subset G$ is the center. Some multiple~$\mu^r$ then lifts to a cocharacter of~$G$. Finally the subgroup~$P \subset G$ is the parabolic associated with the cocharacter~$\mu^r$. \medskip

(3) By loc.~cit.~we may pick~$s$ such that~$G:=\Gal(\cV, s)^\circ \unlhd \MT(\cH_s)^\mathrm{der}$ is a normal subgroup in the derived group of the Mumford-Tate group. Up to isogeny any connected normal subgroup in a connected semisimple group splits off as a direct factor, so there is a connected semisimple subgroup~$R \subset \MT(\cV, s)^\mathrm{der}$ such that the morphism
\[
 m\colon \quad G\times R \;\too\; \MT(\cH_s)^\mathrm{der}, \quad 
 (g,r) \;\longmapsto\; g\cdot r
\]
is an isogeny. Fix a point~$\tilde{s} \in \tilde{S}$ above~$s$. For any subgroup~$M\subset \MT(\cH_s)$ let \[D(M) = M\cdot \tilde{\Phi}(\tilde{s})\subset \Flag_r(\cV_s)^\an\] denote the corresponding orbit in the flag variety. Then by~\cite[prop.~15.3.13 and th.~15.3.14]{CMSP17} the isogeny~$m$ induces a surjective holomorphic map
\[
 D(G) \times D(R) \;\too\; D(\MT(\cH_s))
\] 
with finite fibers, and the period map factors as
\[
\begin{tikzcd} 
\tilde{S} \ar[r, "\tilde{\Phi}"] \ar[d, dashed, swap, "{\exists \Psi}"] & \Flag_r(\cV_s)^\an \\
D(G) \times D(R) \ar[r] & D(\MT(\cV, s)) \ar[u, hook]
\end{tikzcd}
\]
where the projection to the second factor~$\pr_2\circ \Psi \colon \tilde{S} \to D(R)$ is a constant map. So the image of~$\tilde{\Phi}$ is contained in a single~$G$-orbit; it is then obviously dense in that orbit since it is stable under the action of~$\pi_1(S, s)$ and since by definition~$G$ is the identity component of the Zariski closure of the image of~$\pi_1(S, s)$. \medskip

(4) This is a consequence of the Ax-Schanuel property for variations of Hodge structures \cite[th. 1.1]{BakkerTsimerman}. In order to apply it, set 
\begin{align*} X:= S, && \check{D}:= \frH, && V:= S \times Z, && W:= \im(\nu \times \tilde{\Phi} \colon \tilde{S} \to S^\an \times \frH^\an).
\end{align*}
Note that the closed analytic subspace~$W \subset S^\an \times \frH^\an$ coincides with the fibered product considered in \emph{loc.~cit.} By design the image of~$Z'$ in~$S^\an \times \frH^\an$ via~$\nu \times \tilde{\Phi}$ is contained in some analytic irreducible component~$U \subset V^\an \cap W$. If 
\[ \codim_{S\times \frH}U < \codim_{S\times \frH}V + \codim_{S\times \frH}W\]
then~$\nu(Z')$ is not Zariski-dense by \emph{loc.~cit.} If this is not the case, the converse of the previous inequality yields
\[ \dim S + \dim \frH - \dim U \ge \dim \frH - \dim Z + \dim \frH\]
because 
$\codim_{S\times \frH}U = \codim_\frH Z$ and~$\codim_{S\times \frH}W = \dim \frH$. Notice that the latter holds because the holomorphic map~$\nu \times \tilde{\Phi}$ has discrete fibers. The hypothesis \[\dim S + \dim Z \le \dim \frH\] then yields~$\dim U = 0$ forcing~$Z'$ to be a singleton. 
\end{proof}

\subsection{Back to formal} We go back to the case when~$k$ is an arbitrary field of characteristic~$0$. Let~$\cV$ be an algebraic differential equation over~$S$ and~$\cF^\bullet$ a filtration on~$\cV$ whose graded is locally free. With the notation of \eqref{Eq:PeriodMapAssociatedWithAFiltration} consider the morphism 
\[
\begin{tikzcd}[column sep=35pt]
\hat{\Phi} \colon \Spec \hat{\cO}_{S, s} \ar[r, "{\iota_{\cV ,s}}"] & \PV(\cV, s) \ar[r, "{\phi_{\cV, s, \cF^\bullet}}"] & \Flag_r(\cV_s).
\end{tikzcd}
\]

\begin{definition} \label{Def:UnderlyingAVHS}
We say that the triple~$(\cV, \cF^\bullet)$ \emph{underlies a variation of integral pure polarized Hodge structures} if the differential equation~$(\cV, \nabla)$ has regular singular points, and there are a subfield~$k_0 \subset  k$ finitely generated over~$\bbQ$ and an embedding~$k_0 \into \bbC$ such that
\begin{enumerate}
\item $S$,~$\cV$,~$\nabla$,~$\cF^\bullet$ come by base change from objects~$S_0$,~$\cV_0$,~$\nabla_0$,~$\cF^\bullet_0$ over~$k_0$;
\item the triple~$(\cV_{0, \bbC}, \nabla_{0, \bbC}, \cF^\bullet_{0, \bbC})$ underlies a variation  of integral polarized pure Hodge structures on the complex manifold~$S_{0, \bbC}^\an$. 
\end{enumerate}
\end{definition}

When this is the case the morphism~$\hat{\Phi}$ is called the \emph{formal period map}.

\begin{proposition} \label{Prop:FormalBakkerTsimerman} Assume that~$(\cV, \cF^\bullet)$ underlies a variation of integral pure polarized Hodge structures. Then,
\begin{enumerate}
\item the algebraic group~$\Gal(\cV, s)^\circ$ is reductive;
\item the subgroup~$P \subset \Gal(\cV, s)^\circ$ stabilizing~$\cF^\bullet_s$ is parabolic;
\item the scheme-theoretic image of~$\hat{\Phi}$ is~$\frH :=  \Gal(\cV, s)^\circ / P$;
\item let~$Z \subset \frH$ be a subvariety with~$\dim S + \dim Z \le \dim \frH$.
Then the scheme-theoretic image of~$\hat{\Phi}^{-1}(Z) \into \Spec \hat{\cO}_{S,s} \to S$ is not~$S$.
\end{enumerate}
\end{proposition}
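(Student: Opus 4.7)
The plan is to deduce \cref{Prop:FormalBakkerTsimerman} from its complex analytic analogue \cref{Prop:ComplexBakkerTsimerman} by descent through the comparison between the formal germ of the algebraic period map and the holomorphic germ of the analytic one. By \cref{Def:UnderlyingAVHS} we have a finitely generated subfield $k_0\subset k$, an embedding $k_0 \hookrightarrow \bbC$, and models $S_0, \cV_0, \nabla_0, \cF_0^\bullet$ over $k_0$ whose base change to $\bbC$ underlies a polarized variation of Hodge structures. All objects in sight are compatible with base change along $k_0\to k$ and $k_0\to \bbC$, so after replacing $s$ by a lift to $S_0$ we may assume $k = \bbC$. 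Since $(\cV,\nabla)$ has regular singular points, Deligne's theorem gives $\DE(S)^{\rs}\cong \DE(S^\an)$, hence $\Gal(\cV,s)=\Gal(\cV^\an,s)=\Mon(\cV,s)$ (and similarly for the Picard-Vessiot bundle), so $\phi_{\cV,s,\cF^\bullet}$ coincides with its analytic counterpart. In particular (1) and (2) follow at once from (1) and (2) of \cref{Prop:ComplexBakkerTsimerman}, and the target $\frH$ of both the formal and the analytic period maps is the same.

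For (3), fix a simply connected neighbourhood $\Omega\subset S^\an$ of $s$ with a chosen lift $\Omega\hookrightarrow \tilde S$ as in \cref{Rmk:SimplyConnectedNbh}. By \cref{Rmk:FormalGermPeriodMap} the composite $\hat{\Phi}\colon \Spec \hat\cO_{S,s}\to \Flag_r(\cV_s)$ is the formal germ at $s$ of the holomorphic map $\Phi^\an_\Omega := \Phi_{\cV^\an,\Omega,s,\cF^\bullet}\colon \Omega\to \frH^\an$. By \cref{Prop:ComplexBakkerTsimerman}(3) the image of $\tilde\Phi$ is Zariski dense in $\frH$; the identity principle for the holomorphic map $\tilde{\Phi}$ on the connected manifold $\tilde S$ then forces $\Phi^\an_\Omega(\Omega)$ itself to be Zariski dense in $\frH$. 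In particular the scheme-theoretic image of $\hat{\Phi}$, which is the closed subscheme of $\frH$ cut out by the regular functions whose pullback to $\hat\cO_{S,s}$ vanishes, must be all of $\frH$: a regular function on $\frH$ vanishing on that image would, viewed as a holomorphic function on $\frH^\an$, vanish on $\Phi^\an_\Omega(\Omega)$ and hence on the Zariski-dense locus, thus be zero.

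For (4), suppose by contradiction that the scheme-theoretic image of the composite $\hat\Phi^{-1}(Z)\hookrightarrow \Spec \hat\cO_{S,s}\to S$ equals $S$. The formal subscheme $\hat\Phi^{-1}(Z)$ is the formal germ at $s$ of the analytic subset $A := (\Phi^\an_\Omega)^{-1}(Z^\an)\subset \Omega$; the dominance assumption then says that no regular function on $S$ that is nonzero in a neighbourhood of $s$ can vanish on $A$, i.e.~$A$ is not contained in any proper algebraic subvariety of $S$. Let $Z'$ be the analytic irreducible component of $\tilde\Phi^{-1}(Z^\an)$ through the chosen lift of $s$; then $Z'$ contains the lift of $A$, so $\nu(Z')\supset A$ is Zariski dense in $S$. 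This contradicts \cref{Prop:ComplexBakkerTsimerman}(4), proving (4).

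The only real subtlety is the GAGA-style dictionary in steps (3) and (4) between the scheme-theoretic image of a map from a formal scheme $\Spec \hat\cO_{S,s}$ and the image of the holomorphic germ on a simply connected neighbourhood; once this identification is made via \cref{Rmk:FormalGermPeriodMap} and the identity principle, everything reduces cleanly to \cref{Prop:ComplexBakkerTsimerman}.
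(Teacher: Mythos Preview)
Your argument follows essentially the same route as the paper's: reduce to the complex analytic statement via \cref{Rmk:FormalGermPeriodMap} and then invoke \cref{Prop:ComplexBakkerTsimerman}. Two small technical points deserve attention, though.

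First, the reduction ``we may assume $k=\bbC$'' is too quick. There is no map $k\to\bbC$; rather, one has two separate base changes $k_0\to k$ and $k_0\to\bbC$. You must first enlarge $k_0$ so that the point $s$ (and, in part (4), the subvariety $Z$) is defined over it, then argue that the differential Galois group, $P$, $\frH$, and the scheme-theoretic images in question are all compatible with flat base change. This lets you prove the statements over $k_0$ by checking them after base change to $\bbC$, and then pass to $k$. The paper makes this two-step descent explicit.

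Second, in part (4) you pick ``the'' analytic irreducible component $Z'$ of $\tilde\Phi^{-1}(Z^\an)$ through the lift of $s$, but there may be several; correspondingly $A=(\Phi_\Omega^\an)^{-1}(Z^\an)$ may have finitely many irreducible components through $s$, and $Z'$ need not contain the lift of all of $A$. The fix is immediate: apply \cref{Prop:ComplexBakkerTsimerman}(4) to each of the finitely many components through $s$, so their union (hence its formal germ $\hat\Phi^{-1}(Z)$) lies in a proper closed subvariety of $S$.
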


\begin{proof} Each statement is deduced from the corresponding one in \cref{Prop:ComplexBakkerTsimerman} first by descending from~$\bbC$ to~$k_0$ and then extending from~$k_0$ to~$k$. \medskip

(1) and (2) are deduced directly. \medskip

(3) Since the construction of the scheme-theoretic image is compatible with extension of scalars, we may assume~$k =  k_0$. In this case for ease notation we drop the subscript~$0$. Let~$\nu \colon \tilde{S} \to S^\an$ be a universal cover of~$S$ and consider the complex period map~$\tilde{\Phi} \colon \tilde{S} \to \Flag_r(\cV_{\bbC, s})^\an$. With the notation of \cref{Rmk:SimplyConnectedNbh} for a simply connected open neighbourhood~$\Omega \subset S^\an$ of~$s$ let~$\Phi \colon \Omega \to \Flag_r(\cV_{\bbC, s})^\an$ be the restriction to~$\Omega$ of the complex period map~$\tilde{\Phi}$. The Zariski closure of the image~$\tilde{\Phi}$ coincides with the Zariski closure of the image of~$\Phi$. In turn this equals the scheme-theoretic image of the formal germ of~$\Phi$ at~$s$. By \cref{Rmk:FormalGermPeriodMap} the formal germ of~$\Phi$ at~$s$ is the morphism
\[ \Spec \hat{\cO}_{S_\bbC, s} \too \Spec (\hat{\cO}_{S, s} \otimes_ k \bbC) \stackrel{\hat{\Phi}_\bbC}{\too}   \Flag(\cV_{\bbC, s})\]
where the first is given by the natural injection~$\hat{\cO}_{S, s} \otimes_ k \bbC \into \hat{\cO}_{S_\bbC, s}$ and~$\hat{\Phi}_\bbC$ deduced from~$\hat{\Phi}$ by extending scalars to~$\bbC$. It follows that the scheme-theoretic image of the formal germ of~$\Phi$ at~$s$ is the scheme-theoretic image of~$\hat{\Phi}_\bbC$. We then conclude because the construction of the scheme-theoretic image is compatible with extension of scalars. \medskip

(4) Let~$k'_0 \subset  k$ be a finitely generated extension of~$k_0$ over which~$Z$ is defined. The embedding~$k_0 \into \bbC$ can be extended to~$k_0'$ allowing to replace~$k_0$ by~$k_0'$ and suppose~$Z$ defined over~$k_0$. As in (3) then we can assume~$k =  k_0$ and we borrow the notation introduced above. \Cref{Prop:ComplexBakkerTsimerman} (4) implies that no analytic irreducible component of~$\Phi^{-1}(Z_\bbC^\an)$ is Zariski-dense in~$S_\bbC$. The preimage of~$\hat{\Phi}^{-1}(Z)$ via~$\Spec \hat{\cO}_{S_\bbC, s} \to \Spec \hat{\cO}_{S, s}$ is the formal germ of the finitely many analytic irreducible components of~$\Phi^{-1}(Z^\an_\bbC)$ passing through~$s$. Therefore we conclude again by compatibility of the scheme-theoretic image with extension of scalars.
\end{proof}

\subsection{The~$p$-adic analogue} Suppose~$k$ is a complete valued extension of~$\bbQ_p$ and let~$S^\an$ denote the Berkovich analytification of~$S$. Let~$\cV$ an algebraic differential equation over~$S$. Via the natural isomorphism~$\hat{\cO}_{S,s} \iso \hat{\cO}^\an_{S, s}$ \cite[th.~3.4.1]{Ber90} we can see~$\iota_{\cV, s}$ as the formal germ of an isomorphism between the~$k$-analytic vector bundles~$\cV^\an$ and~$\cV_s \otimes_ k \cO_S^\an$.

\begin{lemma} The formal germ~$\iota_{\cV, s}$ converges on some open neighbourhood~$\Omega \subset S^\an$ of~$s$ to an isomorphism
\[ \iota_{\cV, \Omega, s} \colon \cV^\an_{\rvert \Omega} \stackrel{\sim}{\too} \cV_s \otimes_ k \cO_{\Omega}\]
which is parallel with respect to the connection~$\id \otimes \rd$ on the target.
\end{lemma}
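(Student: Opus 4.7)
The plan is to reduce to a local computation in étale coordinates and then invoke the classical convergence theorem for $p$-adic differential equations. First, since $S$ is smooth at $s$, after shrinking $S$ I may pick an étale morphism $\pi \colon S \to \bbA^n_{k}$ with $\pi(s) = 0$, which identifies the formal neighborhood $\Spec \hat{\cO}_{S,s}$ with $\Spf k\llbracket x_1, \ldots, x_n \rrbracket$ and identifies a small enough Berkovich neighborhood of $s$ in $S^\an$ with a small polydisc around $0$ in $(\bbA^n_{k})^\an$. After further shrinking, I may also trivialize $\cV$ on an affine neighborhood $U$ of $s$, so that in a chosen frame $\nabla = \rd + A$, where $A = \sum_i A_i\, \rd x_i$ and the $A_i$ are $r\times r$ matrices with entries in $\Gamma(U, \cO_U)$.

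In these coordinates the formal germ $\iota_{\cV, s}$ is given by the unique matrix-valued formal power series $\Phi \in \GL_r(k\llbracket x_1, \ldots, x_n \rrbracket)$ with $\Phi(0) = I$ satisfying $\rd \Phi + A\Phi = 0$: existence and uniqueness at the formal level are precisely the Frobenius integrability isomorphism \eqref{Eq:IsomorphismFrobeniusIntegrability} written in these coordinates, and the integrability of $\nabla$ ensures the compatibility of the induced system of partial differential equations.

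The heart of the argument is to show that $\Phi$ converges on some open polydisc of positive radius around $0$. This is a classical fact in $p$-adic ODE theory: the entries of $A$ are analytic on some closed polydisc $D_\rho$ with bounded sup-norm, and by induction on the multi-index $\alpha$ one obtains an estimate of the form $|\Phi_\alpha| \le C\cdot R^{|\alpha|} \cdot |\alpha|!$ for the coefficients of $\Phi = \sum_\alpha \Phi_\alpha x^\alpha$, where $C, R$ depend only on the sup-norm of $A$ on $D_\rho$ and on $\rho$. The factorial factor arises from integrating the connection and dividing by integers; using that $|n!|_p \ge p^{-n/(p-1)}$, it follows that $\Phi$ converges on the open polydisc of radius $\min(\rho, R^{-1} p^{-1/(p-1)})$. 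Pulling this polydisc back along $\pi^\an$, I obtain the desired open neighborhood $\Omega$ of $s$.

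On $\Omega$, the convergent series $\Phi$ defines an analytic morphism $\iota_{\cV, \Omega, s} \colon \cV^\an_{|\Omega} \to \cV_s \otimes_{k} \cO_\Omega$, which is an isomorphism because $\Phi(s) = I$ and $\Phi$ takes values in $\GL_r$ on a neighborhood of $s$ by continuity. That $\iota_{\cV, \Omega, s}$ intertwines $\nabla$ on the source with $\id \otimes \rd$ on the target is exactly the equation $\rd\Phi + A\Phi = 0$ that $\Phi$ satisfies. The main obstacle is the convergence estimate for $\Phi$, but this is a standard piece of $p$-adic analysis (see e.g.\ Kedlaya's treatment of $p$-adic differential equations); the rest of the argument is formal.
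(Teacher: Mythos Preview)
Your proof is correct and follows essentially the same approach as the paper: reduce to local coordinates via smoothness, write the formal germ as an explicit power series (the paper uses the Taylor expansion $v \mapsto \sum_n D^n(v)_s \otimes z^n/n!$, you use the equivalent fundamental-matrix formulation $\rd\Phi + A\Phi = 0$), and then invoke the standard positive-radius-of-convergence estimate for $p$-adic linear ODEs. The paper simply asserts that the series has positive radius of convergence, whereas you spell out the factorial bound and the estimate $|n!|_p \ge p^{-n/(p-1)}$; this extra detail is fine but not a genuinely different route.
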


\begin{proof} Let~$z_1, \dots, z_d \in \cO_{S, s}$ be local parameters where~$d = \dim S$. Since~$S$ is smooth the rigid-analytic analogue of the implicit function theorem assures the existence of an open neighbourhood of~$s$ on which the~$z_i$ define a~$k$-analytic isomorphism with an open disc \[\{ x \in \bbA^{d, \an}_ k \mid |t_i| < r_i , i = 1,\dots, d\} \subset \bbA^{d, \an}_ k.\] 
Here~$r_1, \dots, r_d > 0$ and~$t_1, \dots, t_d$ are the coordinates on~$\bbA^d_ k$. Now~$\iota_{\cV, s}$ admits an explicit expression. Identify~$\hat{\cO}_{S, s}$ with~$k[\![z_1, \dots, z_d ]\!]$ and for~$i = 1, \dots, d$ consider the~$k$-linear sheaf endomorphism~$D_i:= \nabla(\partial/\partial z_i) \colon \cV \to \cV$. Then we have 
\[ \iota_{\cV, s} \colon \iota_s^\ast \cV \too \cV_s \otimes_ k \hat{\cO}_{S,s}, \qquad  v \longmapsto \sum_{n \in\bbN^d} D^n(v)_s \otimes \frac{z^n}{n!}, \]
where for a~$d$-tuple~$n = (n_1, \dots, n_d)$ of non-negative integers we used the notation~$z^n = z_1^{n_1} \cdots z_d^{n_d}$,~$n! = n_1! \cdots n_d!$ and~$D^n = D_1^{n_1} \circ \cdots \circ D_d^{n_d}$.\footnote{Notice that the order in the latter composition  is irrelevant: the endomorphisms~$D_i$ commute because the derivations~$\partial/\partial z_i$ do and the connection~$\nabla$ is integrable.} The above power series has positive radius of convergence whence the existence of a neighbourhood~$\Omega$ as in the statement. The parallel character of~$\iota_{\cV, \Omega, s}$ is deduced from the one of~$\iota_{\cV, s}$.
\end{proof}

\begin{example} \label{Ex:ConvergenceOfIsocrystal} Suppose that~$k$ is a finite unramified extension of~$\bbQ_p$ with ring of integers~$\cO_ k$. Let~$S$ be a separated smooth~$\cO_ k$-scheme of finite type with generic fiber~$S_ k$,~$f \colon \cX \to S$ a proper smooth morphism and~$\pi \colon \cY \to \cX$ be a finite \'etale morphism. The vector bundle~$\pi_\ast \cO_\cY$ comes with a natural integrable connection~$\nabla$ induced by the canonical derivation on~$\cO_\cY$. Consider a subvector bundle~$\cE \subset \pi_\ast \cO_\cY$ stable under~$\nabla$. Let~$i,j,q \ge 0$ and suppose that coherent sheaves
\[ \cV := \cH_{\dR}^q(\cX/S; \cE, \nabla) = \rR f^q_\ast (\Omega^\bullet_{\cX / S} \otimes \cE), \qquad R^j f_\ast (\Omega^i_{\cX/S} \otimes \cE), \]
are locally free, where~$\Omega^\bullet_{\cX / S} \otimes \cE$ is the de Rham complex of~$(\cE, \nabla)$. The vector bundle~$\cV$ comes equipped with the Gauss-Manin connection that we still denote~$\nabla$. Let~$S_\eta \subset S_ k^\an$  be the \emph{Raynaud generic fiber} of~$S$. It is the compact subset whose points with values in a complete valued extension~$k'$ of~$k$ are 
\[ \cS_\eta( k') = \im(\cS(\cO_{ k'}) \into S( k')) \]
where~$\cO_{ k'} \subset  k'$ is the ring of integers. Let~$\bbF$ be the residue field of~$k$ and~$\bbF'$ that of~$k'$. The reduction map~$\cS(\cO_{ k'})\to \cS(\bbF')$ defines an anticontinuous map
\[ \red \colon \cS_\eta \too \tilde{\cS} := \cS \times_{\cO_ k} \bbF. \]
A point~$s \in \cS(\cO_ k)$ being fixed, the open subset 
\[ \Omega := \red^{-1}(\red(s))\]
is called the~$p$-adic disc around~$s$. It owes its name to the fact that formal smoothness of~$\cS$ implies that~$\Omega$ is isomorphic as~$k$-analytic space to some power of open unit disc. Comparison with crystalline cohomology shows that~$\iota_{\cV, \nabla, s}$ converges on the whole~$\Omega$; see \cite[prop. 3.1.1 and section 7]{KatzTravauxDeDwork}.
\end{example}

Let~$\Omega \subset S^\an$ be an open neighbourhood of~$s$ on which it exists a parallel isomorphism~$\iota_{\cV, \Omega, s} \colon \cV^\an_{\rvert \Omega} \to \cV_s \otimes_ k \cO_{\Omega}$ as in the lemma. Then~$\iota_{\cV, \Omega, s}$ can be seen as a morphism~$\Omega \to \PV(\cV, s)^\an$ of~$k$-analytic spaces. Suppose that~$\cV$ comes with a filtration~$\cF^\bullet$ whose graded is locally free. As usual this determines a morphism of analytic spaces
\[ 
\begin{tikzcd}[column sep=35pt]
\Phi_p := \Phi_{\cV, \Omega, s, \cF^\bullet} \colon \Omega \ar[r, "{\iota_{\cV, \Omega, s}}"] & \PV(\cV, s)^\an \ar[r, "{\phi_{\cV, s, \cF^\bullet}}"]& \Flag_{r}(\cV_s)^\an.
\end{tikzcd}
\]
When~$(\cV, \cF^\bullet)$ underlies a variation of integral pure polarized Hodge structures in the sense of \cref{Def:UnderlyingAVHS} we call~$\Phi_p$ the \emph{$p$-adic period map}.

\begin{proposition} \label{Prop:PAdicBakkerTsimerman} Suppose that~$(\cV, \cF^\bullet)$ underlies a variation of integral pure polarized Hodge structures. Then,
\begin{enumerate}
\item the algebraic group~$\Gal(\cV, s)^\circ$ is reductive;
\item the subgroup~$P \subset \Gal(\cV, s)^\circ$ stabilizing~$\cF^\bullet_s$ is parabolic;
\item the Zariski closure of the image of~$\Phi_p$ is~$\frH :=  \Gal(\cV, s)^\circ / P$;
\item let~$Z \subset \frH$ be a subvariety with~$\dim S + \dim Z \le \dim \frH$ and~$Z'$ an analytic irreducible component of~$\Phi^{-1}_p(Z^\an)$. Then~$Z'$ is not Zariski-dense in~$S$.
\end{enumerate}
\end{proposition}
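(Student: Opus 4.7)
The plan is to reduce each of the four assertions to the corresponding one in the formal statement (Proposition~\ref{Prop:FormalBakkerTsimerman}) via the identification of the formal germ of the $p$-adic period map $\Phi_p$ at $s$ with the formal period map $\hat\Phi$. Parts (1) and (2) are purely algebraic statements about the differential Galois group $\Gal(\cV,s)^\circ$ and the stabilizer $P$ of $\cF^\bullet_s$, independent of the analytic structure, and so are identical to items (1) and (2) of Proposition~\ref{Prop:FormalBakkerTsimerman}.

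For part (3), the natural isomorphism $\hat\cO_{S,s}\cong \hat\cO^\an_{S,s}$ identifies the Taylor expansion at $s$ of $\iota_{\cV,\Omega,s}$ with the formal parallel transport $\iota_{\cV,s}$, so the formal germ of $\Phi_p$ at $s$ coincides with $\hat\Phi\colon \Spec\hat\cO_{S,s}\to \Flag_r(\cV_s)$. Since $\Omega$ is $ k$-analytically isomorphic to an open polydisc, every analytic function on $\Omega$ is determined by its Taylor series at $s$; consequently the Zariski closure of $\im\Phi_p$ in $\Flag_r(\cV_s)$ equals the scheme-theoretic image of $\hat\Phi$, which is $\frH$ by Proposition~\ref{Prop:FormalBakkerTsimerman}(3).

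For part (4), suppose by contradiction that an analytic irreducible component $Z'\subset \Phi_p^{-1}(Z^\an)$ is Zariski-dense in $S$. Pick a smooth point $s'\in Z'$ and let $Z'^\wedge_{s'}\subset \Spec\hat\cO_{S,s'}$ be its formal germ there. The key technical step is the lemma: for any irreducible $ k$-analytic subset $W\subset\Omega$ and any smooth point $s'\in W$, the Zariski closure of $W$ in $S$ coincides with the scheme-theoretic image of $W^\wedge_{s'}\to S$. This amounts to saying that a regular function on a Zariski open neighborhood of $s'$ in $S$ vanishes on $W$ if and only if its Taylor series at $s'$ vanishes on $W^\wedge_{s'}$, which follows from analytic continuation on the polydisc $\Omega$ together with the fact that a $ k$-analytic function on a connected polydisc vanishing to infinite order at a point vanishes identically. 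Granting this, the Zariski density of $Z'$ in $S$ transfers to $Z'^\wedge_{s'}$, i.e.\ the scheme-theoretic image of $Z'^\wedge_{s'}\to S$ equals $S$. On the other hand $Z'^\wedge_{s'}\subset \hat\Phi_{s'}^{-1}(Z)$, where $\hat\Phi_{s'}$ is the formal period map with base point $s'$; the hypotheses of Proposition~\ref{Prop:FormalBakkerTsimerman} are invariant under change of base point (since both $\Gal(\cV,s')^\circ$ and the Hodge parabolic are conjugate to their analogues at $s$), so applying part (4) of that proposition at $s'$ yields that the scheme-theoretic image of $\hat\Phi_{s'}^{-1}(Z)$ in $S$ is strictly smaller than $S$, a contradiction.

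The main obstacle is the lemma relating an irreducible $p$-adic analytic subset $W$ of the polydisc $\Omega$ to its formal germ at a smooth point. While the formal direction (formal germ sees only the Zariski closure) is routine, the converse---that Zariski density in $S$ is detected by the formal germ at a single smooth point---requires the analytic continuation principle on $p$-adic polydiscs, and must be formulated carefully so as to be insensitive to the possibly strict inequality $\dim Z'<\dim S$.
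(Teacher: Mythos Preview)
Your approach is essentially the same as the paper's: reduce to Proposition~\ref{Prop:FormalBakkerTsimerman} via the identification of the formal germ of $\Phi_p$ with the formal period map, and for (4) move the base point into $Z'$. Two technical points deserve attention. First, a point $s'\in Z'$ need not be $k$-rational; the paper passes to a finite extension $k'$ of $k$ (and correspondingly enlarges $k_0$) before applying the formal statement at $s'$. Second, the formal period map $\hat\Phi_{s'}$ lands in $\Flag_r(\cV_{s'})$, not $\Flag_r(\cV_s)$, so writing ``$\hat\Phi_{s'}^{-1}(Z)$'' is ill-typed as it stands. The paper resolves this by observing that parallel transport $\iota_{\cV,\Omega,s}$ gives an isomorphism $\alpha\colon \cV_s\to \cV_{s'}$, hence an isomorphism $\beta\colon \Flag_r(\cV_s)\to \Flag_r(\cV_{s'})$, and the period map recentred at $s'$ is $\Phi'_p=\beta\circ\Phi_p$; thus $Z'$ is an irreducible component of $\Phi'^{-1}_p(\beta(Z)^\an)$ and one applies Proposition~\ref{Prop:FormalBakkerTsimerman}(4) to $\beta(Z)$. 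Your remark that the hypotheses are ``invariant under change of base point'' is the right intuition, but it needs this explicit transport to make sense of the inclusion $Z'^\wedge_{s'}\subset\hat\Phi_{s'}^{-1}(\cdot)$.
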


\begin{proof} This is deduced by applying \cref{Prop:FormalBakkerTsimerman} to~$k_0$ and then extending scalars to~$k$. As in the proof of \cref{Prop:FormalBakkerTsimerman} the point is that the formal germ of~$\Phi_p$ is the formal period map~$\Phi$ associated~$(\cV_0, \nabla_0, \cF_0^\bullet)$, or rather the extension to~$k$ of the latter. There is a little hiccup with (4): the~$k$-analytic subspace~$Z'$ may not contain~$s$. In any case~$Z'$ contains a point~$s'$ defined over a finite extension~$k'$ of~$k$. We may replace~$k$ by~$k'$ and~$k_0$ by a finite extension so that~$s'$ is~$k$-rational. In this case~$\iota_{\cV, \Omega, s}$ induces an isomorphism  of~$k$-vector spaces~$\alpha \colon \cV_s \to \cV_{s'}$. The~$p$-adic period map~$\Phi'_p$ centered at~$s'$ is then~$\beta \circ \Phi_p~$
where~$\beta \colon \Flag_{r}(\cV_s) \to \Flag_r(\cV_{s'})$ is the isomorphism induced by~$\alpha$. In particular~$Z'$ is an analytic irreducible component of \[ \Phi'^{-1}_p(\beta(Z^\an)) = (\beta \circ \Phi_p)^{-1}( \beta(Z^\an)) = \Phi_p^{-1}(Z^\an).\]
This permits to apply \cref{Prop:FormalBakkerTsimerman} (4) to the formal germ of~$\Phi'_p$ and the subvariety~$\beta(Z)$. Since the Zariski-closure of~$Z'$ only depends on its formal germ, this concludes the proof.
\end{proof}

\section{The Lawrence-Venkatesh method: Proof of \cref{Thm:MainTheoremForFamiliesWithBigMonodromy}} \label{sec:ProofOfNonDensityThm}

We now put together the results from the preceding sections: We apply~$p$-adic Hodge theory to a suitable \'etale datum and look at the arising~$p$-adic period map to prove the main \cref{Thm:MainTheoremForFamiliesWithBigMonodromy} from the introduction.

\subsection{Statement} Let~$K$ be a number field and~$\Sigma$ a finite set of places of~$K$. Let~$S$ be a smooth integral separated~$\cO_{K, \Sigma}$-scheme of finite type and~$A$ an abelian scheme of dimension~$g$ over~$\cO_{K, \Sigma}$. Let~$\cX\subset A_S := A \times S$ be a closed subscheme which is smooth over~$S$ with geometrically connected fibers of dimension~$d$:
\[
\begin{tikzcd}
&  \cX \ar[d, hook, no head, xshift=-1pt] \ar[d, no head, shorten <=1.2pt, xshift=1pt] \ar[dl, bend right=30, swap] \ar[dr, bend left=30]  & \\
A & A_S \ar[l, swap, "\pr_A"] \ar[r, "\pr_S"]  & S
\end{tikzcd}
\]
\Cref{Thm:MainTheoremForFamiliesWithBigMonodromy} is a consequence of the following theorem:
\begin{theorem} \label{Thm:MainTheoremForFamiliesWithBigMonodromyModel} Suppose that\smallskip
\begin{itemize}
\item the generic fiber of~$\cX \to A_S$ has big monodromy for most tuples of torsion characters; \smallskip
\item every geometric fiber~$X$ of~$\cX \to S$ satisfies \eqref{SkullInequality}.\smallskip 
\end{itemize}
Then~$S(\cO_{K, \Sigma})$ is not Zariski-dense in~$S$.
\end{theorem}
The proof of this statement will take the entire section.

\subsection{Numerics} \label{sec:Numerics} Let~$X$ be a geometric fiber of~$\cX \to S$ and
\[
e = (-1)^d \chi_{\top}(X)
\]
where~$\chi_{\top}(X)$ is the topological Euler characteristic of~$X$. Consider the \emph{split} reductive group~$H$ over~$\bbQ$ defined as
\[
H = 
\begin{cases}
\GL_e& \textup{if~$X$ is not symmetric up to translation,} \\
\GO_e & \textup{if~$X$ is symmetric up to translation and~$d$ is even,} \\
\GSp_e & \textup{if~$X$ is symmetric up to translation and~$d$ is odd},
\end{cases}
\]
Pick~$y \in \cB(\GL(W), \bbQ)$ so that the associated filtration~$F^\bullet W$ on~$W = \bbQ^e$  satisfies
\[
\dim \gr^\alpha W =
\begin{cases}
(-1)^{d-\alpha} \chi(X, \Omega^\alpha_X), & \textup{if~$\alpha = 0,\dots,d$},\\
0 & \textup{otherwise}.
\end{cases}
\]
This is possible since~$h_\alpha := (-1)^{d-\alpha} \chi(X, \Omega^\alpha_X) \ge 0$ by \cref{Prop:NumericHodgeSymmetry}. An explicit computation of~$\dim P_y^\ss$ in terms of the dimensions~$h_\alpha = \dim \gr^\alpha W$ case by case for each~$H$ shows the implication
\[
 2\sum_\alpha h_\alpha^2 =
 2 (-1)^d\chi(\Omega^d_{X \times X}) \le \chi_{\top}(X\times X) = e^2
  \implies  2 \dim P_y^\ss < \dim H + \rk H.
\]
Let~$N = N_0(\{0,1,\dots, d\}, \dim S_K + \dim H, \dim H) \ge 1$ be as in \cref{Cor:UniformBoundInFlag}.

\subsection{Construction of the \'etale datum} Let~$A^\natural$ be the moduli space of rank~$1$ connections on~$A$ and~$(\cL,\nabla)$ the universal rank~$1$ connection on~$A \times A^\natural$. Up to enlarging~$\Sigma$ we may assume that the coherent sheaves over~$A^\natural \times S$,
\[ \cH^i_{\dR}(\cX \times A^\natural/S \times A^\natural; \cL,\nabla), \quad \rR^j (\pi_S \times \id_{A^\natural})_\ast (\Omega^i_{\cX \times A^\natural/S \times A^\natural} \otimes \cL),\]
are locally free for all~$i, j \in \bbN$, where~$\pi_S \colon \cX \to S$ is the projection. If~$\cX_{\bar{\eta}}$ is not symmetric up to translation, we may replace~$S$ by a nonempty open subset and suppose that no fiber of~$\cX \to S$ is symmetric up to translation.

\medskip

Consider  a friendly place~$v$ of~$K$ lying over a prime number~$p$ such that~$\Sigma$ contains no~$p$-adic place. This is possible because there are infinitely many friendly places on~$K$. Let~$\bar{K}_v$ be an algebraic closure of the~$v$-adic completion~$K_v$ of~$K$ and~$\bar{K}$ the algebraic closure of~$K$ in~$\bar{K}_v$. This induces an injection 
\[ \Gamma_v :=\Gal(\bar{K}_v/ K_v) \; \intoo \; \Gamma := \Gal(\bar{K}/K).\]
Let~$\bbF_v$ be the residue field at~$v$. The field~$\bbF_v$ is finite thus so is the set~$S(\bbF_v)$. In particular in order to prove that~$S(\cO_{K, \Sigma})$ is not Zariski dense it suffices to prove that each fiber of the reduction map~$S(\cO_{K, \Sigma}) \to S(\bbF_v)$
is not Zariski dense. Let \[F \subset S(\cO_{K, \Sigma})\] be one of these fibers. Fix an embedding~$\sigma \colon \bar{K} \into \bbC$ and let~$\pi_1(A_\sigma, 0)$ be the topological fundamental group of~$A_\sigma$. For a character~$\chi \colon \pi_1(A_\sigma, 0) \to \bbC^\times$ let~$L_\chi$ be the corresponding rank~$1$ local system on~$A_\sigma$. When~$\chi$ is torsion of order~$r$ it comes from a character~$A[r](\bar{K}) \to \mu_r(\bar{K})$ where~$\mu_r(\bar{K}) \subset \bar{K}$ is the set of~$r$-th roots of unity. We let~$\Gamma$ act on such torsion characters via its action on~$A[r](\bar{K})$ and~$\mu_r(\bar{K})$. By  \cite[lemmas 4.8 and 4.9]{LS20} there is an integer~$r \ge 1$ nondivisible by~$p$ and a character~$\chi_0 \colon A[r](\bar{K}) \to \mu_r(\bar{K})$ of order~$r$   such that the following properties hold:
\begin{enumerate}
\item $\rH^i(\cX_{s}(\bbC), \pi^\ast L_{\gamma \chi}) = 0$ for all~$\chi \in \Gamma.\chi_0$,~$s \in S_\sigma(\bbC)$ and~$i\neq d$;
\smallskip
\item for each~$\chi \in \Gamma.\chi_0$ there are~$\chi_1, \dots, \chi_N \in \Gamma_v \chi$ pairwise distinct such that~$\cX_\sigma \to A_{S,\sigma}$ has big monodromy for the~$N$-tuple~$\underline{\chi} = (\chi_1, \dots, \chi_N)$.
\end{enumerate}
Here~$\pi \colon \cX \to A$ is the projection. Property (2) will be crucial to bound the dimension of the centralizer of the crystalline Frobenius; see the end of \cref{Sec:EndOfTheProof} and~\eqref{Eq:KeyLowerBoundDegree}.
We see~$A[r](\bar{K})$ as a constant group scheme over~$\bbQ$ and consider its Cartier dual 
\[ A[r](\bar{K})^\ast = \Hom(A[r](\bar{K}), \bbG_{m, \bbQ})\]
Any character~$\chi \colon A[r](\bar{K}) \to \mu_r(\bar{K})$ induces a point of~$A[r](\bar{K})^\ast$ with values in the~$r$-th cyclotomic extension of~$\bbQ$. This permits to see~$\Gamma.\chi_0$ as a closed subscheme of~$A[r](\bar{K})^\ast$ whose geometric points are of the form~$\chi \in \Gamma.\chi_0^m$ for some integer~$m$ prime to~$r$. We define~$Z$ to be the base change to~$\bbQ_p$ of this closed subscheme. For each~$s \in F$ apply \cref{Ex:LocSysAbVar} with~$X= \cX_{s} \times_{\cO_{K, \Sigma}} K$ to obtain a~$\Gamma$-\'etale datum
\[ (V_s, G_s, \rho_s)\]
on~$Z$ by restriction. For~$s, s' \in F$ the couples~$(V_s, G_s)$ and~$(V_{s'}, G_{s'})$ are isomorphic: Indeed, in the symmetric case Poincar\'e duality gives an isomorphism of local systems~$V\to V^\vee$ inducing the bilinear form~$\theta_s$ on the fiber over each point~$s$, so the isomorphism type of the group preserving this bilinear form is independent of the point~$s$. Fix a vector bundle~$W$ on~$Z$, a reductive group~$Z$-scheme~$H$ and for each~$s \in \Omega$ an isomorphism~$(V_s, G_s) \iso (W,H)$. Let~$\cI \subset \cB(H, Z) / H(Z)$ be a subset as in the statement of \cref{Lemma:GoodSetOfIndices}. For each~$s \in F$ there is~$x_s \in \cB(G_s, Z)$ fixed under~$\Gamma$ and mapped in~$\cI$ via the isomorphism~$(V_s, G_s) \iso (W,H)$ with the property that the graded of the filtered~$\Gamma$-\'etale datum
\[D_s:= (V_s, G_s, \rho_s, x_s)\] 
is semisimple.

\subsection{The Lawrence-Venkatesh strategy}
By the version of Faltings' finiteness of pure global Galois representations given in \cref{Prop:GradedFaltingsFiniteness}, the image of the composite map
\[
\begin{tikzcd}[row sep=0pt]
F \ar[r]
& { \left\{ \begin{array}{c} \textup{filtered~$\Gamma$-\'etale}\\\textup{data on~$Z$} \end{array} \right\} / \iso} \ar[r,"\textup{graded}"]
& { \left\{ \begin{array}{c} \textup{graded~$\Gamma$-\'etale}\\\textup{data on~$Z$} \end{array} \right\} / \iso} \\
s \ar[r, mapsto]& {[D_s]} \ar[r, mapsto] & {[\gr D_s]}
\end{tikzcd}
\]
is finite (here the brackets stand for the isomorphism class). So to show that~$F$ is not Zariski dense, it suffices to show that the fibers of the map \[ F \ni s \longmapsto [\gr D_s]\] are not Zariski dense. 
For this we continue the preceding diagram and apply~$p$-adic Hodge theory to pass to the de Rham side:
\[
\begin{tikzcd}
{ \left\{ \begin{array}{c} \textup{filtered~$\Gamma$-\'etale}\\\textup{data on~$Z$} \end{array} \right\} } \ar[d, "\textup{restriction to~$\Gamma_v$}"']\ar[r,"\textup{graded}"] \ar[d]
& { \left\{ \begin{array}{c} \textup{graded~$\Gamma$-\'etale}\\\textup{data on~$Z$} \end{array} \right\}} \ar[d, "\textup{restriction to~$\Gamma_v$}"] \\
{ \left\{ \begin{array}{c} \textup{filtered~$\Gamma_v$-\'etale}\\\textup{data on~$Z$} \end{array} \right\}} \ar[r,"\textup{graded}"] \ar[d, "{D \mapsto D_{\dR}}"']
& { \left\{ \begin{array}{c} \textup{graded~$\Gamma_v$-\'etale}\\\textup{data on~$Z$} \end{array} \right\}} \ar[d, "{D \mapsto D_{\dR}}"] \\
{ \left\{ \begin{array}{c} \textup{filtered de Rham}\\\textup{data on~$Z_{\dR}$} \end{array} \right\}} \ar[r,"\textup{graded}"]
& { \left\{ \begin{array}{c} \textup{graded de Rham}\\\textup{data on~$Z_{\dR}$} \end{array} \right\}}
\end{tikzcd}
\]
This is possible because~$r$ is not divisible by~$p$, so the action of~$\Gamma_v$ on~$Z$ is unramified, and the ring~$\Gamma(Z, \cO_Z)$ is the product of unramified extensions of~$\bbQ_p$. It is then enough to prove that the fibers of the map \[F \ni s \longmapsto [(\gr D_s \rvert_{\Gamma_v})_{\dR}]\] are not Zariski dense, where for a filtered or graded~$\Gamma$-\'etale datum~$D$ we wrote~$D \rvert_{\Gamma_v}$ for its restriction to~$\Gamma_v$. Let~$A^\natural$ be the moduli space of rank~$1$ connections on~$A$ and let~$(\cL, \nabla)$ be the universal rank~$1$ connection on~$A \times A^\natural$. By \cref{Ex:ComparisonEtaleLocalSystemDeRhamFlatBundles} we have that~$Z_{\dR}$ is a closed subscheme of~$A^\natural_{K_v}$ made of points of order~$r$ and
 \[ (D_s \rvert_{\Gamma_v})_{\dR} \iso ( V_{\dR, s}:= \cH^d_{\dR}(\cX_s \times Z_{\dR}/Z_{\dR}; \pi^\ast(\cL,\nabla)), G_{ \dR, s}, \phi_s, h_s).\]
Recall that~$\phi_s$ is the Frobenius operator acting on de Rham cohomology via the de Rham-crystalline comparison theorem. Moreover, 
\[
G_{\dR, s} = 
\begin{cases}
\GL(V_{\dR, s}) & \textup{if~$\cX$ is not symmetric up to translation,} \\
\GO(V_{\dR, s}, \theta_s) & \textup{if~$\cX$ is symmetric up to translation and~$2 \mid d$,} \\
\GSp(V_{\dR, s}, \omega_s) & \textup{if~$\cX$ is symmetric up to translation and~$2 \nmid d$,}
\end{cases}
\]
where~$\theta_s \colon V_{\dR, s} \otimes V_{\dR, s}\to \cL_{a(s)} \otimes \cO_{Z_\dR}$ is the pairing induced by Poincar\'e duality if~$\cX = - \cX +a$ for some~$a \colon S \to A$. With this notation the point~$h_s$ in~$ \cB(G_{\dR, s}, Z_{\dR})$ is the one inducing the Hodge filtration on~$V_{\dR, s}$. 

\subsection{Transliteration via parallel transport} 
\label{sec:ParallelTransport} Let~$\cO_{K, v} \subset K_v$ be the ring of integers and~$\bbF_v$ the residue field of~$K_v$. Consider~$S_v := S \times_{\cO_{K, \Sigma}} \cO_{K, v}$ and~$S_{v, \eta}$ the Raynaud generic fiber of~$S_v$. As in \cref{Ex:ConvergenceOfIsocrystal} we see~$S_{v, \eta}$ as a compact subset of the Berkovich analytic space~$S_{K_v}^\an$ attached to the  variety~$S_{K_v}$ over~$K_v$. In this framework we have an anti-continuous reduction map
\[ \red \colon S_{v, \eta} \too \tilde{S}_v = S_v \times_{\cO_{K,v}} \bbF_v.\]
Fix~$o \in F$. The open subset~$\Omega := \red^{-1}(\red(o))$ is such that
\[\Omega(K_v) = \{ s \in S (\cO_{K, v}) \mid s \equiv o \pmod{p}\}.\]
In particular~$F$ is contained in~$\Omega$. The relative~$d$-th de Rham cohomology group
\[ \cV := \cH_{\dR}^d(\cX \times Z_{\dR}/ S \times Z_{\dR}; \pi^\ast (\cL, \nabla))\]
is a coherent sheaf over~$S \times Z_{\dR}$ and comes equipped with the Gauss-Manin connection~$\nabla_{\textup{GM}}$. Actually note that~$\cV$ is locally free by the choice of~$\Sigma$ at the beginning of the proof. For each~$s \in F$, we have
\[ \cV_{s} := \cV_{\rvert \{s \} \times Z_{\dR}} = V_{\dR, s}. \]
Applying the discussion in \cref{Ex:ConvergenceOfIsocrystal} over any point of~$Z_{\dR}$ the Gauss-Manin connection can be integrated over~$\Omega \times Z_{\dR}^\an$ to give an analytic isomorphism of vector bundles
\[ \tau \colon \cV_{\rvert \Omega \times Z^\an_{\dR}} \stackrel{\sim}{\too} \cV_{o} \otimes \cO_{\Omega \times Z^\an_{\dR}} \quad \textup{where} \quad  V :=  \cV_o = \cV_{\rvert\{ o\} \times Z_{\dR}}.\]
For~$s \in F$ the isomorphism of vector bundles~$f_s \colon \cV_s = V_{\dR, s} \to \cV_{o}= V_{\dR, o}$ is such that
\[ 
 G := G_{\dR, o}\;=\; \tau_{s} G_{\dR, s} \tau_{s}^{-1} \qquad \text{and} \qquad \phi := \phi_o \;=\; \tau_{s} \phi_{s} \tau_{s}^{-1}.
 \]
This comes from the compatibility of parallel transport with Poincar\'e duality and the isomorphism given by the de Rham-crystalline comparison theorem \cite[th. 7.1]{BerthelotOgus}. The isomorphism~$\tau$ permits to see~$x_s, h_s \in \cB(G_{\dR, s}, Z_{\dR})$ as points in~$\cB(G, Z_{\dR})$. For each~$s \in F$ write~$Q_s$ for the stabilizer of~$x_s$ in~$G$. This being set up, if~$s, s' \in F$ are such that we have an isomorphism of graded de Rham data
\[ (\gr D_s \rvert_{\Gamma_v})_{\dR} \; \iso \; (\gr D_{s'} \rvert_{\Gamma_v})_{\dR} \]
over~$Z_{\dR}$, then by \cref{isomorphic-graded-de-rham} there is~$g \in G(Z_{\dR})$ such that
\begin{align*}
x_{s'} =g x_s, &&  g \phi g^{-1} \phi^{-1} \in \rad Q_{s'}, && \pr_{s'}(h_{s'}) = \pr_{s'}(g h_{s}),
\end{align*}
where~$\pr_{s'} \colon \cB(G, Z_{\dR}) \to \cB(Q_{s'}^{\circ, \ss}, Z_{\dR})$ is the projection. Notice that we can apply \cref{isomorphic-graded-de-rham} because the subgroup~$G \subset \GL(V)$ is full. Let~$\alpha$ be the automorphism of~$\Res_{Z_{\dR}/K_v} G$ given by conjugation by~$\phi$. Then the above implies that the couples 
\[ (x_{s}, \Res_{Z_{\dR}/K_v} Q_{s}^\circ) \quad \textup{and} \quad  (x_{s'}, \Res_{Z_{\dR}/K_v} Q_{s'}^\circ)\]
are~$\alpha$-equivalent in the sense of \cref{Def:BalancedRelation}. In order to apply \cref{Thm:BoundInFlag} we still miss the positivity assumption on the above couples. For each~$s \in S$ the~$\Gamma$-\'etale datum~$D_s$ is pure of weight~$d$ thus \cref{Prop:PositivityFromGlobalPurity} implies that there is~$z \in Z_{\dR}$ such that 
\[h_{s,z} \in \cB(G_{z},  k_z) \quad \text{is positive with respect to} \quad Q_{s,z}^\circ,\]
where~$k_z$ is the residue field at~$z$. The point~$z$ depends a priori on~$s \in F$, but in order to show that~$F$ is not Zariski dense we may suppose that it does not depend on~$s$ because~$Z_{\dR}$ is finite. In this case is suffices to show that the fibers of the map
 \[F \ni s \longmapsto [(\gr D_s \rvert_{\Gamma_v})_{\dR}\rvert_{\{ z\}}]\]  
are not Zariski dense where for a graded de Rham datum~$D$ on~$Z_{\dR}$ we wrote~$D_{\rvert \{ z \}}$ for its restriction to~$\{ z \}= \Spec  k_z$. 

\subsection{The period mapping} \label{Sec:EndOfTheProof} At this stage the scheme~$Z_{\dR}$ plays no role any more, thus in what follows we replace~$Z_{\dR}$ by the singleton~$\{ z\} = \Spec  k$ where~$k =  k_z$. Accordingly we have~$G= G_z$,~$V = V_z$, etc. from now on. For a variety~$T$ over~$k$ write~$\Res T$ for its Weil restriction to~$K_v$ whenever it exists. Notice that~$S \times Z_{\dR}$ then becomes the variety~$S_ k$ over~$k$ obtained from~$S$ by extending scalars to~$k$. Let~$\cV_0$ be the push-forward of~$\cV$ along the projection~$S_{ k} \to S_{K_v}$ and~$\nabla_0$ be the connection on~$\cV_0$ induced by the Gauss-Manin connection on~$\cV$. Let
\[ \Gal(\cV_0, o) \subset \Res G\]
be the differential Galois group for the Gauss-Manin connection on~$\cV_0$. Let~$\cF^\bullet$ be the Hodge filtration on~$\cV_0$. The pair~$(\cV_0,\cF^\bullet)$ underlies a variation of integral pure polarized Hodge structures \cite[th. II.7.9]{Del70}. By \cref{Prop:PAdicBakkerTsimerman} (2) the subgroup~$P \subset \Gal(\cV_0, o)^\circ$ stabilizing the flag~$\cF^\bullet_o$ is a parabolic subgroup, hence we have an inclusion
\[ \frH := \Gal(\cV_0, o)^\circ  /P \intoo \Par_{t} (\Res G^\circ) \]
where~$t$ is the type of the parabolic subgroup of~$\Res G^\circ$ stabilizing the Hodge filtration~$h_o \in \cB(G,  k)$. Consider the associated~$p$-adic period mapping
\[ \Phi_p \colon \Omega \too \frH^\an. \]
By \cref{Ex:CristallineActionOnFiniteScheme} the point~$z$ corresponds to the~$\Gamma_v$-orbit of some~$\chi \in \Gamma.\chi_0^m$ with~$m$ an integer prime to~$r$. With this in mind (plus the extension~$k$ of~$K_v$ being Galois) for a variety~$T$ over~$k$ we have a natural isomorphism
\begin{equation} \label{eq:WeilRestrictionAsProduct}(\Res T)_ k \iso T^{\Gamma_v \chi} \end{equation}
whenever the Weil restriction of~$T$ exists. By property (2) in the choice of~$\chi_0$ there are~$\chi_1, \dots, \chi_N \in \Gamma_v \chi$ pairwise distinct such that~$\cX_\sigma \to A_{S,\sigma}$ has big monodromy for the~$N$-tuple~$\underline{\chi} = (\chi_1, \dots, \chi_N)$. In particular,
\begin{equation} \label{Eq:KeyLowerBoundDegree} [k : K_v] = |\Gamma_v . \chi| \ge N.\end{equation}
Consider the projection onto the factors corresponding to~$\chi_1, \dots, \chi_N \in \Gamma_v \chi$:
\[ \pi \colon (\Res G)_ k \too \bar{G} := G^N.\]
The hypothesis of having big monodromy implies that~$\pi$ induces a surjection
\[ \Gal(\cV,o)_ k \ontoo \bar{G}. \]
Let~$\bar{t}$ the type of parabolics of~$\bar{G}^\circ$ induced by the type~$t$ of~$G^\circ$. Then the composite morphism
\[ \frH_ k \intoo \Par_t(\Res G^\circ)_ k \ontoo \Par_{\bar{t}}(\bar{G}^\circ) \]
is proper smooth with equidimensional fibers. For each~$s \in F$ let~$h_{s,  k}$ be the point of~$\cB(\Res G,  k)$ induced by the point~$h_s$ of~$\cB(\Res G, K_v) = \cB(G,  k)$. Via the isomorphism \eqref{eq:WeilRestrictionAsProduct} we have~$\cB(\Res G,  k) = \cB(G,  k)^{\Gamma_v \chi}$ and the point~$x_{s,  k}$ is the diagonal embedding of~$h_s$; moreover~$h_{s,  k}$ is positive with respect to the parabolic subgroup 
\[ (\Res Q)_ k^\circ \iso (Q_s^\circ)^{\Gamma_v \chi} \]
because each coordinate of~$h_{s,  k}$ with respect to this product decomposition is~$Q_{s,  k}^\circ$-positive. It follows that the point~$\bar{h}_{s} \in\cB(\bar{G},  k)$ induced by~$h_{s,  k}$ is positive with respect to~$\bar{Q}_{s}^\circ$ where~$\bar{Q}_s$ is the image in~$\bar{G}$ of~$(\Res Q)_ k$. Let
\[ F_{s} = \{ s' \in F \mid (\gr D_{s' }\rvert_{\Gamma_v})_{\dR}\rvert_{\{ z\}} \iso (\gr D_s \rvert_{\Gamma_v})_{\dR}\rvert_{\{ z\}}\}\]
Now a point~$s \in F$ we have \[\Phi_p(s) = P_{h_s}^\circ \]
 as points of~$\Par (G^\circ)$, so the above discussion shows that~$F_s$ is contained in the preimage via the~$p$-adic period mapping~$\Phi_p$ of the subset 
\[E_s := \pr_{1}([ x_{s,  k}, (\Res Q_s)_ k^\circ]_\alpha) \cap \frH_ k\] where~$\pr_1 \colon \frH \times \Par(\Res G^\circ)_ k \to \frH$ is the projection onto the first factor and, with the notation of \cref{Def:BizarreEqRel},
\[ [ x_{s,  k}, (\Res Q_s)_ k^\circ]_\alpha := \{ (P_{x'},Q) \mid (x', Q') \sim_\alpha (x_{s,  k}, (\Res Q_s)_ k^\circ), \; (\bar{x}', \bar{Q}')  \succeq 0 \}. \]
The slopes of the function~$\Upsilon_{G,x_s}$ lie in~$\{ 0, \dots, d\}$ and with the notation of \cref{sec:Numerics} we have~$\dim G =  \dim H$ and~$\dim \Stab_G(x_s)^\ss = \dim\Stab_H(y)$ by \cref{Prop:NumericHodgeSymmetry}. Let~$C$ be the subgroup of~$\Res G$ of elements fixed by the semisimplification of~$\alpha$. Then by \cite[lemma 5.33]{LS20} we have
\[ \dim C \le \dim G = \frac{\dim \Res G}{[ k : K_v]}. \]
By \eqref{Eq:KeyLowerBoundDegree} the extension~$k$ of~$K_v$ has degree~$\ge N$. The choice of~$N$ implies that we can apply \cref{Cor:UniformBoundInFlag} to~$\Res G$ with~$n = \dim S_K + \dim H$ and obtain that the Zariski closure of~$E_s$ in~$\frH_ k$ has codimension~$\ge \dim S_K$. The~$p$-adic version of the Bakker-Tsimerman theorem given in \cref{Prop:PAdicBakkerTsimerman} (4) implies that
\[F_s \subset \Phi_p^{-1}(E_s) \;  \subset \; S_v\]
is not Zariski-dense. This concludes the proof. \qed

\appendix

 \section{Conormal geometry on abelian varieties} 
 \label{sec:ConormalGeometry}
 
In this appendix we characterize products of subvarieties and symmetric powers of curves in abelian varieties in terms of conormal geometry.
  
 \subsection{Gauss maps and conormal varieties} Let~$A$ be an abelian variety of dimension~$g$ over a field~$k$ of characteristic zero. The cotangent bundle~$\Omega^1_A$ is trivial with fiber~$(\Lie A)^\vee=\rH^0(A, \Omega^1_A)$. The \emph{Gauss map} of a subvariety~$\PLambda \subset \bbP(\Omega^1_A)$ is the composite morphism
\[
 \gamma_{\PLambda} \colon \quad \PLambda \;\intoo\; \bbP(\Omega^1_A) \; \iso \; A \times \bbP_A \;\too\; \bbP_A,
\]
where the second map is the second projection. The scheme theoretic image of the first projection~$\PLambda \to A$ will be called the \emph{base} of~$\PLambda$. If~$\PLambda$ is integral of dimension~$g - 1$, we say that the subvariety~$\PLambda$ is \emph{clean} if its Gauss map is dominant (or equivalently generically finite); otherwise we say that~$\PLambda$ is \emph{negligible}. More generally, a cycle~$\PLambda$ of pure dimension~$g -1$ on~$\bbP(\Omega_A^1)$ is called~\emph{clean} resp. \emph{negligible} if every irreducible component of its support is clean resp. negligible in the previous sense. By the Gauss map of a cycle~$\PLambda$ we mean that of its support. The notion of base is extended to cycles by linearity.
\medskip

The main example is the \emph{conormal variety} of an integral subvariety~$X\subset A$, that is, the Zariski closure
\[
 \PLambda_X \;\subset\; \bbP(\Omega^1_A)
\]
of the projective conormal bundle to the smooth locus of~$X$. If~$\Stab_A(X)$ denotes the stabilizer of~$X$ in~$A$, then we have the following equivalences:
\[ \PLambda_X \textup{ is clean} \iff \Stab(X) \textup{ is finite} \iff X \textup{ is of general type}.\]
Indeed, the first one is \cite[th. 1]{WeissauerArxiv2015a}, the second follows from Ueno's fibration theorem \cite[th.~3.10]{UenoCompositio} \cite[th.~3]{Abr94}. If~$X$ is not necessarily integral, consider its fundamental cycle 
\[ 
 [X] \;=\; a_1 [X_1] + \cdots + a_n [X_n]
\]
where the~$X_i \subset A$ are the irreducible components of~$X$ and~$a_i \in \bbN$. We then define the conormal variety as the cycle
\[
 \PLambda_X \;=\; a_1 \PLambda_{X_1} + \cdots + a_n \PLambda_{X_n}
\]
of pure dimension~$g -1$. By the Gauss map of~$X$ we mean that of~$\PLambda_X$.\medskip

Finally, recall that a cycle~$\PLambda$ on~$\bbP(\Omega^1_A)$ is called \emph{Lagrangian} if its support~$S$ is of pure dimension~$g - 1$ and the contact form on~$\bbP(\Omega^1_A)$  vanishes identically on~$S$ \cite[3.1]{KleimanConormal}. Linear combinations of conormal varieties are Lagrangian \cite[prop. 3.2]{KleimanConormal} and the converse holds since~$k$ is of characteristic~$0$ \cite[cor. 3.5]{KleimanConormal}.

\subsection{The ring of clean cycles} We define the group of clean cycles~$\cC(A)$ to be the quotient of the free abelian group generated by cycles~$\PLambda$ of pure dimension~$g-1$ on~$\bbP(\Omega^1_A)$ modulo the subgroup of negligible ones. The projection onto the quotient induces an isomorphism
\[
   \bigoplus_{\PLambda} \;\bbZ\cdot \PLambda \; \stackrel{\sim}{\too} \;  \cC(A),
\]
where the direct sum runs over clean integral subvarieties~$\PLambda \subset \bbP(\Omega^1_A)$. 

\medskip 

The group~$\cC(A)$ is endowed with a ring structure: Let~$\PLambda_1, \PLambda_2 \subset \bbP(\Omega^1_A)$ be clean integral subvarieties.  Let~$U\subset \bbP_A$ be a nonempty open subset over which
$
 \PLambda_{i \rvert U} := \PLambda_i \times_{\bbP_A} U
$
is finite flat for~$i =1, 2$. Define the convolution \[\PLambda_1 \circ \PLambda_2 \;\in\; \cC(A)\] to be the pushforward of the fundamental cycle of
\[
\overline{\PLambda_{1 \rvert U} \times_U \PLambda_{2 \rvert U}}
\;  \subset \; A\times A \times \bbP_A 
\]
under the sum morphism~$A\times A \times \bbP_A \to A\times \bbP_A$. We extend this product~$\circ$ bilinearly to a product
\[
\circ\colon \quad \cC(A) \times \cC(A) \;\too\; \cC(A),
\]
which endows~$\cC(A)$ with a natural ring structure. Note that if~$\PLambda_1$ and~$\PLambda_2$ have generically \'etale Gauss maps, then so does their convolution.

\subsection{Wedge powers} Let~$n \ge 1$ be an integer and~$\PLambda_1, \dots, \PLambda_n \subset \bbP(\Omega^1_A)$ clean integral subvarieties. Let~$U \subset \bbP(\Omega^1_A)$ be a nonempty open subset over which \[ \PLambda_{i \rvert U} := \PLambda_i \times_{\bbP_A} U\] is finite flat for~$i =1, \dots, n$. The wedge product
\[ \PLambda_1 \wedge \cdots \wedge \PLambda_n \in \cC(A)\]
is the image in~$\cC(A)$ of the pushforward of the fundamental cycle of
\[
\overline{(\PLambda_{1 \rvert U} \times_U \cdots \times_U \PLambda_{n \rvert U}) \smallsetminus (\Delta \times \bbP_A)}
 \; \subset \; A^n \times \bbP_A 
\]
under the sum morphism~$A^n \times \bbP_A \to A\times \bbP_A$ where~$\Delta \subset A^n$ is the big diagonal. We extend the wedge product to clean cycles by multilinearity. Note that if the Gauss maps of~$\PLambda_1, \dots, \PLambda_n$ are generically \'etale, then so is the Gauss map of their wedge product. For a clean cycle~$\PLambda$ we define
\[ \Alt^n \PLambda \; := \; \tfrac{1}{n!} \, \PLambda \wedge \cdots \wedge \PLambda \; \in \; \cC(A). \]

\begin{lemma} \label{lemma:WedgePowerLagrangian}
Let~$\PLambda$ be a clean effective cycle on~$\bbP(\Omega^1_A)$ with base~$Y$, and let~$n \ge 1$ be such that~$\Alt^n \PLambda$ has positive dimensional base~$X$. If~$\Alt^n \PLambda$ is geometrically integral, then so is~$\PLambda$ hence~$Y$.
\end{lemma}

\begin{proof} We may suppose~$k$ algebraically closed.  Pick an open dense~$U \subset \bbP_A$ over which the Gauss map of~$\PLambda$ is finite flat. For the integrality we argue as in \cite[lemma~5.18]{JKLM}. Fix~$v \in U(k)$ and let~$x_1, \dots, x_d\in A(k)$ be the points in the fiber of the Gauss map~$\PLambda \to \bbP_A$ counted with multiplicities so that 
$\PLambda_{v} = [x_1] + \cdots + [x_d]$
as zero-dimensional cycles on~$A$. By taking the fiber over~$v$ in~$\Alt^n \PLambda$ we then get
\[
 (\Alt^n \PLambda)_v \;=\; \tfrac{1}{n!} \sum_{1\le i_1 < \cdots < i_n \le d} [ x_{i_1} + \cdots + x_{i_n} ].
\]
Since~$\Alt^n \PLambda$ is integral, we identify it with its support. By hypothesis there is a nonempty open subset~$V \subset U$ over which the Gauss map of~$\Alt^n \PLambda$ is finite \'etale. For~$v \in V(k)$ then all the summands~$x_{i_1} + \cdots + x_{i_n}$ in the above equation are pairwise distinct. But~$\dim X > 0$ forces~$n<d$, hence in this case the points~$x_1, \dots, x_d$ are pairwise distinct as well. Since this holds for the fiber of the Gauss map~$\PLambda \to \bbP_A$ over generic~$v$, we get that~$\PLambda$ is reduced. In passing, note that~$x_1, \dots, x_d$ being pairwise distinct implies that the Gauss map of~$\PLambda$ is generically \'etale.
Now if~$\PLambda$ were not integral, say~$\PLambda = \PLambda_1 + \PLambda_2$ for clean effective cycles~$\PLambda_1, \PLambda_2$, then
\[
 \Alt^n \PLambda \;=\; 
 \!\!\!\sum_{n_1 + n_2 = n} \Alt^{n_1} \PLambda_1 \circ \Alt^{n_2} \PLambda_2.
\]
The sum on the right hand side contains at least two nontrivial summands because~$\deg(\PLambda_1 \to \bbP_A) + \deg(\PLambda_2\to \bbP_A) = d > n$, see loc.~cit. This would contradict the integrality~$\Alt^n \PLambda$.
\end{proof} 

In the above setting, we can characterize symmetric powers of curves on abelian varieties in terms of their conormal varieties:

\begin{lemma} \label{lem:symmetric-power-criterion}
Let~$X\subset A$ be a smooth geometrically irreducible subvariety of dimension~$n$ with generically \'etale Gauss map, and let~$C \subset A$ be a curve whose irreducible components are of general type 
when endowed with their reduced structure. Then, the following are equivalent:
\begin{enumerate} 
\item $\Alt^n \PLambda_{C} = \PLambda_{X}$.
\item The sum induces an isomorphism
$\Sym^n C \iso X$.
\end{enumerate} 
Moreover, if these properties hold,~$C$ is smooth and geometrically irreducible.
\end{lemma}

\begin{proof} If (1) holds, then the definitions imply that~$X$ is the sum of~$n$ copies of the curve~$C \subset A$. The sum morphism therefore induces a morphism
\[
 \sigma\colon \quad \Sym^n C \;\too\; X,
\]
To see that~$\sigma$ is an isomorphism, consider the abelian subvariety 
$ B \subset A$ generated by the curve, i.e.~the smallest abelian subvariety containing the difference of any two points on the curve. After a translation both~$C$ and~$X$ are contained in~$B$, and their conormal varieties in~$A$ are obtained from the respective conormal varieties in~$B$ by taking closures of the preimages under the projection
\[f \colon \quad B\times (\bbP_A \smallsetminus \bbP(V)) \; \too \; B\times \bbP_B,\]
where~$V = (\Lie A/B)^\vee$.  By construction the formation of wedge powers of clean cycles commutes with taking closure of the preimages under~$f$. Hence for the proof of (2) we may replace~$A$ by~$B$ and therefore assume that~$C$ generates~$A$ in what follows. In this case, the Gauss map
\[
 \gamma_C\colon \quad \PLambda_{C} \;\too\; \bbP_A
\]
is a finite morphism. By our assumption
\[ \PLambda_{X} \; = \; \Alt^n \PLambda_{C}, \]
it follows that~$\gamma_X\colon \PLambda_{X} \to \bbP_A$ is also finite. Now consider the following commutative diagram which comes from our assumption~$\Alt^n \PLambda_C = \PLambda_X$ and from the definition of wedge powers of cycles:
\[
\begin{tikzcd}[column sep=40pt]
 \Sym^n C  \ar[d, swap, "\sigma"] &  \PLambda_C^{[n]}  \ar[d, "\tilde{\sigma}"]  \ar[l, swap, "\pi_{C,n}"]  \ar[r,"\gamma_{C,n}"] &  \bbP_A  \ar[d, equals] \\
X &  \PLambda_X \ar[l, "\pi_X"] \ar[r, swap, "\gamma_X"]  & \bbP_A
\end{tikzcd} 
\]	
The fibers of~$\pi_X$ have pure dimension~$N=\codim_A X - 1$. Moreover, by the above diagram~$\tilde{\sigma}$ is a finite morphism since~$\gamma_{C,n}$ and~$\gamma_X$ are finite. So all the fibers of~$\pi_X \circ \tilde{\sigma}$ have dimension~$N$. Since~$\sigma$ is generically finite, it follows that the generic fiber of~$\pi_{C, n}$ also has dimension~$N$. The semicontinuity of the fiber dimension for proper morphisms then implies that the morphism 
$\sigma \colon \Sym^n C \to X$ is finite. On the other hand the identity~$\Alt^n \PLambda_{C} = \PLambda_{X}$ implies that~$\sigma$ is birational. The smoothness of~$X$ then forces~$\sigma$ to be an isomorphism.

\medskip

Conversely, if (2) holds, then the curve~$C$ is integral and smooth; see for instance \cite[prop. B.3]{JKLM}. To prove~$\PLambda_X = \Alt^n \PLambda_C$ we may assume~$k$ algebraically closed. Then, given pairwise distinct points~$x_1, \dots, x_n \in C(k)$, the tangent space to~$X$ at~$x=x_1+\cdots + x_n$ is
\[
 T_{X,x} \;=\; T_{C,x_1} \oplus \cdots \oplus T_{C,x_n}
\]
where all the occurring tangent spaces are viewed as subspaces of~$\Lie A$. In particular, the conormal of~$X \subset A$ at~$x$ is the intersection of the conormal subspaces of~$C \subset A$ at~$x_i$ for~$i= 1, \dots, n$. Unwinding the definitions, this implies the wanted identity~$\PLambda_X = \Alt^n \PLambda_C$. 
\end{proof}

In the above lemma one cannot drop the assumption that all irreducible components of the curve are of general type. Indeed,  condition~$(1)$ is unaffected if we add to~$C$ extra components that are not of general type, whereas condition~$(2)$ excludes the existence of such extra components. In fact, any curve summand of an iterated sum of a curve $C$ has to be $C$. More precisely:

\begin{lemma} \label{lemma:CurveSummandOfGeneralType}
Let~$X\subset A$ be a geometrically integral subvariety of general type of dimension~$n$ which is the~$n$-fold sum of a subvariety $C \subset A$ of pure dimension~$1$. Suppose that $X$ has fundamental cycle
\[ [X] \; = \; \tfrac{1}{n!} \sigma_\ast [C^n], \]
where~$\sigma \colon A^n \to A$ is the sum map and that $X$ is also the~$n$-fold sum of some irreducible component~$C' \subset C$ endowed with its reduced structure. Then,~$C'$ is geometrically integral and~$C$ is generically reduced with underlying reduced subvariety~$C'$.
\end{lemma}

\begin{proof} We may assume~$k$ algebraically closed. The hypothesis~$[X] = \tfrac{1}{n!} \sigma_\ast [C^n]$ forces~$C'$ to enter with multiplicity one in~$[C]$ hence~$C$ is generically reduced along~$C'$. Also, it implies that~$C'$ is the unique component for which the sum map~$C' \times Y \to X$ is generically finite, where~$Y$ is the sum of~$n-1$ copies of~$C'$. If there is another irreducible component~$C''\subset C$, then after a translation we may assume~$0\in C''(k)$, so that
\[ Y \;\subset\; C''+Y \;\subset\; C + Y \;=\; X, \]
The second inclusion must be strict because the sum map~$C''\times Y \to X$ is not generically finite. Thus~$Y = C'' + Y$ because~$\dim Y = n-1$ by generic finiteness of the sum map~$C'\times Y \to X$. In particular,~$Y$ is stable under translations by all points in~$C''$. Since~$Y$ is a summand of~$X$, we have the following chain of inclusion
\[ C'' \; \subset \; \Stab_A(Y) \; \subset \; \Stab_A(X),\]
contradicting that~$X$ is of general type. \end{proof}

\section{Conjugacy classes of disconnected reductive pairs}

In this appendix we verify the fact about conjugacy classes that was used in the proof of \cref{lemma:FaltingsFiniteness}. Let~$k$ be an algebraically closed field of characteristic zero. To simplify notation, we identify algebraic varieties over~$k$ with the set of their~$k$-rational points. Let~$V$ be a finite dimensional~$k$-vector space. For algebraic subgroups~$H \subset G \subset \GL(V)$ consider the set
\[ \cC_G(H) := \{ H' \subset G \mid H' = g H g^{-1} \; \text{for some}\; g \in \GL(V)\}. \]
The group~$G$ acts naturally on~$\cC_G(H)$ by conjugation.

\begin{proposition} \label{Prop:DisconnectedRichardsonThm} If~$G$ and~$H$ are reductive, then the set~$\cC_G(H) /G$ is finite.
\end{proposition}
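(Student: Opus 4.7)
The plan is to reduce to the classical (connected) Richardson theorem via a two-step argument. Since $G/G^\circ$ is finite, it suffices to show that $\cC_G(H)/G^\circ$ is finite.

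First I would consider the $G^\circ$-equivariant map $\sigma\colon \cC_G(H) \to \cC_{G^\circ}(H^\circ)$, $H' \mapsto H'^\circ$; the image lies in $G^\circ$ because $H'^\circ$ is connected and contains the identity. The target consists of closed connected reductive subgroups of $G^\circ$ of fixed dimension $\dim H^\circ$, and by Richardson's classical finiteness theorem for conjugacy classes of connected reductive subgroups of a connected reductive group in characteristic zero, the set $\cC_{G^\circ}(H^\circ)/G^\circ$ is finite. It then remains to control the fibers: for each representative $K$ of a $G^\circ$-orbit on $\cC_{G^\circ}(H^\circ)$, to show that $\sigma^{-1}(K)/N_{G^\circ}(K)$ is finite.

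Any $H' \in \sigma^{-1}(K)$ normalizes $K$, so it sits in $N_G(K)$, with $H'/K$ a finite subgroup of $L := N_G(K)/K$ abstractly isomorphic to $\Gamma := H/H^\circ$ via the isomorphism $H \cong H'$ coming from the $\GL(V)$-conjugation. Conversely $H'$ is recovered from its image in $L$, so $\sigma^{-1}(K)$ injects into the variety $\Hom(\Gamma, L)$, and the $N_{G^\circ}(K)$-action corresponds to conjugation by $L' := N_{G^\circ}(K)/K \subset L$. Since $N_{G^\circ}(K) = N_G(K) \cap G^\circ$, we have $[L:L'] \le [G:G^\circ] < \infty$, and therefore it will suffice to prove that $\Hom(\Gamma, L)/L$ is finite.

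This last step is Weil rigidity: the variety $\Hom(\Gamma, L)$ is a closed subvariety of $L^{|\Gamma|}$ cut out by the relations defining $\Gamma$, so it is of finite type. For any $\phi \in \Hom(\Gamma, L)$ the Zariski tangent space to the $L$-orbit of $\phi$ is the space of coboundaries $B^1(\Gamma, \Lie L)$, while that of $\Hom(\Gamma, L)$ itself is $Z^1(\Gamma, \Lie L)$. In characteristic zero with $\Gamma$ finite, $H^1(\Gamma, \Lie L) = 0$ by the standard averaging argument, so every orbit is open and there are only finitely many. The main conceptual content of the argument is the reduction from the disconnected setup to the connected case through the map $\sigma$; beyond this, both Richardson's theorem and the vanishing of finite-group cohomology in characteristic zero are classical.
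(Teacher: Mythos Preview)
Your proof is correct, and the overall architecture matches the paper's: first use Richardson to handle the identity component $H^\circ$, then control the possible groups sitting over each conjugacy class of $H^\circ$. The difference lies in how this second step is executed.

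The paper lifts the component group back into $H$: it produces a finite subgroup $F \subset H$ surjecting onto $H/H^\circ$ (this is their Lemma~\ref{Lemma:RepresentingGroupOfComponents}, a separate argument about extensions of finite groups by tori), writes $H = \langle H^\circ, F\rangle$, and then applies Vinberg's finiteness for conjugacy classes of finite subgroups to $F$ inside $N_G(H^\circ)$, which they note is reductive. Your argument instead passes \emph{down} to the quotient $L = N_G(K)/K$ and invokes Weil rigidity for $\Hom(\Gamma, L)$ directly. This is a genuine simplification: it bypasses Lemma~\ref{Lemma:RepresentingGroupOfComponents} entirely, and since the vanishing of the first group cohomology of $\Gamma$ with coefficients in $\Lie L$ holds for any linear algebraic $L$ in characteristic zero, you never need to check that $N_G(K)$ or $L$ is reductive.

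One small imprecision worth cleaning up: the ``injection'' $\sigma^{-1}(K) \hookrightarrow \Hom(\Gamma, L)$ depends on choosing, for each $H'$, a specific $g \in \GL(V)$ with $gHg^{-1} = H'$, and different choices differ by an automorphism of $\Gamma$. What you actually have canonically is an $L'$-equivariant injection of $\sigma^{-1}(K)$ into the set of subgroups of $L$ isomorphic to $\Gamma$; this set is covered by $\Hom(\Gamma, L)$ with finite fibers (bounded by $|\Aut(\Gamma)|$), so finiteness of $\Hom(\Gamma, L)/L$ still gives the conclusion.
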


\begin{proof} When~$H$ is finite this is \cite[Cor. 16.2]{Vinberg}. We next deal with the case when~$H$ is connected: Then~$\cC_{G}(H) = \cC_{G^\circ}(H)$, hence we may assume that~$G$ is also connected; in this case it suffices to show for the adjoint representation~$\ad$ of~$\GL(V)$ that 
\[ \{ \mathfrak{h}' \mid \mathfrak{h}' = \ad(g).\Lie H \subset \Lie G, g \in \GL(V)\}\]
is a finite union of~$G$-orbits, which holds by~\cite[th. 7.1]{Richardson}.\medskip

When~$H$ is arbitrary, we combine the previous two cases as follows. By \cref{Lemma:RepresentingGroupOfComponents} below, there is a finite subgroup~$F\subset G$ such that the projection~$F \to G/G^\circ$ is surjective. From the already proven result in the connected case we know that the set~$\cC_G(H^\circ)$ consists of finitely many~$G$-orbits. Fix such an~$G$-orbit, say the~$G$-orbit of~$g_0 H^\circ g_0^{-1} \subset G$. Writing~$H' = g_0 H g_0^{-1}$ it suffices to prove that the set
\[ \cC':= \{g H' g^{-1} \mid g \in N_{\GL(V)}(H'^\circ), g H' g^{-1} \subset G \} \]
consists of finitely many~$N_G(H'^\circ)$-orbits, where~$N_G(H'^\circ)$ resp.~$N_{\GL(V)}(H'^\circ)$ is the normalizer of~$H'^\circ$ in~$G$ resp.~$\GL(V)$. Replacing~$H$ by~$H'$ we may assume~$H= H'$. Now for~$g \in N_{\GL(V)}(H^\circ)$ we have~$ gHg^{-1} = \langle H^\circ, gFg^{-1} \rangle$ because~$H = \langle H^\circ, F\rangle$. Since the identity component~$H^\circ$ is contained in~$G$, we have that \[ gHg^{-1}\subset G \iff gFg^{-1}\subset G \iff gFg^{-1} \subset N_G(H^\circ).\]
The latter equivalence holds because~$g$ and~$F$ are contained in~$N_{\GL(V)}(H^\circ)$: the first by assumption and the second because any subgroup of~$H$ normalizes the identity component~$H^\circ$. It follows that we have an inclusion
\[ \cC' = \{g F g^{-1} \mid g \in N_{\GL(V)}(H^\circ), g F g^{-1} \subset N_G(H^\circ) \} \subset \cC_{N_G(H^\circ)}(F),\]
yielding an injection~$\cC' / N_G(H^\circ) \into \cC_{N_G(H^\circ)}(F) / N_G(H^\circ)$.
The group~$N_G(H^\circ)$ is reductive and~$F$ is finite, thus~$\cC_{N_G(H^\circ)}(F)$ is a finite union of~$N_G(H^\circ)$-orbits by the finite case. This concludes the proof.
\end{proof}

\begin{lemma} \label{Lemma:RepresentingGroupOfComponents} Given  a reductive group~$G$, there is a finite subgroup~$F \subset G$ such that the projection~$F \to G/G^\circ$ is surjective. 
\end{lemma}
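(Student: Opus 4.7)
The plan is to reduce the problem to a question about extensions of a finite group by a torus, and then to solve that question cohomologically using the divisibility of tori in characteristic zero.

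First I would fix a maximal torus $T \subset G^\circ$ and consider its normalizer $N := N_G(T)$ in $G$. For any $g \in G$, the conjugate $gTg^{-1}$ is again a maximal torus of the normal subgroup $G^\circ$, so by conjugacy of maximal tori in the connected reductive group $G^\circ$ there exists $h \in G^\circ$ with $hgTg^{-1}h^{-1}=T$, i.e. $hg \in N$; as $h \in G^\circ$, the element $hg$ still lies in the coset $gG^\circ$. Hence $N \to G/G^\circ$ is surjective. Its identity component $N^\circ$ is a connected subgroup of $G^\circ$ normalizing $T$, hence contained in $N_{G^\circ}(T)^\circ = T$; combined with $T \subset N^\circ$ this gives $N^\circ = T$. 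Therefore $N$ fits into a short exact sequence of algebraic groups
\[
 1 \;\too\; T \;\too\; N \;\too\; H \;\too\; 1,
\]
with $H := N/T$ finite, through which the surjection $N \to G/G^\circ$ factors. It then suffices to produce a finite subgroup of $N$ surjecting onto $H$.

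Such a subgroup corresponds to lifting the extension class $[\epsilon] \in H^2(H, T(k))$ classifying $N$ to a class arising from an extension of $H$ by a finite subgroup of $T$. In characteristic zero $T$ is divisible, so the sequence of $H$-modules
\[
 0 \;\too\; T[|H|] \;\too\; T(k) \;\xrightarrow{[|H|]}\; T(k) \;\too\; 0
\]
is exact, and the associated long exact cohomology sequence yields a map $H^2(H, T[|H|]) \to H^2(H, T(k))$ whose cokernel is the image of multiplication by $|H|$ on the target. Since finite-group cohomology in positive degrees is annihilated by the order of the group, this map is surjective, and $[\epsilon]$ lifts to a class classifying an extension $1 \to T[|H|] \to F \to H \to 1$ together with a compatible homomorphism $F \to N$ over $H$. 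The kernel of $F \to N$ is contained in $T[|H|]$, where the map is the inclusion $T[|H|] \hookrightarrow T(k)$, so $F \hookrightarrow N$ is an embedding. Then $F \subset G$ is finite, being an extension of two finite groups, and surjects onto $H$, hence onto $G/G^\circ$.

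The main point to verify is the identification of algebraic extensions of the finite group~$H$ by the torus~$T$ (with the $H$-action induced from conjugation in $N$) with abstract group extensions classified by $H^2(H, T(k))$: this is immediate because $H$ is finite, so any set-theoretic map out of $H$ is automatically a morphism of varieties, and therefore abstract and algebraic $2$-cocycles coincide; the algebraic structure on the lifted group $F$ is inherited from its embedding into $N \subset G$.
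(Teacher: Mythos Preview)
Your proof is correct and follows essentially the same route as the paper: reduce to the normalizer of a maximal torus so that the identity component becomes a torus, then use the short exact sequence $0 \to T[n] \to T \xrightarrow{n} T \to 0$ together with the fact that group cohomology of a finite group is killed by its order to lift the extension class through a finite subgroup of $T$. The paper phrases the cohomological step in terms of $\Ext^1(W,T)$ rather than $H^2(H,T(k))$, and omits your explicit verification that the resulting map $F \to N$ is injective, but the argument is the same.
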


\begin{proof} We reproduce the argument 
in \cite{MO13}. 
First we reduce to the case when~$G^\circ$ is a torus: Let~$T \subset G^\circ$ be a maximal torus and~$N_G(T) \subset G$ the normalizer of~$T$. The projection~$N_G(T) \to G/G^\circ$ is surjective. Indeed given~$g \in G$ the subgroup~$g T g^{-1}$ is a maximal torus of~$G^\circ$. Since maximal (split) tori of a connected reductive group are all conjugated, there is~$g_0 \in G^\circ$ such that~$g T g^{-1} = g_0 T g_0^{-1}$ hence~$h = g_0^{-1} g$ lies in the normalizer of~$T$. In particular~$g \equiv h$ modulo~$G^\circ$. Since~$N_G(T)/T$ is finite, we may replace~$G^\circ$ by~$T$ and~$G$ by~$N_G(T)$. In this case write~$T = G^\circ$ and~$W= G / G^\circ$. We claim that the extension~$1 \to T \to G \to W \to 1$ is the pushout of an extension
\[ 1 \too T[n] \too \tilde{G} \too W \too 1\]
along~$T[n] \into T$ where~$n= |W|$ and~$T[n] \subset T$ is the~$n$-torsion subgroup. Since the group~$\tilde{G}$ is finite, the subgroup~$F := \im(\tilde{G} \to G)$ is finite and does the job. To prove the claim, remark that~$T$ is commutative so we can see it as a~$W$-module by conjugation. For each~$m \in \bbN$ the short exact sequence~$1 \to T[m] \to T \to T \to 1$ of~$W$-modules induces an exact sequence
\[ \Ext^1(W,T[m]) \too \Ext^1(W,T) \stackrel{m}{\too} \Ext^1(W,T) \]
of abelian groups. These abelian groups are~$n$-torsion because~$n = |W|$. Thus by taking~$m = n$ one sees that the isomorphism class~$[G] \in \Ext^1(W,T)$ is in the image of~$\Ext^1(W,T[m]) \to \Ext^1(W,T)$. This concludes the proof.
\end{proof}

\section{The Hodge filtration on twisted cohomology}

In this appendix we gather some general facts about the Hodge filtration on the cohomology of torsion local systems of rank one. Let~$Y$ be a smooth projective complex algebraic variety with a free action of a finite abelian group~$G$, and denote by~$\pi \colon Y \to X=Y/G$ the quotient. For a character~$\chi \in G^\ast = \Hom(G, \bbG_m)$, we denote by~$L_\chi$ the rank one local system on~$Y(\bbC)$ whose sections on an open subset~$U \subset Y(\bbC)$ are the locally constant functions~$f \colon \pi^{-1}(U) \to \bbC$ such that
\[ 
 f(gx) = \chi(g)f(x) \quad \text{for all} \quad g \in G, x \in \pi^{-1}(U).
\]
We have a direct sum decomposition~$\pi_\ast \bbC_Y = \bigoplus_{\chi \in G^\ast} L_\chi$ inducing a decomposition, for each~$q \ge 0$,
\[ \rH^q(Y, \bbC) = \bigoplus_{\chi \in G^\ast} \rH^q (X, L_\chi). \]
Similarly, let~$\cL_\chi$ be the line bundle on~$X$ whose sections on an open subset~$U \subset X$ are those~$f \in \Gamma(\pi^{-1}(U, \cO_Y))$ with~$\sigma^\ast f = \chi \otimes  f$ for the group action~$\sigma \colon G \times Y \to Y$. Then we have a decomposition
\[ \pi_\ast \cO_Y = \bigoplus_{\chi \in G^\ast} \cL_\chi.\]
\begin{proposition} \label{Prop:HodgeDecompositionCharacter}
For~$q \in \bbN$ and any character~$\chi \in G^\ast$, the Hodge decomposition of~$\rH^q(Y, \bbC)$ induces a decomposition
\[ \rH^q(X, L_\chi) = \bigoplus_{i + j = q} \rH^j(X, \Omega_X^i \otimes \cL_\chi).\]
\end{proposition}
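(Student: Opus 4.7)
The plan is to upgrade the classical Hodge decomposition for $Y$ to a $G$-equivariant statement and then extract $\chi$-isotypical components on both sides.

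First, I would observe that since $\pi$ is étale, we have $\Omega^i_Y = \pi^\ast \Omega^i_X$, so the projection formula together with the given decomposition $\pi_\ast \cO_Y = \bigoplus_\chi \cL_\chi$ yields
\[
 \pi_\ast \Omega^i_Y \;=\; \Omega^i_X \otimes_{\cO_X} \pi_\ast \cO_Y \;=\; \bigoplus_{\chi \in G^\ast} \Omega^i_X \otimes \cL_\chi.
\]
Since $\pi$ is finite, $\mathrm{R}\pi_\ast = \pi_\ast$ on coherent sheaves, hence
\[
 \rH^j(Y, \Omega^i_Y) \;=\; \bigoplus_{\chi \in G^\ast} \rH^j(X, \Omega^i_X \otimes \cL_\chi).
\]

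Second, I would check that both this Dolbeault splitting and the given splitting $\rH^q(Y, \bbC) = \bigoplus_\chi \rH^q(X, L_\chi)$ coincide with the $G$-isotypical decomposition under the natural action of $G$ on $Y$. For $\rH^q(X,L_\chi)$ this is immediate from the construction of $L_\chi$ as the $\chi$-sub-local system of $\pi_\ast \bbC_Y$; for $\rH^j(X, \Omega^i_X \otimes \cL_\chi)$ it is the analogous statement for the coherent decomposition above, which is likewise $G$-equivariant by the very definition of $\cL_\chi$.

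Third, since $G$ acts by biholomorphisms of $Y$, the induced $\bbC$-linear $G$-action on $\rH^q(Y, \bbC)$ preserves the Hodge decomposition $\rH^q(Y, \bbC) = \bigoplus_{i+j=q} \rH^j(Y, \Omega^i_Y)$ (the $G$-action preserves both the complex structure and the bidegree of forms). Consequently each Hodge summand splits into $G$-isotypical pieces compatibly, and taking $\chi$-isotypical components on both sides of the Hodge decomposition, together with the identifications of the previous step, produces the claimed decomposition
\[
 \rH^q(X, L_\chi) \;=\; \bigoplus_{i+j=q} \rH^j(X, \Omega^i_X \otimes \cL_\chi).
\]

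The argument is essentially formal and I do not anticipate a genuine obstacle; the only point requiring care is to make a consistent choice of character convention so that the $\chi$-eigenspace of $\pi_\ast \bbC_Y$ really is $L_\chi$ (and not $L_{\chi^{-1}}$) and matches the convention used for $\cL_\chi$. Once conventions are fixed, the proof reduces to the $G$-equivariance of the Hodge decomposition, which is automatic from holomorphicity of the action.
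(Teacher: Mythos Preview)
Your proposal is correct and follows essentially the same approach as the paper: decompose $\rH^j(Y,\Omega^i_Y)$ via the projection formula and the identification $\Omega^i_Y \cong \pi^\ast \Omega^i_X$, then use $G$-equivariance of the Hodge decomposition (functoriality) to extract $\chi$-eigenspaces on both sides. The paper's write-up is slightly terser but the argument is the same.
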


\begin{proof} 
For every coherent vector bundle~$\cE$ on~$X$ and~$j\ge 0$, the projection formula shows that
\[ \rH^j(Y, \pi^\ast \cE) = \bigoplus_{\chi \in G^\ast} \rH^j(X, \cE \otimes \cL_\chi). \]
Taking~$\cE = \Omega_Y^i$ and using that~$\pi^\ast \Omega^i_X \to \Omega^i_Y$ is an isomorphism because~$\pi$ is finite \'etale, we get
\[ \rH^j(Y, \Omega_Y^i) = \bigoplus_{\chi \in G^\ast} \rH^j(X, \Omega^i_X \otimes \cL_\chi).\]
We now insert this in the Hodge decomposition
\[ \alpha \colon \rH^q(Y, \bbC)   \stackrel{\sim}{\too} \bigoplus_{i + j = q} \rH^j(Y, \Omega_Y^i).\]
The group~$G$ acts linearly on~$\rH^q(Y, \bbC)$ and~$\rH^j(Y, \Omega_Y^i)$, and~$\alpha$ is~$G$-equivariant for these actions since the Hodge decomposition is functorial. The claim then follows by looking at eigenspaces: For~$\chi \in G^\ast$, the~$\chi$-eigenspace of~$\rH^q(Y, \bbC)$ is~$\rH^q (X, L_\chi)$, while the~$\chi$-eigenspace of~$\rH^j(Y, \Omega_Y^i)$ is~$\rH^j(X, \Omega^i_X \otimes \cL_\chi)$. 
\end{proof}

Let~$F^\bullet \rH^q(Y, \bbC)$ be the Hodge filtration of~$\rH^q(Y, \bbC)$, given in terms of the Hodge decomposition by
\[ F^p \rH^q(Y, \bbC) = \bigoplus_{i = p}^q \rH^{q - i}(Y, \Omega_Y^i). \]
\Cref{Prop:HodgeDecompositionCharacter} shows that for~$\chi \in G^\ast$, the Hodge filtration of~$\rH^q(X, L_\chi)$ is
\[ F^p \rH^q(X, L_\chi) = \bigoplus_{i = p}^q \rH^{q - i}(Y, \Omega_X^i \otimes \cL_\chi). \]
We are interested in situation where generic vanishing applies:

\begin{example} \label{ex:semismall}
Consider a smooth complex projective variety~$X$ of dimension~$d$ with a morphism~$f\colon X\to A$ to an abelian variety. Recall that~$f$ is~\emph{semismall} if \[\dim X\times_A X \le \dim X.\] For instance this is the case if~$f$ is finite.  By ~\cite[prop.~4.2.1]{DeCataldoMigliorini09} the morphism~$f$ is semismall if and only if~$P=Rf_*\bbC_X[d]$ is a perverse sheaf, because~$X$ is smooth and proper.
 Semismall morphisms are in particular generically finite. For our purpose we can weaken this as follows: Let us say that~$f\colon X\to A$ is~\emph{cohomologically semismall} if for all~$i\neq 0$ the perverse cohomology sheaves
$
{^p\hspace*{-0.2em}\cH}^i(P)
$ 
have Euler characteristic zero. In this case, generic vanishing~\cite{KWVanishing, SchnellHolonomic} says that most rank one local systems~$L$ on~$A(\bbC)$ satisfy
\[ \rH^q(X, f^*L) \;=\; \rH^{q-d}(A, P\otimes L) \;=\; 0 \quad \text{for all} \quad q\;\neq\; d.
\]
If~$L$ is of finite order, then~$L$ embeds as a direct summand in the direct image~$p_*\bbC_B$ for some isogeny~$p\colon B\to A$ of abelian varieties. We can use the setting discussed at the beginning of this section by writing the given variety as the quotient~$X=Y/G$ of the smooth variety~$Y=X\times_A B$ by the free action of the finite group~$G=\ker(p)$. Then~$f^*L \iso L_\chi$ for some character~$\chi \in G^\ast$ and therefore
\[
 \rH^q(X, L_\chi) \;=\; 0 
 \quad \text{for all} \quad q \;\neq\; d.
\]
In general, such a vanishing property can be used to write Hodge numbers as Euler characteristics of sheaves of differential forms:
\end{example}

\begin{proposition} \label{Prop:NumericHodgeSymmetry} 
Let~$d := \dim X$ and let~$\chi \in G^\ast$ be such that~$\rH^q(X, L_\chi)=0$ for all~$q \neq d$. Then for~$i = 0, \dots, d$ we have:
\begin{align}
\tag{1} \dim \rH^{d- i }(X, \Omega^i_X \otimes \cL_\chi) &= (-1)^{d- i}\chi(X, \Omega^i_X), \\
\tag{2} \dim \rH^{d- i }(X, \Omega^i_X \otimes \cL_\chi) &= \dim \rH^{i}(X, \Omega^{d-i}_X \otimes \cL_\chi), \\
\tag{3} \dim \rH^d(X, L_\chi) &= \sum_{i = 0}^d (-1)^{d- i}\chi(X, \Omega^i_X).
\end{align}
\end{proposition}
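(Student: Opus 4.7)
The key observation is that once we combine the hypothesis with the preceding proposition on the Hodge decomposition of $\rH^q(X, L_\chi)$, every individual summand $\rH^j(X, \Omega_X^i\otimes \cL_\chi)$ must vanish unless $i+j=d$. Indeed, the Hodge decomposition
\[
 \rH^q(X, L_\chi) \;=\; \bigoplus_{i+j=q} \rH^j(X, \Omega_X^i\otimes\cL_\chi)
\]
together with the vanishing of the left-hand side for $q\neq d$ forces every summand on the right-hand side to vanish whenever $q\neq d$, i.e.\ whenever $i+j\neq d$. Call this the \emph{diagonal vanishing}. This is the whole source of the concentration phenomenon we need.

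For part~(1), I would compute the Euler characteristic of $\Omega_X^i \otimes \cL_\chi$ in two ways. On the one hand, by diagonal vanishing only the term $j=d-i$ in $\chi(X,\Omega_X^i\otimes\cL_\chi)=\sum_j(-1)^j\dim \rH^j(X,\Omega_X^i\otimes\cL_\chi)$ survives, giving $(-1)^{d-i}\dim \rH^{d-i}(X,\Omega_X^i\otimes\cL_\chi)$. On the other hand, since $\cL_\chi$ is torsion we have $c_1(\cL_\chi)=0$ in $\rH^2(X,\bbQ)$, so $\ch(\cL_\chi)=1$ in rational cohomology, and Hirzebruch--Riemann--Roch yields
\[
 \chi(X,\Omega_X^i\otimes \cL_\chi) \;=\; \int_X \td(X)\ch(\Omega_X^i)\ch(\cL_\chi)\;=\;\chi(X,\Omega_X^i).
\]
Comparing gives (1).

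For part~(2), I would apply~(1) twice: once to the exponent $i$ and once to $d-i$ (the hypothesis on $\rH^q(X,L_\chi)$ is symmetric in this sense, so the diagonal vanishing applies equally well to $\Omega_X^{d-i}\otimes\cL_\chi$, whose only nonzero cohomology sits in degree~$i$). This gives $\dim \rH^{d-i}(X,\Omega_X^i\otimes\cL_\chi)=(-1)^{d-i}\chi(X,\Omega_X^i)$ and $\dim \rH^{i}(X,\Omega_X^{d-i}\otimes\cL_\chi)=(-1)^{i}\chi(X,\Omega_X^{d-i})$. It then remains to compare $\chi(X,\Omega_X^i)$ with $\chi(X,\Omega_X^{d-i})$, and Serre duality $\rH^j(X,\Omega_X^i)\cong \rH^{d-j}(X,\Omega_X^{d-i})^\vee$ yields $\chi(X,\Omega_X^{d-i})=(-1)^d\chi(X,\Omega_X^i)$. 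A quick sign chase then produces equality of the two dimensions in~(2). Finally, part~(3) is immediate: by the Hodge decomposition $\dim\rH^d(X,L_\chi)=\sum_{i=0}^d \dim\rH^{d-i}(X,\Omega_X^i\otimes\cL_\chi)$, and (1) converts each summand to $(-1)^{d-i}\chi(X,\Omega_X^i)$.

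There is really no hard step here; the only place to be careful is the Hirzebruch--Riemann--Roch input, which requires noting that $\cL_\chi^{\otimes n}\cong\cO_X$ for some $n\ge 1$ (since $\chi$ has finite order), hence $c_1(\cL_\chi)=0$ in $\rH^2(X,\bbQ)$. Everything else is bookkeeping from the diagonal vanishing and Serre duality.
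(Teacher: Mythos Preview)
Your proof is correct and follows essentially the same approach as the paper: both use the Hodge decomposition from the preceding proposition to get the diagonal vanishing, then Hirzebruch--Riemann--Roch with $c_1(\cL_\chi)=0$ for (1), Serre duality in the form $\chi(X,\Omega_X^{d-i})=(-1)^d\chi(X,\Omega_X^i)$ for (2), and the Hodge decomposition again for (3). Your write-up is slightly more explicit about why $c_1(\cL_\chi)=0$, but there is no substantive difference.
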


\begin{proof} (1)~$\rH^q(X, L_\chi)=0$ for~$q \neq d$ implies~$\rH^j(X, \Omega^i_X \otimes \cL_\chi)=0$ for~$i + j \neq d$ by \cref{Prop:HodgeDecompositionCharacter}. In particular,
\begin{align*} \chi(X, \Omega^i_X \otimes \cL_\chi) &= \sum_{j \in \bbN} (-1)^j \dim \rH^j(X, \Omega^i_X \otimes \cL_\chi) \\ &= (-1)^{d-i} \dim \rH^{d-i}(X, \Omega^i_X \otimes \cL_\chi).
\end{align*}
But~$\chi(X, \Omega^i_X \otimes \cL_\chi) = \chi(X, \Omega^i_X)$ by Hirzebruch-Riemann-Roch since~$c_1(\cL_\chi)$ vanishes.

\smallskip 

(2) follows from (1) since~$\chi(X, \Omega^i_X) = (-1)^d\chi(X, \Omega^{d-i}_X)$ by Serre duality.

\smallskip

(3) follows from (1) together with \cref{Prop:HodgeDecompositionCharacter}.
\end{proof}

In the situation of \cref{Prop:NumericHodgeSymmetry} the Hodge filtration is compatible with the Poincar\'e pairing. To explain what we mean by this, note that the Poincar\'e pairing induces a perfect pairing 
\[
\theta \colon \quad \rH^d(X, L_\chi) \otimes \rH^d(X, L_{\chi^{-1}}) \too \bbC. 
\]

\begin{corollary} \label{Prop:HodgeDecompositionIsOrthogonalWRTPoincarePairing} 
Let~$\chi\in G^\ast$ be a character with~$H^q(X, L_\chi)=0$ for all~$q\neq d$. Then the orthocomplement of~$F^i H^d(X, L_\chi)$ with respect to the bilinear form~$\theta$ is 
\[ (F^i H^d(X, L_\chi))^\bot = F^{d + 1 - i} H^d(X, L_{\chi^{-1}}). \]
\end{corollary}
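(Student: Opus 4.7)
\medskip

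The plan is to show that the Poincar\'e pairing $\theta$ is "Hodge-diagonal": under the decompositions from \cref{Prop:HodgeDecompositionCharacter}, the component $\rH^{d-i}(X, \Omega^i_X \otimes \cL_\chi) \subset \rH^d(X, L_\chi)$ pairs nontrivially only with $\rH^{i}(X, \Omega^{d-i}_X \otimes \cL_{\chi^{-1}}) \subset \rH^d(X, L_{\chi^{-1}})$, and the resulting pairing is perfect (Serre duality). Once this structural fact is in hand, the orthogonality of the Hodge filtration steps is a matter of bookkeeping.

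First I would verify the Hodge-diagonal character of $\theta$. Given $\alpha \in \rH^{d-i}(X, \Omega^i_X \otimes \cL_\chi)$ and $\beta \in \rH^{d-j}(X, \Omega^j_X \otimes \cL_{\chi^{-1}})$, the cup product $\alpha \cup \beta$ lives in $\rH^{2d-i-j}(X, \Omega^{i+j}_X)$, since $\cL_\chi \otimes \cL_{\chi^{-1}} \cong \cO_X$. If $i + j > d$ then $\Omega^{i+j}_X = 0$; if $i + j < d$ then the cohomological degree $2d - i - j$ exceeds $\dim X$ and the group vanishes; and only $j = d - i$ survives, producing a class in $\rH^d(X, \Omega^d_X) \cong \bbC$ whose integral is, by construction, the standard Serre duality pairing between $\rH^{d-i}(X, \Omega^i_X \otimes \cL_\chi)$ and $\rH^i(X, \Omega^{d-i}_X \otimes \cL_{\chi^{-1}})$. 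This latter pairing is perfect.

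Now decompose $\beta = \sum_{j=0}^d \beta^{(j)}$ with $\beta^{(j)} \in \rH^{d-j}(X, \Omega^j_X \otimes \cL_{\chi^{-1}})$ according to \cref{Prop:HodgeDecompositionCharacter}; the hypothesis $\rH^q(X, L_{\chi^{-1}}) = 0$ for $q \neq d$, which follows from the assumption on $\chi$ via Poincar\'e duality, ensures this makes sense. Since
\[ F^i \rH^d(X, L_\chi) = \bigoplus_{k \geq i} \rH^{d-k}(X, \Omega^k_X \otimes \cL_\chi), \]
the vanishing of $\theta(\alpha, \beta)$ for every $\alpha \in F^i \rH^d(X, L_\chi)$ is equivalent, by Hodge-diagonality and the perfectness of each matched pairing, to the vanishing of $\beta^{(d-k)}$ for every $k \geq i$. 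Setting $j = d - k$, this says $\beta^{(j)} = 0$ for $j \leq d - i$, that is, $\beta \in F^{d+1-i} \rH^d(X, L_{\chi^{-1}})$. The reverse inclusion is immediate from the same bidegree analysis.

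There is no real obstacle: the entire argument is a transparent bookkeeping once one observes that $\cL_\chi \otimes \cL_{\chi^{-1}} \cong \cO_X$ makes the cup product land in the untwisted de Rham cohomology, reducing the claim to classical Hodge theory on $X$ together with Serre duality.
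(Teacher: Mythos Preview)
Your proof is correct and shares its key observation with the paper's: the Poincar\'e pairing is ``Hodge-diagonal'' because the cup product $\rH^{d-i}(X, \Omega^i_X \otimes \cL_\chi) \otimes \rH^{d-j}(X, \Omega^j_X \otimes \cL_{\chi^{-1}}) \to \rH^{2d-i-j}(X, \Omega^{i+j}_X)$ lands in a group that vanishes unless $i+j=d$. From there you diverge slightly: the paper uses this observation to obtain only the inclusion $F^{d+1-i} \subset (F^i)^\bot$ and then closes the argument with a dimension count via \cref{Prop:NumericHodgeSymmetry}, whereas you invoke the perfectness of Serre duality on each matched pair of Hodge pieces to characterize the orthocomplement directly in both directions. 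Your route is marginally cleaner and in fact does not use the vanishing hypothesis at all---the decomposition from \cref{Prop:HodgeDecompositionCharacter} holds for every $\chi$, so your parenthetical remark that the vanishing for $\chi^{-1}$ ``ensures this makes sense'' is unnecessary. The paper's dimension count, by contrast, genuinely relies on the vanishing through \cref{Prop:NumericHodgeSymmetry}.
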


\begin{proof} 
By construction of the Poincar\'e pairing we have for each~$p,q$ a commutative diagram 
\[
\begin{tikzcd} 
 H^d(X, L_\chi) \otimes H^d(X, L_{\chi^{-1}}) \ar[r, "\theta"] & \bbC \\
 H^{d-p}(X, \Omega_X^p \otimes \cL_\chi) \otimes H^{d-q}(X, \Omega_X^q \otimes \cL_{\chi^{-1}}) \ar[r, "\smile"] \ar[u, hook]
 & H^{2d-p-q}(X, \Omega_X^{p+q}) \ar[u, swap, "\tr"]
\end{tikzcd} 
\]
where~$\mathrm{tr}$ is the trace map if~$p+q=d$, and the zero map otherwise. Hence the pairing between
\[
 F^i H^d(X, L_\chi) \;=\; 
 \bigoplus_{p\ge i}H^{d-p}(X, \Omega_X^p \otimes \cL_{\chi})
\] 
and 
\[
 F^{d+1-i} H^d(X, L_{\chi^{-1}}) \;=\; 
 \bigoplus_{q\ge d+1-i} H^{d-q}(X, \Omega_X^q \otimes \cL_{\chi^{-1}})
\]
is zero since it only involves terms with~$p+q\ge i+d+1-i>d$. So we have an inclusion
\[
 F^{d+1-i} H^d(X, L_{\chi^{-1}})
 \;\subset\;
 (F^i H^d(X, L_\chi))^\bot
\]
and to show that the two spaces are equal, it suffices to check that they have the same dimension. Since~$\theta$ is nondegenerate, the dimension of the orthocomplement is given by
\begin{eqnarray*}
 \dim (F^i H^d(X, L_\chi))^\bot
 &\;=\;& \dim H^d(X, L_\chi) - \dim F^i H^d(X, L_\chi)
 \\
 &\;=\;& \sum_{p=0}^{i-1} (-1)^{d-p} \chi(X, \Omega_X^p) 
 \\
 &\;=\;& \sum_{q=d+1-i}^d (-1)^q \chi(X, \Omega_X^{d-q}) 
 \\
 &\;=\;& \sum_{q=d+1-i}^d (-1)^{d-q} \chi(X, \Omega_X^q) 
 \\
 &\;=\;& \dim F^{d+1-i} H^d(X, L_{\chi^{-1}})
\end{eqnarray*}
where the last four equalities are due to \cref{Prop:NumericHodgeSymmetry}.
\end{proof}

Part (1) of~\cref{Prop:NumericHodgeSymmetry} in particular says that~$(-1)^{d-p} \chi(X, \Omega_X^p)\ge 0$ for all~$p$, in line with the result for subvarieties of abelian varieties given by generic vanishing~\cite[cor.~1.4]{PopaSchnell}. This allows to verify the numerical assumption~\eqref{SkullInequality}  from the introduction for odd-dimensional varieties and for surfaces with nef cotangent bundle:

\begin{lemma} \label{inequality-in-odd-dimension}
Let~$X$ be a smooth projective variety of dimension~$d$ over a field of characteristic zero, and assume~$h_p := (-1)^{d-p} \chi(X, \Omega_X^p)\ge 0$ for all~$p$. Suppose either that~$d$ is odd or that~$X$ is a surface  with nef cotangent bundle. Then
\[
 2\chi(X\times X, \Omega^d_{X\times X}) \;\le\; 
 \chi_{\top}(X\times X).
\]
\end{lemma}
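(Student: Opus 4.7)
The plan is to turn both sides of the desired inequality into polynomial expressions in the quantities $h_p$ and then to handle the two cases separately. Using the K\"unneth-type decomposition $\Omega^d_{X\times X} = \bigoplus_{p+q=d} \pi_1^\ast\Omega^p_X \otimes \pi_2^\ast\Omega^q_X$ together with the projection formula, I would first compute
\[
 \chi(X\times X, \Omega^d_{X\times X}) \;=\; \sum_{p+q=d} \chi(X,\Omega^p_X)\,\chi(X,\Omega^q_X) \;=\; (-1)^d \sum_{p+q=d} h_p h_q.
\]
Next, Hirzebruch--Riemann--Roch applied to the alternating sum $\sum_p(-1)^p[\Omega^p_X]$ (whose Chern character equals $c_d(T_X)/\mathrm{td}(T_X)$) gives the familiar identity $\chi_{\top}(X) = \sum_p(-1)^p\chi(X, \Omega^p_X)=(-1)^d\sum_p h_p$, and hence $\chi_{\top}(X\times X) = (\sum_p h_p)^2$. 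The inequality to prove becomes
\[
 2\,(-1)^d\sum_{p+q=d} h_p h_q \;\le\; \Bigl(\sum_p h_p\Bigr)^2.
\]

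If $d$ is odd then $(-1)^d = -1$ and the left-hand side is nonpositive under the standing hypothesis $h_p\ge 0$, whereas the right-hand side is a nonnegative square; so the inequality is immediate.

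The remaining case is $d=2$, and here the work really happens. Serre duality forces $h_0 = h_2$, and after substituting this into the inequality a short calculation reduces it to $h_1^2 - 4h_0 h_1 \le 0$, i.e.\ to $h_1 \le 4h_0$ (the case $h_1 = 0$ being trivial). Noether's formula gives $h_0 = \chi(X, \cO_X) = \tfrac{1}{12}(K_X^2 + c_2(X))$, and Riemann--Roch applied to $\Omega^1_X$ gives $h_1 = -\chi(X, \Omega^1_X) = \tfrac{1}{6}(5c_2(X) - K_X^2)$. A direct comparison then shows that $h_1 \le 4h_0$ is equivalent to the single numerical inequality
\[
 c_2(X) \;\le\; K_X^2.
\]

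The crux of the proof, and the step I expect to be the main obstacle, is this last inequality. The plan is to appeal to the Fulton--Lazarsfeld positivity theorem: for any nef vector bundle on a smooth projective variety, every Schur polynomial in its Chern classes is represented by a numerically nonnegative cycle class. Applied to the rank-two nef bundle $\Omega^1_X$ and to the Schur polynomial $s_{(2)} = c_1^2 - c_2$, this yields $K_X^2 - c_2(X) \ge 0$, which is exactly what is needed. A subtle point worth emphasising is that the hypothesis $h_p\ge 0$ by itself is \emph{not} strong enough to force $c_2(X)\le K_X^2$ on an arbitrary surface of general type; it is genuinely the nefness of $\Omega^1_X$ that is used, via Fulton--Lazarsfeld, to dispose of the last inequality.
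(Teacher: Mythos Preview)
Your proof is correct and follows essentially the same approach as the paper: both reduce the inequality to a quadratic expression in the $h_p$ via K\"unneth and the identity $\chi_{\top}(X)=\sum_p(-1)^p\chi(X,\Omega_X^p)$, and both finish the surface case by reducing to $c_1^2(X)\ge c_2(X)$ via Noether/Riemann--Roch and then invoking Fulton--Lazarsfeld positivity for the nef bundle $\Omega^1_X$. Your treatment of the odd-dimensional case is in fact slightly slicker than the paper's, since you observe directly that the sign $(-1)^d=-1$ makes the left-hand side nonpositive, whereas the paper first applies Serre duality to rewrite everything as $\sum_p h_p^2$ and then expands the resulting quadratic form.
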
 

\begin{proof} 
Serre duality says~$h_p = h_{d-p}$ for all~$p$, so the K\"unneth decomposition for differential forms gives
\[
 \chi(X\times X, \Omega_{X\times X}^d) \;=\; \sum_{p=0}^d \chi(X, \Omega_X^p) \chi(X, \Omega_X^{d-p} )
 \;=\; 
 \sum_{p=0}^d h_p h_{d-p} 
 \;=\;
 \sum_{p=0}^d h_p^2.
\]
On the other hand, the Hodge decomposition gives~$\chi_\top(X)=h_0 + \cdots + h_d$ and therefore
\[
 \chi_\top(X\times X) \;=\; \left(\chi_\top(X)\right)^2 \;=\; 
 \left( h_0 + \cdots + h_d \right)^2
\]
It will thus be enough to show that the inequality~$Q(h_0, \dots, h_d)\ge 0$ holds for the quadratic form
\[
Q(X_0, \dots, X_d) =  \left( X_0 + \cdots + X_d \right)^2 - 2 (X_0^2 + \cdots + X_d^2).
\]
If~$d$ is odd, then the desired inequality follows from~$h_i = h_{d-i} \ge 0$ since in this case
\[
 Q(h_0, \dots, h_d) = 8 \sum_{0 \le i < j \le d/2} h_i h_j \ge 0.
\]
If~$d$ is even, then one gets
\[
Q(h_0, \dots, h_d) = - h_{d/2}^2 + 4 (h_0 + \cdots + h_{d/2 - 1})h_{d/2} + 8 \sum_{0 \le i < j \le d/2 -1} h_i h_j.
\]
In particular, for~$d=2$ the desired inequality can be reformulated as follows:
\begin{eqnarray*} 
 Q(h_0, h_1, h_2) \ge 0 
 &\;\Longleftrightarrow\;&
 h_1(4h_0-h_1) \ge 0 \\
 &\;\Longleftrightarrow\;&
 4\chi(X, \cO_X) + \chi(X, \Omega_X^1) \ge 0\\
 &\;\Longleftrightarrow\;&
 6\chi(X, \cO_X) \ge c_2(X) \\
 &\;\Longleftrightarrow\;&
 c_1^2(X) \ge c_2(X).
\end{eqnarray*}
Now~$c_1(X)^2 = c_1(\Omega_X^1)^2$ and~$c_2(X)=c_2(\Omega^1_X)$. Therefore if~$X$ has nef cotangent bundle, then~$c_1(X)^2\ge c_2(X)$ by nonnegativity of Schur polynomials of Chern classes of nef vector bundles \cite[example 8.3.10]{LazarsfeldPositivityII}.
\end{proof} 

\small

\bibliography{./../biblio}

\bibliographystyle{amsalpha}

\end{document}